\newtheorem{theorem}{Theorem}[section]
\newtheorem{corollary}[theorem]{Corollary}
\newtheorem{lemma}[theorem]{Lemma}
\newtheorem{proposition}[theorem]{Proposition}
\newtheorem{conjecture}[theorem]{Conjecture}
\theoremstyle{definition}
\newtheorem{remark}[theorem]{Remark}
\newtheorem{example}[theorem]{Example}
\newtheorem*{remark*}{Remark}
\numberwithin{equation}{section}
\newcommand{\nlOper}{{\mathcal{A}}}
\newcommand{\Ctwo}{{\mathscr{C}}}
\newcommand{\Jtwo}{{\mathcal{J}}}
\newcommand{\Atwo}{{\mathscr{A}}}
\newcommand{\Obsq}{{\Ob \times \Ob \backslash \Delta}}
\newcommand{\nlComp}{{\Comp_{\mathrm{nl}}}}
\newcommand{\med}{\medskip\noindent}
\newcommand{\e}{\varepsilon}
\def\d{\delta}
\def\a{\alpha}
\def\f{\varphi}
\def\la{\lambda}
\newcommand{\weak}{\stackrel{*}{\rightharpoonup}}
\newcommand{\R}{\mathbb{R}}
\newcommand{\N}{\mathbb{N}}
\newcommand{\NN}{\mathcal{N}}
\newcommand{\Rd}{\R^d}
\newcommand{\ident}{{\mathrm{id}}}
\newcommand{\Ident}{{\mathrm{Id}}}
\newcommand{\Comp}{{\mathcal{C}}}
\newcommand{\ind}{\mathbf{\chi}}
\newcommand{\ov}{\overline}
\newcommand{\A}{\mathcal{A}}
\newcommand{\B}{\mathcal{B}}
\newcommand{\K}{\mathcal{K}}
\newcommand{\LL}{\mathcal{L}}
\newcommand{\weakstar}{{\overset{\ast}{\rightharpoonup}}}
\newcommand{\mbf}[1]{{\mathbf{#1}}}
\newcommand \res{\mathop{\hbox{\vrule height 7pt width .5pt depth 0pt
			\vrule height .5pt width 6pt depth 0pt}}\nolimits}
\newcommand{\opnorm}[1]{{\left\vert\kern-0.25ex\left\vert\kern-0.25ex\left\vert #1 
		\right\vert\kern-0.25ex\right\vert\kern-0.25ex\right\vert}}
\def\O{\Omega}
\newcommand{\Ob}{{\overline{\Omega}}}
\newcommand{\bO}{{\partial\Omega}}
\newcommand{\argu}{{\,\cdot\,}}
\newcommand{\Mes}{{\mathcal{M}}}
\newcommand{\Sddp}{{\mathcal{S}^{d\times d}_+}}
\newcommand{\DIV}{{\mathrm{Div}}}
\newcommand{\dive}{{\mathrm{div}}}
\newcommand{\sig}{{\sigma}}
\newcommand{\tr}{{\mathrm{Tr}}}
\newcommand{\PP}{\mathcal{P}}
\newcommand{\pairing}[1]{{\left \langle #1 \right \rangle}}
\newcommand{\norm}[1]{\Arrowvert #1 \Arrowvert}
\newcommand{\abs}[1]{{\left \lvert #1 \right \rvert}}
\newcommand{\Lip}{{\mathrm{Lip}}}
\newcommand{\eps}{\varepsilon}
\newcommand{\D}{\mathcal{D}}
\newcommand{\C}{{\mathrm{C}}}
\newcommand{\U}{{\mathcal{U}}}
\newcommand{\Ha}{\mathcal{H}}
\newcommand{\IM}{\mathrm{Im}}
\newcommand{\Sdd}{{\mathcal{S}^{d \times d}}}
\newcommand{\mres}{\mathbin{\vrule height 1.6ex depth 0pt width
		0.13ex\vrule height 0.13ex depth 0pt width 1.3ex}}
\newcommand{\one}{{{\bf 1}
		\kern-0,28em \rm l}}
\DeclareMathOperator{\spt}{spt}
\DeclareMathOperator{\rk}{rank}
\newcommand{\bK}{\overline{\mathcal{K}}}
\DeclareMathOperator{\co}{co}
\DeclareMathOperator{\cobar}{\overline{co}}
\def\dis{\displaystyle}
\begin{document}

	\title{Optimal design versus maximal Monge-Kantorovich metrics}

	

	\author{Karol Bo{\l}botowski \and   Guy Bouchitt\'{e}}


	\date{\today}
	
	\begin{abstract} A remarkable connection between optimal design and Monge transport was initiated in the years 1997 
	in the context of the minimal elastic compliance problem and where the euclidean metric cost was naturally involved. In this paper we present different variants in optimal design of mechanical structures, in particular focusing on the optimal pre-stressed elastic membrane problem.  We show that the underlying metric cost is associated with an unknown maximal monotone map which maximizes the Monge-Kantorovich distance between two measures.
 In parallel with the classical duality theory leading to existence and (in a smooth case) to PDE optimality conditions,
 we present a  general geometrical approach arising from a two-point scheme in which  geodesics with respect to the optimal metric
 play a central role.  
 These two  aspects are enlightened by several explicit examples and also by numerical solutions in which optimal structures very  often turn out to be truss-like 
i.e supported by piecewise affine geodesics. In case of a discrete load, we are able to relate
 the existence of such truss-like solutions  to an extension property of maximal monotone maps which is of independent interest and that we propose here as a conjecture.

\end{abstract}

\maketitle

\textbf{Keywords: }  Minimal compliance, pres-stressed membrane, monotone maps, peusdo-metric, Monge-Kantorovich distance,  geodesics, 
duality  and saddle point

\textbf{2010 Mathematics Subject Classification:}  49J45, 49K20, 49J20, 90B06, 28A50, 74P05

	\dedicatory{}
	
	
	\maketitle

	\vskip1cm
	
	
	\section{introduction}
	\label{sec:introduction}
	The analysis of the behaviour of elastic structures has always been a central
	problem in Mathematics and in Engineering. In the last decades, the optimal design of such structures took benefit  of the dramatic improvement of the powerful tools of
	calculus of variations and geometric measure theory which have been developed meanwhile. Among them homogenization and $\Gamma$-convergence techniques allowed  decisive breakdowns
	as for instance the emergence of  topological optimization methods \cite{allaire-book} which are now very popular  in civil and mechanical engineering.

For the first time, in the year 1997, a remarkable connection between optimal design and Monge transport problem was discovered. 
	It concerned specifically the classical optimal compliance problem in the  scalar case \cite{bouchitte1997}  that may be related to designing a heat conductor and then it was  developped further in the framework of elasticity \cite{bouchitte2001}. This new approach turned out to be very fruitful as it was  possible to consider concentrated loads $f$ and
	low dimensional structures as competitors, in particular trusses of bars as they appear in Michell problem \cite{bouchitte2008} in case of a vanishing volume fraction limit of available elastic material.
From the Monge-Kantorovich optimal transport point of view, the main limitation of this approach is that the underlying cost is always related
to the Euclidean metric on the ambient space $\R^d$ ($d=2$ or $d=3$ in practice). 
 
 In this work we will bring to the fore a new family of optimal design problems 
 for which the Monge-Kantorovich approach mentioned above can be used but needs to be adapted with a major modification: the transport cost is
 now an unknown that will be determined  by solving
a  maximization problem in a suitable class of admissible  metric costs on $\R^d$ (including the Euclidean one). 
This family includes the two-dimensional problem of designing a membrane subject to an out-of-plane load $f$.  Two approaches can be distinguished:
 
 \med
(A)  \quad	 The \textit{optimal elastic membrane problem} which relies on a non-linear model inspired by the von K\'{a}rm\'{a}n's plate theory (see e.g. \cite{ciarlet1980}, Section 6 in \cite{fox1993}, Section II.4 in \cite{lewinski2000}); the design variable is a mass distribution $\mu$ for which the in-plane stress field $\sigma$ is an effect of elastic response to the loads; 
	
\med	
	(B)\quad  The simplified \textit{optimal pre-stressed membrane problem} which relies on the classical linear model (see e.g. Section IV.10.3 in \cite{courant1989}) where  the design variable is the (non-negative) in-plane stress field $\sigma$ which is subject to the equilibrium constraint.

Note that in both cases the transverse stiffness of the membrane does not depend  directly on the constitutive law of the underlying elastic material but of the in-plane stress field $\sigma$  which point-wisely is a positive semi-definite symmetric tensor.
It turns out that the two problems above are in some sense  equivalent to each other and
surprisingly also to another 3D design problem that fits into a class of long standing engineering problems of \textit{form finding}:

\med
	(C)\quad The \textit{optimal vault problem} where over a horizontal 2D reference domain one is to find a surface $z = z(x_1,x_2)$ and elastic material's distribution on this surface. The shell/vault thus constructed ought to carry the vertical load $f$ by means of compression only; vertical position of the load is a design variable, i.e. $f$ tracks the shape $z$ of the vault.
	
		\medskip
The  equivalence between (C) and (B) is established in a recent work of the first author \cite{bolbotowski2021} where (C) appears as a generalization of the Rozvany-Prager optimal arch-grid problem \cite{rozvany1979}, \cite{czubacki2020} (the latter one being recovered when the vault must compose of two mutually orthogonal families of arches). 

\medskip
A detailed presentation of the different optimal design problems sketched above will be skipped for the sake of conciseness but
we ask to keep in mind that they all lead precisely to the same mathematical framework, up to changing the mechanical interpretation of the paramaters coming into play.
In the present paper we will focus on model (B) namely {\em the optimal pre-stressed membrane problem}.
	The motivation for choosing  this variant is that it can be obtained through a simple modification of the classical optimal compliance model in the spirit of \cite{bouchitte2001, bouchitte1997} and then 
	we may describe step by step how we pass from the Monge OT approach involving the Euclidean cost to a model requiring the identification of an optimal metric cost.

%
%

	
	\medskip
	In a first step let us briefly describe the scalar mass optimization
	and its variant we call {\em free material design problem}  (see \cite{bolbotowski2020} for the vectorial variant).
	Given a bounded domain $\Omega\subset \R^d$ (design region),  $\Sigma_0$ being a compact subset of $\Ob$ and a bounded signed Radon measure
	 $f \in \Mes(\Ob;\R)$, we consider an unknown 
	mass distribution $\mu \in \Mes_+(\Ob)$ of material and define the compliance to be
	$$\mathcal{E}_{\Omega,f,\Sigma_0}(\mu)= \mathcal{E}  (\mu) = 
	\sup \left\{\int u\, df - \frac{1}{2} \int \abs{\nabla u}^2 d\mu \ : \ u\in \D(\R^d) \ ,\ u=0 \ \text{on $\Sigma_0$} \right\}.$$
	The mass optimization problem (MOP) with respect to a given amount of mass $m$ reads as follows
	\begin{equation} \tag(MOP)\label{MOP}  \beta(m):= \inf \left\{ \Comp(\mu) :  \mu \in \Mes_+(\Ob),\, \int d\mu \leq m \right\}
	\end{equation}
	If $\Sigma_0$ is void (pure Neumann problem) we need to assume that $f$ is balanced, i.e. $\int f^+= \int f^-$.
	Then if $\Omega$ is convex it is shown in \cite{bouchitte2001} that
	$ \dis\beta(m)=  \frac { (W_1(f^+,f^-))^2}{2 m}\ ,$
	where 
	\begin{equation} \label{MK}
	W_1(f^+,f^-)= \inf \left\{\int_{\Ob\times\Ob} |x-y| \,\gamma(dxdy)\, : \, \gamma \in \Gamma(f^+,f^-) \right\}  
	\end{equation}
	denotes the Monge-Kantorovich distance and $\gamma \in \Gamma(f^+,f^-)$ means that $\gamma$ as a measure 
	on $\Mes_+\bigl(\Ob^2\bigr)$ (transport plan) admits  $f^+,f^-$ as marginals.
	Notice that, if $\Omega$ is non-convex, we simply need to substitute  $|x-y|$ with the geodesic   distance in $\Omega$  between $x$ and $y$.
	In case $\Sigma_0$ is a non-empty compact subset of $\partial\Omega$ and $f$ is a non-negative measure (that is $f^-=0$), the latter formula for the infimum
	of $\mathrm{(MOP)}$ can be recast from the Monge distance of $f$ to $\Sigma_0$ namely (see \cite{bouchitte1997})
	\begin{equation} \label{magic}
	\beta(m) :=\inf(\mathrm{MOP})  = \frac { \bigl(W_1(f,\Sigma_0)\bigr)^2}{2 m} ,
	\end{equation}
	where
	\begin{equation}\label{def:W1-Sigma}
		W_1(f, \Sigma_0):= \min \Big\{ W_1(f, \nu)\,: \, \nu \in \Mes_+(\Sigma_0)\Big\}= \int \mathrm{dist} (x,\Sigma_0) \, f(dx). 
	\end{equation}
	Then a geometric characterization of optimal $\mu$ can be deduced from the geodesics transport rays connecting points in the support of $f$ to $\Sigma_0$.
	Let us mention that the case where $\Sigma_0=\partial \Omega$ is classical in sandpile models (see for instance \cite{cardaliaguet}).   
	
	Next, an anisotropic   generalization of the $\mathrm{(MOP)}$ problem can be considered in which, instead of looking at optimal mass distributions, we search for optimal  conductivity tensor field $\sigma\in  \Mes(\Ob;\Sddp)$ where
	$\Sddp$ is the set of positive semi-definite symmetric tensors:
	\begin{equation}\tag (FMD)
	\label{FMD}
	\inf \biggl\{ \Comp(\sigma) \,:\, \sigma \in \Mes(\Ob;\Sddp),  \  \frac1{d} \int \tr\, \sigma \leq m \biggr\}
	\end{equation} 
	where, for a prescribed Dirichlet region $\Sigma_0$, the compliance reads
	\begin{equation}
	\label{eq:comp_FMD}
	\Comp(\sigma) = \sup \left\{ \int u \, df - \frac{1}{2} \int \pairing{\sigma, \nabla u \otimes \nabla u} \ : \  u\in \D(\R^d) ,\ u=0 \ \text{on $\Sigma_0$}  \right\}.
	\end{equation}
	With the notations above, we see that for $\sigma= \Ident \, \mu$ it holds  $ \Comp(\sigma) =\mathcal{E}  (\mu)$ and 
	the choice of the constraint to be the integral of the trace of $\sigma$, although debatable, intends to be the natural counterpart of the mass constraint in (MOP). Note that in the vector case of linear elasticity, the design variable $\sigma$ is rather a fourth order tensor (inducing Hooke's law) and  the related version of (FMD) is more involved (see the recent work \cite{bolbotowski2020}).
	Nevertheless, as will be seen later, the (FMD) variant is very close fo the initial (MOP) problem and optimal solutions $\sigma$ can be recovered in the same way by selecting geodesics with respect to the Euclidean metric.
	
	\medskip
	We may now readily pass to the {\it optimal pre-stressed membrane model}. As in the (FMD) problem, the unknown design variable is an element of $\Mes(\Ob;\Sddp)$ whereas now the tensor measure $\sigma$ represents the in-plane stress in the membrane occupying a plane horizontal domain $\Omega$. The positivity condition imposed on $\sigma$ rules out compressive stress, which is reasonable when the membrane is very thin and thus perfectly immune to buckling. We assume that the membrane is subject to vertical pressure $f\in \Mes(\Ob;\R)$ and to an in-plane load exerted on the boundary only. The latter forces depend on the designer and play the role of a \textit{pre-load} that generates the \textit{pre-stress} $\sigma$ in the whole domain which in turn provides stiffness against $f$. Since in the interior of the design region only the out-of-plane component of load is non-zero the in-plane equilibrium requires that $\DIV\,\sig = 0$ in the distributional sense in $\Omega$. Virtually, it is the {\em divergence free condition} on $\sigma$ that converts the (FMD) problem for heat conductor to the optimal design model for a pre-stressed membrane:
	\begin{equation} \tag(OM)
	\label{OM}
	\inf \biggl\{ \Comp(\sigma) \,:\, \sigma \in \Mes(\Ob;\Sddp), \, \DIV \sigma= 0 \ \text{in $\Omega$}, \ \frac1{d}\int \tr\, \sigma \leq m \biggr\},
	\end{equation} 
	where the load $f$ enters through $\Comp(\sigma)$ (defined in \eqref{eq:comp_FMD}) together with $\Sigma_0$ being the part of the boundary where the membrane is pinned in the vertical direction; the function $u$ represents deflection (the out-of-plane displacement) of the membrane. 

%
%
	
\medskip
	A first observation is that the  problems (FMD)  and (OM) do not share the same infimum in general. This is due to the divergence constraint which rules out many possible competitors. In particular:
	\begin{itemize}
		\item  An isotropic tensor field of the kind $\sigma= a(x)\, \Ident\
		 \mathcal{L}^2\mres\Omega$ is not admissible unless $a$ is constant.
		\item Let  $\sigma = p(x)\, \tau(x)\otimes \tau(x)\, \Ha^1 \mres C$ with $C$ being a simple curve, $p(x)$ a positive weight  and $\tau(x)$  a unit vector vector. Then $\sigma$  is admissible iff $C$ is a straight line connecting two points of $\partial\Omega$ while  $\tau$ is a constant vector parallel to $C$ and  $p$ is constant.
	\end{itemize}
	In fact as will be seen later, pre-stress tensor fields supported  by networks of bars or, as they should be called within the membrane model, \textit{strings} are favoured in the (OM) problem. To give a flavour of the geometry of solutions, we illustrate in Figure \ref{fig:intro} below the optimal configurations for (FMD) and (OM)  in the case where $f$ is a single Dirac pressure exerted at a point of a square membrane which is pinned along all its boundary, i.e. $\Sigma_0=\partial \Omega$. For (OM) problem the support of optimal measure $\sigma$ is described by a finite union of strings of different thickness that are tied at the loaded point. Such lower dimensional solutions shall be referred to as \textit{trusses} or \textit{truss structures}. In Figure \ref{fig:intro} the arrows indicate the direction of the gradient flow of the deflection function $u$.

	
%
%
%
	\begin{figure}[h]
		\centering
		\subfloat[]{\includegraphics*[trim={0cm 0cm -0cm -0cm},clip,width=0.2\textwidth]{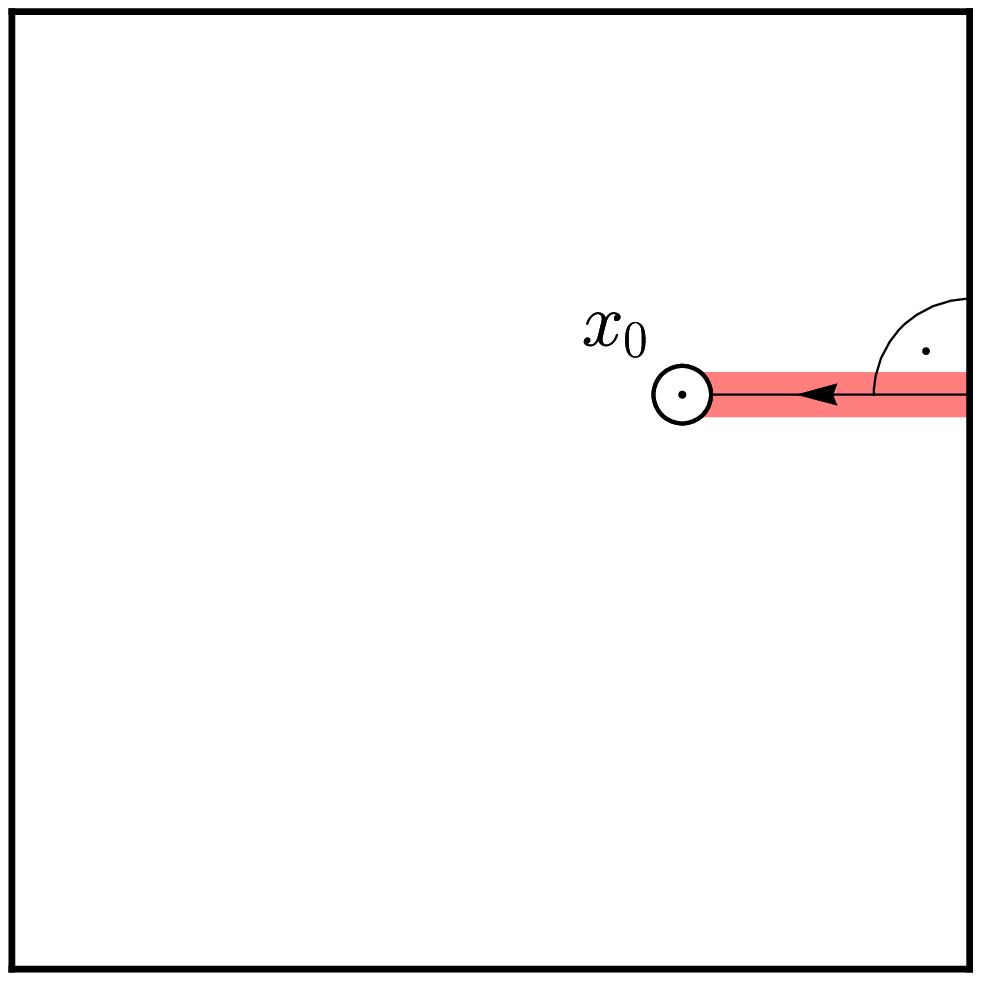}}\hspace{3cm}
		\subfloat[]{\includegraphics*[trim={0cm 0cm -0cm -0cm},clip,width=0.2\textwidth]{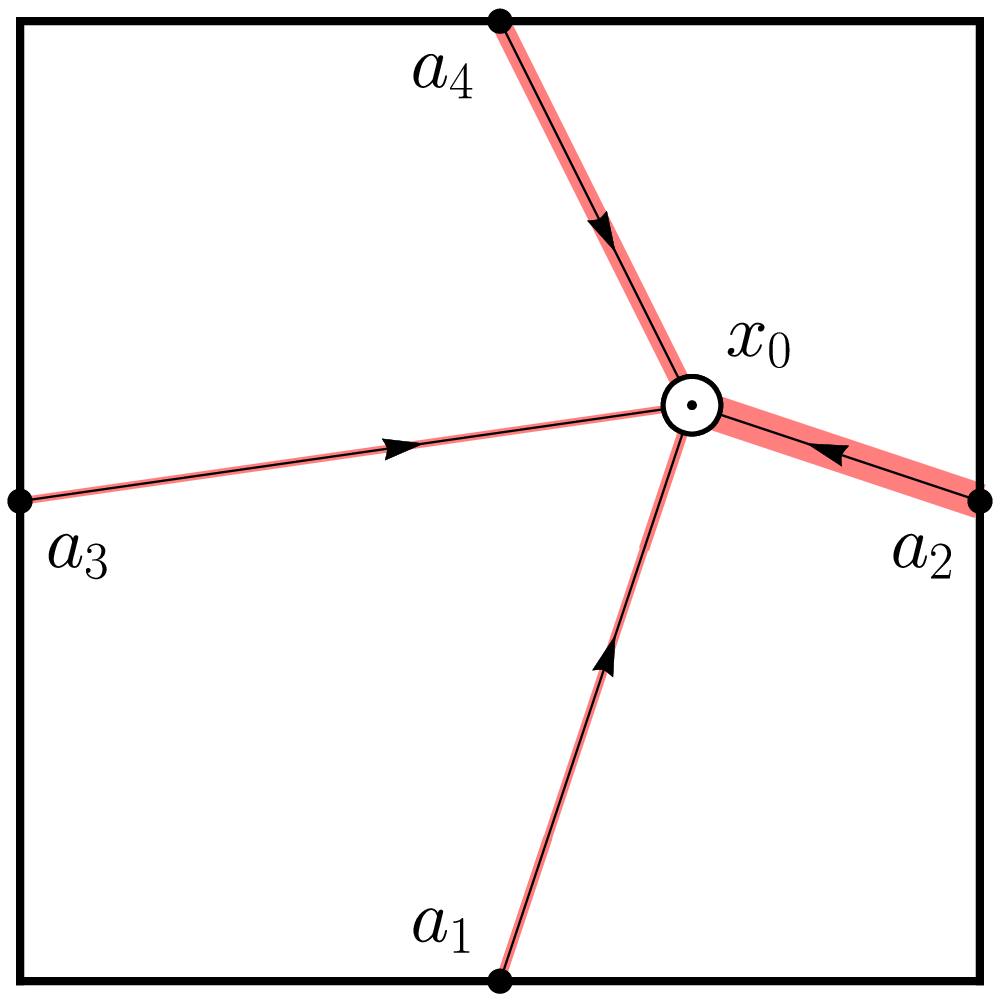}}
		\caption{An optimal solution $\sigma$ in the case of a point-source $f= \delta_{x_0}$ and of a square domain $\Omega$: (a) for the (FMD) problem; (b) for the (OM) problem. $\Omega$. The points $a_i$ are centres of the square's sides.}
		\label{fig:intro}       
	\end{figure}

	
	An unexpected discovery we wish to promote in this paper is that the (OM)  problem has a very deep relation with another  interesting issue in geometry and  optimal transport theory: the search of optimal metrics in a suitable class which maximize the associated Monge distance between two measures. 
	More precisely, if we consider the membrane problem for $\Sigma_0=\partial\Omega$ and $f\in\Mes_+(\O)$,
	then it holds that 
	$ \min (\mathrm{OM}) = \frac{Z_0^2}{2 m_0}$
	where $Z_0= Z_0(f, \Omega)$ is given by
	\begin{equation}\label{optmetric}
	Z_0= \sup \Big\{ W_{c_v} (f,\partial\Omega) \ : \ v\in C^{\infty}(\R^2;\R^2), \  e(v)\ge 0,\ v=\ident 
	\ \text{in $\R^2\setminus\Ob$} \Big\},
	\end{equation}
	where $e(v)$ is the symmetrized gradient of $v$, $c_v$ is the geodesic distance associated with the metric tensor $e(v)$ and  $W_{c_v}$
	stands for the Monge distance related to the cost $c_v$.    
	Our results include the existence of an optimal {\em maximal monotone map} $v$ for a relaxed version of the right hand member of \eqref{optmetric}. 
	Note that the problem  of maximizing a geodesic distance among particular classes of {\em scalar} metrics has been considered in a different context  by several authors \cite{buttazzo2004}, \cite{Conti2011}.
	
	\bigskip
	
	The paper is organized as follows: 
	
	\medskip
	
	In Section \ref{sec:MKfree}, we revisit the link between the (FMD) problem and Monge-Kantorovich theory in the spirit of  \cite{bouchitte2001}. In particular we establish an equivalence between (MOP) and (FMD) and  we give a full description of the optimal measure $\sigma$ in terms of the transport rays connecting the support of $f$  to the boundary of $\partial\Omega$.
	In addition we show that the  strict inequality $\inf \mathrm{(FMD)} < \inf \mathrm{(OM)}$ holds unless the load $f$ is supported 
	on a geometrically identifiable compact subset of $\Omega$. 
	
	In Section \ref{sec:optimembrane}, we show the existence of an optimal  $\sigma\in \Mes(\Ob;\Sddp)$ for (OM) and 
	 we develop a primal-dual framework based on the introduction of an additional unknown  {\em horizontal} vector field  $w:\Omega \to  \R^2$ vanishing on $\partial\Omega$ whose symmetrized distributional gradient   will play the role of a Lagrange multiplier for the divergence free constraint. Accordingly we are led to a dual problem 
	\begin{equation} \label{constraint}
	Z_0:=\sup \left\{ \pairing{f,u}\ :\  e(w)+ \frac1{2}\, \nabla u \otimes \nabla u \le \Ident\right\}
	\end{equation}
	where   pairs $(u,w)$ are in duality  with measures $(\lambda, \sigma) \in \Mes(\Ob;\R^d) \times \Mes(\Ob;\Sdd)$  ($\lambda$ coresponds to the transverse internal force in the membrane caused by its deflection). Then, upon rewriting the constraint in \eqref{constraint}  in terms of an equivalent two points conditions, namely:
	\begin{equation} \label{two-points}
	\frac1{2}\,|u(y)-u(x)|^2 +  \pairing{w(y)- w(x),y-x} \le |x-y|^2 \qquad \forall (x,y) \in (\Ob)^2,
	\end{equation}
	we put forward an alternative duality scheme which fits perfectly to characterize truss-like optimal pairs $(\la,\sigma)$ in the sense that they are decomposable in the form $(\la_\pi,\sigma_\Pi)$ given in  \eqref{def:llpi}, \eqref{def:sigmaPi}. Here, as far as they exist, 
	 $\pi$ and $\Pi$ are measures on $\Ob\times \Ob$ which play the role of 
	Lagrange multipliers of the two-point constraint \eqref{two-points}.
	
	\medskip
	
	In Section \ref{opticond}, we give necessary and sufficient conditions of optimality for two pairs $(\la,\sigma)$ and $(u,w)$
	assuming that $(u,w)$ is Lipschitz regular.  These conditions are particularized in the case of a truss configuration $(\la,\sigma)= (\la_\pi,\sigma_\Pi)$.
	Then, examples of explicit optimal configurations are established in the radial case or for the load $f$ being a single Dirac mass
	(confirming in particular the optimality of the structure depicted in Figure \ref{fig:intro}(b)).

	\medskip
	In  Section \ref{sec:geodesic}, we exploit the two-point condition \eqref{two-points} to
	 establish a connection between the optimal membrane problem $\mathrm{(OM)}$ and 
	the search of a monotone map $v= \ident - w$ maximizing a Monge-Kantorovich distance as stated in \eqref{optmetric}.
	To that aim we begin with a preliminary subsection which could be considered of independent interest where  we define  the intrinsic pseudo-distance $c_v$ associated with a maximal monotone map $v: \R^d\to \R^d$ which agrees with the identity outside $\Ob$.
	Then we prove the existence of a maximal monotone  $v$ associated with the worst Monge-Kantorovich metric \eqref{optmetric}  and derive a saddle point characterization of 
	an optimal pair $(v,\gamma)$ where $\gamma$ is selected among the optimal transports plans solving
	$W_{c_v} (f, \partial\Omega)=\inf \bigl\{\int_{\Ob\times\Ob} c_v(x,y) \,\gamma(dxdy)\, :\, \gamma \in \Gamma(f,\nu),\  \nu \in \Mes_+(\partial\Omega)\bigr\}. $ 
	Next we give a general criterium of optimality for a truss solution and establish the existence of such a solution in case of a finitely supported load $f$ assuming an extension property for monotone maps that we conjecture to be true.

	\medskip
	In Section \ref{sec:numerics}, we present several numerical simulations for the (OM) problem taking for design subset $\Omega$ a squared domain in $\R^2$. It turns out that in most cases optimal stress measures $\sigma$ exhibit a truss structure. 	The numerical method is based on a duality scheme  which involves the two-point condition \eqref{two-points} restricted to a discrete subset of $\Ob \times \Ob$. It is worked out through a conic programming algorithm introduced recently in \cite{bolbotowski2021}. 
	
	Eventually we provide in the appendix several classical tools of convex analysis, a short survey about tangential calculus with respect to a measure
	and some useful approximation properties of convex functions of measures.	
	 	
	\medskip
	To conclude this introduction, let us point out that the existence issue for the coupling measures $(\pi,\Pi)$ allowing the truss representation of 
	a solution $(\la,\sigma)$ (see \eqref{def:llpi},\,\eqref{def:sigmaPi}) is not ensured in general for the infinite dimensional setting. However  we expect it to be true in the case of a finitely supported measure $f$ once the extension property  for monotone metrics conjectured in Section \ref{sec:geodesic} can be proved.

	\subsection*{Acknowledgements}  The first author is grateful to the Laboratoire IMATH, Universit\'{e} de Toulon for hosting his two-month visit there in the fall of 2019 where this research was initiated. He	would also like to thank the National Science Centre (Poland) for the financial support and acknowledge the Research Grant no 2019/33/B/ST8/00325 entitled "Merging the optimum design problems of structural topology and of the optimal choice of material characteristics. The theoretical foundations and numerical methods".
	
	%
	
	%
	%
\subsection*{Notations} Throughout the paper we will use the following notations: 

\begin{enumerate}[leftmargin=1.2\parindent]

\item [-] $\Omega$ denotes a  bounded  domain of $\Rd$ that in general we assume  to be convex; although our mechanical context requires $d = 2$, the mathematical arguments will often be valid for any natural \nolinebreak $d$; 

\item [-] $\Sigma_0$ will be a compact subset of $\bO$ on which a Dirichlet condition is prescribed;

\item [-] the Euclidean norm of $z\in\R^d$ is denoted by $|z|$; $S^{d-1}$ denores the unit sphere $\{|z|=1\}$;

\item[-] by $\Delta$ we denote the diagonal of $\R^d$, namely $\Delta = \bigl\{(x,x) : x \in \Rd \bigr\}$;

\item [-] by $\Sdd$ we shall see the space of $d\times d$ symmetric matrices, while $\Sddp$ will be its subset whose elements are positive semi-definite.
Given $A,B \in \Sdd$, we will write $A\le B$ if $B-A\in \Sddp$; $\tr A$ denotes the trace of $A$, $\rk A$ the rank of $A$;  $\Ident$ denotes the identity matrix while $\ident$ denotes the identity map on $\Rd$;

\item [-]  by $\ind_B$ we will denote the indicator function of the set $B$ taking value $0$ in $B$ and 
	$+\infty$ outside. Instead we denote by $\one_B$  the characteristic function of $B$ taking value $1$ in $B$ and $0$ outside;  

\item [-] if $A\subset \R^d$ is an open subset, $\D(A)$ denotes be  space of $C^\infty$ functions compactly supported in $A$; 
$\D(\Ob\setminus \Sigma_0)$ denotes the set of restrictions to $\Ob$ of elements in $\D(\R^d\setminus\Sigma_0)$;

\item [-]  $C^0(\Ob)$ denotes the Banach space of continuous functions on $\Ob$ while $C_{\Sigma_0}(\Ob)$ (resp. $C_0(\O)$) denotes the subset of $C^0(\Ob)$ consisting of functions  vanishing in $\Sigma_0$ (resp. in $\bO$);

\item [-] $\Lip(\O)$ (or $\Lip(\Ob)$) stands for the space of Lipschitz continuous functions on $\O$  (resp $\Ob$) while $\Lip_0(\O)$ (resp. $\Lip_{\Sigma_0}(\O)$) denotes 
the subspace of elements vanishing on  $\bO$ (resp. $\Sigma_0$);

\item [-] for $k>0$, $\Lip_k(\O)$ is the subset of $\Lip(\O)$ of functions $u$ such that $|u(x) - u(y)|\le k\; |x-y|$ for all $(x,y)$
(if $\O$ is convex, it coincides with $\{u\in W^{1,\infty}(\O): |\nabla u|\le k\ \text{a.e.}\}$);

\item [-] $\Mes_+(\Rd)$ denotes the space of Borel measures on $\R^d$ with values in $[0,+\infty]$. Unless  explicitely specified, we will additionaly assume that elements of $\Mes_+(\Rd)$ are finite on compact subsets; the topological support of $\mu \in \Mes_+(\Rd)$ is denoted  $\spt(\mu)$ 
while $\mu\res A$; represents its  trace on a Borel subset $A\subset \Rd$;   $\Mes_+(A)$ will the subset of elements  $\mu\in\Mes_+(\Rd)$ such that $\mu = \mu\res A$ (or such that $\spt(\mu) \subset A$ if $A$ is closed);

 \item [-] $\Mes(\Ob)$ (resp. $\Mes(\Ob;\R^d)$ or $\Mes(\Ob;\Sddp)$)
is the space of signed finite Radon measures on  $\Rd$ which are compactly supported in $\Ob$ (resp. Borel regular measures from $\Ob$ to $\R^d$ or $\Sddp$); given $\nu\in \Mes(\Ob;\R^d)$
and $\mu \in \Mes_+(\Ob)$, then $\nu\ll \mu$ means that $\nu =\zeta \mu$ for a suitable $\zeta\in L^1_\mu(\Ob;\R^d)$  whereas $\nu\perp \mu$ means that $\mu$ and $\nu$ are mutually singular;

 \item [-] for every Borel set $A$, $\PP(A):=\{ \mu \in \Mes_+(A)\ :\ \mu(A)=1\}$ denotes the set of probalities on $A$;
 
\item [-]  given an open subset $A$, $\D'(A)$ denotes the set of distributions on $A$ (the dual of $\D(A)$); $\D'(\Ob)$ stands for the subset of $\D'(\R^d)$
consisting of distributions  supported in $\Ob$;  to a distribution in $\D'(\R^d)$ we may associate its trace on any open subset $A$ defining
 a unique element of  $\D'(A)$.

\item [-] the distributional divergence of a matrix field  $\sigma \in \Mes(\Ob;\Sddp)$  is an element in $\D'(\Rd;\Sddp)$ that we will be denoted $\DIV(\sigma)$ while $\dive \la \in \D'(\Rd;\Rd)$ will stand for the standard distributional divergence acting on a vector measure  $\la\in \Mes(\Ob;\Rd)$; 
for $A$ being an open subset of $\Rd$, the equality $-\dive \la =f$ on $A$ means that the two distributions have the same trace on $A$;

\item [-]  the topological support of a  function  $f$ (resp.of a measure $\mu$) will be denoted $\spt(f)$ (resp. $\spt(\mu)$);

\item [-] the bracket $\pairing{\argu,\argu}$ shall be used to denote a canonical scalar product in the finite dimensional space of vectors or matrices, whilst in the case of infinite dimensional spaces we shall use the same bracket while sometimes specifying the functional spaces involved  in the lower index;
	
\item [-]	$C^{0,\frac1{2}}(\Ob)$ denotes the space of $\frac1{2}$-H\"older continuous functions on $\O$, while $C^1(\Ob)$
denotes its subclass consisting of continuously differentiable functions;

\item [-] given $\Sigma_0\subset \bO$, for every $x\in \Ob$ we denote by $d(\cdot,\Sigma_0)$ the euclidean distance to $\Sigma_0$ 
and by $p_{\Sigma_0}(x)$ the subset of $\Sigma_0$ defined by
		\begin{equation}\label{Gamma0}
		p_{\Sigma_0}(x) = \Big\{ z\in \Sigma_0 \ : \ d(x,\Sigma_0) = |x-z| \Big\};
		\end{equation}
		the graph of $p_{\Sigma_0}$ as a map from $\Ob$ to subsets of $\Sigma_0$ will be denoted $G_{\Sigma_0}$;	
\item [-] for every $(x,y)\in \Ob\times \Ob$ such that $x\not=y$, we denote by $\lambda^{x,y}$ and $\sigma^{x,y}$ the elements of $\Mes(\Ob;\R^d)$ and $\Mes(\Ob;\Sddp)$, respectively, defined by:
\begin{equation}\label{def:sigma^xy}
	\lambda^{x,y} =  \tau^{x,y}\, \Ha^1 \mres [x,y], \qquad  \sigma^{x,y} = \tau^{x,y}\otimes \tau^{x,y}\, \Ha^1 \mres [x,y], \qquad \tau^{x,y}= \frac{y-x}{|y-x|}
\end{equation}
(by convention, we set  $\lambda^{x,y}=0$ and  $\sigma^{x,y}=0$ if $x=y$). Note that $\lambda^{y,x}=-\lambda^{x,y}$ while $\sigma^{y,x}=\sigma^{x,y}$; 
		
\item [-] for any $\pi\in \Mes(\Ob\times \Ob)$, we denote by $\int \la^{x,y} \,\pi(dxdy)$ the measure $\la_\pi\in \Mes(\Ob;\R^d)$ such that
\begin{equation}\label{def:llpi}
\pairing{\la_\pi, \psi} \ :=\ \int \pairing{\la^{x,y},\psi}  \, \pi(dxdy)  \qquad  \forall\,\psi \in C^0(\Ob,\R^d);
\end{equation} 
	
\item [-] for any $\Pi\in \Mes_+(\Ob\times \Ob)$, we denote by $\int \sigma^{x,y} \,\Pi(dxdy)$ the measure $\sigma_\Pi\in \Mes(\Ob;\Sddp)$ such that 
\begin{equation} \label{def:sigmaPi}
\pairing{\sigma_\Pi, \Psi} \ :=\ \int \pairing{\sigma^{x,y},\Psi}  \, \Pi(dxdy) \qquad \forall\,\Psi \in C^0(\Ob,\Sddp).
\end{equation}

\end{enumerate}

	\bigskip
	
	\section{Monge-Kantorovich approach for the free material design problem}\label{sec:MKfree}
	
	Throughout the whole section we will assume  that the  load $f$ is a non-negative measure that we normalize to satisfy $\int f=1$. 
	As the Dirichlet condition $u=0$ is prescribed on $\Sigma_0$, it is not restrictive to assume that $f(\Sigma_0)=0$.
	To simplify the presentation, we also assume that the design  $\Omega$ is a convex domain. Note that this convexity assumption can be removed if we assume that   $\Sigma_0=\partial\Omega$.
	Keeping the notations from the introduction, we consider the optimal design of heat conductor (FMD) for a given mass $m$:
	\begin{equation}\label{alpham}
	\alpha(m) :=  \inf \biggl\{ \Comp(\sigma)  \ :\ \sigma \in \Mes(\Ob;\Sddp), \  \frac1{d} \int \tr\, \sigma \le m\biggr\},
	\end{equation}
	with the compliance $\Comp(\sigma)$ being defined in \eqref{eq:comp_FMD}.
	Here the mass constraint is intended as the overall integral of the arithmetic mean of the  eigenvalues of the conductivity tensor.
	This normalization, although it may look arbitrary, is convenient in order to compare with the (MOP) problem where the infimum  
	is restricted to the subclass of isotropic conductivity tensor fields $\sigma = \Ident \,\mu$. Therefore $\beta(m) $ defined in \eqref{magic}
	satisfies the inequality $\alpha(m) \le \beta(m) .$ A more precise relation will be derived in Proposition \ref{mop=fmd}. 	
	Let us first show how we can handle the mass parameter $m$ by introducing the reduced problem associated with (FMD), namely
	\begin{equation}
	\label{reducedFMD}
	Z\ :=\ \inf \biggl\{ \Comp(\sigma)  +  \int \tr\, \sigma  \ :\ \sigma \in \Mes(\Ob;\Sddp)\biggr\}.
	\end{equation} 
	By exploiting the $2$-homogeneity with respect to $u$ in the definition \eqref{eq:comp_FMD} of $\Comp(\sigma)$, we easily infer the scaling properties:
	\begin{equation}\label{scaling}
	\Comp(t \, \sigma)  = \frac1{t}\, \Comp(\sigma)\quad \forall t>0 \quad , \qquad \alpha(m)= \frac{\alpha(1)}{m}\quad \forall m>0.
	\end{equation}
	\begin{lemma}\label{2hom} The infimum problem \eqref{reducedFMD} admits at least one solution. All such solutions satisfy 
		the equi-repartition principle 	$\Comp(\sigma)=\int \tr \sigma= \frac{Z}{2}$.
		Moreover, for given $m>0$, $\widetilde{\sigma}$ is optimal for $\a(m)$ if and only if $\sigma =\frac{Z}{2 md}\,\widetilde{\sigma} $ is optimal for \eqref{reducedFMD}.
		Accordingly, the value function in \eqref{alpham} is given by\ $ \alpha(m) = \frac{Z^2}{4 md}$ and any minimizer $\sigma$ for \eqref{reducedFMD}
		is optimal for $\a(m_0)$ for $m_0:= \frac{Z}{2d}$ and vice-versa.
	\end{lemma}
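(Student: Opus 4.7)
The strategy is to exploit the $2$-homogeneity of $\Comp$ recorded in \eqref{scaling} so as to reduce both problems to a one-dimensional scalar minimization, and to invoke the direct method for existence. The key observation is that, for any admissible $\sigma$ with $\mu := \frac{1}{d}\int\tr\,\sigma > 0$, the function
$t \mapsto \Comp(t\sigma) + t\int\tr\,\sigma = \Comp(\sigma)/t + t\,d\mu$
on $(0,+\infty)$ is minimized at $t^\ast = \sqrt{\Comp(\sigma)/(d\mu)}$ with value $2\sqrt{d\mu\,\Comp(\sigma)}$. Partitioning admissible $\sigma$'s according to $\mu$ and noting that the mass constraint in \eqref{alpham} must be saturated at the infimum (otherwise one could rescale $\sigma$ by some $t > 1$ and strictly decrease the compliance), I would then derive
\begin{equation*}
Z \;=\; \inf_{\mu > 0}\bigl(\alpha(\mu) + d\mu\bigr) \;=\; \inf_{\mu > 0}\Bigl(\frac{\alpha(1)}{\mu} + d\mu\Bigr) \;=\; 2\sqrt{\alpha(1)\,d},
\end{equation*}
with the infimum attained at $\mu = \sqrt{\alpha(1)/d}$. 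Solving these identities yields the announced formulas $\alpha(m) = Z^2/(4md)$ and $m_0 = Z/(2d)$, while applying the same one-parameter minimization directly to any minimizer $\sigma$ of \eqref{reducedFMD} forces $t = 1$ to be optimal, producing the equi-repartition identity $\Comp(\sigma) = \int\tr\,\sigma = Z/2$.

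Existence would then follow from the direct method. A minimizing sequence $(\sigma_n) \subset \Mes(\Ob;\Sddp)$ satisfies $\int\tr\,\sigma_n \le Z + o(1)$; since each $\sigma_n$ takes values in $\Sddp$, its total variation is controlled by its trace, so that up to extraction $\sigma_n \weak \sigma$ weakly-$\ast$ in $\Mes(\Ob;\Sddp)$. The trace functional is weak-$\ast$ continuous, whereas $\Comp$, being by \eqref{eq:comp_FMD} the supremum of weak-$\ast$ continuous affine maps in $\sigma$, is weak-$\ast$ lower semicontinuous; passing to the limit gives $\Comp(\sigma) + \int\tr\,\sigma \le Z$, so $\sigma$ is optimal.

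Finally, the bijective correspondence is a direct consequence of the scalar reduction. Given $\widetilde\sigma$ optimal for $\alpha(m)$ (constraint saturated), the scalar minimization singles out $s = Z/(2md)$ as the unique minimizer of $s \mapsto \alpha(m)/s + s\,md$, with value $2\sqrt{\alpha(m)\,md} = Z$, so that $\sigma := \frac{Z}{2md}\,\widetilde\sigma$ solves \eqref{reducedFMD}. Conversely, if $\sigma$ solves \eqref{reducedFMD}, equi-repartition gives $\frac{1}{d}\int\tr\,\sigma = m_0$ and $\Comp(\sigma) = Z/2 = \alpha(m_0)$, so that $\sigma$ is optimal for $\alpha(m_0)$; rescaling by $\frac{2md}{Z}$ yields a minimizer for $\alpha(m)$ for arbitrary $m > 0$. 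I expect the only delicate point to be the weak-$\ast$ lower semicontinuity of $\Comp$ in the existence step; this is routine once one notes that the supremum in \eqref{eq:comp_FMD} may be taken over $u \in \D(\R^d\setminus\Sigma_0)$, against which $\sigma \mapsto \int \pairing{\sigma,\nabla u \otimes \nabla u}$ is a weak-$\ast$ continuous linear functional.
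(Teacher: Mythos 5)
Your proposal is correct and follows essentially the same route as the paper: the scaling identity \eqref{scaling}, the one-parameter minimization $t\mapsto \Comp(\sigma)/t + t\int\tr\sigma$ to obtain equi-repartition, and the direct method with weak-$\ast$ lower semicontinuity of $\Comp$ and coercivity from the trace bound on $\Sddp$-valued measures. The only cosmetic difference is that you make the reduction to a scalar minimization over $\mu$ explicit (deriving $Z=\inf_{\mu>0}(\alpha(\mu)+d\mu)=2\sqrt{\alpha(1)d}$ before concluding), whereas the paper applies the same $t$-minimization directly to a minimizer and infers $\frac{Z}{2}=\alpha(\frac{Z}{2d})$ with slightly less detail; your version spells out a step the paper leaves implicit, but the underlying argument is the same.
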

	
	\begin{proof} \ The existence of an optimal $\sigma$ for \eqref{reducedFMD} is a consequence of the direct method of Calculus of Variations
		that we apply on the space $\Mes(\Ob;\Sddp)$ equipped with the weak* topology.  Indeed the functional $\sigma\in \Mes(\Ob;\Sddp) \mapsto \Comp(\sigma)\in [0,+\infty]$ is convex lower semicontinuous as a supremum over $u\in \D(\Rd)$  of the affine weakly* continuous functions : $L_u(\sigma)= \int u \, df - \frac{1}{2} \int \pairing{\sigma, \nabla u \otimes \nabla u}$. 
		On the other hand, the functional $\sigma\in \Mes(\Ob;\Sddp) \mapsto  \int \tr \,\sigma$ is convex l.s.c. with weakly* compact sublevel sets 
		(notice that the trace coincides with the restriction  to $\Sddp$ of a norm on symmetric matrices). 
		Let $\sigma$ be any solution for \eqref{reducedFMD}. Then the function
		$t\in \R_+ \mapsto  \Comp(t\sigma) + \int \tr(t\sigma) = \frac1{t} \Comp(\sigma) + t \int \tr\, \sigma\ $
		is minimal at $t=1$, thus $ \Comp(\sigma)= \int \tr\, \sigma= \frac{Z}{2}$ as claimed and $\frac{Z}{2}=\alpha(\frac{Z}{2d})$. 
		In addition, by using \eqref{scaling}, we get $\alpha(m) =\, \frac{Z}{2md}\, \alpha\left(\frac{Z}{2d}\right)\, =\ \frac{Z^2}{4md}.$
		Eventually we notice that $\widetilde{\sigma}:= \frac{2md}{Z} \, \sigma$ is admissible for $\alpha(m)$ while,
		from \eqref{scaling}, we infer that $\Comp(\widetilde{\sigma})= \frac{Z}{2md}\, \Comp(\sigma)= \frac{Z^2}{4md}.$ 
		Thus  $\widetilde{\sigma}$ is optimal for $\alpha(m)$. 
		The converse implication can be derived in a similar way. The last statement is obvious since, for 
		any solution $\sigma$  of \eqref{reducedFMD}, it holds that  $\Comp(\sigma) = \frac{Z}{2}= \a(m_0)$  while $\int \tr\, \sigma= \frac{Z}{2}=\a(m_0)$.

	\end{proof}
	
	%
	\begin{proposition} \label{mop=fmd} For $Z$ defined by \eqref{reducedFMD} let
		\begin{equation} \label{def:If}
		I(f,\Sigma_0) := \sup \Big\{ \pairing{f,u} \ :\ u \in \mathrm{Lip}_1(\Omega) \ , u=0 \ \text{on $\Sigma_0$}\Big\}.
		\end{equation}
		The following statements hold true:
		\begin{itemize}[leftmargin=1.5\parindent]
			\item [(i)] Let $W_1(\cdot,\cdot)$ denote the Monge distance defined  in \eqref{MK}, then
			\begin{equation} \label{dualMK} I(f,\Sigma_0) = W_1(f,\Sigma_0):= \min \Big\{ W_1(f,g)  : g \in \PP(\Sigma_0)\Big\} = \int d(x,\Sigma_0) \, f(dx);
			\end{equation}
		\item [(ii)]  The following equality holds: \ $ Z = \sqrt{2} \ I(f,\Sigma_0)$;
		
		\item [(iii)] Let $\sigma$ be a solution to \eqref{reducedFMD}. Then  $\mu= \tr\, \sigma$ solves the mass optimization problem $\mathrm{(MOP)}$ 
		with $\int \mu= \frac{Z}{2}$. As a consequence $\mu(\Sigma_0)=0$ and the  value functions for $\mathrm{(MOP)}$ and $\mathrm{(FMD)}$ are linked by the 
		relation $\beta(m) = d\, \alpha(m).$ 
		 Furthermore $\sigma$ is the  rank-one tensor measure given by 
		\begin{equation}\label{rank1} 
		\sigma\ =\ (\nabla_\mu \bar u \otimes \nabla_\mu \bar u) \ \mu \qquad  \text{where} \qquad   \bar u:= d(x,\Sigma_0).
		\end{equation}
		($\nabla_\mu \bar u$ denotes the $\mu$-tangential gradient  of the Lipschitz function $\bar u$ as defined in Proposition \ref{Tmu}).
		\end{itemize}
	\end{proposition}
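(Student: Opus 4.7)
I would treat the three parts in order, since (ii) rests on (i) and (iii) on both. \textbf{Part (i).} The function $\bar u(x) := d(x,\Sigma_0)$ lies in $\Lip_1(\Omega)$ and vanishes on $\Sigma_0$, hence is a competitor in \eqref{def:If}. For any other admissible $u$ and any $z \in \Sigma_0$ one has $u(x) = u(x) - u(z) \le |x-z|$, whence $u \le \bar u$ pointwise; the nonnegativity of $f$ then gives $I(f,\Sigma_0) = \int \bar u \, df$. The identity with $W_1(f,\Sigma_0)$ follows by applying Kantorovich--Rubinstein duality to each $W_1(f,g)$ for $g \in \PP(\Sigma_0)$ and interchanging the resulting $\min_g \sup_u$ into $\sup_u \min_g$ (routine, since $\PP(\Sigma_0)$ is weak$^*$ compact and the pairing is bilinear); the inner infimum forces $u\le 0$ on $\Sigma_0$, and the outer supremum reproduces $\int \bar u \, df$.

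\textbf{Part (ii).} Rewrite the reduced problem as
\[
Z \ = \ \inf_{\sigma \ge 0}\ \sup_u \Big\{ \int u\, df + \int \bigl\langle \sigma,\; \Ident - \tfrac{1}{2}\nabla u \otimes \nabla u \bigr\rangle \Big\},
\]
with $u \in \D(\Rd)$ vanishing on $\Sigma_0$. For the lower bound $Z \ge \sqrt 2\, I(f,\Sigma_0)$ I test against $\sqrt 2\, u$ for $u \in \Lip_1(\Omega)$ admissible in (i), mollified to lie in $\D(\Rd)$: since $\Ident - \nabla u \otimes \nabla u$ is PSD, the remainder $\int \langle \sigma, \Ident - \nabla u \otimes \nabla u\rangle$ is nonnegative for every $\sigma \in \Mes(\Ob;\Sddp)$, giving $\Comp(\sigma) + \int \tr\,\sigma \ge \sqrt 2 \int u\,df$, and the supremum over such $u$ produces the bound. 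For the reverse inequality I would swap inf and sup: at fixed $u$, $\inf_{\sigma \ge 0} \int \bigl\langle \sigma, \Ident - \tfrac{1}{2}\nabla u \otimes \nabla u\bigr\rangle$ equals $0$ when $|\nabla u| \le \sqrt 2$ (PSD matrix) and $-\infty$ otherwise, so after rescaling $u \mapsto u/\sqrt 2$ the outer supremum reduces to $\sqrt 2\, I(f,\Sigma_0)$, modulo the density of smooth functions in the Lipschitz class.

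\textbf{Part (iii).} Combining (ii) with Lemma \ref{2hom} yields $\alpha(m) = Z^2/(4md) = I(f,\Sigma_0)^2/(2md)$, which, together with the formula $\beta(m) = W_1(f,\Sigma_0)^2/(2m)$ recalled in the introduction, gives $\beta(m) = d\,\alpha(m)$. To characterize a minimizer $\sigma$ of \eqref{reducedFMD} I extract complementary slackness from the saddle pair $(\sigma, u^\ast)$ with $u^\ast := \sqrt 2\, \bar u$: one needs $\langle \sigma, \Ident - \tfrac{1}{2}\nabla u^\ast \otimes \nabla u^\ast \rangle = 0$ $\mu$-a.e.\ (with $\mu := \tr\,\sigma$), the gradient being interpreted tangentially via Proposition \ref{Tmu}. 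Since both matrices are PSD and $|\nabla_\mu \bar u| \le 1$, this forces simultaneously $|\nabla_\mu \bar u| = 1$ $\mu$-a.e.\ and $\sigma$ rank one along $\nabla_\mu \bar u$, yielding exactly \eqref{rank1}. The rank-one property gives $\sigma \le \mu\,\Ident$ and hence $\Comp(\sigma) \ge \Comp(\mu\,\Ident) = \mathcal{E}(\mu)$; combined with $\Comp(\sigma) = Z/2 = \beta(Z/2)$, this proves that $\mu$ solves $\mathrm{(MOP)}$ at mass $Z/2$. Finally, $\mu(\Sigma_0)=0$ comes from the same slackness, as $|\nabla_\mu \bar u| = 1$ is incompatible with $\mu$ charging $\Sigma_0$ where $\bar u \equiv 0$ forces the tangential gradient to vanish.

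\textbf{Main obstacle.} The sharpest step is the min-max interchange in (ii): the integrand is convex l.s.c.\ in $\sigma$ and affine in $u$, but the smooth competitors $u$ are not weakly closed and $\sigma$ lies in a non-reflexive space. I would either invoke Sion's theorem on the weak$^*$ compact sublevel sets $\{\int \tr\,\sigma \le M\}$ after enlarging the $u$-class to Lipschitz test functions vanishing on $\Sigma_0$, or bypass the swap entirely by an explicit construction: pick a measurable selection $T \colon \spt(f) \to \Sigma_0$ of $p_{\Sigma_0}$ and set $\sigma := c \int \sigma^{x,T(x)}\,f(dx)$ with $c$ tuned so that $\int \tr\,\sigma$ matches $I(f,\Sigma_0)/\sqrt 2$; an integration-along-rays computation then shows $\Comp(\sigma) = \int \tr\,\sigma$ and delivers both the upper bound for $Z$ and the truss representation that reappears in (iii). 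The secondary difficulty is the tangential calculus needed to give meaning to $\nabla_\mu \bar u$, which is precisely the content of Proposition \ref{Tmu} in the appendix.
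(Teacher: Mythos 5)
Your proof is correct and follows essentially the same route as the paper: part (ii) hinges on a min-max interchange over the weak* compact constraint set $\{\sigma:\int\tr\,\sigma\le 1\}$ followed by optimization over rank-one Dirac tensors (the paper invokes Ky Fan's theorem precisely as your ``Sion on sublevel sets'' remedy suggests, then passes to $Z$ via Lemma \ref{2hom}), and part (iii) derives the rank-one structure from the equality case of $\langle S,\Ident-\nabla_\mu\bar u\otimes\nabla_\mu\bar u\rangle\ge 0$. One place where you should fill in a detail is the ``saddle pair'' step in (iii): to test the supremum defining $\Comp(\sigma)$ against the merely Lipschitz function $u^*=\sqrt 2\,\bar u$, you need the relaxed formula $\Comp(\sigma)=\sup\{\pairing{f,v}-\tfrac12\int\pairing{S,\nabla_\mu v\otimes\nabla_\mu v}d\mu:v\in\Lip(\O),\,v=0\text{ on }\Sigma_0\}$, which requires the density/approximation machinery (Lemma \ref{density} and Proposition \ref{Tmu}(ii)) — the paper states this explicitly as \eqref{relax-mu}–\eqref{relax-sigma}. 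Two small slips: the $\mu$-a.e.\ complementary slackness should be written in terms of $S=d\sigma/d\mu$ rather than $\sigma$ itself, and $\sigma\le\mu\,\Ident$ follows already from $S\ge 0$, $\tr S\le 1$, not from the rank-one conclusion. Your direct argument for $\mu(\Sigma_0)=0$ via the vanishing of $\nabla_\mu\bar u$ on $\Sigma_0$ is a nice shortcut (cf.\ Corollary \ref{dualmu}(i)) where the paper instead cites the earlier literature.
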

	
	\begin{proof}\ For the assertion (i), we refer to \cite{bouchitte2001}. The convexity assumption on $\Omega$ ensures that the geodesic distance in $\Ob$ coincides with the Euclidean one. Let us establish  (ii). In order to use a compactness argument, we go back to the constrained problem \eqref{alpham} noticing that, by Lemma \ref{2hom}, we recover the desired equality by showing that $\alpha(d^{-1}) = \frac{1}{2}\, \bigl(I(f,\Sigma_0)\bigr)^2.$
		The latter equality is  a consequence of the following chain of equalities: 
		\begin{align*} \alpha(d^{-1}) = \inf_{\substack{\sigma \in \Mes(\Ob;\Sddp) \\ \int \tr \sigma \le 1}} \ \sup_{u\in \D(\Ob\setminus \Sigma_0)}  \left\{ \pairing{f,u}    -   \frac{1}{2}\! \int \pairing{\sigma, \nabla u \otimes \nabla u} \right\}
		&=\sup_{u\in \D(\Ob\setminus \Sigma_0)}\ \inf_{\substack{\sigma \in \Mes(\Ob;\Sddp) \\ \int \tr \sigma \le 1}}  \left\{ \pairing{f,u} -   \frac{1}{2} \int\! \pairing{\sigma, \nabla u \otimes \nabla u} \right\}\\
		&= \sup_{u\in \D(\Ob\setminus \Sigma_0)} \left\{ \pairing{f,u}- \frac1{2}\|\nabla u\|_\infty^2  \right\} \ =\ \frac{1}{2}\, \bigl(I(f,\Sigma_0)\bigr)^2
		\end{align*}
		where in the first  line, we switch infimum and supremum by applying  Ky Fan's Theorem (see Theorem \ref{ky-Fan} in Appendix) to  the convex-concave Lagrangian 
			$\mathcal{L}(\sigma,u) = \pairing{f,u} -  \frac{1}{2}\int \pairing{\sigma,\nabla u \otimes \nabla u} $  taken on $X\times \D(\Ob\setminus \Sigma_0)$ where $X:=\bigl\{\sigma \in \Mes(\Ob;\Sddp) : \int \tr \sigma \le 1 \bigr\}$ is convex and compact for the weak* topology and 
		where to pass from the first to   the second line, we optimize with respect to $\sigma\in X$ by taking tensors of the form $\sigma= \tau\otimes \tau \, \delta_x$ where $\delta_x$ is the Dirac mass at $x$ and $\tau$ is a unit vector in $\R^d$. The last equality can be readily obtained by
writing competitors $u$ in the form $u= t v$ where $v\in  \mathrm{Lip}_1(\Omega)$ and $t\in \R$ and by maximizing in $t$ first and then 
with respect to $v$.

		Let us now prove the assertion (iii). Let $m_0= \frac{Z}{2d}$ as given in Lemma \ref{2hom} and let $\sigma$ solve \eqref{reducedFMD}.
		 Then $\mu:=\tr\, \sigma$
		satisfies $\int \mu = \frac{Z}{2}$  and, in view of the assertions (i) and (ii) and taking \eqref{magic} into account, we infer that:
		$$   \Comp(\Ident\,\mu) \ \ge \ \frac { \bigl(I(f,\Sigma_0)\bigr)^2}{2 \int \mu} \ =\  \frac{ Z}{2}  .$$
		Since  $0\le \sigma \le \Ident\,\mu$, it follows from  definition \eqref{eq:comp_FMD} that  $\Comp(\sigma) \ge \Comp(\Ident\,\mu)$, hence
		$  Z = \int \tr \sigma + \Comp(\sigma) \ge \frac{Z}{2}  +  \Comp(\Ident\,\mu)    \ge Z .$
		As a consequence, we are led to the equalities:\  
		\begin{equation}\label{comp=}
		\Comp(\sigma)  = \Comp ( \Ident\,\mu) =\frac{Z}{2}. 
		\end{equation}
		It follows that $\mu$ is optimal for  $(\mathrm{MOP})$ subject to the mass constraint $\int \mu= \frac{Z}{2} $. 
		In particular  $\mu(\Sigma_0)= 0$ as a consequence of \cite[Prop 3.7]{bouchitte2001} and 
	 we have  $\Comp ( \Ident\,\mu)= \beta(d \, m_0)= \frac1{d} \beta(m_0) $. 
On the other hand, it holds that  $\Comp(\sigma)= \alpha(m_0)$ since  $\sigma$ is optimal for $(\mathrm{FMD})$ with the upper bound on the mass being $m_0$. 
Therefore we  obatin the
		 equality $\beta(m_0) = d\, \alpha(m_0)$ that we  extend to all $m>0$ by the scaling property.

 To conclude the proof of Proposition \ref{mop=fmd} it remains to show that any optimal $\sigma$ 
		 is uniquely determined in terms of its trace $\mu:= \tr\, \sigma $ by the relation \eqref{rank1}.
		To that aim, we exploit \eqref{comp=} and the fact that the function  $\bar u(x):= d(x, \Sigma_0)$ is optimal in \eqref{def:If}.
	It turns out that optimality of $\mu$ for $(\mathrm{MOP})$ implies that  $|\nabla_\mu \bar u|=1\ $ $\mu$-a.e (see  \cite{bouchitte2001}). Next,
		we rewrite the supremum problem involved in the definition of  the compliance $ \Comp(\Ident\,\mu)$ (resp. $ \Comp(\sigma)$) by extending to Lipschitz competitors as follows:  
		\begin{align} \label{relax-mu} \Comp(\Ident\,\mu) &= \sup \left\{\pairing{f,v} - \frac{1}{2} \int \abs{\nabla_\mu v}^2 d\mu \ : \ v\in \mathrm{Lip}(\Omega), \ v=0 \ \text{on $\Sigma_0$} \right\} \\
	\label{relax-sigma} \Comp(\sigma) &= \sup \left\{\pairing{f,v} - \frac{1}{2} \int \pairing{ S, \nabla_\mu v \otimes \nabla_\mu v}\, d\mu \ : \ v\in \mathrm{Lip}(\Omega),\ v=0 \ \text{on $\Sigma_0$} \right\}	
\end{align}
where $S \in L^\infty_{\mu}(\Ob;\Sddp)$ satisfies $\sigma = S \mu$ and $\tr S=1$. To justify the equalities \eqref{relax-mu}, \eqref{relax-sigma}, it is enough to approximate any element $v\in \mathrm{Lip}(\Omega)$ vanishing on $\Sigma_0$ by an equi-Lipschitz sequence $v_n$ 
in $\D(\R^d\setminus \Sigma_0)$ and apply the assertion (ii) of Proposition \ref{Tmu}.
		Taking into account \eqref{comp=} and that $|\nabla_\mu \bar u|=1$, one checks easily that $\bar v:= \sqrt{2}\, \bar u$ is optimal in \eqref{relax-mu}. Indeed by \eqref{dualMK}, we have  
		\begin{equation*}\label{vopti}
		\Comp(\Ident\,\mu)\ \le\ \pairing{f,\bar v} - \frac{1}{2} \int \abs{\nabla_\mu \bar v}^2 d\mu\ =\ \sqrt{2}\,  I(f,\Sigma_0) - \int \mu\ =\
	 \frac{Z}{2} =
			\Comp(\Ident\,\mu).
		\end{equation*}
	 On the other hand by \eqref{relax-sigma}, we have
		$ \Comp(\sigma)  \ge 
	 \pairing{f,\bar v} - \frac1{2} \int \pairing{S,\nabla_\mu \bar v \otimes \nabla_\mu \bar v}\, d\mu.$
		Hence, considering \eqref{comp=} and assertion (ii),  we deduce that
		$$ \int \pairing{S,\nabla_\mu \bar v \otimes \nabla_\mu \bar v}\, d\mu \ \ge\ \int |\nabla_\mu \bar v|^2 \, d\mu.$$
		Obviously the same inequality holds after substituting $\bar v$ with $\bar u$. Then, since $\tr\, S=1$ \ $\mu$-a.e., we may localize to obtain:
		$$ \pairing{S,\nabla_\mu \bar u \otimes \nabla_\mu \bar u} = |\nabla_\mu \bar u|^2\quad \mu\text{-a.e.}$$
		and then  conclude that $S=\nabla_\mu \bar u \otimes \nabla _\mu \bar u\ $ as claimed in \eqref{rank1}. 
		\end{proof}
	
	The rest of the section is devoted to the representation of optimal $\sigma$ through transport rays connecting the support of $f$ to $\Sigma_0$.
		\begin{theorem}\label{sliced-sigma} Let $\bar \gamma \in \Mes_+(G_{\Sigma_0})$ be a pairing measure with first marginal being equal to $f$.
		Then, the tensor measure $ \bar \sigma:= \frac1{\sqrt{2}} \int \sigma^{x,y} \, \bar\gamma(dxdy)$ \ is optimal for \eqref{reducedFMD}.
		Conversely, any optimal measure $\sigma$ for \eqref{reducedFMD} can be represented in this form for a suitable $\gamma \in \Mes_+(G_{\Sigma_0})$.
	\end{theorem}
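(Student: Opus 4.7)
The plan is to verify both assertions by exploiting the rank-one characterization \eqref{rank1} together with the identity $Z=\sqrt{2}\,I(f,\Sigma_0)$ provided by Proposition~\ref{mop=fmd}. For the forward implication I will compute $\int\tr\,\bar\sigma$ and bound $\Comp(\bar\sigma)$ directly from the definition of $\bar\sigma$; for the converse I will read off the transport rays of an optimal $\mu$ via the results of \cite{bouchitte2001}.

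\textbf{Forward direction.}
Assume $\bar\gamma\in\Mes_+(G_{\Sigma_0})$ has first marginal $f$. Since $\tr(\tau^{x,y}\otimes\tau^{x,y})=1$, the measure $\tr\,\sigma^{x,y}=\Ha^1\mres[x,y]$ has total mass $|x-y|=d(x,\Sigma_0)$ on $G_{\Sigma_0}$. Fubini together with Proposition~\ref{mop=fmd}(i)--(ii) gives
\[
\int\tr\,\bar\sigma=\tfrac{1}{\sqrt{2}}\int|x-y|\,\bar\gamma(dxdy)=\tfrac{1}{\sqrt{2}}\int d(x,\Sigma_0)\,f(dx)=\tfrac{1}{\sqrt{2}}\,I(f,\Sigma_0)=\tfrac{Z}{2}.
\]
Next I fix $u\in\D(\R^d)$ with $u=0$ on $\Sigma_0$ and apply Cauchy--Schwarz to the directional derivative $\tau^{x,y}\cdot\nabla u$ along each segment, yielding $\pairing{\sigma^{x,y},\nabla u\otimes\nabla u}\ge (u(y)-u(x))^2/|x-y|$. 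Using $u(y)=0$ and $|x-y|=d(x,\Sigma_0)$ on $G_{\Sigma_0}$, integration against $\bar\gamma/\sqrt{2}$ produces $\pairing{\bar\sigma,\nabla u\otimes\nabla u}\ge\tfrac{1}{\sqrt{2}}\int u(x)^2/d(x,\Sigma_0)\,f(dx)$. A pointwise maximization of $s\mapsto s-s^2/(2\sqrt{2}\,d(x,\Sigma_0))$, whose maximum equals $d(x,\Sigma_0)/\sqrt{2}$, then leads to $\Comp(\bar\sigma)\le \tfrac{1}{\sqrt{2}}I(f,\Sigma_0)=Z/2$. Summing with the trace computation gives $\Comp(\bar\sigma)+\int\tr\,\bar\sigma\le Z$, so $\bar\sigma$ is optimal.

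\textbf{Converse direction.}
Let $\sigma$ be optimal. Proposition~\ref{mop=fmd}(iii) gives $\sigma=(\nabla_\mu\bar u\otimes\nabla_\mu\bar u)\,\mu$ with $\mu=\tr\,\sigma$ solving $(\mathrm{MOP})$ at mass $Z/2$ and $|\nabla_\mu\bar u|=1$ $\mu$-a.e. I will then invoke the transport-ray decomposition of \cite{bouchitte2001}: there exists $\gamma\in\Mes_+(G_{\Sigma_0})$ with first marginal $f$ such that
\[
\mu=\tfrac{1}{\sqrt{2}}\int \Ha^1\mres[x,y]\,\gamma(dxdy),
\]
and on $\gamma$-a.e.\ ray $[x,y]$ the unit tangent $\tau^{x,y}$ coincides $\Ha^1\mres[x,y]$-a.e.\ with $-\nabla_\mu\bar u$ (reflecting that $\bar u$ decreases along rays toward $\Sigma_0$). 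Consequently $\sigma^{x,y}$ and $(\nabla_\mu\bar u\otimes\nabla_\mu\bar u)\,\Ha^1\mres[x,y]$ agree along each ray, and integrating this identity against $\gamma/\sqrt{2}$ recovers precisely $(\nabla_\mu\bar u\otimes\nabla_\mu\bar u)\,\mu=\sigma$, as claimed.

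\textbf{Main obstacle.}
The forward part is essentially a one-line Cauchy--Schwarz plus a pointwise optimisation. The entire weight of the converse rests on the existence of the transport-ray decomposition of the optimal $\mu$ and on the alignment of each ray with $\nabla_\mu\bar u$. These ingredients are taken verbatim from \cite{bouchitte2001}; re-deriving them would require Monge's classical geometric analysis of disjoint transport rays for the Euclidean cost with closed target $\Sigma_0$, together with a disintegration of $\mu$ along this ray system, which I intend to cite rather than reprove.
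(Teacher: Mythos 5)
Your proof is correct and takes a genuinely different route from the paper's in the forward direction, and a comparable but slightly less streamlined route in the converse.

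\textbf{Forward direction.} The paper first notes that $\mu := \tr\,\bar\sigma$ is a slicing of length measures along transport rays, then cites \cite[Thm 4.6]{bouchitte2001} to conclude that this $\mu$ is optimal for $(\mathrm{MOP})$ at mass $Z/2$, and finally uses $\Comp(\bar\sigma)\le\Comp(\Ident\,\mu)$. You instead bypass the external optimality theorem altogether: the segment Cauchy--Schwarz inequality $\pairing{\sigma^{x,y},\nabla u\otimes\nabla u}\ge (u(y)-u(x))^2/|x-y|$ combined with $u=0$ on $\Sigma_0$ and the support condition $|x-y|=d(x,\Sigma_0)$ on $G_{\Sigma_0}$ gives $\pairing{\bar\sigma,\nabla u\otimes\nabla u}\ge\frac{1}{\sqrt 2}\int u^2/d(\cdot,\Sigma_0)\,df$, and then a pointwise concave maximization yields $\Comp(\bar\sigma)\le I(f,\Sigma_0)/\sqrt 2 = Z/2$ directly. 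This is more elementary and essentially re-derives, in the special case needed, the quoted optimality from \cite{bouchitte2001}. It is a clean alternative.

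\textbf{Converse direction.} Here both you and the paper delegate the ray-decomposition of $\mu$ to an external result, but the way the decomposition is used differs. The paper only needs the \emph{scalar} slicing $\mu=\frac{1}{\sqrt 2}\int\Ha^1\mres[x,y]\,\gamma$ (delivered by Lemma~\ref{NeuSmi}, which is proved via Smirnov's decomposition of the transport vector measure $\bar\lambda=\bar q\,\mu$). From that it deduces the tensor identity indirectly: the measure $\tilde\sigma:=\frac{1}{\sqrt 2}\int\sigma^{x,y}\,\gamma$ is optimal by the forward direction and has trace $\mu=\tr\,\sigma$, so by the rank-one uniqueness \eqref{rank1} both $\sigma$ and $\tilde\sigma$ equal $(\nabla_\mu\bar u\otimes\nabla_\mu\bar u)\,\mu$ and hence coincide. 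You instead assert, as a separate citable fact, the \emph{pointwise} alignment of $\tau^{x,y}$ with $-\nabla_\mu\bar u$ along each ray, and integrate. This works, but it asks more of the cited source than the paper does: the scalar slicing of $\mu$ is already enough, and the tangential-gradient alignment is a delicate disintegration statement (one must identify $\nabla_\mu\bar u$, defined only $\mu$-a.e.\ with respect to the global measure, with the Euclidean ray tangent $\gamma$-a.e.). If you rewrite the converse along the lines above — slicing of $\mu$ plus the forward direction plus the uniqueness in \eqref{rank1} — you get a fully rigorous argument that does not require the stronger alignment claim, and that also matches more precisely what Lemma~\ref{NeuSmi} in this paper actually delivers.
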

	
	\begin{proof}\ Let $\mu= \tr\, \bar\sigma$. Then, since $\int \tr\, \sigma^{x,y} = |x-y|$, we have
		$$\int \mu = \frac1{\sqrt{2}} \int_{\Ob\times\Ob} |x-y| \, \gamma(dxdy) =\frac1{\sqrt{2}}  \int d(x,\Sigma_0) \, f(dx)= \frac1{\sqrt{2}}\, I(f,\Sigma_0)= \frac{Z}{2}.$$
		This implies that $\sigma$ meets the mass constraint for $m_0= \frac{Z}{2d}$ given in Lemma \ref{2hom}. 
		On the other hand, by \cite[Thm 4.6]{bouchitte2001}, the measure $\mu$ is optimal for $\mathrm{(MOP)}$ for that prescribed mass $m_0$. It follows that
		$$ \Comp(\bar \sigma) \le \Comp(\Ident\,\mu) =  \frac{I(f,\Sigma_0)^2} {2 m_0} = \frac{Z}{2},$$
		hence the optimality  of $\bar\sigma$ in \eqref{reducedFMD} since $\Comp(\bar \sigma) + \int \tr \bar \sigma \le Z$.
		
		Conversely let $\sigma$ be optimal for \eqref{reducedFMD} and let $\mu=\tr\, \sigma$. By assertion (iii) of Proposition \ref{mop=fmd} we know that $\mu$ is optimal for
		$\mathrm{(MOP)}$ and that $\sigma$ is rank-one according to  \eqref{rank1}. As a result, it is enough to show the existence of a transport plan $\gamma \in\Mes_+(G_{\Sigma_0})$
		such that $\mu$ is represented by the slicing formula:
		$$ \pairing{\mu, \varphi} \ =\ \frac1{\sqrt{2}} \int_{\Ob\times \Ob} \left(\int_{[x,y]}\varphi\, d\Ha^1\right)\! \gamma(dxdy) \qquad  \forall\, \varphi\in C^0(\R^d).$$
		This is a consequence of Lemma \ref{NeuSmi} given hereafter that we apply to $\mu$ whose mass is  $m_0=\frac{Z}{2} = \frac{I(f,\Sigma_0)}{\sqrt{2}}$.
	\end{proof}
	
	\begin{lemma}\label{NeuSmi} \ Let $f\in \PP(\Ob)$ be a probability measure on $\Ob$, $I(f,\Sigma_0)$ defined by \eqref{def:If} and $\bar u=d(x,\Sigma_0)$.  Then:
		\begin{itemize}[leftmargin=1.5\parindent]
			\item [(i)]\ Let $\mu\in \Mes_+(\Ob)$ such that $\Comp(\Ident\,\mu)<+\infty$. Then there exists $g\in \PP(\Sigma_0)$ such that   
			\begin{equation}\label{dir=neu}
			\Comp(\Ident\,\mu) = \sup_{v\in C^1(\Ob)} \left\{\pairing{f- g,v} - \frac1{2} \int |\nabla v|^2 \, d\mu\right\}=
			\min_{q\in L^2_\mu(\Ob;\R^d)} \left\{ \frac1{2} \int |q|^2 \, d\mu\, :\,  -\dive (q\,\mu) \!=f- g \ \text{ in $\D'(\R^d)$}\right\}. 
			\end{equation}
			
						\item [(ii)]\ Assume further that $\mu$ is optimal for $\mathrm{(MOP)}$ with mass $m=\frac{Z}{2}$. Then the minimum on the right hand side of \eqref{dir=neu} is reached for a vector field $\bar q$ such that we have
			$|\bar q|= \frac{I(f,\Sigma_0)}{m}$ and $\bar q=\frac{I(f,\Sigma_0)}{m} \nabla_\mu \bar u\ $ holding $\mu$-a.e.
			Moreover, there exists a suitable $\bar\gamma \in \Mes_+(G_{\Sigma_0}) $ with $\bar\gamma\in \Gamma(f,g)$ such that the vector measure $\bar\lambda= q\,\mu $ can be decomposed as follows:
			$$ \pairing{\bar\lambda, \psi} \ =
			\ - \int_{\Ob\times \Ob} \left(\int_{[x,y]} \pairing{\psi, \frac{y-x}{|y-x|}}\, d\Ha^1\right) \bar\gamma(dxdy) \qquad  \forall \psi\in C^0(\R^d;\R^d).$$

		\end{itemize}
	\end{lemma}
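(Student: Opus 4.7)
\textbf{Plan for the proof of Lemma \ref{NeuSmi}.}

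\emph{Part (i).} I would start from the Dirichlet formulation of $\Comp(\Ident\,\mu)$ (with Lipschitz test functions, as already justified by the relaxation \eqref{relax-mu}) and apply Fenchel--Rockafellar duality, encoding the boundary constraint $v=0$ on $\Sigma_0$ as an indicator. Its Fenchel conjugate introduces a Radon multiplier $g$ supported on $\Sigma_0$, while the Legendre transform of the quadratic energy $v\mapsto \tfrac12 \int |\nabla_\mu v|^2\, d\mu$ yields the dual variable $q\in L^2_\mu(\Ob;\R^d)$ linked to $v$ by $q=\nabla_\mu v$, subject to $-\dive(q\mu)=f-g$. Weak-$*$ compactness of bounded subsets of $\Mes(\Sigma_0)$ and lower semicontinuity of $q\mapsto \tfrac12\int|q|^2\,d\mu$ give existence of a minimizing pair $(g,\bar q)$. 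The information $g\in\PP(\Sigma_0)$ will be extracted from the dual sup over $v\in C^1(\Ob)$: testing with an additive constant forces $g(\Sigma_0)=1$, and any negative part of $g$ would make the sup infinite via a localized negative test function near that part, contradicting $\Comp(\Ident\,\mu)<+\infty$.

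\emph{Part (ii).} By assertion (iii) of Proposition \ref{mop=fmd} I know that $|\nabla_\mu \bar u|=1$ $\mu$-a.e.\ for $\bar u=d(\cdot,\Sigma_0)$, and that $\mu$ is optimal for $\mathrm{(MOP)}$ with $\int\mu=m$. Plugging $\bar v := (I(f,\Sigma_0)/m)\,\bar u$ into the Neumann-like sup of \eqref{dir=neu} and exploiting $\pairing{f,\bar u}=I(f,\Sigma_0)$ (from \eqref{dualMK}) shows that $\bar v$ attains the compliance $\tfrac{1}{2}I(f,\Sigma_0)^2/m$. By the primal-dual equality, $\bar q := \nabla_\mu \bar v = (I(f,\Sigma_0)/m)\nabla_\mu\bar u$ must minimize the $L^2$-problem, with $|\bar q|=I(f,\Sigma_0)/m$ holding $\mu$-a.e. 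Uniqueness of $\bar q$ modulo the usual $\mu$-tangential relations together with the saturation of Cauchy--Schwarz at the optimum locks in the pointwise identification.

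\emph{Decomposition along rays.} For the Smirnov-type representation of $\bar\lambda := \bar q\,\mu$, I would apply the same disintegration principle behind Theorem \ref{sliced-sigma}: the (MOP)-optimality of $\mu$ together with \cite[Thm 4.6]{bouchitte2001} provides an optimal transport plan $\bar\gamma\in\Mes_+(G_{\Sigma_0})$ with first marginal $f$ and second marginal some $g\in\PP(\Sigma_0)$, such that
\[\pairing{\mu,\varphi} = \frac{1}{\sqrt 2}\int_{\Ob\times\Ob}\!\left(\int_{[x,y]}\varphi\,d\Ha^1\right)\bar\gamma(dxdy).\]
Because $\bar\gamma$ is concentrated on the graph of the nearest-point map, $\nabla\bar u$ coincides along each segment $[x,y]$ with the unit vector $-\tau^{x,y}$ pointing from $x$ toward $y=p_{\Sigma_0}(x)$. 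Combining this with the pointwise formula for $\bar q$ and slicing $\bar\lambda=\bar q\,\mu$ through $\bar\gamma$ yields the stated integral decomposition; the compatibility constraint $-\dive(\bar q\,\mu)=f-g$ then matches the marginals of $\bar\gamma$, closing the loop with the Lagrange multiplier produced in (i).

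\emph{Main obstacle.} The most delicate point will be aligning the abstractly produced multiplier $g$ from Fenchel--Rockafellar with the geometric marginal of $\bar\gamma$: both are probability measures on $\Sigma_0$ rendering the Neumann-type problem solvable, but identifying them requires passing from purely variational information to a geometric slicing along transport rays, which is exactly where the $\mu$-tangential structure $|\nabla_\mu\bar u|=1$ and the concentration of $\bar\gamma$ on $G_{\Sigma_0}$ must be used in a compatible way.
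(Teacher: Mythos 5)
Your overall architecture for part (i) — extracting a boundary multiplier $g$ by convex duality and then pairing it with the quadratic conjugate — is close in spirit to the paper's use of Ky Fan's minimax, but you take a genuinely different route by starting from a Fenchel--Rockafellar dual over \emph{signed} measures $g\in\Mes(\Sigma_0)$ and only afterward trying to squeeze $g$ into $\PP(\Sigma_0)$. This is where a real gap appears. Encoding $v=0$ on $\Sigma_0$ as an indicator whose conjugate is identically zero leaves $g$ unconstrained, and neither of your two arguments for recovering $g\in\PP(\Sigma_0)$ is airtight. The mass claim $g(\Sigma_0)=1$ via constant tests is fine once the dual infimum is known to be finite, but the positivity claim is not: to exploit a negative part $g^-$ in $\pairing{f-g,v}$ one must take $v$ large \emph{positive} near $\spt(g^-)$ (your ``localized negative test function'' has the wrong sign — it would \emph{decrease} $-\pairing{g,v}$), and even with the right sign the quadratic penalty $\tfrac12\int|\nabla v|^2\,d\mu$ dominates any linear blow-up whenever $\mu$ charges a neighbourhood of $\spt(g^-)$, so the sup stays finite. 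A concrete one-dimensional example ($\Omega=(-1,1)$, $\Sigma_0=\{\pm1\}$, $\mu=\LL^1$, $f=\delta_0$, $g=2\delta_{-1}-\delta_1$) gives a finite $G(g)=5/2$ with $g^-\neq 0$, so the dual functional simply is not $+\infty$ off the probability simplex. On top of this, your existence argument for $g$ invokes weak-$*$ compactness of ``bounded subsets of $\Mes(\Sigma_0)$'', but a minimizing sequence of signed measures with $g_n(\Sigma_0)=1$ need not be bounded in total variation, so the a priori bound is missing. The paper cuts through both problems at once: it restricts \emph{a priori} to $\nu\in\PP(\Sigma_0)$, which is automatically weak-$*$ compact, and proves that this restricted minimum already equals $\Comp(\Ident\mu)$ by swapping $\inf$ and $\sup$ with Ky Fan and then using the substitution $u=(v-\sup_{\Sigma_0}v)^+$, which exploits $f\geq 0$ to show the two suprema coincide. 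You never use $f\geq 0$ in part (i), yet the lemma's statement depends on it; that is a sign that a step is missing.

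For part (ii), the first half of your plan (plugging $\bar v=(I/m)\bar u$ into the Neumann sup, reading off $\bar q=\nabla_\mu\bar v$ from the $L^2_\mu$ primal-dual link, and concluding $|\bar q|=I/m$ from $|\nabla_\mu\bar u|=1$) is sound and is essentially equivalent to what the paper imports from \cite[Thm 2.3 and Thm 3.9]{bouchitte2001}. For the decomposition along rays you take a slightly different order of operations: you first slice the scalar measure $\mu$ and then transport that slicing to $\bar\lambda$ by the pointwise formula for $\bar q$, whereas the paper applies Smirnov's decomposition theorem (\cite{Smirnov}, \cite[Prop.\,2.3]{Dweik}) directly to the optimal vector measure $\bar\lambda$ for the Beckmann problem $\min\{\int|\lambda|:-\dive\lambda=f-g\}$, and then concludes $\spt\bar\gamma\subset G_{\Sigma_0}$ from the equality $\int|x-y|\,d\bar\gamma=\int d(x,\Sigma_0)\,df$. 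Your variant can be made to work but requires the extra check that $\nabla_\mu\bar u$ coincides with the tangent $-\tau^{x,y}$ $\Ha^1$-a.e.\ along $\bar\gamma$-a.e.\ ray, which is not automatic from the definition of the $\mu$-tangential gradient when $\mu$ is only assumed to admit a slicing; the paper's reliance on Smirnov sidesteps this point entirely.
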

	Note that the assertion (i) above (whose validity requires that $f\ge 0$) provides an equivalence principle between a Dirichlet condition on $\Sigma_0$ and a Neumann condition associated with a suitable source term $g$ supported on $\Sigma_0$.

	
		\begin{proof}[Proof of the assertion (i)] \ We show that as a measure $g$ satisfying the assertion (i) we may take any minimizer for the problem:
			\begin{equation}\label{optig}
			\min\Big\{ G(\nu) \ :\ \nu\in\PP(\Sigma_0) \Big\}, \qquad  G(\nu) :=  \sup_{v\in C^1(\Ob)} \left\{\pairing{f- \nu,v} - \frac1{2} \int |\nabla v|^2 \, d\mu\right\}.
			\end{equation}
 Next by applying again  the commutation argument for convex concave Lagrangians (see Theorem \ref{ky-Fan}), we get
			\begin{align*}
			\min\Big\{ G(\nu) \ :\ \nu\in\PP(\Sigma_0) \Big\}\ &=\  \inf_{\nu\in\PP(\Sigma_0)}  \sup_{v\in C^1(\Ob)} \left\{\pairing{f- \nu,v} - \frac1{2} \int |\nabla v|^2 \, d\mu\right\}\\
			&= \sup_{v\in C^1(\R^d)} \inf_{\nu\in\PP(\Sigma_0)}   \left\{\pairing{f- \nu,v} - \frac1{2} \int |\nabla v|^2 \, d\mu\right\}\\
			&= \sup_{v\in C^1(\R^d)} \left\{\Big\langle f,v- \sup_{\Sigma_0} v \Big\rangle - \frac1{2} \int |\nabla v|^2 \, d\mu \right\}\\
			&= \sup_{u\in \mathrm{Lip}_+(\R^d)} \left\{\pairing{f,u} - \frac1{2} \int |\nabla_\mu u|^2 \, d\mu \ : u =0 \ \text{on $\Sigma_0$}\right\} 
		=\, \Comp(\Ident\,\mu),
 			\end{align*}
			where:
			\begin{enumerate}
				\item[-] in the third line we used the fact that for every $v\in C^1(\R^d)$ the non-negative Lipschitz function $u= (v -\sup_{\Sigma_0} v)^+$ vanishes on $\Sigma_0$ while its energy $\pairing{f,u}$ is larger than $\pairing{f,v}$ since $f\ge 0$ ;
				\item[-] in the last line, we used the fact that, for a positive load $f$, the supremum in the definition of the compliance functional is unchanged if we restrict to non-negative Lipschitz functions $u$.
			\end{enumerate}
			By taking $g$ to be a minimizer in \eqref{optig}, we are led to the first equality in \eqref{dir=neu}. The second equality is a byproduct of classical duality in the Hilbert space $L^2_\mu$ after noticing that the divergence condition holding in $\D'(\R^d)$ is equivalent to the equality  $\pairing{f-g,v}= \int \pairing{q,\nabla v}\, d\mu$ holding for every $v\in C^1(\R^d)$.
		
		\medskip
		\noindent
		{\em Proof of the assertion (ii)}\ If $\mu$ is optimal for $\mathrm{(MOP)}$ with mass $m_0$, we know from \cite[Thm 2.3 and  Thm 3.9]{bouchitte2001} that
		any $\bar  q$ solving \eqref{dir=neu} has a constant norm $|\bar q|= \frac{I(f,\Sigma_0)}{m_0}$ while the vector measure $\bar \lambda = \bar q\, \mu$ is optimal for 
		the PDE formulation of the Monge distance between $f$ and $g$:
		$$ W_1(f,g) =\min_{\lambda \in \Mes(\Ob;\R^d)} \left\{ \int |\lambda| \ :\ -\dive \lambda =f- g \ \text{ in $\D'(\R^d)$}\right\}.$$
Next, by applying a deep argument in geometric measure theory due to S.K. Smirnov (see \cite{Smirnov}  and  Proposition 2.3 in \cite{Dweik}), we may decompose the optimal vector measure $\bar\lambda$ into the form $\bar\lambda =  \int_{\Ob\times \Ob}  \lambda^{x,y} \, \bar\gamma(dxdy)$ (for definition of $\lambda^{x,y}$ see \eqref{def:sigma^xy})  with  $\bar\gamma\in \Gamma(f,g)$ being a suitable pairing measure such that 
 $$ \int |\bar\lambda| = \int_{\Ob\times \Ob} |\lambda^{x,y}|  \, \bar\gamma(dxdy) =   \int_{\Ob\times \Ob} |x-y| \, \bar\gamma(dxdy).$$
Then, recalling that by \eqref{dualMK} one has $ W_1(f,g)= \int d(x,\Sigma_0) \, f(dx)$, we infer that
$\int_{\Ob\times \Ob}  |x-y| \, \bar\gamma(dxdy) = \int d(x,\Sigma_0) \, f(dx).$
It follows that $ \bar\gamma$ is supported in $G_{\Sigma_0}$ as claimed. 

	\end{proof} 	
	Thanks to Theorem \ref{sliced-sigma} we are able to characterize the special geometric configurations for which the inequality $\inf \mathrm{(FMD)}\le \inf \mathrm{(OM)}$
	mentioned in the introduction becomes an equality.	
	Recalling  the definition \eqref{Gamma0} of $p_{\Sigma_0}$, let us introduce:
	\begin{equation}\label{def:Mridge}
	M(\Omega, \Sigma_0):= \Big\{x \in \Ob\setminus\Sigma_0    \ :\   x \in\co\bigl(p_{\Sigma_0}(x)\bigr) \Big\}.
	\end{equation}
		Notice that $M(\Omega, \Sigma_0)$ is empty if $\Sigma_0=\co(\Sigma_0)$. Otherwise it is a non-empty compact subset of $\co (\Sigma_0)$ as shown in Lemma \ref{ridge} below.
	\begin{corollary}\label{fmd<om} Let $f\in \PP(\Ob)$.  Then the reduced $\mathrm{(FMD)}$ problem  \eqref{reducedFMD} admits a divergence free solution  if and only if $\spt(f)\cap \Omega \subset M(\Omega, \Sigma_0)$. 
		If this condition is violated, then we have the strict inequality \ $\inf \mathrm{(FMD)}< \inf \mathrm{(OM)}$.
	\end{corollary}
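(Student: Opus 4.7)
The plan is to combine the structural representation in Theorem \ref{sliced-sigma}---every optimum of \eqref{reducedFMD} is of the form $\bar\sigma=\tfrac{1}{\sqrt 2}\int\sigma^{x,y}\,\bar\gamma(dxdy)$ with $\bar\gamma\in\Mes_+(G_{\Sigma_0})$ having first marginal $f$---with a direct computation of the divergence of the slice measures $\sigma^{x,y}$. Parametrizing the segment $[x,y]$ and integrating $\langle \tau^{x,y}\!\otimes\tau^{x,y},\nabla\varphi\rangle$ against a smooth vector field $\varphi$ gives
\begin{equation*}
\DIV\sigma^{x,y} \,=\, \tau^{x,y}(\delta_x-\delta_y) \qquad\text{in }\D'(\R^d;\R^d).
\end{equation*}

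Disintegrate $\bar\gamma = f(dx)\otimes\gamma_x(dy)$ with $\gamma_x\in\PP(p_{\Sigma_0}(x))$. Since $\Sigma_0\subset\partial\Omega$, the endpoint contributions $\delta_y$ drop out when $\DIV\bar\sigma$ is restricted to $\Omega$, and testing against $\varphi\in C_c^\infty(\Omega;\R^d)$ yields
\begin{equation*}
\DIV\bar\sigma\res\Omega \,=\, \frac{1}{\sqrt{2}\,d(x,\Sigma_0)}\Bigl(\int y\,\gamma_x(dy)-x\Bigr)\,f(dx)\res\Omega,
\end{equation*}
where one uses that $|y-x|=d(x,\Sigma_0)$ for $y\in p_{\Sigma_0}(x)$. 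Hence divergence-freeness of $\bar\sigma$ in $\Omega$ is equivalent to the barycenter identity $x=\int y\,\gamma_x(dy)$ holding for $f$-a.e.\ $x\in\Omega$; since $\spt(\gamma_x)\subset p_{\Sigma_0}(x)$, this is in turn equivalent to $x\in\co(p_{\Sigma_0}(x))$, i.e.\ $x\in M(\Omega,\Sigma_0)$. Closedness of $M$ (Lemma \ref{ridge}) translates this almost-everywhere statement into $\spt(f)\cap\Omega\subset M(\Omega,\Sigma_0)$, which settles the necessity.

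For sufficiency one has to construct such a $\bar\gamma$ from the inclusion $\spt(f)\cap\Omega\subset M$. For each $x\in M$, Carath\'{e}odory's theorem provides a discrete probability measure on at most $d+1$ points of $p_{\Sigma_0}(x)$ whose barycenter is $x$; a Borel selection theorem (Kuratowski--Ryll-Nardzewski) applied to the closed-valued multimap $x\mapsto\{\gamma\in\PP(p_{\Sigma_0}(x)):\int y\,\gamma(dy)=x\}$, extended arbitrarily on $\spt(f)\setminus\Omega$, yields a measurable family $\{\gamma_x\}$. Setting $\bar\gamma := f\otimes\gamma_x$ then produces, via Theorem \ref{sliced-sigma}, an optimum which by the preceding equivalence is divergence-free.

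Finally, for the strict inequality assume the set inclusion fails. The reduced divergence-free problem
\begin{equation*}
Z^* \,:=\, \inf\Bigl\{\Comp(\sigma)+\int\tr\,\sigma \,:\, \sigma\in\Mes(\Ob;\Sddp),\,\DIV\sigma=0\text{ in }\Omega\Bigr\}
\end{equation*}
admits a minimizer by the same direct-method argument as in Lemma \ref{2hom}, since the constraint $\DIV\sigma=0$ is closed under weak-$*$ convergence. If one had $Z^*=Z$, this minimizer would also solve \eqref{reducedFMD} while being divergence-free, contradicting the previous step; hence $Z<Z^*$. Invoking the scaling relation $\alpha(m)=Z^2/(4md)$ from Lemma \ref{2hom}, and its straightforward transcription to $\mathrm{(OM)}$, the strict inequality $\inf\mathrm{(FMD)}<\inf\mathrm{(OM)}$ follows at every prescribed mass $m$. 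The only delicate step in the scheme is the measurable selection used to build $\bar\gamma$ on $M$; the rest amounts to the explicit divergence computation together with the sliced representation of Theorem \ref{sliced-sigma}.
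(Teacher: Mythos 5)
Your argument is correct and follows the paper's proof in all essentials: invoke Theorem \ref{sliced-sigma} to represent any optimum as a sliced measure, compute $\DIV\bar\sigma\res\Omega$ via the disintegration $\bar\gamma=f\otimes\gamma_x$ and the identity $\DIV\sigma^{x,y}=\tau^{x,y}(\delta_x-\delta_y)$, observe this vanishes iff the barycenter condition holds $f$-a.e.\ in $\Omega$, and use closedness of $M(\Omega,\Sigma_0)$ (Lemma \ref{ridge}) to pass to the support condition. The only cosmetic divergences from the paper are the choice of auxiliary tools (Carath\'eodory + Kuratowski--Ryll-Nardzewski in place of the paper's Choquet + Castaing--Valadier selection, both standard and interchangeable here), and the fact that you explicitly spell out the direct-method existence of a minimizer for the divergence-constrained reduced problem $Z^*$ in order to convert ``no divergence-free (FMD) minimizer'' into the strict inequality — the paper states this as an equivalence ``and vice-versa'' and relies on the existence result proved later in Theorem \ref{Z=I}, so your version is a self-contained reading of the same argument rather than a different route.
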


	\begin{proof} Assume that $\spt(f) \subset M(\Omega, \Sigma_0)$. By Choquet's theorem, for $f$-a.e. $x$, there exists a probability $p^x$
	supported in $p_{\Sigma_0}(x)$ whose barycenter $[p^x]$ satisfies $[p^x] = x$. As the map $(x,p) \in \Ob\times \PP(\Sigma_0) \mapsto [p]-x$ is Borel regular, we can select $p^x$ so that $x \to p^x$ is $f$-measurable (see for instance \cite{castaing}). Then  a plan $\bar\gamma\in \Mes_+(G_{\Sigma_0})$ with first marginal $f$  is obtained by setting for every $\f\in C^0(\Ob\times\Ob)$:
		\begin{equation} \label{desint}
		\pairing{\bar\gamma, \varphi} := \int_{\Ob} \bigl\langle p^x,\varphi(x,\cdot) \bigl\rangle \, f(dx).\end{equation}
		Then, by invoking Theorem \ref{sliced-sigma}, the tensor measure $ \bar \sigma:=\frac1{\sqrt{2}} \int \sigma^{x,y} \, \bar\gamma(dxdy)$ is optimal for \eqref{reducedFMD}. Furthermore, in view of definition \eqref{def:sigma^xy}, we may evaluate the divergence of $\sigma$ against a test function $\psi \in \D(\Omega;\R^d)$:
		\begin{align*}
		\sqrt{2}\,  \pairing{\DIV \,\bar\sigma, \psi} &= - \int_{\Ob\times\Ob} \pairing{\sigma^{x,y}, \nabla \psi} \, \bar\gamma(dxdy) = -\int_{\Ob\times\Ob} \pairing{\psi(y)-\psi(x),\frac{y-x}{|y-x|} } \, \bar\gamma(dxdy)\\
		&= \int_\Omega  \left(\int \pairing{ \psi(x),\frac{y-x}{|y-x|}}  \, p^x(dy) \right) f(dx) \\
		&= \int_{\Omega}  \frac{\pairing{\psi(x), [p^x]-x}} {d(x,\Sigma_0)} \, f(dx) =0 \ ,
		\end{align*}
		where in the second line we used \eqref{desint} and the fact that $\psi$ vanishes on $\partial \Omega$ hence $p^x$-a.e.,
		while in the last line  we exploit the fact that $|y-x| =d(x,\Sigma_0)$ for all $y\in  p_{\Sigma_0}(x)$, thus $ p^x$-a.e.

		In order to prove the converse implication, we  assume that \eqref{reducedFMD} admits $\sigma$ as a divergence free solution.
		Invoking again Theorem \ref{sliced-sigma}, we may write $\sigma= \frac1{\sqrt{2}} \int \sigma^{x,y} \, \gamma(dxdy)$ for a suitable $\gamma\in \Mes_+(G_{\Sigma_0})$ with the first marginal being equal to $f$. This pairing measure $\gamma$ admits a disintegration of the form \eqref{desint} for a measurable family of probabilities $\{p^x\}$ such that
	$\spt(p^x)\subset p_{\Sigma_0}(x)$ for $f$-a.e $x\in \Ob$.
		Then, after the same computations as before,  we are led to the equality
		$$ \sqrt{2}\,  \pairing{\DIV \,{\sigma}, \psi} =  \int_{\Ob\setminus\Sigma_0}  \frac{\pairing{\psi(x), [p^x]-x}} {d(x,\Sigma_0)} \, f(dx) =0, $$
		holding for every test function $\psi \in \D(\Omega;\R^d)$. It follows that $[p^x] = x$ for $f$-a.e. $x\in\Omega$, thus $\spt(f)\cap \Omega \subset M(\Omega, \Sigma_0)$ since $M(\Omega,\Sigma_0)$ is closed by Lemma \ref{ridge}.
		We conclude the proof of Corollary \ref{fmd<om} by noticing that the existence of a divergence free solution to  \eqref{reducedFMD} induces the equality $\min \mathrm{(FMD)}= \min \mathrm{(OM)}$
		and vice-versa.
	\end{proof}
	
	\begin{remark}\label{selectgamma} The requirement for $f$ given in Corollary \ref{fmd<om} implies that $\spt(f)\cap\Omega$ is contained in $\cobar(\Sigma_0)$ as well as in the closure of the set where  $d(\cdot, \Sigma_0)$ is not differentiable.
		In general these sets are strictly larger than $ M(\Omega, \Sigma_0)$. 
		For instance, if $\Omega=\{|x_1|\!<\!a , |x_2|\!<\!b \}$ with $a\le b$ and $\Sigma_0=\partial\Omega$, we infer that $\inf \mathrm{(FMD)}< \inf \mathrm{(OM)}$ unless
		$ \spt(f) \subset M(\Omega)=\{x_1=0 ,\ |x_2|\le b\!-\!a\}.$
		The fact that $p_{\Sigma_0}$ can be multi-valued (namely at points of non-differentiability of $d(\cdot,\Sigma_0)$)
		implies the  existence of multiple  solutions to the $\mathrm{(FMD)}$ problem. 
		In the example above, if $f$ is a Dirac mass located at a point $x_0=(0,t_0)$ with $|t_0|< b-a$, then all solutions to $\mathrm{(FMD)}$ arise from the family of pairings 
		$\{\gamma_\theta :\ 0\le \theta\le 1\}$ where
		$ \gamma_\theta = (1\!-\!\theta)\ \delta_{(0,t_0)}\otimes \delta_{(-a,t_0)} \, +\, \theta\ \delta_{(0,t_0)}\otimes \delta_{(a,t_0)}.$
		Among them the only one which meets  the divergence free constraint is obtained for $\theta =\frac1{2}$ and the optimal stress is $\bar \sigma= e_1\otimes e_1 \, \Ha^1 \mres [(-a,t_0),(a,t_0)]$. 
	\end{remark}

	\begin{lemma}\label{ridge}\  Assume that $\Sigma_0$ is a strict subset of $\co(\Sigma_0)$. Then  $M(\Omega, \Sigma_0)$ is a non-empty compact subset of $\co(\Sigma_0)\setminus\Sigma_0$.
		If $\Sigma_0=\partial\Omega$, it coincides with the {\rm high ridge} of $\Omega$ defined by: 
		\begin{equation}\label{def:ridge}
		M(\Omega) := \Big\{ x\in \Omega \ :\ d(x,\partial\Omega) \ge  d(z,\partial\Omega)\ \ \forall z\in \Omega\Big\}.
		\end{equation}
	\end{lemma}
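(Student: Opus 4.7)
My approach rests on viewing $M(\Omega,\Sigma_0)$ as the set of ``critical points'' of the squared distance $\phi^2(x) := d(x,\Sigma_0)^2$ lying outside $\Sigma_0$. Indeed, $\psi(x) := |x|^2 - \phi^2(x) = \sup_{z\in\Sigma_0}\{2\langle x,z\rangle - |z|^2\}$ is convex with subdifferential $\partial\psi(x) = 2\,\co(p_{\Sigma_0}(x))$, so $\phi^2$ is $2$-semiconcave with super-differential $\partial^+\phi^2(x) = 2\bigl(x - \co(p_{\Sigma_0}(x))\bigr)$; in particular $x \in M(\Omega,\Sigma_0)$ iff $0 \in \partial^+\phi^2(x)$ and $x \notin \Sigma_0$. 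For non-emptiness I would apply the Kakutani--Fan--Glicksberg fixed point theorem to the set-valued map $H : \co(\Sigma_0) \to 2^{\co(\Sigma_0)}$ defined by $H(x) := \co(p_{\Sigma_0}(x))$, which is upper semicontinuous (inherited from the upper semicontinuity of $p_{\Sigma_0}$ together with the fact that convex hull preserves this property in finite dimension) with non-empty convex compact values in $\co(\Sigma_0)$. The theorem produces $x^* \in H(x^*)$, hence $x^* \in \Sigma_0 \cup M(\Omega,\Sigma_0)$. To rule out the trivial case $x^* \in \Sigma_0$ (the step I expect to be the main obstacle), I would exploit the assumption $\Sigma_0 \subsetneq \co(\Sigma_0)$ by picking $x_0 \in \co(\Sigma_0)\setminus\Sigma_0$ and running a homotopy-degree argument on $H_t(x) := (1-t)\{x_0\} + t\,H(x)$, whose fixed-point index at $t = 0$ equals $1$ at $x_0 \notin \Sigma_0$ and is preserved up to $t = 1$.

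For the inclusion $M(\Omega,\Sigma_0) \subset \co(\Sigma_0)\setminus\Sigma_0$, any $x \in M$ is by construction a convex combination of points of $p_{\Sigma_0}(x) \subset \Sigma_0$, while $x \notin \Sigma_0$ by definition. For closedness: given $x_n \in M$ with $x_n \to x$, use Carath\'eodory to write $x_n = \sum_{i=1}^{d+1}\lambda_i^n z_i^n$ with $z_i^n \in p_{\Sigma_0}(x_n)$ and $\sum_i \lambda_i^n = 1$, extract convergent subsequences $\lambda_i^n \to \lambda_i$, $z_i^n \to z_i$, and use upper semicontinuity of $p_{\Sigma_0}$ to conclude $z_i \in p_{\Sigma_0}(x)$ and $x \in \co(p_{\Sigma_0}(x))$. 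To preclude $x \in \Sigma_0$, I would note that for each $x_n \in M\setminus\Sigma_0$ the set $p_{\Sigma_0}(x_n)$ must contain at least two distinct points (else $x_n = p_{\Sigma_0}(x_n) \in \Sigma_0$), whence the triangle inequality gives $d(x_n,\Sigma_0) \geq \tfrac{1}{2}|z_i^n - z_j^n|$; a uniform lower bound on $d(x_n,\Sigma_0)$ then follows from the fact that on a tubular neighborhood of the smooth part of $\Sigma_0$ the projection $p_{\Sigma_0}$ is single-valued, and hence $M$-points avoid such neighborhoods.

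For the special case $\Sigma_0 = \partial\Omega$: the distance function $\phi(x) = d(x,\partial\Omega)$ is concave on the convex set $\Omega$ (a classical consequence of convexity of $\Omega$). Its super-differential $\partial^+\phi(x) = \co\{(x-z)/|x-z| : z \in p_{\partial\Omega}(x)\}$ contains $0$ iff $x \in \co(p_{\partial\Omega}(x))$, i.e.\ iff $x \in M(\Omega,\partial\Omega)$; by concavity of $\phi$, the condition $0 \in \partial^+\phi(x)$ holds precisely at global maximizers of $\phi$ over $\Omega$. This identifies $M(\Omega,\partial\Omega)$ with the high ridge $M(\Omega)$ defined in~\eqref{def:ridge}, completing the proof.
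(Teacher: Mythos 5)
Your reformulation of $M(\Omega,\Sigma_0)$ via the super-differential of $\phi^2=d(\cdot,\Sigma_0)^2$ is correct (the identity $\partial\psi(x)=2\,\co(p_{\Sigma_0}(x))$ for $\psi(x)=|x|^2-\phi^2(x)$ is standard), and your treatment of the special case $\Sigma_0=\partial\Omega$ via concavity of the distance function matches the paper. However, the two steps you yourself flag as delicate are where the proof breaks down.

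For non-emptiness, the Kakutani--Fan--Glicksberg approach does not do any work here: for $x\in\Sigma_0$ we have $p_{\Sigma_0}(x)=\{x\}$, so $H(x)=\{x\}$ and \emph{every} point of $\Sigma_0$ is already a fixed point of $H$. The theorem thus produces a fixed point, but with no information whatsoever about whether it lies off $\Sigma_0$. The homotopy-degree repair is also blocked: the fixed-point set of $H_1=H$ contains $\Sigma_0$, and $\Sigma_0$ meets $\partial\bigl(\co(\Sigma_0)\bigr)$ (its extreme points lie in $\Sigma_0$), so there is no open set $U\subset\co(\Sigma_0)$ containing $x_0$ on whose boundary you can guarantee $x\notin H_t(x)$ for all $t\in[0,1]$ --- the admissibility hypothesis for degree/index invariance fails precisely along $\Sigma_0$. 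The paper's route is much simpler and bypasses this entirely: it maximizes $u=d(\cdot,\Sigma_0)$ over the compact set $K_0=\co(\Sigma_0)$. Since $\Sigma_0\subsetneq K_0$ the maximum is strictly positive, so the maximizer $\bar x$ lies off $\Sigma_0$ automatically; being a constrained maximizer of $u$ over $K_0$ forces $0\in C_0(\bar x)$ (Clarke gradient), i.e. $\bar x\in M(\Omega,\Sigma_0)$, with a Carath\'eodory reduction to a minimal face when $\bar x$ lands on $\partial K_0$.

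For the claim that $M(\Omega,\Sigma_0)$ stays away from $\Sigma_0$ (needed both for closedness in $\co(\Sigma_0)\setminus\Sigma_0$ and implicitly in the degree argument), your invocation of a tubular neighbourhood of ``the smooth part of $\Sigma_0$'' is a genuine gap. The lemma assumes only that $\Sigma_0$ is a compact subset of $\partial\Omega$; it may have no smooth part at all (think of a finite set or a Cantor set), and even for smooth boundaries the uniform lower bound you assert on $d(x_n,\Sigma_0)$ is not a consequence of ``the projection is single-valued near the smooth part.'' What you need is a quantitative statement that points of $\Sigma_0$ cannot be limits of points in $M$. The paper obtains it from first-order analysis of $u$: the lower directional derivative $u^0_-(\cdot,\theta)$ is lower semicontinuous and strictly positive at each $x\in\Sigma_0$ for any $\theta$ pointing inward into the convex $\Omega$ (since $d(x+h\theta,\Sigma_0)\ge d(x+h\theta,\partial\Omega)\gtrsim h$), while $u^0_-(\cdot,\theta)\le 0$ at points of $M(\Omega,\Sigma_0)$; lower semicontinuity then propagates the sign condition to limit points and rules out $\Sigma_0$. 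Your Carath\'eodory extraction for closedness is fine once that fact is in hand, but as written the proof has a real hole at exactly this spot.
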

	\begin{proof} \ Let $u=d(\cdot,\Sigma_0)$ and define, for every $x\notin \Sigma_0$,  the convex compact set of $S^{d-1}$:
		$$ C_0(x):= \co \left(\left\{ \frac{x-y}{|x-y|}\, :\, y\in p_{\Sigma_0}(x)  \right\}   \right)$$
		It turns out that $C_0(x)$ coincides with the Clarke's gradient of $u$ on $\R^d\setminus\Sigma_0$. Moreover, by \cite[Prop 4.4.1, Thm 3.2.6]{cannarsa}, $u$ is locally
		semi concave in $\R^d\setminus \Sigma_0$ and its lower directional derivative 
		$$u^0_-(x,\theta) :=\ \liminf_{h\to 0^+,\, y\to x} \frac{u(y+h \theta) - u(y)}{h}\ $$
		is lower semicontinuous and satisfies $|u^0_-(x,\theta)|\le |\theta|$, while for every $(x,\theta) \in (\R^d\setminus \Sigma_0)\times \R^d$:
		$$ u^0_-(x, \theta):= \lim_{h\to 0^+} \frac{u(y+h \theta) - u(y)}{h} = \min \Big\{ \pairing{p,\theta} \, :\, p\in C_0(x) \Big\}.$$
		In view of definition \eqref{def:Mridge} and noticing that $u^0(x,\theta)>0$ if $x\in \Sigma_0$ and $\theta$ has a direction pointing  inward to the convex set $\Omega$, we
		deduce  the following equivalences:
		\begin{equation}\label{clarke=0}
		x \in M(\Omega, \Sigma_0)\quad \Longleftrightarrow\quad  x\notin \Sigma_0 \ \text{and}\  C_0(x)\supset\{0\} \quad \Longleftrightarrow  \quad u^0_-(x, \theta) \le 0  \quad \forall\, \theta\in\R^d .
		\end{equation}
		It follows from the lower semicontinuity of $u^0_-$ that $M(\Omega, \Sigma_0)$ is a closed subset of $ \co(\Sigma_0)\setminus \Sigma_0$.
		
		If $\Sigma_0=\partial\Omega$, then the function $u$ is concave on $\Ob$ (see for instance \cite{zolesio}) and $C_0(x)$ coincides with the superdifferential $\partial^+u(x)$ 
		in the sense of convex analysis. Therefore the condition $0\in C_0(x)$ in \eqref{clarke=0} is  equivalent to  saying that the maximum of $u$ on $\Ob$ is reached at $x$.
		Thus in this case, we obtain the equality  $M(\Omega, \Sigma_0)= M(\Omega)$ with $M(\Omega)$ defined in \eqref{def:ridge}, which in turn is a non-empty convex compact subset.
		
		Eventually we have to show that $M(\Omega, \Sigma_0)$ is non-empty in the general case. Since we assumed that $K_0:=\co(\Sigma_0)\nsupseteq\Sigma_0$, there exists
		$\bar x\in K_0$ such that $ u(\bar x) =\max_{K_0}  u >0$. We argue that $\bar x\in M(\Omega,\Sigma_0)$.
		If $\bar x$ belongs to the interior of $K_0$ then $u^0_-(\bar x,\theta)\le 0$ for every $\theta\in \R^d$ and our claim follows from \eqref{clarke=0}.
		In fact we may always reduce ourselves to this case by considering a finite subset $\Sigma_{\bar x}\subset \Sigma_0$ whose convex hull $K_{\bar x}$ contains $\bar x$ while having minimal cardinality
		(at most $d+1$).  Noticing that $u(\bar x) =\max_{K_{\bar x}} u$, we apply the same arguments to the restriction of $u$ to the affine subspace  $\bar x + V$ spanned by $K_{\bar x}$.
		As $\bar x\notin \Sigma_{\bar{x}}$, by construction $\bar x$ belongs to the relative interior of $K_{\bar x}$ while $p_{\Sigma_0}(\bar x)= p_{\Sigma_{\bar x}}(\bar x)$.
		Therefore $u^0(\bar x, \theta) \le 0$ for $\theta\in V$ and, by applying the counterpart of  \eqref{clarke=0} in $V$ , we deduce that
		$$0\in \co \left(\left\{ \frac{\bar x-y}{|\bar x-y|}\, :\, y\in p_{\Sigma_{\bar x}}(\bar x)\right\}\right),$$
		thus arriving at the same conclusion as the right handside above is a subset of $C_0(x)$.
		
		\end{proof}


\begin{remark}\label{signed-f-false} \ The validity of Corollary \ref{fmd<om} requires that $f$ is a non-negative measure.
	In case of a signed load $f$, Example \ref{divers} (see in particular the configuration depicted in Fig. \ref{fig:miscellaneous}(c)) furnishes a counter-example in which $\Sigma_0=\bO$ and, despite 
	$f$ not being supported in $M(\O)$,  a solution $\sigma$ to $\mathrm{(FMD)}$ exists such that  $\DIV\, \sigma=0$ in $\O$.
	\end{remark}

	\medskip

	%
	%
	%
	
	\section{The optimal pre-stressed membrane problem }\label{sec:optimembrane}
	
	In this section $\Omega$ denotes a bounded convex domain of $\R^d$  and $\Sigma_0$ a closed subset of $\bO$. 
	we investigate the optimal design problem described in the introduction:
	\begin{equation} \tag(OM)
	\label{OM}
	\alpha_0(m) \ :=\ 	\inf \biggl\{ \Comp(\sigma) \,:\, \sigma \in \Mes(\Ob;\Sddp), \ \DIV \sigma= 0 \ \text{in $\Omega$}, \ \frac1{d}\int \tr\, \sigma \leq m \biggr\}
	\end{equation} 
	where $\Comp(\sigma)=\Comp_{\O,f,\Sigma_0}(\sigma)$ is defined in \eqref{eq:comp_FMD}. In practice $d=2$ and   $f\in \Mes(\Ob)$  represents the vertical pressure exerted on the membrane, $u$ the deflection function
	and $\Sigma_0$ the part of the boundary where the membrane is pinned in the vertical direction. We may assume that $f(\Sigma_0)=0$.
	%
	Recall that in Section \ref{sec:MKfree} (see Corollary \ref{fmd<om}) we  showed that the infimum in $\mathrm{(OM)} $ is stricly larger than the infimum of $\mathrm{(FMD)} $
	(that is $\alpha(m)\le \alpha_0(m)$), except if $f$ is supported in the set $M(\Omega,\Sigma_0)$ defined in \eqref{def:Mridge}. 
	Although the two problems exhibit in general  very different solutions, they have some common features, in particular the $2$-homogeneity argument used in Lemma \ref{2hom}.
	Then it is easy to derive that 
	$$ \alpha_0(m)\ =\ \frac{Z_0^2}{4\, m\, d}\qquad \text{(null indices  recall the divergence free constraint)}$$ 
	where $Z_0$ denotes the infimum of the following 
	reduced membrane problem 
	\begin{equation}\label{reducedom}
	Z_0 \ :=\ \inf\ \biggl\{ \Comp(\sigma)  +   \int \tr\, \sigma  \ :\ \sigma \in \Mes(\Ob;\Sddp), \ \   \DIV \sigma= 0 \ \text{in $\Omega$}\biggr\}.
	\end{equation}
	As will be seen later optimal tensor measures $\sigma$ for \eqref{reducedom} exist and every optimal $\ov \sigma$ satisfies the  equi-repartition of energy principle
	\begin{equation}\label{equipart}
	\int \tr\, \ov\sigma\ =\ \Comp(\ov\sigma)\ =\ \frac{Z_0}{2}. 
	\end{equation}
	Nonetheless, the duality argument leading to  Proposition \ref{mop=fmd} has to be modified in a significant way. In order to account for the divergence free constraint in $\Omega$ it is necessary to introduce, as a Lagrange multiplier, the  symmetrized gradient $$e(w):= \frac1{2} \,\Big(\nabla w + (\nabla w)^{\mathrm{T}} \Big) , $$
	of a smooth function $w\in \mathrm{Lip}(\Ob;\R^d)$ vanishing on $\partial\Omega$.   
	Accordingly, the counterpart of the supremum problem \eqref{def:If} will read as follows
	\begin{equation}\label{dualOM}  I_0(f,\Sigma_0) \ :=\ \sup\left\{ \int u \, df \ :\ (u,w)\in \mathcal{K}\right\},
	\end{equation}
	where $\mathcal{K}$ denotes the convex subset consisting of all Lipschitz pairs $ (u,w)\in \mathrm{Lip}(\Ob)^{1+d}$ such that:
\begin{subnumcases}{\label{def:calK}}
     u=0 \quad \text{on $\Sigma_0$},\qquad  w=0  \quad \text{on $\bO$},& \label{dirichlet}\\
   \frac{1}{2}\,\nabla u \otimes \nabla u + e(w)  \leq \Ident \qquad \text{a.e in } \Omega. & \label{pointwise}
\end{subnumcases}

	The natural duality involved will be now between continuous pairs $(u,w)$ (that is the deflection and in-plane deformation of the membrane)  and measures $(\lambda, \sigma) \in \Mes(\Ob;\R^d) \times \Mes(\Ob;\Sdd)$  (transverse force and in-plane stress).
	As a primal problem we will consider the $\mathrm{(OM)}$ problem \eqref{reducedom}  rewritten in the form:
	\begin{equation}\tag{$\mathcal{P}$}
			\inf  \Big\{ J(\lambda,\sigma) \ : \ (\lambda,\sigma) \in \A  \Big\} 
	\end{equation}
	where $J$ is a suitable functional on measures (see \eqref{defJ}) and the admissible set $\A$ is defined by
			\begin{equation}
			\label{defA}
			\A:=\Big\{(\lambda,\sigma) \in \Mes(\Ob; \R^d\times \Sddp) \ :-\dive\, \lambda =f \quad \text{in $\R^d\setminus \Sigma_0$}, \quad \DIV\, \sigma =0  \quad \text{in $\Omega$} \Big\}.
		\end{equation}
	
In parallel we  put forward an alternative  duality scheme based on a two-point equivalent of the condition \eqref{pointwise}
(see forthcoming Lemma \ref{lem:two_point_condition}), namely:
\begin{equation}
			\label{eq:two_point_condition}
			\frac{1}{2}\, \abs{u(y)-u(x)}^2 + \pairing{w(y)- w(x),y-x} \leq \abs{x-y}^2 \qquad \forall (x,y)\in \Ob\times\Ob .
			\end{equation}
This complementary approach is useful in order to characterize solutions of $(\mathcal{P})$ 
which are decomposable in the spirit of Theorem \ref{sliced-sigma}, i.e. which are of the form $\la_\pi$ and $\sigma_\Pi$ (see definitions \eqref{def:llpi} 
and \eqref{def:sigmaPi}) for
suitable  scalar measures $(\pi,\Pi)$ on $\Ob\times\Ob$.
 These measures act as Lagrange multipliers of the two-point constraint and they will encode an optimal truss-like solution $(\lambda_\pi, \sigma_\Pi)$
 to $(\mathcal{P})$ if they are minimal in the following problem:
\begin{equation}\tag{$\mathscr{P}$}
	\inf\Big\{ \Jtwo(\pi,\Pi)\, :\, (\pi,\Pi) \in \Atwo \Big\}
\end{equation}
 where $\Jtwo$ is a suitable  a suitable convex local functional on measures (see \eqref{defJtwo}) and $\Atwo$ denotes the class of
pairs $(\pi,\Pi) \in \Mes(\Ob\times\Ob;\R^2)$ such that $(\lambda_\pi, \sigma_\Pi)\in\A$. One can check easily that 
 \begin{equation}\label{defAtwo}
	(\pi,\Pi) \in \Atwo \quad \iff \quad   \begin{cases} 
	(i) &\int \big(u(y)-u(x) \bigr)\, \pi(dx dy) = \pairing{f,u}  \qquad \ \forall\, u\in C_{\Sigma_0}(\Ob),\\
	(ii) & \int \langle w(y)-w(x),\tau^{x,y} \rangle\, \Pi(dx dy) = 0   \qquad \forall\, w\in C_0(\Omega;\Rd),\\
	(iii) &  \Pi\ge 0 \quad \text{on $\Obsq$}
	\end{cases} 
\end{equation}
Note that, by a density argument, conditions (i) and (ii) above hold once they are checked for smooth pairs $(\f,\phi)
\in \D(\Ob\setminus \Sigma_0)\times \D(\O;\R^d)$.

\medskip
The main point of this section consists in showing  the following equalities
\begin{equation} \label{Z0=I0}   I_0(f,\Sigma_0) = Z_0 = \min (\mathcal{P}) =  \inf (\mathscr{P})
	\end{equation}
that render the zero duality gap and moreover allow to choose truss structures $(\lambda_{\pi},\sigma_{\Pi})$ as minimizing sequences for the $\mathrm{(OM)} $ problem.
This result is established in Section \ref{duality} after a preliminary Section \ref{prelim} devoted to some properties of the 
convex set $\K$ and to the  duality properties of
the functionals $J$ and $\Jtwo$ which appear in problems $(\mathcal{P})$ and $(\mathscr{P})$, respectively. 	
 Before proceeding several remarks are in order:
 
 \begin{remark}\label{rem-dual}\ If we restrict the supremum in the right hand member of \eqref{dualOM} to pairs $(u,w)\in \K$ such that $w\equiv 0$, we recover $I(f,\Sigma_0)$ defined in \eqref{def:If} up to factor $\sqrt{2}$. This discrepancy is due to
the $\frac1{2}$ factor in front of $\nabla u\otimes\nabla u$ that we set in order to better fit to the mechanical viewpoint and in particular 
with applications of our theory to several issues in civil engineering mentioned in the introduction.
 \end{remark}
 
\begin{remark}\label{rem-relaxdual}\
 In contrast with the $\mathrm{(FMD)}$ problem where the dual attainment in \eqref{def:If} is straightforward, here a major difficulty
	is that we cannot ensure the existence of Lipschitz maximizers for \eqref{dualOM}. 
	Indeed, as will seen later, the convex subset  $\mathcal{K}$ is merely bounded in $C^{0,\frac1{2}}(\Omega)\times W^{1,1}(\Omega;\R^d)$
	and relaxed solutions solutions $(u,w)$ may appear as a limit of maximizing sequences and this limit could be even discontinuous.
	A complete characterization of the closure of $\mathcal{K}$ in $C_0(\Omega)\times L^1(\Omega;\R^d)$ that employs a suitable extension of the two-point condition \eqref{eq:two_point_condition}
	will appear in Section \ref{sec:geodesic} within the framework  of maximal monotone maps. \end{remark}
 
 \begin{remark}\label{rem-notruss}\ A major drawback of the two-point duality strategy  is the lack of solutions to  problem $(\mathscr{P})$ in the general case, which will be confirmed by a simple counter-example  (see Remark \ref{notruss}).  This failure of existence 
 is mainly due to  the fact that minimizing sequence $(\pi_n,\Pi_n) = (\alpha_n \, \Pi_n,\Pi_n)$, despite the natural estimate 
 $ \sup_n \Jtwo(\pi_n,\Pi_n) = \sup_n   \int  (1+ \frac{ \a_n^2}{2}) \, |x-y|\, \Pi_n(dxdy) <+\infty$, can exhibit 
 very large mass concentrations of $\Pi_n$ on the diagonal $\Delta$.
 The same kind of difficulty arises in the mathematical approach towards Michell's truss problem developed in \cite{bouchitte2008}.
 Nevertheless, we expect that for finitely supported loads $f$ a solution $(\pi,\Pi)$ always exists thanks to an extension argument for monotone maps that we present  in Section \ref{sec:geodesic} as a conjecture. Should it be true, the optimal discrete measure $\Pi$ induces an optimal design $\sigma_{\Pi}$ composed of bars (or strings) as it is confirmed analytically through the examples  
 in Section \ref{opticond} and numerically through the simulations in Section \ref{sec:numerics}.
 
 \end{remark}
	
\begin{remark}\label{pi>0}\
 The admissible set $\Atwo$ for $(\mathscr{P})$ \textit{a priori}  involves signed measures $\pi$
 while  $\sigma_\Pi\ge 0$ requires that $\Pi\ge 0$.
 In fact the infimum of  $(\mathscr{P})$ is unchanged if one restrict to  measures $\pi$
 of the kind $\pi=\alpha \Pi$ with $\a\ge 0$ or, as will appear more natural in the context of Monge-Kantorovich problem (see Section \ref{MK_sec}), with $\a\le 0$. Indeed, if $(\a \, \Pi, \Pi)$ is an element of the admissible set $\Atwo$, then
 so is $(|\a|\, \tilde{\Pi}, \tilde{\Pi})$ where $\tilde{\Pi}$ is defined by $\langle \tilde{\Pi}, \f \rangle := \int_{\a\ge 0} \f(x,y) \, d\Pi + \int_{\a< 0} \f(y,x) \, d\Pi$ for every $\f\in C^0(\Ob^2)$. Clearly this new  admissible pair shares the same energy i.e. $\Jtwo(|\a|\, \tilde{\Pi}, \tilde{\Pi})= \Jtwo(\a\, \Pi, \Pi).$  This will be also  the case for the admissible pair  $(-|\a|\, \widehat{\Pi}, \widehat{\Pi})$
 where $\widehat{\Pi}$ is the image of $\tilde{\Pi}$ under the map $(x,y)\mapsto (y,x)$.  

\end{remark}	
	

	\subsection{Preliminary results } \label{prelim} 
	
	\medskip
	\begin{lemma}
		\label{lem:two_point_condition}
		Let there be given $(u,w) \in \mathrm{Lip}(\Ob)^{1+d}$; then the pointwise constraint \eqref{pointwise} is equivalent to the two-point condition
		\eqref{eq:two_point_condition}. In particular they imply that the function $v:= \ident -w$ is monotone on $\Ob$. In addition
		the equality is reached in \eqref{eq:two_point_condition} for $(x,y)$ if and only if $u$
		and $\pairing{v(\cdot),y-x}$  are affine functions on $[x,y]$.
			\end{lemma}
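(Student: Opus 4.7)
For \eqref{pointwise}$\Rightarrow$\eqref{eq:two_point_condition}, the plan is to fix $x,y\in\Ob$ and exploit the convexity of $\Omega$ to parametrize the segment $[x,y]$ by $\gamma(t)=x+t(y-x)$, $t\in[0,1]$. After extending $(u,w)$ as Lipschitz to a neighborhood of $\Ob$ and mollifying, I may reduce to the smooth case and pass to the limit. The fundamental theorem of calculus then yields both
$u(y)-u(x)=\int_0^1 \nabla u(\gamma(t))\cdot(y-x)\,dt$ and
$\langle w(y)-w(x),y-x\rangle = \int_0^1 \langle (\nabla w)(\gamma(t))(y-x),y-x\rangle\,dt = \int_0^1 \langle e(w)(\gamma(t))(y-x),y-x\rangle\,dt$,
the last equality using the symmetry identity $\langle A\xi,\xi\rangle=\langle \tfrac{A+A^{\mathrm T}}{2}\xi,\xi\rangle$. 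Squaring the first identity via Cauchy--Schwarz (Jensen) yields $\tfrac12|u(y)-u(x)|^2\le \int_0^1 \tfrac12|\nabla u\cdot(y-x)|^2\,dt$; summing with the second identity and applying \eqref{pointwise} pointwise on $[x,y]$ gives
$$\tfrac12|u(y)-u(x)|^2+\langle w(y)-w(x),y-x\rangle
\le \int_0^1\bigl[\tfrac12|\nabla u\cdot(y-x)|^2+\langle e(w)(y-x),y-x\rangle\bigr]\,dt
\le |y-x|^2,$$
which is \eqref{eq:two_point_condition}.

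For the converse, I fix any point $x\in\Omega$ of differentiability of both $u$ and $w$ (a.e.\ by Rademacher's theorem) and any $\xi\in\R^d$; plugging $y=x+\varepsilon\xi\in\Ob$ into \eqref{eq:two_point_condition}, dividing by $\varepsilon^2$ and letting $\varepsilon\to 0^+$ yields $\tfrac12(\nabla u(x)\cdot\xi)^2 + \langle(\nabla w(x))\xi,\xi\rangle \le |\xi|^2$, which is exactly \eqref{pointwise} at $x$. The monotonicity of $v=\ident-w$ is then immediate from \eqref{eq:two_point_condition} by rewriting $\langle v(y)-v(x),y-x\rangle = |y-x|^2 - \langle w(y)-w(x),y-x\rangle \ge \tfrac12|u(y)-u(x)|^2 \ge 0$.

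For the equality case, if \eqref{eq:two_point_condition} is saturated at $(x,y)$, both inequalities in the chain above must be equalities. Saturation of Cauchy--Schwarz forces $\nabla u(\gamma(t))\cdot(y-x)$ to be constant (equal to $u(y)-u(x)$) a.e.\ on $[0,1]$, so $u$ is affine on $[x,y]$; saturation of \eqref{pointwise} a.e.\ along $[x,y]$ combined with this constancy then forces $\langle e(w)(\gamma(t))(y-x),y-x\rangle$ also to be constant a.e., and since this quantity equals $\tfrac{d}{dt}\langle w(\gamma(t)),y-x\rangle$ a.e., the map $t\mapsto\langle w(\gamma(t)),y-x\rangle$, and hence $t\mapsto\langle v(\gamma(t)),y-x\rangle$, is affine on $[0,1]$. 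The converse implication is obtained by substituting these affine forms back into the integral identities and reversing the chain. The principal technical obstacle throughout is the gap between \eqref{pointwise} holding only a.e.\ and \eqref{eq:two_point_condition} being required for every pair $(x,y)$; handling it cleanly requires either a careful mollification with a stability argument, or a Fubini-type selection of segments avoiding the exceptional set, then using continuity of $u$ and $w$ to extend to every pair.
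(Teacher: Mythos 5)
Your proof follows essentially the same route as the paper's: parametrize the segment $[x,y]$, apply the fundamental theorem of calculus along it, use Cauchy--Schwarz (Jensen) on the $u$-term, integrate \eqref{pointwise} along the segment; for the converse take $y = x + \varepsilon\xi$ at a point of differentiability and let $\varepsilon\to 0^+$; monotonicity of $v$ follows by rearranging \eqref{eq:two_point_condition}. You are actually slightly more careful than the paper on one point: the printed proof integrates $\pairing{\nabla u,\tau}$ and $\pairing{e(w),\tau\otimes\tau}$ along a \emph{specific} segment $[x,y]$ and then invokes \eqref{pointwise} a.e.\ there, without addressing the fact that \eqref{pointwise} holds only $\mathcal{L}^d$-a.e.\ and a given segment may lie entirely in the exceptional set. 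Your proposed remedy (mollify using convexity of $g$ as in Lemma \ref{density}/\ref{mollig}, or Fubini over lines and then use continuity of $(u,w)$) is the right way to close this.

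One flag on the equality case: your ``only if'' argument (Schwarz saturation gives $u$ affine; then a.e.\ saturation of \eqref{pointwise} along the segment forces $\pairing{e(w),\tau\otimes\tau}$ constant, hence $\pairing{v(\cdot),y-x}$ affine) is sound and matches the paper's argument via the scalar function $\varphi$. However, your closing claim that the ``if'' direction is ``obtained by substituting these affine forms back and reversing the chain'' does not go through as stated: affineness of $u$ and $\pairing{v(\cdot),y-x}$ on $[x,y]$ only yields $\tfrac12\bigl(u(y)-u(x)\bigr)^2 \le \pairing{v(y)-v(x),y-x}$, not equality, unless the slopes also satisfy the relation $\tfrac12\alpha^2 = \pairing{e(v),\tau\otimes\tau}$, which affineness alone does not enforce (already in $d=1$: $u' = a$, $v' = b$ constant with $\tfrac12 a^2 < b$ satisfies \eqref{pointwise} but the two-point inequality at $(x,y)$ is strict). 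Note, though, that the paper's own proof establishes only the ``only if'' implication, and that is the only direction used downstream (Remark \ref{u-affine}, the remark after Proposition \ref{OptiTwo}); so your proof is on par with the source here, but you should not assert that the converse follows by merely reversing the chain.
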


	\begin{proof}
		First let us assume that condition \eqref{pointwise} is satisfied. Let   $(x,y) \in \Ob^2$  be  any pair of distinct points and set $\tau = \frac{y-x}{\abs{y-x}}$. Since the trace of  $(u,w)$ on the segment  $[x,y]$ is Lipschitz continuous, we have
		\begin{align*}
		\pairing{w(y)- w(x),y- x} &=\abs{y- x}  \int_{[x,y]} \pairing{\nabla w,\tau} \, d\Ha^1 =\abs{y- x} \int_{[x,y]} \pairing{e(w),\tau \otimes \tau} \, d\Ha^1 \\
		\abs{u(y)- u(x)}^2 &= \abs{\int_{[x,y]}  \pairing{\nabla u,\tau } \,d\Ha^1}^2 \leq \abs{y- x} \int_{[x,y]} \pairing{\nabla u \otimes \nabla u, \tau \otimes \tau } \,d\Ha^1 ,
		\end{align*}
		where in the second line we used  Schwarz's inequality.
		From the above and taking \eqref{pointwise} into account, we deduce that
		\begin{equation*}
		\frac{1}{2}\, \abs{u(y)- u(x)}^2 + \pairing{w(y)- w(x),y- x} \leq  \abs{y- x} \int_{[x,y]} \pairing{\frac{1}{2}\,\nabla u \otimes \nabla u + e(w), \tau \otimes \tau } \,d\Ha^1 \  \leq \abs{y-x}^2.
		\end{equation*}
		Conversely assume that \eqref{eq:two_point_condition} holds and let us chose a point of differentiability $x\in \Omega$ for $(u,w)$.
		Then, by taking any   $\tau \in S^{d-1}$ and $y= x + h\, \tau$ for small $h>0$, we infer  directly from the two-point condition \eqref{eq:two_point_condition} that
		\begin{equation*}
		\pairing{\frac{1}{2}\,\nabla u(x) \otimes \nabla u(x)+ e(w),\tau \otimes \tau}
		=\lim_{h\to 0} \left\{ \frac{1}{2} \, \left(\frac{u(x + h \, \tau) -u(x)}{h} \right)^2 \! + \pairing{\frac{w(x+h\, \tau)-w(x)}{h},\tau} \right\} \leq 1.
		\end{equation*}
		Due to arbitrariness of $\tau$ we are led to the desired inequality \eqref{pointwise} since the differentiability of the Lipschitz map $(u,w)$ holds  a.e. $x\in \Omega$. The asserted equivalence is established. Moreover, by \eqref{eq:two_point_condition}, the function $v=\ident-w$ satisfies
		$\frac{1}{2}\, \abs{u(y)- u(x)}^2 \le \pairing{v(y)- v(x),y- x}$, hence the monotonicity property. 
	 In order to check the last statement let us consider a pair $(x,y)$ where  \eqref{eq:two_point_condition} holds with an equality. Then, the Schwarz's inequality mentioned above becomes an equality and therefore $\pairing{\nabla u,\tau }$ is a constant $\alpha$ on $[x,y]$. 
	Then the scalar Lipschitz function $\f(t)=\pairing{v(x+t \tau), \tau}$ satisfies $\f' \ge \frac{\alpha^2}{2}$ a.e. while 
	$\f(1)-\f(0)= \frac1{2} \frac{(u(y)-u(x))^2}{|y-x|^2} =  \frac{\alpha^2}{2}$. Thus $\f$ has a constant slope $\frac{\alpha^2}{2}$.  
	\end{proof}
	
	\begin{remark}  \ If $\Omega$ is a general domain the equivalence stated in Lemma \ref{lem:two_point_condition} is still valid if  we restrict the condition  \eqref{eq:two_point_condition}
		to those pairs $(x,y)$ satisfying $[x,y]\subset \Ob$. 
		
	\end{remark}

	\subsubsection*{ Construction and properties of the functional $J$}  As directly related to the constraint \eqref{pointwise}, we consider the following subset
	\begin{equation} \label{def:C}
	\C:=\left\{ (z,M)\in \R^d \times \Sdd\, :\, \frac1{2}\, z\otimes z +M\le \Ident   \right\}
	\end{equation}
	which can be seen as the level set $\{g\le 1\}$ of the function  $g: \R^d \times \Sdd \to \R_+$ defined by
	\begin{equation}\label{def:g}
	g(z,M) \ :=\ \rho^+ \!\left( \frac1{2}\, z\otimes z + M \right)\!,
	\end{equation}
	$\rho^+$ being the semi-norm on $\Sdd$ given by
	\begin{equation}\label{def:rho+}
	\rho_+ (A) \ :=\ \sup \Big\{ \pairing{S,A}\,:\, S\ge 0,\ \ \tr \, S\le 1 \Big\}.
	\end{equation}
		
	\begin{lemma}\label{rho+} 
		For $A\in \Sdd$ let  $\sigma_A$ be the set of eigenvalues of $A$. Then:
		\begin{itemize}[leftmargin=1.5\parindent]
			\item[(i)]  $\displaystyle \rho_+ (A)=  \min \{s\ge 0 : A \le  s\, \Ident\} = \max \big\{\lambda_+: \lambda \in \sigma_A \big\}$.
			\item[(ii)] Assume moreover that $\rho_+ (A)=1$ and by ${\rm i}_A$ denote the multiplicity of eigenvalue $1$.
			Then any $S$ optimal in \eqref{def:rho+} satisfies:
			$ \rk(S) \ \le\ {\rm i}_A \ \le \ \rk(A).$
			In particular, we have  $\rk(S) \le d-1$ if $A\not=\Ident$.
		\end{itemize}
		
	\end{lemma}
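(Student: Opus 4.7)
The plan is to reduce the supremum defining $\rho_+$ to a spectral computation via the spectral theorem, and then read off the rank information from the case of equality.

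For part (i), I would first observe that the admissible set $\{S\in\Sdd : S\ge 0,\ \tr S\le 1\}$ is the closed convex hull of $\{0\}\cup\{\xi\otimes\xi : |\xi|=1\}$ (by spectrally decomposing any such $S=\sum\lambda_i\,e_i\otimes e_i$ with $\lambda_i\ge 0$, $\sum\lambda_i\le 1$). Consequently
\[
\rho_+(A)\;=\;\max\Big(0,\ \sup_{|\xi|=1}\pairing{A\xi,\xi}\Big)\;=\;\max\{\lambda_+ : \lambda\in\sigma_A\},
\]
the zero coming from the possibility of choosing $S=0$ when $A$ is negative definite. The first equality in (i) then follows since $A\le s\,\Ident$ is equivalent to $\lambda_{\max}(A)\le s$, and the smallest such non-negative $s$ is precisely $\max(0,\lambda_{\max}(A))$.

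For part (ii), assume $\rho_+(A)=1$ and let $V_1\subset\R^d$ be the eigenspace of $A$ for the eigenvalue $1$, so $\dim V_1=\mathrm{i}_A$. Fix an orthonormal eigenbasis $(e_i)$ of $A$ with eigenvalues $\lambda_i\le 1$, and let $S\ge 0$ with $\tr S\le 1$ attain the supremum. Writing $S_{ij}=\pairing{Se_i,e_j}$, one has
\[
1\;=\;\pairing{S,A}\;=\;\sum_i\lambda_i S_{ii}\;\le\;\sum_i S_{ii}\;\le\;1,
\]
since $\lambda_i\le 1$ and $S_{ii}\ge 0$. Equality forces $\sum_i S_{ii}=1$ and $S_{ii}=0$ whenever $\lambda_i<1$, i.e.\ whenever $e_i\notin V_1$. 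For $S\ge 0$, the vanishing of a diagonal entry $S_{ii}=|S^{1/2}e_i|^2$ forces $Se_i=0$, so $\ker S\supset V_1^\perp$ and hence $\rk(S)\le\dim V_1=\mathrm{i}_A$. Since the eigenvalue $1$ is non-zero, every eigenvector in $V_1$ contributes to the rank of $A$, giving $\mathrm{i}_A\le\rk(A)$. Finally, $A\ne\Ident$ means $V_1\ne\R^d$, so $\mathrm{i}_A\le d-1$ and therefore $\rk(S)\le d-1$.

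There is no real obstacle here beyond carefully tracking the case of equality in the Cauchy--Schwarz--type estimate $\sum\lambda_i S_{ii}\le\sum S_{ii}$; the only subtle point is justifying that $S_{ii}=0$ implies $Se_i=0$ for positive semi-definite $S$, which I would invoke via the square-root $S^{1/2}$ as above.
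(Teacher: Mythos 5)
Your proof is correct, and for part (ii) it is in fact slightly more careful than the paper's own argument. The paper chooses an orthonormal basis $\{a_i\}$ that simultaneously diagonalizes both $A$ and the optimal $S$, i.e.\ it implicitly uses that an optimal $S$ must commute with $A$; this is true, but it is not a priori obvious and the paper does not justify it. You bypass the issue entirely by working with the diagonal entries $S_{ii}=\pairing{Se_i,e_i}$ in the eigenbasis of $A$ alone, and then using the observation that for $S\ge 0$ the vanishing of a diagonal entry forces the corresponding column to vanish (via $S_{ii}=|S^{1/2}e_i|^2$). That extracts $\ker S\supset V_1^\perp$ directly and gives $\rk S\le \mathrm{i}_A$ without ever invoking commutativity; as a by-product it also shows why the optimal $S$ does commute with $A$, filling the gap in the paper's version. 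For part (i) the two arguments are essentially the same: the paper evaluates the supremum on $S=a\otimes a$ and lets the reader supply the upper bound, while you make the reduction explicit via the extreme points of $\{S\ge 0,\ \tr S\le 1\}$. Both are fine; yours is a bit more self-contained. One minor note: to conclude $\mathrm{i}_A\le\rk A$ you correctly observe that the eigenvalue $1$ is non-zero; it is worth saying explicitly that $V_1\subset(\ker A)^\perp$ so that $\mathrm{i}_A=\dim V_1\le d-\dim\ker A=\rk A$.
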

	\begin{remark}\label{rankS}
		The rank-one property of optimal $\sigma=S\mu$ in the $\mathrm{(FMD)}$ problem  obtained in Proposition \ref{mop=fmd} can be seen as a consequence of Lemma \ref{rho+} applied to the rank-one tensor function $A=\nabla_\mu u\otimes \nabla_\mu u$. For $d=2$ note also that  $\rk(S)=1$ is true unless $A= \rho_+(A) \, \Ident$.
	\end{remark}
	\begin{proof} The assertion (i) is straightforward by evaluating the supremum in \eqref{def:rho+} with $S= a\otimes a$ where $|a|=1$.
		Let now $A$ and $S\ge 0$ be such that $1= \rho_+(A) = \pairing{S,A}$ and $\tr S =1$.
	We can chose an orthonormal base $\{a_i ,\, 1\!\le\! i\!\le\! d\}$ so that $A =\sum_i \lambda_i \, a_i\otimes  a_i $ and $S =\sum_i \mu_i\, a_i\otimes  a_i $ 
		where the real eigenvalues $\lambda_i, \mu_i$ satisfy:
		$$ 1= \lambda_1 = \max_i  \lambda_i, \qquad  \mu_i\in [0,1], \quad \sum_i \mu_i =1  .$$
		Then the equality $1=\pairing{S,A}$ is equivalent to $\sum_i \mu_i (1-\lambda_i) =0$. Since $\lambda_i\le 1$,  this implies that $\mu_i=0$
		for every index $i$ such that $\lambda_i<1$. Thus $\rk(S)= \sharp\bigl(\{ i : \mu_i>0\}\bigr) \le  \sharp\bigl(\{ i : \lambda_i=1\}\bigr)= i_A$.
	\end{proof}
	
	\begin{lemma}\label{CA}\ The function $g$ defined in \eqref{def:g} is convex continuous. Therefore $\C$ is
	a closed  (unbounded) convex subset of $\R^d \times \Sdd$. Its support function is given by
		\begin{equation}\label{C*}
		\ind_\C^*(\theta, S) =\begin{cases} \tr\, S + \frac1{2} \pairing{S, q\otimes q}  & \text{if $S\in \Sddp,\ q\in \Rd$  \ and \  $\theta = S q$,   }\\
		+\infty & \text{if $S\notin \Sddp$ \ or if \  $\theta \notin  \IM(S)$ }
		\end{cases}
		\end{equation}
		and we have the following lower bound: 
		\begin{equation}\label{coercif}
		\ind_\C^*(\theta, S)\ \ge\ \frac1{2}\, \tr\, S +  |\theta|.
		\end{equation}
	\end{lemma}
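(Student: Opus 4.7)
The plan is to break the statement into three parts: convexity/continuity of $g$ (hence closedness and convexity of $\C$), computation of the support function $\ind_\C^*$, and the lower bound.

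For the first part, I would observe that by the definition \eqref{def:rho+} of $\rho_+$ one has
\begin{equation*}
g(z,M) \ =\ \sup\Big\{\tfrac{1}{2}\pairing{Sz,z} + \pairing{S,M} \ :\ S\in \Sddp,\ \tr\, S\le 1 \Big\}.
\end{equation*}
For each fixed such $S$ the integrand is convex in $(z,M)$ (quadratic with PSD Hessian in $z$ and linear in $M$), so $g$ is convex as a supremum of convex functions. Being a convex function that is finite everywhere on a finite-dimensional space, $g$ is automatically continuous, and $\C = \{g\le 1\}$ is closed and convex. Unboundedness comes from the fact that taking $z=0$ and letting $M$ decrease in the Löwner order keeps $(0,M)\in\C$.

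For the support function, I would introduce the change of variables $A:=\frac{1}{2}z\otimes z + M$. By Lemma \ref{rho+}(i) the constraint $(z,M)\in\C$ is equivalent to $A\le\Ident$, so that
\begin{equation*}
\ind_\C^*(\theta,S)\ =\ \sup_{z\in\R^d}\Big\{\pairing{\theta,z} - \tfrac{1}{2}\pairing{S,z\otimes z}\Big\} \ +\ \sup_{A\le \Ident}\pairing{S,A}.
\end{equation*}
Writing $A=\Ident - B$ with $B\ge 0$, the second supremum equals $\tr\,S$ when $S\in\Sddp$ (optimum at $B=0$) and $+\infty$ otherwise: indeed if $S\not\ge 0$, choosing $\xi$ with $\pairing{S\xi,\xi}<0$ and $B=t\,\xi\otimes \xi$ with $t\to+\infty$ forces the supremum to $+\infty$. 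For the first supremum (assuming $S\ge 0$), the Euler equation $Sq=\theta$ has a solution iff $\theta\in\IM(S)$; if not, decomposing $\theta = \theta_1 + \theta_2$ along $\IM(S)\oplus\Ker(S)$ (which is an orthogonal decomposition since $S$ is symmetric) with $\theta_2\ne 0$, the direction $z=t\theta_2$ sends the expression to $+\infty$. When $\theta=Sq$, the concave quadratic attains its maximum at $z=q$ with value $\frac{1}{2}\pairing{Sq,q}=\frac{1}{2}\pairing{S,q\otimes q}$, well-defined since any other $q'$ with $Sq'=\theta$ differs by an element of $\Ker(S)\perp\IM(S)\ni\theta$. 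Combining gives formula \eqref{C*}.

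For the lower bound, when $\ind_\C^*(\theta,S)=+\infty$ it is trivial. Otherwise we have $S\in\Sddp$, $\theta = Sq$, and must show
\begin{equation*}
\tfrac{1}{2}\tr\,S + \tfrac{1}{2}\pairing{Sq,q}\ \ge\ |Sq|.
\end{equation*}
This follows from AM--GM once we prove $|Sq|^2 \le \tr\,S\cdot \pairing{Sq,q}$, which in turn comes from $|Sq|^2 = |S^{1/2}(S^{1/2}q)|^2 \le \|S^{1/2}\|^2_{\mathrm{op}}\cdot |S^{1/2}q|^2 = \lambda_{\max}(S)\cdot \pairing{Sq,q}\le \tr\,S\cdot \pairing{Sq,q}$. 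The main technical point is really the careful case analysis in the support function computation, especially distinguishing $\theta\in\IM(S)$ from $\theta\notin\IM(S)$; the convexity and the lower bound are each a one-line observation once the right algebraic identity is in place.
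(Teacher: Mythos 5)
Your proof is correct. The support-function computation is essentially the paper's (the paper nests the two suprema — first maximizing over $M\le\Ident-\frac{1}{2}z\otimes z$, then over $z$ — whereas you decouple via the substitution $A=\frac{1}{2}z\otimes z+M$, but the resulting case analysis and final formula are the same). Your convexity argument also parallels the paper's, differing only in that you represent $g$ as a supremum over the full set $\{S\ge0,\ \tr S\le 1\}$ while the paper uses only the extreme rank-one tensors $a\otimes a$, $|a|=1$; both are one-liners.

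The genuine divergence is the lower bound \eqref{coercif}. You prove it algebraically: reduce to $|Sq|^2\le\tr S\cdot\pairing{Sq,q}$ via the factorization $|Sq|^2=|S^{1/2}(S^{1/2}q)|^2\le\lambda_{\max}(S)\,\pairing{Sq,q}\le\tr S\cdot\pairing{Sq,q}$, then conclude by AM--GM. The paper instead observes that the right-hand side $\frac{1}{2}\tr S+|\theta|$ is the support function of $K_1:=\{(z,A):\ |z|\le 1,\ 2A\le\Ident\}$, checks that $K_1\subset\C$ (because $|z|\le1$ forces $z\otimes z\le\Ident$), and invokes monotonicity of support functions under inclusion. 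The paper's route avoids any spectral estimate and makes the bound transparent as a geometric comparison of convex sets; yours has the advantage of being entirely self-contained algebra, with the slight cost of needing the operator-norm bound $\|S^{1/2}\|^2_{\mathrm{op}}=\lambda_{\max}(S)\le\tr S$. Both are valid and about equally short.
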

	We notice that the expression given in \eqref{C*} for $\theta \in \IM(S)$ is independent of the choice of $q$ such that $\theta= S\, q$. With a small abuse of notation, we will sometimes write $\ind_\C^*(\theta, S)=\tr S + \frac1{2} \pairing{S^{-1} \theta,\theta}.$ 
	\begin{proof} \ It is is easy to check that $g$ is locally bounded and that  $g = \sup_{|a|=1} g_a$  where, for every $a\in\R^d$, $g_a$ is the convex continuous function  given by
		$$ g_a(z,M) := \frac1{2} |\pairing{z,a}|^2 + \pairing{ M a, a}.$$
		It follows that $g$ is convex continuous hence
		$\C$ is a  closed convex subset of $\R^d \times \Sdd$. 
		
		Let us now prove \eqref{C*}. First, by considering  pairs $(0, - s\, \Ident)$  which belong to $\C$ for $s>0$ being arbitrarily large,
		we infer that $\ind_\C^*(\theta, S) = +\infty$ unless $S\ge 0$. Next, assuming that $S\ge 0$, we  compute
		\begin{align*}
		\ind_\C^*(\theta, S) &= \sup_{z\in \R^d} \sup_{M\in\Sdd} 
		\left\{\pairing{\theta,z} + \pairing{M,S}\ :\ \frac1{2}\, z\otimes z +M\le \Ident  \right\}\\
		&=  \sup_{z\in \R^d}  \left\{\pairing{\theta,z} +  \sup_{M\in\Sdd} \Big\{\pairing{M,S}\ :\ M\le \Ident- \frac1{2}\, z\otimes z\Big\}   \right\} \\
		&=   \tr\, S  + \sup_{z\in \R^d}\left\{  \pairing{\theta,z} - \frac1{2} \, \pairing {Sz,z}   \right\}.
		\end{align*} 
		Clearly the supremum with respect to $z$ is infinite if $\theta$ is not orthogonal to the kernel of $S$. If it is not the case, then
		$\theta\in \IM(S)$ and the concave function $z\mapsto \pairing{\theta,z} - \frac1{2}  \pairing {Sz,z}$ achieves its maximum 
		at $z=q$ for $q$ being any solution of $S q=\theta$, thus rendering the maximum equal to $\frac1{2} \pairing{Sq,q}$.
		
		Eventually, we deduce \eqref{coercif} by noticing that the right hand side of the inequality coincides with the support function
		of $\bigl\{ (z,A): |z|\le 1 \ ,\ 2 A\le  \Ident \bigr\}$ which clearly is a subset of $\C$.
	\end{proof}
	\medskip
	
	Following \cite{Goffman}, to the one-homogeneous  integrand 
	$\ind_\C^*:\R^d \times \Sdd \to [0,+\infty]$ and any pair $(\lambda,\sigma)\in \Mes(\Ob;\R^d \times \Sdd)$ we can  associate a scalar measure defined for all Borel subsets $B\subset\Ob$ by:
	$$ \int_{B} \ind_\C^*(\lambda,\sigma)  :=  \int_{B} \ind_\C^*\Big(\frac{d\lambda}{dm},\frac{d\sigma}{dm}\Big) \, dm$$
	for $m$ being any measure in $\Mes_+(\Ob)$ such that $(\lambda,\sigma) \ll m$ (the choice of $m$ is immaterial due to the homogeneity of $\ind_\C^*$).
	By fixing $B=\Ob$ we obtain a functional depending on $(\lambda,\sigma)$: 
	\begin{equation}\label{defJ}
	J(\lambda,\sigma) := \int_{\Ob} \ind_\C^*(\lambda,\sigma)  \qquad\quad \forall\, (\lambda,\sigma)\in \Mes(\Ob;\R^d \times \Sdd). 
	\end{equation}
	Convex one-homogeneous functionals on measures of this type have been studied in \cite{bouvalIHP}. In particular, $J$ can be characterized in terms of the duality between $\Mes(\Ob;\R^d \times \Sdd)$ and $C^0(\Ob;\R^d\times\Sdd)$:
	\begin{lemma}\label{1hom}
		The following statements hold true:
		\begin{itemize}[leftmargin=1.5\parindent]
			\item [(i)] The functional $J$ is convex, weakly* lower semicontinuous and $1$-homogeneous on $\Mes(\Ob;\R^d \times \Sdd)$.
			Moreover, in order that  $J(\lambda,\sigma)<+\infty$, it is necessary that $\sigma \in \Mes(\Ob;\Sddp)$ and $\lambda \ll\mu:=\tr\, \sigma$. In this case 
			$$  J(\lambda,\sigma) = \begin{cases}   \int \tr \,\sigma + \frac1{2} \int \pairing{Sq,q}\, d\mu & \text{where  $\sigma= S\mu$ and $ \lambda=Sq\, \mu$}, \\
			+\infty & \text{if} \ \mu \Big( \big\{\frac{d\lambda}{ d\mu} \notin \IM(S)\big\} \Big)>0.
			\end{cases}$$
			\item [(ii)] $J$ is the support function of the set $\U_\C$ of continuous selections of $\C$ defined by:
			$$\U_\C:=\Big\{(\psi,\Psi)\in C^0(\Ob;\Rd \times \Sdd)\, :\, (\psi(x),\Psi(x))\in \C \ \ \forall \, x\in \Ob\, \Big\}.$$
			In particular, for every $\mu\in\Mes_+(\Ob)$ and  $(q,S)\in L^1_\mu(\Ob; \R^d\times \Sddp)$, we have 
			$$ J(q\mu,S\mu)  \ =\ \sup \left\{ \int \Big(\pairing{q,\psi} + \pairing{S,\Psi} \Big)\, d\mu \, :\, (\psi,\Psi)\in \mathcal{U}_\C\right\};$$
			\item [(iii)] Let $\Comp(\sigma)$ be the compliance functional defined in \eqref{eq:comp_FMD}. Then we have:
			\begin{equation}\label{dualcomp}
			\Comp(\sigma) + \int \tr \, \sigma \ =\ \min   \Big\{ J(\lambda,\sigma) \, : \,  \lambda \in \Mes(\Ob;\R^d), \ -\dive \lambda = f \text{ in $\D'(\R^d\setminus \Sigma_0)$} \Big\}.
			\end{equation}
			
		\end{itemize}
		
	\end{lemma}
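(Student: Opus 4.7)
My plan is to derive assertions (i) and (ii) from the general theory of convex $1$-homogeneous functionals of measures developed in \cite{bouvalIHP} and \cite{Goffman}, and then to obtain (iii) via a Fenchel duality argument in a Hilbert space canonically attached to $\sigma$.

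For (i), I would first observe that the integrand $\ind_\C^*$ computed in Lemma \ref{CA} is proper, convex, lower semicontinuous and positively $1$-homogeneous, being the support function of the closed convex set $\C$. The cited theory then immediately yields convexity, $1$-homogeneity, weak* lower semicontinuity and the fact that $J(\lambda,\sigma)$ is intrinsic (independent of the chosen dominating measure). To characterize the finiteness domain, I would pick a common base measure $m:=|\lambda|+|\sigma|$ and write $(\lambda,\sigma)=(\tilde\theta,\tilde S)\,m$. Since \eqref{C*} forces $\tilde S\ge 0$ and $\tilde\theta\in \IM(\tilde S)$ whenever $\ind_\C^*(\tilde\theta,\tilde S)<+\infty$, the finiteness of $J$ yields $\sigma \in \Mes(\Ob;\Sddp)$. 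Setting then $\mu:=\tr\,\sigma$ and $\sigma = S\mu$, the constraint $\tilde\theta\in \IM(\tilde S)$ forces the singular part of $\lambda$ with respect to $\mu$ to vanish and, upon writing $\lambda = Sq\,\mu$ with $q\in L^1_\mu(\Ob;\R^d)$, the explicit formula for $J(\lambda,\sigma)$ follows by substituting into \eqref{C*} and integrating.

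Assertion (ii) is essentially a restatement of the classical duality theorem in \cite{bouvalIHP}: the convex $1$-homogeneous weakly* l.s.c. functional $J$ coincides with the support function on $\Mes(\Ob;\R^d\times\Sdd)$ of the set $\U_\C$ of continuous selections of $\C$. The trivial inequality comes from the pointwise bound $\pairing{q(x),\psi(x)}+\pairing{S(x),\Psi(x)} \le \ind_\C^*(q(x),S(x))$, which holds by definition of the support function for every $(\psi,\Psi)\in \U_\C$ and integrates to the desired inequality after multiplying by $d\mu$. The reverse inequality rests on measurable selection combined with Lusin-type approximation: pick a Borel selection of $\C$ that approaches the supremum defining $\ind_\C^*$ pointwise up to $\varepsilon$, then approximate it by continuous elements of $\U_\C$, exploiting the fact that the constant multifunction $x\mapsto \C$ is trivially l.s.c. with closed convex values.

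For (iii), the plan is a Fenchel duality in the Hilbert completion $\Hs_\sigma$ of $\D(\R^d\setminus \Sigma_0)$ for the seminorm $u\mapsto (\int \pairing{\sigma,\nabla u\otimes\nabla u})^{1/2}$. By definition the compliance is the Legendre--Fenchel transform of $\tfrac{1}{2}\|\cdot\|_{\Hs_\sigma}^2$ applied to the linear form $u\mapsto \pairing{f,u}$, and its dual formulation reads
\begin{equation*}
\Comp(\sigma)\ =\ \min\Big\{\tfrac{1}{2}\int \pairing{\sigma,q\otimes q}\,:\, q\in L^2_\sigma,\ -\dive(\sigma q)=f\ \text{in $\D'(\R^d\setminus \Sigma_0)$}\Big\},
\end{equation*}
with attainment as soon as $\Comp(\sigma)<+\infty$ (by Lax--Milgram/reflexivity, the minimizer being identified with the unique maximizer $\bar u$ of the primal problem through $q=\nabla_\sigma \bar u$). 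If $\Comp(\sigma)=+\infty$ the admissible set on the right is empty and both sides equal $+\infty$. Setting $\lambda:=\sigma q$ and using the formula in (i) gives $J(\lambda,\sigma)=\int \tr\,\sigma + \tfrac{1}{2}\int \pairing{\sigma,q\otimes q}$, whereas $J(\lambda,\sigma)=+\infty$ for any $\lambda$ not of that form; adding $\int \tr\,\sigma$ on both sides of the duality identity then yields \eqref{dualcomp}. The main obstacle I expect is making this Fenchel duality rigorous when $\sigma$ is singular with respect to Lebesgue measure, in particular identifying the dual space as a subspace of $L^2_\sigma$, ensuring that the boundary condition $u=0$ on $\Sigma_0$ is correctly encoded by the distributional equation on the open set $\R^d\setminus \Sigma_0$, and verifying the attainment of the minimum when the support of $\sigma$ is low-dimensional.
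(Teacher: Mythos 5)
Your proposal follows essentially the same route as the paper: for (i), the key observation is the one you isolate ($\ind_\C^*(z,0)<+\infty\Rightarrow z=0$), applied through a common dominating measure and the Lebesgue decomposition of $\lambda$ with respect to $\mu=\tr\,\sigma$; for (ii), the paper simply cites the same results in \cite{bouvalIHP} and \cite{valadier}; and for (iii), the paper performs the same Fenchel duality in $L^2_\mu$. The one technical point you flag as an obstacle --- that the distributional equation $-\dive\lambda=f$ in $\D'(\R^d\setminus\Sigma_0)$ is equivalent to testing against $C^1(\R^d)$ functions vanishing on $\Sigma_0$ --- is precisely what the paper resolves by invoking the density result of Lemma \ref{density}, which supplies an equi-Lipschitz sequence in $\D(\R^d\setminus\Sigma_0)$ converging uniformly to any such test function; you should invoke that lemma to close the argument, and then attainment of the minimum is automatic by weak compactness of bounded sets in the Hilbert space $L^2_\mu$.
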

	\begin{proof}\ For (i),  we notice  that  $\ind_\C^*(z,0)<+\infty$ implies that $z=0$. Therefore if $\sigma =S\mu$ and  $\lambda = {\theta\, \mu + \theta_s \, m_s}$ is the Lebesgue decomposition of $\lambda$ with respect to $\mu$ (with $m_s \perp \mu$), then $J(\lambda,\sigma)= \int {\ind_\C^*(\theta,S)}\, d\mu +
	  \int {\ind_\C^*(\theta_s,0)}\, dm_s <+\infty $ implies that $\theta_s=0$, thus $(\lambda,\sigma) = (\theta,S)\,\mu$. The integral representation of $J(\lambda,\sigma)$ follows from
		the definition \eqref{defJ} and Lemma \ref{CA}.
		
		For the assertion (ii), we refer to \cite[Thm 5]{bouvalIHP} and \cite[Thm 11]{valadier}. Let us now establish assertion (iii). 
		Let $\sigma= S\mu$ with $S\in L^\infty_{\mu}(\Ob;\Sddp)$ and $\tr\, S=1$. In view of the expression for $J$ obtained in (i) we have to show that:
		$$  \inf \left\{ \frac1{2}\int  \pairing{Sq,q} \, d\mu\  :\  -\dive(Sq \mu)= f\ \text{ in $\D'(\R^d\setminus \Sigma_0)$}\right\}=
		\sup_{\substack{u\in C^1(\R^d)\\ u=0\, \text{on $\Sigma_0$}}} \left\{  \pairing{f,u} - \frac1{2}\int  \pairing{S\,\nabla u,\nabla u}\, d\mu \right\}.$$
		This equality follows from standard duality arguments in $L^2_\mu$ where in addition we exploit the density result of Lemma \ref{density} to show that the divergence condition  $-\dive(Sq \mu)= f$  in $\D'(\R^d\setminus \Sigma_0)$
		is equivalent to the equality  $\pairing{f,v}= \int \pairing{Sq, v} d\mu$ holding for every $v\in C^1(\R^d)$ that vanishes on $\Sigma_0$. 
	\end{proof}
	\subsubsection*{ Construction of the functional $\Jtwo$} \
	We introduce the closed (unbounded) convex subset of $\R^2$:
\begin{equation}\label{def:Ctwo}
 \Ctwo := \left\{ (s_1,s_2) \in \R^2 \, :  \, \frac{1}{2} \, (s_1)^2 + s_2\,  \leq 1 \right\}
\end{equation}
which is directly related with the  two-point condition \eqref{eq:two_point_condition}. Indeed, if for distinct pairs $(x,y) \in \Ob \times \Ob$ we set
\begin{equation}
	\label{defLambda}
	 \zeta_1(x,y) = u(y)-u(x), \qquad \zeta_2(x,y):=\pairing{w(y)-w(x),\frac{y-x}{\abs{y-x}}},
\end{equation}
then we can rewrite \eqref{eq:two_point_condition} as  $(\zeta_1,\zeta_2) \in \abs{x-y}\, \Ctwo$ for all $(x,y) \in \Ob\times\Ob$ (where $\zeta_2(x,y) = 0$ for $x=y$ by convention).


In what follows, a pair of scalar measures $(\pi, \Pi)$ on $\Ob \times \Ob$ will play the role of Lagrange multipliers for the two-point constraint.
The convex functional $\Jtwo(\pi,\Pi)$  involved in the truss problem $(\mathscr{P})$ 
will be associated with the support function $\ind_{\Ctwo}^*$ of the convex set $\Ctwo$. 
Recalling definition \eqref{def:C} and formula \eqref{C*}, it turns out that this convex one homogeneous integrand $\ind_{\Ctwo}^*$ is closely related to $\ind_\C^*$:
\begin{lemma}\label{Ctwo}  The support function of $\Ctwo$ is a one-homogeneous convex l.s.c. non-negative function given by
		\begin{equation}\label{Ctwo*}
		\ind_{\Ctwo}^*(t_1, t_2) =  \begin{cases}
		\left(1+\frac{1}{2}\,a^2 \right) t_2   & \text{if \ $t_2 \geq 0$ \ and \  $t_1 = a\, t_2$, }\\
		+\infty & \text{if \ $t_2 <0$ \ or if \ $t_2=0, t_1 \neq 0$ . }
		\end{cases}
		\end{equation}
		It satisfies the relation 
		\begin{equation}
		\label{Ctwo_C}
			\ind_{\Ctwo}^*(t_1, t_2) = \ind_\C^*(t_1\,\tau,\,t_2\, \tau \otimes \tau) \qquad \forall\, \tau \in S^{d-1}.
		\end{equation}	

%
\end{lemma}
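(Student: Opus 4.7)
The plan is to compute the support function of $\Ctwo$ directly from its definition, and then to verify the identity \eqref{Ctwo_C} either by reducing the computation of $\ind_\C^*(t_1\tau, t_2\tau\otimes\tau)$ to the same one-variable optimization, or by plugging the formula \eqref{C*} from Lemma \ref{CA} into the right-hand side.

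For the first part, I would write
\[
\ind_\Ctwo^*(t_1,t_2) \ =\ \sup\Big\{ t_1 s_1 + t_2 s_2 \ :\ (s_1,s_2)\in\R^2,\ \tfrac{1}{2} s_1^2 + s_2\le 1\Big\},
\]
and perform the maximization in two steps. First, for $s_1$ fixed, the variable $s_2$ ranges in $(-\infty,\, 1-\tfrac12 s_1^2]$, so the partial supremum is $+\infty$ if $t_2<0$, equals $t_1 s_1$ (with unconstrained $s_1$, hence $+\infty$ unless $t_1=0$) if $t_2=0$, and equals $t_2(1-\tfrac12 s_1^2)$ when $t_2>0$. This already accounts for the two "$+\infty$" cases. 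Second, when $t_2>0$, one is left with
\[
\sup_{s_1\in\R}\Big\{ t_1 s_1 + t_2 - \tfrac{t_2}{2}s_1^2\Big\},
\]
a strictly concave quadratic in $s_1$, attained at $s_1 = t_1/t_2$, yielding the value $t_2 + \frac{t_1^2}{2t_2} = \bigl(1+\tfrac12 a^2\bigr) t_2$ with $a = t_1/t_2$. This matches \eqref{Ctwo*} exactly.

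For the identity \eqref{Ctwo_C}, I would simply apply the formula \eqref{C*} from Lemma \ref{CA} to the pair $(\theta, S) = (t_1\tau,\, t_2\,\tau\otimes\tau)$. Since $\tau\otimes\tau$ is positive semi-definite, $S\in \Sddp$ iff $t_2\ge 0$. When $t_2<0$ we get $+\infty$ on both sides. When $t_2=0$ we have $S=0$ and $\IM(S)=\{0\}$, so $\ind_\C^*(t_1\tau,0)$ is finite (and equals $0$) iff $t_1 = 0$, again in agreement with \eqref{Ctwo*}. In the remaining case $t_2>0$, one has $\IM(S)=\R\tau$, so $\theta = t_1\tau\in\IM(S)$ and any solution $q$ of $Sq=\theta$ satisfies $\pairing{q,\tau} = t_1/t_2$; the simplest choice is $q = (t_1/t_2)\tau$. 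Plugging into \eqref{C*} gives
\[
\ind_\C^*(t_1\tau,\, t_2\,\tau\otimes\tau) \ =\ \tr(t_2\,\tau\otimes\tau) + \tfrac12 \pairing{t_2\,\tau\otimes\tau,\, q\otimes q} \ =\ t_2 + \tfrac{1}{2}\,\frac{t_1^2}{t_2},
\]
which is exactly $\ind_\Ctwo^*(t_1,t_2)$ from \eqref{Ctwo*}. Note that the result is independent of the choice of $q$, as anticipated in the comment following Lemma \ref{CA}, because any two solutions of $Sq=\theta$ differ by an element of $\Ker S$ and $\pairing{S,q\otimes q}=\pairing{Sq,q}$ depends only on $Sq$.

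There is essentially no obstacle here; the statement is a routine Fenchel-type computation, and the main point to be a bit careful about is keeping track of the three regimes $t_2>0$, $t_2=0$, $t_2<0$ so that both cases of \eqref{Ctwo*} (and the infinite value) are recovered. The convexity, lower semicontinuity, non-negativity, and one-homogeneity of $\ind_\Ctwo^*$ require no separate argument, being automatic for the support function of a closed convex set containing the origin.
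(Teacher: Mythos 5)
Your computation is correct, but the logical order is reversed relative to the paper. The paper first proves the identity \eqref{Ctwo_C} by a pure set-membership argument: for any $(z,M)\in\C$ the pair $(\pairing{z,\tau},\pairing{M,\tau\otimes\tau})$ lies in $\Ctwo$ (giving $\ind_{\Ctwo}^*\ge\ind_\C^*$ on the relevant inputs), and conversely any $(s_1,s_2)\in\Ctwo$ lifts to $(s_1\tau,s_2\tau\otimes\tau)\in\C$ (giving the reverse inequality); the explicit formula \eqref{Ctwo*} is then read off from \eqref{C*}, which was established in Lemma \ref{CA}. You instead compute $\ind_{\Ctwo}^*$ from scratch by the two-step maximization over $s_2$ and then $s_1$, and afterwards verify \eqref{Ctwo_C} by substituting $(t_1\tau,t_2\tau\otimes\tau)$ into \eqref{C*} and checking the three regimes. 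Both routes are valid and short; the paper's is slightly more economical since it avoids redoing a Legendre-transform computation already contained in Lemma \ref{CA}, while yours has the minor advantage of making \eqref{Ctwo*} self-contained and independent of the (harder) formula \eqref{C*}. Either way the content matches, and your verification of the three cases $t_2>0$, $t_2=0$, $t_2<0$ is complete and correct.
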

\begin{proof}\ The formula \eqref{Ctwo*} follows directly from \eqref{C*} once we have shown  \eqref{Ctwo_C}.
 For every $(z,M) \in \C$ and any $\tau \in S^{d-1}$ we have $ \frac{1}{2} \bigl(\pairing{z,\tau} \bigr)^2\! + \pairing{M,\tau\otimes \tau} = \pairing{\frac{1}{2}\, z\otimes z +M,\tau\otimes \tau} \leq 1$, thus $(s_1,s_2)=\bigl(\pairing{z,\tau}, \pairing{M,\tau \otimes \tau} \bigr)$ belongs to $\Ctwo$ and therefore 
  $$\ind_{\Ctwo}^*(t_1, t_2)\ \ge \ \sup \Big\{ t_1 \, \pairing{z,\tau} + t_2 \, \pairing{M,\tau \otimes \tau}\ :\ (z,M)\in \C \Big\}  =  \ind_\C^*(t_1\,\tau,\, t_2\, \tau \otimes \tau). $$
 To obtain the opposite inequality, we associate to any element $(s_1,s_2) \in \Ctwo$  the pair $(z,M)\in \C$ defined by $z= s_1\, \tau$ and  $s_2\, \tau \otimes \tau$. We are led to
 $$ \ind_\C^*(t_1\,\tau,t_2\, \tau \otimes \tau) \ \ge \ \sup \Big\{ s_1\, t_1 + s_2\, t_2 : (s_1,s_2) \in \Ctwo\Big\}  =  \ind_{\Ctwo}^*(t_1, t_2).$$ 
 \end{proof}

\medskip
Following the Goffman-Serrin construction, we may associate  to any
 $(\pi,\Pi) \in\Mes(\Ob\times\Ob;\R^2)$ the non-negative Borel measure  $\ind_{\Ctwo}^*(\pi,\Pi)$ ranging in $[0,+\infty]$.
  The functional $\Jtwo$ is defined as follows:
  \begin{equation}\label{defJtwo}
	\Jtwo(\pi,\Pi) := \int_{\Ob \times \Ob} \abs{x-y}\  \ind_{\Ctwo}^*\bigl(\pi, \Pi\bigr)(dxdy) \qquad \forall\, (\pi,\Pi) \in \Mes(\Ob\times\Ob;\R^2).
\end{equation}
\begin{lemma}\label{1hom_two}
 The functional $\Jtwo$ is convex, weakly* lower semicontinuous and $1$-homogeneous on $\Mes(\Ob\times\Ob;\R^2)$.
In order that  $\Jtwo(\pi,\Pi)<+\infty$, it is necessary that $\Pi \ge 0$ and $\pi \ll \Pi$ on $\Obsq$. In this case, it holds that
  \begin{equation}
\label{formula_Jtwo}
\Jtwo(\pi,\Pi) = \int_{\Ob\times\Ob} |x-y|\,  \left( 1 + \frac{1}{2} \left(\frac{d\pi}{d\Pi}\right)^2\right) \Pi(dx dy).
\end{equation}
Furthermore, for every $(\pi,\Pi) \in \Mes(\Ob;\R^2)$  we have the inequality
		\begin{equation}\label{J<Jtwo}
			J(\lambda_\pi, \sigma_\Pi) \ \le\ \Jtwo(\pi, \Pi).
		\end{equation}
%
%
		
		
%
%
%
\end{lemma}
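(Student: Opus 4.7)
The plan is to follow the blueprint of the proof of Lemma \ref{1hom}: express $\Jtwo$ as a support function, read off convexity, $1$-homogeneity and weak$^*$ lower semicontinuity from that representation, identify the integrand formula and necessary finiteness conditions from the explicit form \eqref{Ctwo*}, and finally compare $J$ with $\Jtwo$ through the two-point duality. I would begin by noting that since $\ind_{\Ctwo}^*$ is $1$-homogeneous and $|x-y|\ge 0$, one has $|x-y|\,\ind_{\Ctwo}^*(p,P) = \ind_{|x-y|\,\Ctwo}^*(p,P)$, so the integrand in \eqref{defJtwo} reads $\ind_{K(x,y)}^*(p,P)$ with $K(x,y):=|x-y|\,\Ctwo$ defining a continuous convex-valued multifunction on $\Ob\times\Ob$. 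The Goffman--Serrin machinery of \cite{bouvalIHP, valadier} then yields
\[ \Jtwo(\pi,\Pi) \ =\ \sup \Big\{\, \pairing{a,\pi} + \pairing{b,\Pi} \ :\ (a,b)\in \Utwo\,\Big\}, \]
with $\Utwo:=\{(a,b)\in C^0(\Ob\!\times\!\Ob;\R^2) : (a(x,y),b(x,y))\in |x-y|\,\Ctwo\ \forall (x,y)\}$. Convexity, $1$-homogeneity and weak$^*$ lower semicontinuity are then immediate, since $\Jtwo$ is a supremum of weakly$^*$ continuous affine functionals of $(\pi,\Pi)$.

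For the remaining claims of the first part, I would decompose $\pi$ and $\Pi$ with respect to a common reference $m\in \Mes_+(\Ob\!\times\!\Ob)$ dominating both $|\pi|$ and $|\Pi|$: writing $\pi=p\,m$, $\Pi=P\,m$, the functional becomes $\Jtwo(\pi,\Pi) = \int |x-y|\,\ind_{\Ctwo}^*(p,P)\,dm$. Formula \eqref{Ctwo*} shows that $\ind_{\Ctwo}^*(p,P)$ is finite exactly when $P>0$ or $(p,P)=(0,0)$, so finiteness of the integral away from the diagonal forces $P\ge 0$ and $p=0$ on $\{P=0\}$ throughout $\Obsq$, i.e.\ $\Pi\ge 0$ and $\pi\ll\Pi$ on $\Obsq$; on $\Delta$ the factor $|x-y|$ vanishes and contributes nothing. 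Substituting $p/P$ into \eqref{Ctwo*} then yields the expression \eqref{formula_Jtwo}.

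For \eqref{J<Jtwo}, I would exploit Lemma \ref{1hom}(ii), which represents $J$ as the support function of $\U_\C$. For every $(\psi,\Psi)\in \U_\C$, using the definitions \eqref{def:llpi}, \eqref{def:sigmaPi} and the expressions \eqref{def:sigma^xy} for $\la^{x,y}$ and $\sigma^{x,y}$, I would rewrite
\[ \pairing{\psi,\la_\pi} + \pairing{\Psi,\sigma_\Pi} \ =\ \int a(x,y)\,\pi(dxdy)\ +\ \int b(x,y)\,\Pi(dxdy), \]
with $a(x,y):=\int_{[x,y]}\pairing{\psi,\tau^{x,y}}\,d\Ha^1$ and $b(x,y):=\int_{[x,y]}\pairing{\Psi,\tau^{x,y}\!\otimes\tau^{x,y}}\,d\Ha^1$. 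Since $(\psi(z),\Psi(z))\in \C$ gives $\tfrac12 \pairing{\psi(z),\tau^{x,y}}^2 + \pairing{\Psi(z),\tau^{x,y}\!\otimes\tau^{x,y}} \le 1$ on $[x,y]$, integration along $[x,y]$ combined with the Cauchy--Schwarz step used in the proof of Lemma \ref{lem:two_point_condition} yields $\frac{a^2}{2|x-y|} + b \le |x-y|$, i.e.\ $(a,b)(x,y)\in |x-y|\,\Ctwo$. Continuity of $(a,b)$ in $(x,y)$ -- inherited from the uniform continuity of $\psi,\Psi$ on the compact set $\Ob$ together with the reparametrization $a(x,y)=\int_0^1 \pairing{\psi(x+t(y-x)),y-x}\,dt$ -- places the pair inside $\Utwo$, so the dual representation above bounds the sum by $\Jtwo(\pi,\Pi)$. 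Taking the supremum over $(\psi,\Psi)\in \U_\C$ then concludes.

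The step I expect to be most delicate is the support-function representation of $\Jtwo$ in the first paragraph, because the convex-valued map $(x,y)\mapsto |x-y|\,\Ctwo$ degenerates to $\{(0,0)\}$ on the diagonal $\Delta$. One must verify that continuous selections of this family remain abundant enough to approximate every admissible Radon--Nikodym density, in analogy with the density argument already required for $\U_\C$ in Lemma \ref{1hom}(ii). Once this is secured, the rest of the argument reduces to the explicit formula \eqref{Ctwo*} and the Cauchy--Schwarz inequality along segments, both of which are routine.
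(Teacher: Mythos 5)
Your proof is correct but follows a genuinely different route in two places. For weak$^*$ lower semicontinuity, the paper does not go through a support-function representation of $\Jtwo$ as the support function of $\Utwo$; instead it uses a layer-cake argument: it notes that $B\mapsto\ind_{\Ctwo}^*(\pi,\Pi)(B)$ is weak$^*$ l.s.c. for open $B$, applies this to the level sets $B_t=\{|x-y|>t\}$, and concludes via Fatou's lemma in $t$. This sidesteps the delicacy you flagged about $|x-y|\,\Ctwo$ degenerating to $\{(0,0)\}$ on $\Delta$ and keeps the Valadier selection machinery (essentially \cite[Theorem 5]{valadier}) out of this lemma; the paper invokes that theorem only later, in the proof of Proposition \ref{key}, where the very representation you propose is proved for a general compact $K$ -- so your argument front-loads that citation but is sound, provided you make it explicit. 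Your treatment of the Radon--Nikodym conditions and \eqref{formula_Jtwo} coincides with the paper's up to passing through a common dominating measure $m$ rather than decomposing $\pi$ with respect to $\Pi$ directly. For \eqref{J<Jtwo}, both approaches start from $J=\ind_{\U_\C}^*$ in Lemma \ref{1hom}(ii); you establish $(a,b)\in|x-y|\,\Ctwo$ by integrating the pointwise constraint along $[x,y]$ and applying Cauchy--Schwarz (mirroring Lemma \ref{lem:two_point_condition}), and then appeal to your dual representation of $\Jtwo$, whereas the paper, having already established \eqref{formula_Jtwo}, writes $\pi=\alpha\,\Pi$ on $\Obsq$ and applies the pointwise convex duality bound for $\ind_{\Ctwo}^*$ directly via \eqref{Ctwo_C} -- the same conclusion by slightly different means. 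The trade-off: your route packages a reusable dual representation of $\Jtwo$ up front, while the paper's is more self-contained within this lemma and defers the harder selection-duality argument to where it is genuinely needed.
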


\begin{remark} Although one has $J(\a \lambda^{x,y}, \sigma^{x,y})= \Jtwo( \a \delta_{(x,y)},  \delta_{(x,y)})$  for every pair $(x,y)$ and $\a \in\R$,
 the inequality \eqref{J<Jtwo} is strict in general. This happens for instance when $\Pi$ charges two pairs $(x,y)$ and $(x',y')$ such that  $\Ha^1([x,y] \cap [x',y'])>0$. We expect however that it is possible to avoid this discrepancy by substituting $(\pi,\Pi)$ with a suitable equivalent 
pair $(\tilde{\pi}, \tilde{\Pi})$ such that  $(\lambda_{\tilde{\pi}}, \sigma_{\tilde{\Pi}})= (\lambda_\pi, \sigma_\Pi)$ and no overlapping between segments $[x,y]$  occurs as $(x,y)$ runs over $\spt(\tilde{\Pi})$.
\end{remark}
\begin{proof} 
	Since the  integrand  $\ind_{\Ctwo}^*$ is convex, 1-homogeneous and lower semicontinuous from $\R^2$ to $[0,+\infty]$, 
	we know that, for every open subset $B \subset \Ob\times\Ob$, the map 
	$$ (\pi,\Pi)\in \Mes(\Ob\times\Ob;\R^2) \mapsto \ind_{\Ctwo}^*(\pi,\Pi)(B)\ $$
	is weakly* lower semicontinous. In particular applying this to each level set $B_t=\{|x-y|>t\}$, it follows  fom Fatou's lemma that 
	$$ \liminf_n \Jtwo(\pi_n,\Pi_n) =\liminf_n \int_0^{+\infty} \ind_{\Ctwo}^*(\pi_n,\Pi_n) (B_t)\, dt \ge\int_0^{+\infty}
	 \ind_{\Ctwo}^*(\pi,\Pi)(B_t)\, dt = \Jtwo(\pi,\Pi) $$
	 whenever  $(\pi_n,\Pi_n)\weak (\pi,\Pi)$, hence the desired lower semicontinuity property of $\Jtwo$.
Further, if $\pi= \alpha \, \Pi + \pi_s$ with $\pi_s \perp \Pi$ denotes the Lebesgue-Nikodym decomposition of $\pi$ with respect to $\Pi$, we have
$\ind_{\Ctwo}^*(\pi,\Pi)= \ind_{\Ctwo}^*(\alpha \, \Pi,\Pi)+ \ind_{\Ctwo}^*(\pi_s,0)$, hence by Lemma \ref{Ctwo} for every $t>0$:
\begin{align*}
 \ind_{\Ctwo}^*(\pi,\Pi)(B_t)= \begin{cases}  \int_{} (1+ \frac{\a^2}{2})\, d\Pi \ \ & \text{if \ $\Pi\mres B_t \ge 0$ \ and \   $|\pi_s|(B_t)=0$,} \\ +\infty & \text{otherwise}.
 \end{cases}\end{align*}
 Since $\cup_{t>0} B_t= \Obsq$, 
 by integrating with respect to $t$ between $0$ and $+\infty$,  we recover the conditions required for the finiteness of $\Jtwo(\pi,\Pi)$ and the 
 equality \eqref{formula_Jtwo}.
 
  Let us now prove \eqref{J<Jtwo}. It is not restrictive to assume that $\Jtwo(\pi,\Pi)<+\infty$ so that $\Pi\ge 0$ and $\pi= \a \Pi$ on 
  $\Obsq$ for a suitable Borel function $\a$.
%
Then, for any continuous selection $(\psi,\Psi)$ of the convex set $\C$ (see \eqref{def:C}), we have
	\begin{align*}
		\pairing{\lambda_\pi,\psi} + \pairing{\sigma_\Pi,\Psi} &= \int_{\Obsq} \Big(\a(x,y) \pairing{\lambda^{x,y},\psi} +\pairing{\sigma^{x,y},\Psi}\Big)\, \Pi(dx dy)\\
		&=\int_{\Obsq} \left( \int_{[x,y]} \Big(\a(x,y)\, \pairing{\tau^{x,y},\psi} + \pairing{\tau^{x,y} \otimes \tau^{x,y},\Psi} \Big) d\Ha^1 \right) \Pi(dx dy)\\
		&\leq \int_{\Obsq} \left( \int_{[x,y]} \ind_{\Ctwo}^*  \bigl(\a(x,y),1\bigr) d\Ha^1 \right) \Pi(dx dy)\\
		& = \int_{\Obsq} |x-y| \ \ind_{\Ctwo}^* \! \left(\pi,\Pi\right)\!(dxdy)  \le	\Jtwo(\pi,\Pi)
	\end{align*}
	where, to pass from  second to the third line,  we used that $\bigl(\psi(z),\Psi(z)\bigr)$ belongs to the convex set $\C$  for all $z \in [x,y]$ so that, in virtue of \eqref{Ctwo_C}, we have  $ \bigl(\pairing{\tau^{x,y},\psi(z)},\pairing{\tau^{x,y} \otimes \tau^{x,y},\Psi(z)} \bigr) \in \Ctwo$.
	We conclude with the desired inequality upon recalling the assertion (ii) of Lemma \ref{1hom}
	and by taking the supremum of the left hand side with repect to all pairs $(\psi,\Psi) \in \U_\C$. 	
	\end{proof}
\subsubsection*{A density result and generalized integration by parts}

We first need the following  approximation result:
	\begin{lemma}
		\label{density}
		Let us assume $(u,w) \in (\mathrm{Lip}(\Ob))^{1+d}$ such that $u=0$ on $\Sigma_0$ and  $w=0$  on $\bO$; then there exists a sequence $(u_n,w_n)$ and  and a constant $M>0$ such that: 
		\begin{itemize}[leftmargin=1.5\parindent]
			\item [(i)] \ $(u_n,w_n)\in \D(\R^d\setminus \Sigma_0)\times \D(\Omega)$ and  $(u_n,w_n) \to (u,w)$  uniformly in $\Ob$;
			\item[(ii)]\ $\mathrm{Lip}(u_n)  +  \mathrm{Lip}(w_n) \le M $;
			\item [(iii)] \ $\dis\limsup_{n\to\infty}\   \|g\bigl(\nabla u_n, e(w_n)\bigr)\|_{L^\infty(\Omega)} \le \| g\bigl(\nabla u, e(w)\bigr)\|_{L^\infty(\Omega)}$,
			where $g$ is defined in \eqref{def:g}. 
		\end{itemize}
	\end{lemma}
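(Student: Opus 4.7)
The plan is a four-step approximation: (a) introduce slack in the constraint by a homogeneous rescaling, (b) contract the supports of $u$ and $w$ by a homothety centred at an interior point of $\Omega$, (c) apply a scalar truncation to $u$ to force vanishing in a full neighbourhood of $\Sigma_0$, and (d) mollify, using the convexity of $g$ established in Lemma~\ref{CA} to preserve the pointwise bound asymptotically.

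First, the homogeneity $g(\lambda z,\lambda^2 M)=\lambda^2 g(z,M)$ visible from \eqref{def:g} shows that for every $\lambda\in(0,1)$ the pair $(\lambda u,\lambda^2 w)$ satisfies $g\bigl(\nabla(\lambda u),e(\lambda^2 w)\bigr)=\lambda^2 g\bigl(\nabla u,e(w)\bigr)$, creating strict slack. Next I fix $x_0\in\Omega$, set $T_\mu x:=x_0+(x-x_0)/\mu$ for $\mu\in(0,1)$, extend $w$ by zero outside $\Omega$ (a Lipschitz extension thanks to $w|_{\bO}=0$), and extend $u$ to a compactly supported Kirszbraun--McShane Lipschitz function $\tilde u$ on $\Rd$ with $\tilde u|_{\Sigma_0}=0$. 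Defining
\begin{equation*}
U^{\lambda,\mu}(x):=\lambda\mu\,\tilde u(T_\mu x),\qquad W^{\lambda,\mu}(x):=\lambda^2\mu\, w(T_\mu x),
\end{equation*}
$W^{\lambda,\mu}$ has compact support inside $x_0+\mu(\Ob-x_0)\Subset\Omega$, and on this contracted region the chain rule yields
\begin{equation*}
g\bigl(\nabla U^{\lambda,\mu},e(W^{\lambda,\mu})\bigr)(x)=\lambda^2\,g\bigl(\nabla u,e(w)\bigr)(T_\mu x)\le \lambda^2\Lambda,\qquad \Lambda:=\|g(\nabla u,e(w))\|_{L^\infty(\Omega)}.
\end{equation*}

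I then apply the truncation $\phi_\epsilon(t):=\mathrm{sign}(t)\,(|t|-\epsilon)_+$ and set $U^{\lambda,\mu,\epsilon}:=\phi_\epsilon(U^{\lambda,\mu})$. Since $|\tilde u(y)|\le L\,\mathrm{dist}(y,\Sigma_0)$ with $L=\Lip(u)$, this function vanishes on a full neighbourhood of $\Sigma_0$, and the pointwise inequality $\nabla U^{\lambda,\mu,\epsilon}\otimes\nabla U^{\lambda,\mu,\epsilon}\le \nabla U^{\lambda,\mu}\otimes\nabla U^{\lambda,\mu}$ a.e.\ ensures the constraint is not worsened. Finally I set
\begin{equation*}
u_n:=U^{\lambda,\mu,\epsilon}\ast\rho_\delta\in \D(\Rd\setminus\Sigma_0),\qquad w_n:=W^{\lambda,\mu}\ast\rho_\delta\in \D(\Omega),
\end{equation*}
for $\delta$ sufficiently small. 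Conditions (i) and (ii) are then standard upon letting $\lambda,\mu\to 1$ and $\epsilon,\delta\to 0$ in an appropriate order. For (iii), Jensen's inequality applied to the convex function $g$ gives
\begin{equation*}
g\bigl(\nabla u_n(x),e(w_n)(x)\bigr)\le \bigl\|\,g\bigl(\nabla U^{\lambda,\mu,\epsilon},e(W^{\lambda,\mu})\bigr)\bigr\|_{L^\infty(B(x,\delta))},
\end{equation*}
which on the contracted subdomain is bounded by $\lambda^2\Lambda$.

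The main technical obstacle is the bound of this $L^\infty$ quantity on the \emph{annular} region $\Omega\setminus(x_0+\mu(\Ob-x_0))$, where $e(W^{\lambda,\mu})=0$ but $\nabla U^{\lambda,\mu,\epsilon}$ need not vanish; a naive extension could there make $g$ as large as $\tfrac{\lambda^2}{2}L^2$, which is not controlled by $\Lambda$ in general. My plan is two-pronged: close to $\Sigma_0$ the value $|\tilde u|$ is already small and $\phi_\epsilon$ annihilates $U^{\lambda,\mu,\epsilon}$, while close to the free part $\bO\setminus\Sigma_0$ I would replace the Kirszbraun extension by a reflection-type extension across $\bO$ (after smoothing the boundary and approximating $\Omega$ from outside by smooth convex domains, which is licit by the convexity of $\Omega$), reflecting $w$ accordingly; the differential of such a reflection is orthogonal, and conjugation by an orthogonal matrix preserves the spectral quantity $\rho_+\bigl(\tfrac12\nabla u\otimes\nabla u+e(w)\bigr)$, so the constraint survives on the annulus up to $o(1)$ as $\mu\to 1$. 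Once this annular bound is in place, passing to the limit $\delta\to 0$, then coupling $\epsilon$ to $\mu$ so that both $\epsilon\to 0$ and $\mu\to 1$ with the truncation still killing $U^{\lambda,\mu,\epsilon}$ near $\Sigma_0$, and finally sending $\lambda\to 1$, yields the desired asymptotic inequality in (iii).
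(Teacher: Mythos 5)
Your overall strategy (rescale to create slack, contract supports, truncate near $\Sigma_0$, mollify exploiting convexity of $g$) is the same as the paper's, and steps (b)--(d) are essentially identical to the paper's Steps 2--3. The place where the two approaches diverge --- and where your proposal has a genuine gap --- is the extension of $u$ across $\bO$, which is the crux.

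You correctly identify the annular region $\Omega\setminus\bigl(x_0+\mu(\Ob-x_0)\bigr)$ as the obstacle: there $e(W^{\lambda,\mu})=0$, so $g$ reduces to $\tfrac{\lambda^2}{2}\abs{\nabla\tilde u}^2$, which a Kirszbraun--McShane extension of $u$ does not control by $\Lambda$ (indeed, with $u=Kx_1$ and $w=-\tfrac{K^2}{2}(x_1,0)$ one has $g(\nabla u,e(w))\equiv 0$ while $\Lip(u)=K$ is arbitrary). But the reflection fix you sketch does not close the gap, for two reasons. First, if you reflect $w$ across $\bO$ as well, the orthogonal-conjugation argument does preserve the constraint (on a flat boundary), but then $\tilde w$ is no longer identically zero outside $\Ob$, so $W^{\lambda,\mu}(x)=\lambda^2\mu\,\tilde w(T_\mu x)$ is no longer compactly supported in $\O$; an additional cutoff of $w$ near $\bO$ would introduce the uncontrolled term $\mathrm{sym}(\nabla\chi\otimes w)$ in $e(w_n)$, which you have not addressed. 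Second, if you keep the zero extension of $w$ to preserve the compact support, reflecting $u$ alone buys nothing: orthogonal conjugation gives $\abs{\nabla\tilde u}=\abs{\nabla u}$, so $g(\nabla\tilde u,0)=\tfrac12\abs{\nabla u}^2$ remains unbounded in terms of $\Lambda$. On top of that, $\bO$ is only convex, not $C^{1,1}$, so the differential of a reflection across $\bO$ is not orthogonal (curvature corrections), and the smoothing-from-outside approximation needed to repair this is a nontrivial scheme whose interaction with the $\mu,\lambda,\epsilon,\delta$ limits you never specify.

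The paper's Step 1 sidesteps all of this with a different extension: $\tilde w\equiv 0$ outside $\Ob$ and $\tilde u:=u\circ p_\O$, where $p_\O\in\mathrm{Lip}_1(\Rd;\Ob)$ is a $1$-Lipschitz retraction of $\Rd$ onto $\Ob$ (there the gauge-based one). The point is structural: with $\tilde w=0$ outside, the only thing that can hurt $g(\nabla\tilde u,0)$ in the annulus is the gradient of $\tilde u$, and the composition with $p_\O$ produces a gradient that lives in the range of $(Dp_\O)^T$, i.e.\ essentially the tangential gradient of $u$ along $\bO$. That tangential gradient \emph{is} controlled by $\Lambda$: since $w=0$ on $\bO$, for a tangential direction $\tau$ one has $\langle e(w)\tau,\tau\rangle=0$ on $\bO$, and the constraint $\tfrac12\nabla u\otimes\nabla u+e(w)\le\Lambda\Ident$ then yields $\abs{\langle\nabla u,\tau\rangle}\le\sqrt{2\Lambda}$. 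The paper packages this as the two-point condition (Lemma~\ref{lem:two_point_condition}) applied to $(u/\sqrt k,\,w/k)$ composed with the $1$-Lipschitz $p_\O$, which gives directly $\|g(\nabla\tilde u,e(\tilde w))\|_{L^\infty(\Rd)}=\|g(\nabla u,e(w))\|_{L^\infty(\Omega)}$, i.e.\ no loss at all across the boundary and no annular estimate to fight. You should replace the Kirszbraun extension and the reflection repair by this projection-type extension, and the remaining steps of your proposal then go through.
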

	
	%
	%
	
	\begin{proof}  Let $(u,w)$ be as given in the statement. It is not restrictive to assume that $0$ belongs to the interior of the convex set $\Omega$
		and by  $p_\O$ we denote the element of $\mathrm{Lip}_1(\R^d;\Ob)$ defined by
		$$  p_\O(x) = x \quad \text{if \ $x\in\Ob$}, \qquad   p_\O(x) = \frac{x} {j_\O(x)}\quad   \text{if \ $\R^d\setminus \Ob$}$$
		where $j_\O(x) = \min \bigl\{t \ge 0 : x \in t\, \Ob \bigr\}$ is the gauge of $\O$. 
		We proceed in three steps: 
		
		\med
		{\em Step1}\quad  We construct a Lipschitz extension $(\tilde u, \tilde w)$ of $(u,w)$ to whole $\R^d$ by setting
		$$  \tilde w = 0 \quad \text{in \ $\R^d\setminus \Ob$}, \qquad \tilde u = u \circ p_\O .$$
		This extension is bounded and satisfies 
		$$ \| g\bigl(\nabla \tilde u, e(\tilde w)\bigr)\|_{L^\infty(\R^d)}   \ =\ \| g\bigl(\nabla u, e(w)\bigr)\|_{L^\infty(\Omega)}.$$
		Indeed, if $k$ denotes the right hand side, then  $g\bigl(\nabla (\frac{u}{\sqrt{k}}) , e(\frac{w}{k}) \bigr)\le 1$ a.e. and by  
		Lemma \ref{lem:two_point_condition} we have the two-point condition
		\begin{align*}\frac{1}{2}\, \abs{\tilde u(y)- \tilde u(x)}^2 + \pairing{\tilde w(y)-\tilde  w(x),y- x} &\ =\ \frac{1}{2}\, \abs{u\bigl(p_\O(y)\bigr) -  u\bigl(p_\O(x)\bigr)}^2 + \pairing{ w\bigl(p_\O(y)\bigr)-  w\bigl(p_\O(x)\bigr),p_\O(y)- p_\O(x)}\\
		& \ \leq \ k\, \abs{p_\O(y) - p_\O(x)}^2 \  \leq\  k\,  \abs{y-x}^2
		\end{align*}  
		holding for all $(x,y)\in (\R^d)^2$. 
		
		\med
		{\em Step2}\quad In this step we construct a sequence   $(u_n,w_n)\in \mathrm{Lip}(\R^d;\R^{1+d})$ which fulfils conditions 
		(ii),\,(iii)  of the lemma, but instead of (i), we merely require that $\spt(u_n) \cap\Sigma_0=\varnothing$ and $\spt(w_n) \Subset \O$.
		To that aim, we consider a small parameter $\d>0$ and we contract the pair $(\tilde u, \tilde w)$  defined in Step 1 as follows:
		$$   u_\d (x) = \frac1{1+ \d} \ \tilde u\bigl((1+\d)\, x\bigr), \qquad   w_\d (x) = \frac1{1+ \d}\ \tilde w\bigl((1+\d)\, x\bigr).$$
		Clearly we have  $g\bigl( \nabla u_\d, e(w_\d)\bigr)(x)= g\bigl(\nabla \tilde u, e(\tilde w)\bigr)\big((1+\d)\,x\big)\le \| g\big(\nabla u, e(w)\big)\|_{L^\infty(\Omega)}$
		while $\spt(w_\d) \subset \frac{\Ob}{1+\d} \Subset \O$. By construction we also have 
		$u_\d=0$ on $\Sigma_0$ but unfortunately $\spt(u_\d)$ may touch a part of $\Sigma_0$.
		To remedy this we slightly modify $u_\d$ to arrive at $v_\d$ vanishing on $\{ |u_\d|\le \d\}$; we consider:
		$$   v_\d := u_\d -\d  \quad \text{on $\{ u_\d\le -\d\}$}, \qquad   v_\d := 0  \quad \text{on $\{ |u_\d|\le \d\}$}, \qquad  v_\d := u_\d +\d  \quad 
		\text{on $\{ u_\d \ge\d\}$}.$$ 
		Then, clearly, $v_\d$ is a Lipschitz function such that $\spt(v_\d) \cap\Sigma_0=\varnothing$. 
		Besides one easily checks that $ \nabla v_\d \otimes \nabla v_\d \le  \nabla u_\d \otimes \nabla u_\d$ a.e. hence $ g\big(\nabla v_\d, e(w_\d)\big)\le g\big( \nabla u_\d, e(w_\d)\big)$.
		Eventually, since $|u_\d-v_\d| \le \delta$ and $(\tilde u, \tilde w)$ is bounded and Lipschitz, we have that $(v_\d, w_\d)\to (u,w)$ uniformly in $\Ob$ while keeping the same Lipschitz constant. 
		Hence the Step 2 is completed by taking $(u_n,w_n) =  (v_{\d_n}, w_{\d_n}) $ as approximating sequence with $\d_n \to 0$.
		
		\med
		{\em Step3}\quad  To each pair $(u_n,w_n)$ constructed in Step 2  we apply a smooth convolution kernel $\theta_\e(x)= \e^{-d} \theta(\frac{x}{d})$ where $\theta$ is a radial symmetric element of $\D^+(\R^d)$ such that $\int \theta =1$. Since $\spt(w_n) \Subset \O$ and $\spt(u_n) \Subset \R^d\setminus \Sigma_0$,
		the pair $(u_n^\e, w_n^\e) :=(u_n * \theta_\e, w_n * \theta_\e)$ belongs to $\D(\R^d\setminus \Sigma_0)\times \D(\Omega)$  for $\e$ small enough
		and still satisfies condition (ii).
		On the other hand, since the integrand $g$ is convex lower semicontinuous, it follows from Lemma \ref{mollig} applied to 
		$\xi= \big(\nabla u_n, e(w_n)\big) $ that  
		$$   g\big( \nabla u_n^\e, e(w_n^\e)\big) \ \le\  g\big( \nabla u_n, e(w_n)\big) \ \le\  \| g\big(\nabla u, e(w)\big)\|_{L^\infty(\Omega)}.$$
		Then, by  using a classical diagonalization argument we may choose a sequence $\e_n\to 0$ so that $(u_n,w_n):=(u_n^{\e_n},w_n^{\e_n})$
		satisfies the required conditions (i),\,(ii),\,(iii). The proof of Lemma \ref{density} is complete.
			\end{proof}
	
	As a consequence of Lemma \ref{density}  elements   $(u,w)\in \mathcal{K}$  (see \eqref{def:calK}) which are characterized by condition 
		$\| g(\nabla u, e(w))\|_{L^\infty(\Omega)}\le 1$  can be approximated uniformly by smooth elements of $\K$ that satisfy (i),\,(ii),\,(iii).	Moreover,  by using tangential differential calculus (see Appendix \ref{appendix_mu_calculus}) with respect to a measure, we can deduce an integration by parts formula (see \eqref{intpartmu} below):
	\begin{corollary}\label{dualmu}
		Let  $(u,w) \in \K$ and $\mu\in \Mes_+(\Ob)$. Then:
		\begin{itemize}[leftmargin=1.5\parindent]
			\item[(i)] \ $\nabla_\mu u= 0$  \ $\mu$-a.e. in $\Sigma_0$ and $e_\mu(w)= 0$ \ $\mu$-a.e. in $\bO$;
						\item[(ii)] \ for all  $(\theta,S)\in L^1_\mu(\Ob; \R^d\!\times \!\Sddp)$
			with\,  $-\dive (\theta \mu) =f$ in $\D'(\Sigma_0^c)$ and $\DIV (S \mu)=0$ in $\D'(\Omega)$ it holds that:
			\begin{align}\label{intpartmu}
			&\pairing{f,u} = \int \pairing{\theta, \nabla_\mu u} \, d\mu + \int \pairing{S, e_\mu(w)} \, d\mu, \\
			\label {ineq0}
			& \pairing{\theta, \nabla_\mu u} + \pairing{S, e_\mu(w)} \le  \chi_\C^* (\theta,S) \quad \text{$\mu$-a.e.}
			\end{align} 
		\end{itemize}
	\end{corollary}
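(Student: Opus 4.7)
The plan is to reduce everything to the smooth approximation given by Lemma \ref{density}. Pick $(u_n,w_n) \in \D(\R^d\setminus\Sigma_0)\times \D(\Omega)$ with $(u_n,w_n)\to (u,w)$ uniformly in $\Ob$, uniform Lipschitz bound $M$, and $\limsup_n \|g(\nabla u_n, e(w_n))\|_{L^\infty(\Omega)} \le 1$. Since each $u_n$ vanishes in a neighborhood of $\Sigma_0$, one has $\nabla u_n\equiv 0$ there, hence $\nabla_\mu u_n=0$ $\mu$-a.e. on $\Sigma_0$; the same for $e_\mu(w_n)=0$ $\mu$-a.e. on $\bO$. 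Using the weak-$*$ stability of $\mu$-tangential derivatives under uniform convergence of equi-Lipschitz sequences (from Proposition \ref{Tmu} in the appendix), the vanishing property passes to the limit, yielding assertion (i).

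For the integration by parts formula \eqref{intpartmu}, test the distributional divergence conditions against the smooth approximations. Since $u_n\in \D(\R^d\setminus\Sigma_0)$ and $-\dive(\theta\mu)=f$ in $\D'(\R^d\setminus\Sigma_0)$, we get
\[
 \pairing{f,u_n}=\int \pairing{\theta,\nabla u_n}\, d\mu.
\]
Analogously, since $w_n\in \D(\O;\R^d)$ and $\DIV(S\mu)=0$ in $\D'(\O)$, we have $0=\int \pairing{S,e(w_n)}\, d\mu$. The divergence constraints $-\dive(\theta\mu)=f$ and $\DIV(S\mu)=0$ (tested against smooth fields normal to $T_\mu$) force $\theta\in T_\mu$ and $S$ to act as a $T_\mu$-valued tensor $\mu$-a.e.; invoking the tangential calculus (Proposition \ref{Tmu}), we may replace $\nabla u_n$ by $\nabla_\mu u_n$ and $e(w_n)$ by $e_\mu(w_n)$ in the two identities. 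Adding them and passing to the limit (using uniform convergence $u_n\to u$ on $\spt(f)$ together with the weak-$*$ convergence of the tangential derivatives against $\theta, S\in L^1_\mu$) yields \eqref{intpartmu}.

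For the pointwise inequality \eqref{ineq0}, one uses that the constraint $(u,w)\in \K$ is, by Lemma \ref{lem:two_point_condition}, equivalent to the two-point condition \eqref{eq:two_point_condition}, which is purely geometric and independent of $\mu$. The tangential calculus then transfers this constraint to
\[
\Big(\nabla_\mu u(x),\ e_\mu(w)(x)\Big)\in \C\qquad \mu\text{-a.e. }x,
\]
either directly, or by applying Lemma \ref{lem:two_point_condition} first to each smooth $(u_n,w_n)$ so that $(\nabla_\mu u_n, e_\mu(w_n))\in (1+\varepsilon_n)\,\C$ $\mu$-a.e., then passing to the limit using the weak-$*$ closedness of the convex set of selections of $\C$ (Lemma \ref{1hom}\,(ii)). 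Fenchel--Young applied pointwise with the support function $\chi_\C^*$ then gives \eqref{ineq0}.

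The main obstacle is the substitution of classical gradients by $\mu$-tangential ones in the integration by parts: it must be established that the divergence constraints on $\theta\mu$ and $S\mu$ genuinely force $\theta$, $S$ to lie in the corresponding $\mu$-tangent bundles so that the difference $\nabla u_n - \nabla_\mu u_n$ is annihilated in the pairing. This is exactly what Proposition \ref{Tmu} is designed to provide, and without it one could not close the limiting argument.
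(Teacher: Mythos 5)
Your argument is correct and follows essentially the same route as the paper: regularize via Lemma \ref{density}, use the tangentiality of $\theta$ and $S$ forced by the divergence constraints to replace classical derivatives by tangential ones, then pass to the weak-$*$ limit in $L^\infty_\mu$. Two small remarks. First, the fact that $-\dive(\theta\mu)=f$ forces $\theta(x)\in T_\mu(x)$ $\mu$-a.e.\ and $\DIV(S\mu)=0$ forces $P_\mu S P_\mu=S$ $\mu$-a.e.\ is not Proposition \ref{Tmu} but Proposition \ref{byparts}; Proposition \ref{Tmu} only constructs the tangential operators $\nabla_\mu$, $e_\mu$ and gives their weak-$*$ continuity. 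Second, for \eqref{ineq0} the paper passes to the limit directly in the localized integral inequality $\int_{B\setminus\Sigma_0}\pairing{\theta,\nabla u_n}\,d\mu+\int_{B\cap\O}\pairing{S,e(w_n)}\,d\mu\le\int_B\ind_\C^*(\theta,S)\,d\mu$ and then differentiates in $B$; your route of first establishing the pointwise constraint $(\nabla_\mu u,e_\mu(w))\in\C$ $\mu$-a.e.\ (by Mazur-type weak closedness of $\mu$-a.e.\ selections of the closed convex $\C$) and then applying Fenchel--Young pointwise is a valid alternative, and in fact yields a slightly stronger conclusion since it holds for arbitrary $(\theta,S)$ without the divergence hypothesis. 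Either way the proof closes.
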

	%
		
	\begin{proof} By Lemma \ref{density} there exits an equi-Lipschitz approximating sequence $(u_n,w_n)\in \K$ converging uniformly to $(u,w)$
		and such that $u_n, w_n$ are smooth and compactly supported in $\Sigma_0^c$ and $\O$ respectively. It follows that  
		\begin{equation}\label{Linftymu}
		\big(\nabla_\mu u_n, e_\mu(w_n) \big) \, \weakstar \,  \big(\nabla_\mu u, e_\mu(w)\big)  \quad \text{in $L^\infty_\mu(\Ob; \R^d\times \Sdd)$}.
		\end{equation}
		The assertion (i) is then a consequence of the fact that, for each $n$,  $\nabla_\mu u_n = P_\mu \nabla u_n$ vanishes on $\Sigma_0$ while 
		$e_\mu(w_n)= P_\mu \bigl(e(w_n)\bigr) P_\mu $ vanishes on $\bO$.
		
		Let now  $(\theta,S)\in L^1_\mu(\Ob; \R^d\!\times \!\Sddp)$ as given in the lemma. The condition  $-\dive (\theta \mu) =f$ in $\D'(\Sigma_0^c)$
		implies (see Proposition \ref{byparts}) that $\theta(x)\in T_\mu(x)$ for $\mu$-almost all $x$ in the open subset $\Sigma_0^c$. In the same way the condition
		$\DIV (S \mu)=0$ in $\D'(\Omega)$ implies that $P_\mu S P_\mu = S$ holds $\mu$-a.e. in $\O$.
		Therefore, recalling that $u_n$  vanishes in a neighbourhood of $\Sigma_0$, we have
		\begin{align*} 
		\pairing{f,u_n} =  \int_{\Ob\setminus \Sigma_0} \pairing{\theta, \nabla u_n} \, d\mu + \int_\O \pairing{S, e(w_n)} \, d\mu 
		=  \int_{\Ob\setminus \Sigma_0} \pairing{\theta, \nabla_\mu u_n} \, d\mu + \int_\O \pairing{S, e_\mu(w_n)} \, d\mu.
		\end{align*} 
		Passing to the limit $n\to\infty$ with the help of \eqref{Linftymu} and of the assertion (i)  we obtain  \eqref{intpartmu}.
		In order to derive \eqref{ineq0}, we start with the inequality:
		$$\int_{B\setminus \Sigma_0}  \pairing{\theta, \nabla u_n} \, d\mu + \int_{B\cap\O} \pairing{S, e(w_n)}\, d\mu  \le  \int_B \chi_\C^* (\theta,S)\, d\mu$$
		which holds for any Borel set $B\subset \Ob$ since  $(\nabla u_n, e(w_n)) \in \C$ everywhere in $\Ob$. 
		Passing to the limit with the same arguments as before we get
		$$\int_{B}  \pairing{\theta, \nabla_\mu u} \, d\mu + \int_{B} \pairing{S, e_\mu(w)}\, d\mu  \le  \int_B \chi_\C^* (\theta,S)\, d\mu$$
		thus \eqref{ineq0} follows by localizing. 
	\end{proof}

	\medskip

	\subsection{Primal and dual formulations }\label {duality}
	
	 We recall our primal problem 
	\begin{equation}\tag{{$\mathcal{P}$}}
		\inf_{(\lambda,\sigma) \in \Mes(\Ob; \R^d\times \Sddp) } \Big\{ J(\lambda,\sigma) \, : \, -\dive\, \lambda =f \quad \text{in $\R^d\setminus \Sigma_0$}, \ \ \DIV \,\sigma =0  \quad \text{in $\Omega$} \Big\}
	\end{equation}
	where $J$ is the functional on measures defined in \eqref{defJ}.
	By  minimizing  with respect to $\lambda$ first, with the help of \eqref{dualcomp} we immediately recover that $\inf (\mathcal{P})=Z_0$
	where $Z_0$ is the infimum of the reduced $\mathrm{(OM)}$ problem \eqref{reducedom}. 
	We also recall the truss variant of ($\mathcal{P}$) given by
	\begin{equation} \tag{{$\mathscr{P}$}}
		\inf\Big\{ \Jtwo(\pi,\Pi) : (\pi,\Pi) \in \Atwo \Big\}
	\end{equation}
	where $\Jtwo$ and $\Atwo$ are defined  by \eqref{defJtwo} and \eqref{defAtwo} respectively.
	In virtue of Lemma \ref{1hom_two} and of the definition of the admissible set $\Atwo$, we clearly have the inequality $\inf (\mathcal{P}) \le \inf (\mathscr{P})$.
	
	\medskip
	The dual problem is a relaxed version of the supremum problem \eqref{dualOM}
	that we shall write  as 
	\begin{equation}\tag{{$\mathcal{P}^*$}}
		\sup\left\{ \int u \, df \ : (u,w)\in \bK \right\}
	\end{equation} 
	where  $\bK$ denotes the closure of  the convex constraint $\mathcal{K}$, defined in \eqref{def:calK},
	as a subset of the Banach space $C^0(\Ob)\times L^1(\Omega)$. 
	The choice of this topology is induced by the following estimate result:
	
	\begin{lemma}\label{esti} \ Let $R$ denote the diameter of $\Omega$. Then, for every $(u,w)\in \mathcal{K}$ it holds that:
		\begin{itemize}[leftmargin=1.5\parindent]
			\item [(i)] \ $\|w\|_{L^\infty(\Omega)} \leq R$;
			\item[(ii)]\  $|u(x)-u(y)| \le \sqrt{2R}\,  |x-y|^{\frac1{2}}$ \ for every $(x,y)\in \Ob\times\Ob$; 
			\item[(iii)]\  $ \int_\Omega |Dw| + \int_\Omega |\nabla u|^2 \, dx  \le d \, |\Omega| + c_d \, R^d$ (with $c_d$ depending on $d$ only).
		\end{itemize}
		As a consequence, if $\Sigma_0$ is non-empty, $\mathcal{K}$ is a bounded convex subset
		of $(C^{0,\frac1{2}}(\Ob)\cap W^{1,2}(\Omega)) \times (W^{1,1}\cap L^\infty)(\Omega;\R^d)$, hence it is relatively compact
		in $C^0(\Ob)\times L^1(\Omega;\R^d)$.
	\end{lemma}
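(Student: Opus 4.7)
All three estimates will exploit the two-point reformulation from Lemma \ref{lem:two_point_condition} together with the induced monotonicity of $v:=\ident-w$ on $\Ob$. For \emph{(i)} I fix $x\in\Ob$ with $w(x)\neq 0$ and, using convexity of $\O$, pick $y\in\bO$ on the ray $\{x-t\,w(x)/|w(x)|:t>0\}$; then $|x-y|\le R$ and $w(y)=0$. The two-point inequality \eqref{eq:two_point_condition}, after dropping the non-negative term $\tfrac12|u(y)-u(x)|^2$, reduces to $|x-y|\,|w(x)|\le|x-y|^2$, which gives $|w(x)|\le R$.

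For \emph{(ii)} the idea is to sharpen the lower bound on $\pairing{w(y)-w(x),y-x}$ by using the full chord of $\O$ in the direction $\tau=(y-x)/|y-x|$. Fix distinct $x,y\in\Ob$ and let $z_1,z_2\in\bO$ be the endpoints of the maximal chord through $[x,y]$, with $z_1=x-s_1\tau$ and $z_2=y+s_2\tau$; since $[z_1,z_2]\subset\Ob$, one has $s_1+|y-x|+s_2\le R$. Because $w(z_1)=w(z_2)=0$, monotonicity of $v$ applied to the pairs $(x,z_1)$ and $(y,z_2)$ yields respectively $\pairing{w(x),\tau}\le s_1$ and $\pairing{w(y),\tau}\ge -s_2$, whence
\begin{equation*}
\pairing{w(y)-w(x),y-x}\ \ge\ -(s_1+s_2)\,|y-x|\ \ge\ -(R-|y-x|)\,|y-x|.
\end{equation*}
Feeding this back into \eqref{eq:two_point_condition} leaves $\tfrac12|u(y)-u(x)|^2\le R\,|y-x|$, which is \emph{(ii)}.

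For \emph{(iii)} I take the trace of the matrix inequality \eqref{pointwise} to obtain $\tfrac12|\nabla u|^2+\dive w\le d$ a.e.\ in $\O$; integrating and using $\int_\Omega\dive w=0$ (consequence of $w=0$ on $\bO$) gives $\int_\Omega|\nabla u|^2\le 2d|\O|$. For $\int_\Omega|Dw|$ I extend $v$ by $\ident$ on $\R^d\setminus\Ob$ to a continuous monotone map on the whole of $\R^d$ (compatibility at $\bO$ is automatic since $w=0$ there, and monotonicity across $\bO$ follows from the estimate $\pairing{w(x),x-z}\le|x-z|^2$ obtained in \emph{(i)}). I then invoke the classical $BV$ bound for monotone maps in $\R^d$, namely $|Dv|\le c_d\,\dive v$ as non-negative measures (see e.g.\ Alberti--Ambrosio); since $\int_\Omega\dive v=d|\O|$, this yields a $c_d|\O|$ bound on $\int_\Omega|Dv|$, hence on $\int_\Omega|Dw|$ after subtracting the contribution of $\ident$, and combining with $|\O|\le c_dR^d$ produces the stated form. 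Relative compactness of $\K$ in $C^0(\Ob)\times L^1(\O;\R^d)$ then follows: the $C^{0,\frac12}$ bound from \emph{(ii)} together with the Dirichlet condition $u=0$ on the non-empty $\Sigma_0$ (which yields $\|u\|_\infty\le\sqrt{2}\,R$ via $d(\cdot,\Sigma_0)\le R$) gives an equi-bounded equicontinuous family for Arzelà--Ascoli, while the $W^{1,1}\cap L^\infty$ bound on $w$ provides relative compactness in $L^1$ by standard $BV$ compactness.

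The main obstacle is the $BV$ bound on $w$: the pointwise constraint $e(w)\le\Ident$ alone cannot control the anti-symmetric part of $Dw$, and one must exploit the global monotonicity of the extended map $v$ via the classical monotone-map estimate $|Dv|\le c_d\,\dive v$, which is precisely the source of the dimensional constant in the $c_dR^d$ term of the statement.
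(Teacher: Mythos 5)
Your arguments for (i) and (ii) are correct and take essentially the same route as the paper; in (ii) you keep the term $-|x-y|$ in the chord estimate $s_1+s_2\le R-|x-y|$, which yields exactly $\tfrac12|u(y)-u(x)|^2\le R|x-y|$ and hence the stated constant $\sqrt{2R}$, whereas the paper's own write-up relaxes to $s+t\le R$ and only reaches $\tfrac12|u(y)-u(x)|^2\le 2R|x-y|$ (constant $2\sqrt{R}$); so your version is actually the tighter one.

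Part (iii), however, rests on a false inequality. There is no estimate of the form $|Dv|\le c_d\,\dive v$ (as non-negative measures) valid for all monotone maps: monotonicity gives $e(v)\ge 0$, which controls the symmetric part of $Dv$ by its trace, but says nothing pointwise about the antisymmetric part. The linear map $v(x)=(\Ident+\lambda J)x$, with $J$ a fixed antisymmetric matrix and $\lambda$ arbitrarily large, is maximal monotone (since $\pairing{v(x)-v(y),x-y}=|x-y|^2$) and has $\dive v\equiv d$, yet $|Dv|=|\Ident+\lambda J|\to\infty$ as $\lambda\to\infty$, so no universal constant $c_d$ can exist. What Alberti--Ambrosio actually prove (Proposition 5.1 together with Remark 5.2 of \cite{alberti1999}, the result cited in the paper) is the \emph{oscillation} bound
\begin{equation*}
\int_\Omega |Dv|\ \le\ C_d\,R^{d-1}\,\mathrm{osc}(v,\Omega),
\end{equation*}
and this is precisely why part (i) must be proved first: the $L^\infty$ bound on $w$ gives $\mathrm{osc}(v,\Omega)\le 3R$, so the oscillation estimate converts into the $c_d\,R^d$ term. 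You correctly identify the antisymmetric part of $Dw$ as the obstruction and that global monotonicity of the extension is the right tool, but the resolution is the oscillation estimate, not a divergence estimate. The remaining ingredients of your (iii) are sound: taking the trace of \eqref{pointwise} and using Green's formula to bound $\int_\Omega|\nabla u|^2$, the extension of $v$ by $\ident$ across $\bO$, and Arzel\`a--Ascoli together with $BV$ compactness for the final statement of the lemma.
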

	The complete characterization of the compact $\bK$ (as a subset of $C^0(\Ob) \times BV(\Ob;\R^d)$) will be given in  Section \ref{sec:geodesic}.
	\begin{proof} Let us take an element $(u,w) \in \mathcal{K}$. By Lemma \ref{lem:two_point_condition} the two point condition \eqref{eq:two_point_condition}
		holds for all $(x,y)\in \Ob\times\Ob$. In particular, we have $\pairing{w(y)-w(x),y-x} \le |x-y|^2$ while $w(y)=0$ for all  $y\in \bO$.
		Fix an  arbitrary $x\in \Omega$. Then, for every every $\tau\in S^{d-1}$, there exists
		a unique real $t\in (0,R)$  such that $x'= x + t \tau\in \bO$. Thus we get  $-\pairing{w(x),\tau} \le t\le R$ and  $ |w|(x) \le R $ as $\tau$ is arbitrary. 
		This proves (i). Furthermore, if $x,y\in \Omega$ are distinct and $\tau^{x,y}= \frac{y-x}{|y-x|}$, there exists $s,t >0$ such that $x'=x-s\tau^{x,y},\ y'=y + t\tau^{x,y}$
		belong to $\bO$ and $s+t+ |x-y| = |y'-x'|\le R$. We deduce from  the above that $-\pairing{w(y)-w(x),\tau^{x,y}}\le s+t \le R$, thus 
		$  - \pairing{w(y)-w(x),y-x} \le  R |x-y|.$
		Eventually the  assertion (ii) follows by exploiting again \eqref{eq:two_point_condition}:
		$$ \frac1{2}\, | u(y) - u(x)|^2\ \le\ |x-y|^2 -  \pairing{w(y)- w(x),y- x}\ \le\ 2R \, |x-y| .$$
		The assertion (iii) is a consequence of applying \cite[Prop 5.1]{alberti1999} to 
		the Lipschitz monotone map  $v:\R^d \to \R^d$ where  $v(x) :=x-w(x)$ in $\Ob$ and $v(x)=x$ in $\R^d\setminus\Ob$.
		Then, owing to Remark 5.2 in \cite{alberti1999}, it holds that
		$$ \int_\Omega |D v| \le C_d R^{d-1}\, \mathrm{osc}(v,\Omega)\qquad \Big(\mathrm{osc}(v,\Omega):=\sup \bigl\{|v(x) - v(y)|,\  (x,y) \in \Omega^2 \bigr\} \Big)  $$
		for a suitable universal constant $C_d$. Since by (i) the oscillation of $v$ in $\Omega$ is not larger than $2\, R$,  we get 
		$ \int_\Omega |D v| \le 2 C_d\, R^d$. On the other hand, \eqref{pointwise} implies that 
		$ \frac1{2}\, \nabla u\otimes \nabla u \le e(v)$, hence $\frac1{2}\, |\nabla u|^2 \le \DIV\, v $ 
		by taking the traces. All in all, we deduce 
		the estimate (iii) with $c_d= 6 C_d$.   
	\end{proof}
	
	As  a consequence of the compactness of $\bK$, the linear continuous form $(u,w)\in C^0(\Ob)\cap\times L^1(\Omega;\R^d) \mapsto \pairing {f,u}$ 
	achieves its maximum and  $(\mathcal{P}^*)$ admits (possibly non-Lipschitz) solutions. 
	In addition, by applying the  H\"older estimate (ii) and the Dirichlet condition $u=0$ on $\Sigma_0$, we get:
	\begin{equation}\label{supfini}
	\max\, (\mathcal{P}^*) = I_0(f,\Sigma_0)\ \le\ \sqrt{2} \ {\rm diam}(\Omega) \int |f|  <+\infty .
	\end{equation}
	
	An important tool for proving the  equalities announced in \eqref{Z0=I0} is  the following perturbation of the dual problem: 
	given $K$ being a closed subset of $\Ob\times\Ob$ we define  $h_K : (p,q) \in C^0(\Ob\times\Ob;\R^2) \to [-\infty,0]$ to  be  the functional defined by
		\begin{equation}\label{def:h} h_K(p,q):= \inf_{u,w}\ 
		 \left\{ - \pairing{f,u} \, \left\vert\, 
		 \def\arraystretch{1.2}
		 \begin{array}{ll}(u,w)\in \bigl({\rm Lip}(\Ob)\bigr)^{d+1}, \quad u=0 \ \text{in $\Sigma_0$}, \quad w=0 \ \text{in $\bO$,}\\
		 \bigl(\zeta_1(u)+p,\, \zeta_2(w) + q\bigr) \in \, |x-y|\, \Ctwo\quad 
\text{for all $(x,y)\in K$} \end{array}\right.
		 \right\}
		\end{equation}
		where 
		\begin{equation} \label{def:zeta} \zeta_1(u)(x,y) :=u(y)-u(x), \qquad \zeta_2(w)(x,y):=\pairing{w(y)-w(x),\tau^{x,y}}.\end{equation}
		
		\begin{proposition}\label{key} Let $K$ be a compact subset of $\Ob^2$ and define the (possibly empty) convex subset
		$\Atwo_K:=\bigl\{(\pi,\Pi)\in \Atwo \, : \,\spt\big((\pi,\Pi)\big) \subset K \bigr\}$.
		  Then:
		\begin{itemize}[leftmargin=1.5\parindent]
		\item [(i)] The conjugate of $h_K$ in the duality between $C^0(\Ob\times\Ob;\R^2)$ and $\Mes(\Ob\times\Ob;\R^2)$ is given by
$$  h_K^*(\pi,\Pi) =\begin{cases}  \Jtwo(\pi,\Pi) & \text{if $ \ (\pi,\Pi)\in \Atwo_K,$} \\
+\infty& \text{otherwise};
\end{cases} $$
\item [(ii)] $h_K$ is convex; it is finite at $(0,0)$ if and only if $\Atwo_K\cap \mathrm{dom}(\Jtwo)$ is non-empty. 
If this is the case and if moreover $K$ does not intersect the diagonal $\Delta$, 
then $h_K$ is continuous at  $(0,0)$  and there exists a minimizer for the problem
\begin{equation}\label{PK}
\inf \Big\{ \Jtwo(\pi, \Pi) \ :\ (\pi, \Pi) \in \A_K\Big\} \tag{$\mathscr{P}_K$}
\end{equation}

\item [(iii)] The function $h:=h_{\Ob\times \Ob}$ is finite and lower semicontinuous at $(0,0)$.

\end{itemize}

\end{proposition}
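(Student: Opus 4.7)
My plan is to treat the three parts in turn, all grounded in convex-duality computations with $h_K$.

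For (i), I will directly compute the Legendre-Fenchel conjugate of $h_K$ in the duality between $C^0(\Ob\times\Ob;\R^2)$ and $\Mes(\Ob\times\Ob;\R^2)$. The key observation is that the pointwise two-point constraint $(\zeta_1(u)+p,\,\zeta_2(w)+q) \in \abs{x-y}\Ctwo$ on $K$ can be dualized by introducing a pair of measures $(\pi,\Pi)$ on $K$ as a Lagrange multiplier: using the Goffman-Serrin-Valadier representation of $\Jtwo$ as a supremum of continuous selections (Lemma \ref{1hom_two}), one rewrites $h_K$ in the minimax form
\[
h_K(p,q) = \inf_{u,w}\sup_{(\pi,\Pi)}\Bigl\{-\pairing{f,u} + \!\int_K (\zeta_1(u)+p)\,d\pi + \!\int_K(\zeta_2(w)+q)\,d\Pi - \Jtwo(\pi,\Pi)\Bigr\},
\]
with the inner supremum taken over $(\pi,\Pi) \in \Mes(K;\R^2)$ with $\Pi\ge 0$. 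Swapping $\inf$ and $\sup$ via a Ky Fan / Sion argument (Theorem \ref{ky-Fan}) and evaluating the inner infimum over Lipschitz pairs $(u,w)$ subject to the boundary conditions, one finds that this inner infimum is $-\infty$ unless the pair $(\pi,\Pi)$ satisfies (i) and (ii) of \eqref{defAtwo}, in which case it equals $0$; combined with $\Pi \ge 0$ on $\Obsq$ (automatic when $\Jtwo(\pi,\Pi)<\infty$ by Lemma \ref{1hom_two}), this yields $h_K(p,q) = \sup_{(\pi,\Pi) \in \Atwo_K}\{\int p\,d\pi + \int q\,d\Pi - \Jtwo(\pi,\Pi)\}$, and taking conjugates establishes (i). The bound $h_K^* \le \Jtwo + \ind_{\Atwo_K}$ can also be argued directly using the Fenchel-Young inequality $\int(\zeta_1+p)\,d\pi+\int(\zeta_2+q)\,d\Pi \le \Jtwo(\pi,\Pi)$ on the admissible set, while the reverse bound on $\Atwo_K\cap\mathrm{dom}(\Jtwo)$ is obtained by testing $h_K$ against the explicit perturbation $(p,q) = (\alpha\abs{x-y},\,(1-\tfrac{1}{2}\alpha^2)\abs{x-y})$ (with $\alpha = d\pi/d\Pi$ continuously extended) paired with the trivial competitor $(u,w)=(0,0)$, which saturates the constraint on $\spt(\Pi)$.

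For (ii), convexity is clear since the admissible set for $(u,w,p,q)$ is jointly convex and the objective is linear in $u$. The finiteness criterion at $(0,0)$ follows from (i): if $(\pi_0,\Pi_0) \in \Atwo_K\cap\mathrm{dom}(\Jtwo)$, then $h_K(0,0) \ge -\Jtwo(\pi_0,\Pi_0) > -\infty$, while the trivial competitor gives $h_K(0,0) \le 0$; conversely, if $\Atwo_K \cap \mathrm{dom}(\Jtwo) = \varnothing$, formula (i) forces $h_K^* \equiv +\infty$, so the bipolar $h_K^{**}$ collapses to $-\infty$ and, combined with convexity and $h_K(0,0) \le 0$, this produces $h_K(0,0) = -\infty$. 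When $K\cap \Delta = \varnothing$, so that $\abs{x-y} \ge \delta > 0$ on $K$, the trivial competitor $(u,w)=(0,0)$ remains admissible for $h_K(p,q)$ as soon as $p^2/(2\abs{x-y}) + q \le \abs{x-y}$ on $K$, which holds uniformly on a small $C^0$-neighborhood of $(0,0)$; hence $h_K \le 0$ in such a neighborhood, and local boundedness of a convex function implies continuity there. Continuity of $h_K$ at $(0,0)$ ensures $\partial h_K(0,0) \neq \varnothing$, and any $(\pi^*,\Pi^*) \in \partial h_K(0,0)$ solves \eqref{PK} by the Fenchel-Young equality $\Jtwo(\pi^*,\Pi^*) = h_K^*(\pi^*,\Pi^*) = -h_K(0,0)$ together with the lower bound $\Jtwo(\pi,\Pi) \ge -h_K(0,0)$ valid for every $(\pi,\Pi) \in \Atwo_K$. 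Alternatively, a direct-method proof works: the coercivity $\Jtwo(\pi,\Pi) \ge \delta\,\Pi(K) + \tfrac{\delta}{2}\int\alpha^2 \,d\Pi$ together with Cauchy-Schwarz controls the total variations of any minimizing sequence, while Lemma \ref{1hom_two} provides the required weak-$*$ lower semicontinuity.

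For (iii), with $K = \Ob\times\Ob$, finiteness of $h(0,0)$ is immediate: the trivial competitor gives $h(0,0) \le 0$, and estimate \eqref{supfini} gives $h(0,0) = -I_0(f,\Sigma_0) \ge -\sqrt{2}\,\mathrm{diam}(\Omega)\int|f|$. For the lower semicontinuity I argue sequentially: given $(p_n,q_n) \to (0,0)$ uniformly on $\Ob^2$ and almost-minimizers $(u_n,w_n)$ with $-\pairing{f,u_n} \to \liminf_n h(p_n,q_n)$, I apply the estimates of Lemma \ref{esti} adapted to the perturbed two-point condition to obtain that $u_n$ is equi-bounded and equi-$\tfrac{1}{2}$-H\"older on $\Ob$ and that $w_n$ is equi-bounded in $L^\infty \cap BV(\Omega;\R^d)$; Arzel\`a-Ascoli extracts $u_n \to u$ uniformly (so $\pairing{f,u_n} \to \pairing{f,u}$), and BV-compactness gives $w_n \to w$ in $L^1(\Omega;\R^d)$. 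The delicate step is to show that $(u,w) \in \bK$: one passes to the limit in the perturbed two-point condition at Lebesgue points of $w$ and extends by continuity of $u$ to the full set $\Obsq$, whence $(u,w) \in \bK$ and $-\pairing{f,u} \ge h(0,0)$ follows from density of $\K$ in $\bK$. The main obstacle lies precisely in this limit-passage in $\zeta_2(w_n)$ near the diagonal, where the perturbations couple with the possibly singular behavior of $w_n$; a fully rigorous treatment either relies on a careful density argument at the points of discontinuity of $w$ or, more robustly, invokes the characterization of $\bK$ via maximal monotone maps developed in Section \ref{sec:geodesic}.
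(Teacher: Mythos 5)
Your parts (ii) and the direct (Fenchel--Young) version of (i) are essentially the paper's argument. Two points deserve caution.

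First, the minimax route you sketch for (i) is circular and, if it went through, would prove strictly more than the proposition claims. Writing $h_K = \inf_{u,w}\sup_{(\pi,\Pi)}\{\cdots\}$ and then swapping $\inf$ and $\sup$ would give $h_K(p,q) = \sup_{(\pi,\Pi)\in\Atwo_K}\{\pairing{p,\pi}+\pairing{q,\Pi}-\Jtwo(\pi,\Pi)\} = h_K^{**}(p,q)$ for \emph{every} $(p,q)$, i.e.\ $h_K$ would be convex l.s.c.\ everywhere, which is exactly the hard statement that (iii) is only able to establish at the single point $(0,0)$ and only for $K = \Ob\times\Ob$. The swap is not available here: neither the set of competitors $(u,w)$ nor the set of multipliers $(\pi,\Pi)$ is compact, so Ky Fan's theorem does not apply. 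The paper instead computes $h_K^*$ directly by the change of variables $(\tilde p,\tilde q)=(\zeta_1(u)+p,\zeta_2(w)+q)$ and splits the supremum into the indicator part $b=\ind_\Atwo$ and the integral part $a$, identified as $\Jtwo$ (on $K$) by \cite[Thm 5]{valadier}; this only gives $h_K^*$, never $h_K = h_K^{**}$. Your fallback argument via Fenchel--Young and the explicit test pair $(p,q)=(\alpha|x-y|,(1-\tfrac12\alpha^2)|x-y|)$ is on the right track, but you should note that $\alpha\in L^1_\Pi$ is not continuous, so some mollification/truncation of $\alpha$ is needed before this pair can be used as a $C^0$ perturbation; the Valadier representation theorem sidesteps this.

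Second, in (iii) your compactness for $(w_n)$ via BV-bounds and $L^1$-convergence (hence a.e.\ convergence along a subsequence) will pass to the limit in the two-point condition only for a.e.\ $(x,y)\in\O^2$, and the handling of the boundary condition $w=0$ on $\bO$ and of the jump set of the limit is left vague. The paper avoids this by using Kuratowski convergence of the graphs of $w_n$ (extended by zero outside $\O$): this produces a closed-graph multifunction $\tilde{\mbf w}$ with full domain, the two-point inequality passes to the limit for \emph{all} $(x,y)$ and all selections, and after convexification one gets a maximal monotone $\mbf{v}\in\mbf{M}_\O$ with $u(x)-u(y)\le\ell_{\mbf v}(x,y)$ on $\Ob^2$; then Proposition \ref{explicitKbar}(ii) gives $(u,\mbf v)$ admissible for the geometric dual $(\mathcal{P}_{\rm geo}^*)$, whence $-\pairing{f,u}\ge -I_0(f,\Sigma_0)=h(0,0)$. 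You correctly anticipate that the monotone-map machinery of Section \ref{sec:geodesic} is the robust route, and indeed that is precisely the paper's proof; but the ``pass to the limit at Lebesgue points'' alternative, as you state it, does not by itself deliver the pointwise inequality at boundary pairs nor membership in $\bK$, so it should not be presented as a complete alternative.
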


\begin{proof} \  Let us compute the conjugate of $h_K$ at $(\pi,\Pi)$. We have 
\begin{align*}
h_K^*(\pi,\Pi) =& \sup_{p, q, u, w}\ \left\{ \pairing{\pi,p} + \pairing{\Pi,q} +\pairing{f,u}\  \left\vert\,  \def\arraystretch{1.2}
\begin{array}{ll}(u,w)\in \bigl({\rm Lip}(\Ob)\bigr)^{d+1}, \quad u=0 \ \text{in $\Sigma_0$}, \quad w=0 \ \text{in $\bO$,}\\
\bigl(\zeta_1(u)+p,\, \zeta_2(w) + q\bigr) \in \, |x-y|\, \Ctwo\quad 
\text{for all $(x,y)\in K$} \end{array}\right. \right\}\\
=&\ \sup_{\tilde{p},\tilde{q}}\ \Big\{ \pairing{\pi,\tilde{p}} + \pairing{\Pi,\tilde{q}} \  :\ \left(\tilde{p},\tilde{q}\right) \in \, |x-y|\, \Ctwo\quad 
\text{for all $(x,y)\in K$}  \Big\}\\
&+ \sup_{u,w}\ \Big\{\pairing{f,u} - \pairing{\pi,\zeta_1(u)} - \pairing{\Pi,\zeta_2(w)} \ :\ (u,w)\in ({\rm Lip}\big(\Ob)\big)^{d+1},\ u=0 \ \text{in $\Sigma_0$},\ w=0 \ \text{in $\bO$}\Big\},
\end{align*}
where, to pass from the first to the second line,  we have set $(\tilde{p},\tilde{q}) =(\zeta_1(u)+p, \zeta_2(w) + q)$ which  
 in fact run over all $C_0(\Ob\times\Ob;\R^2)$ selections of the multifunction $|x-y|\, \Ctwo$ whatever is $(u,w)$. This allows to split the supremum into the sum $a(\pi,\Pi) + b(\pi,\Pi)$ where $a$ is the supremum with respect to $(\tilde{p}, \tilde{q})$ in the second line and  $b$ is the supremum with respect to $(u,w)$
 in the third line.
 By linearity, we see that $b(\pi,\Pi)$ coincides with the indicator  of the set of measures $(\pi,\Pi)$ satisfying  the conditions (i) and (ii) in \eqref{defAtwo}
 for Lipschitz test functions thus, by using a density argument, we get $b=\ind_\Atwo$.  
 Next we rewrite $a(\pi,\Pi)$ in the form
 $$ a(\pi,\Pi)= \, \sup \left\{ \int \zeta_1\, d\pi + \int \zeta_2 \, d\Pi \ :
\ \zeta_1,\zeta_2\in C^0(\Ob\times\Ob) \ ,\ (\zeta_1,\zeta_2)\in \Gamma(x,y) \ \text{on\ }  \Ob\times \Ob  \right\}$$
where $\Gamma(x,y) =|x-y|\, \Ctwo$ if $(x,y) \in K$ and  $\Gamma(x,y)=\R^2$ if $(x,y)\in \Ob^2\setminus K$. 
 It is easy to check that $\Gamma$ is lower semicontinuous as a multifunction ranging in the family of closed convex subsets of $\R^2$. The support function of $\Gamma(x,y)$ determines a one-homogeneous lower semicontinuous integrand  $\f:\Ob^2\times \R^2 \to [0,+\infty]$ given by
 $ \f\big((x,y),z\big) = |x-y| \, \ind_{\Ctwo}^*(z)$ if $(x,y)\in K$ and by $\f\big((x,y),z\big) = {\ind_{\{0\}}(z)}$ otherwise.
Then, by applying \cite[Theorem 5]{valadier} and by using once more the Goffmann-Serrin convention, we deduce that
 $$ a(\pi,\Pi) = \int_{\Ob^2} \f\big((x,y), (\pi,\Pi)\big) =  \begin{cases} \Jtwo(\pi,\Pi) & \text{if $\spt(\pi,\Pi) \subset K$,} \\ +\infty & \text{otherwise.}\end{cases}$$
Therefore, recalling that $h_K^*(\pi,\Pi) = a(\pi,\Pi) + b( \pi,\Pi)$, we recover the expression given by the assertion (i).

Lets us now prove the assertion (ii). The convexity of $h_K$ is straightforward due to convexity of the set of elements $\{(u,w), (p,q)\}$ which satisfy the constraint in \eqref{def:h}. Notice that $h_K\le 0$ (by taking $(u,w)=(0,0)$). On the other hand, in view of assertion (i),
 $\Atwo_K\cap \textrm{dom} (\Jtwo)$ is non-empty if and only if 
$\inf h_K^* <+\infty$. In this case we have  $h_K(0,0)\ge h_K^{**}(0,0)= - \inf h_K^*>-\infty$ and  the function $h_K$ is convex, proper
and finite at $(0,0)$.

Assume now that the compact subset $K$ is such that $K\cap \Delta$ is empty. Then there exists $\d>0$ such that $|x-y|\ge \d$ for all $(x,y)\in K$.
By evaluating the infimum in \eqref{def:h} with $(u,w)=(0,0)$ and  in view of \eqref{def:Ctwo}, we obtain that
$$  h_K(p,q) \le 0   \quad\text{whenever} \quad  \max_K \left\{\frac{p^2}{2} + q \right\} \le \d,$$
from which follows that the convex function $h_K$ is continuous at $(p,q)=(0,0)$.  An important consequence is that  $h_K^*$ attains its minimum
on $\Mes(\Ob\times\Ob;\R^2)$ and  that $-h_K(0,0)= - h_K^{**}(0,0)=  \min h_K^*$ (see Lemma \ref{pertu}), hence the assertion (ii).

 The proof of the assertion (iii) is delicate and  requires  technical tools about maximal monotone maps developed in Section \ref{sec:geodesic}. 
 Let $(p_n,q_n)$ be a sequence such that $(p_n,q_n)\to 0$ in $C^0(\Ob^2)$. It is not restrictive to assume that $\abs{p_n(x,y)}\leq 1$ and $\abs{q_n(x,y)}\leq 1$ for any $n$ and $(x,y) \in \Ob \times \Ob$ and that $\sup_n h(p_n,q_n)<+\infty$.
 By  definition \eqref{def:h}, we may choose $(u_n,w_n)\in C_{\Sigma_0}(\Ob) \times (C_0(\O))^d$ so that
\begin{equation}\label{masterineq1}   -\pairing{f,u_n} \le  h(p_n,q_n) +\frac1{n}, \qquad
 \frac1{2}|\zeta_1(u_n)+p_n|^2 + \, |x-y|\, \bigl(\zeta_2(w_n)+ q_n\bigr)  \le |x-y|^2 \quad \forall (x,y) \in (\Ob)^2\ .\end{equation}  
 In particular, recalling definition \eqref{def:zeta}, we deduce that
 $$ \pairing{w_n(y)-w_n(x),y-x} \leq \abs{x-y}^2 -q_n(x,y) \abs{x-y} \leq \abs{x-y}(\abs{x-y}+1). $$
 From this we deduce that $w_n$ is uniformly bounded by slightly modifying the proof of claim (i) in Lemma \ref{esti}. Indeed, by choosing for any  $x \in \Omega$, any vector $\tau \in S^{d-1}$ and a real $t$ such that $|t|\le \mathrm{diam}(\O)$ and $y= x+ t\tau\in \bO$, we obtain that $-\pairing{w(x),\tau} \leq t+1 \leq R+1$ where $R=\mathrm{diam}(\O)$. As $\tau$ is arbitrary, we arrive at the uniform bound $\sup \abs{w_n} \leq 1 + R$ yielding
 in particular that $\zeta_2(w_n)+ q_n \ge - 2 (R+1) -1$. 
 Going back to the inequalities in \eqref{masterineq1}, we obtain the following estimate on $(u_n)$ 
 	\begin{equation}
	\label{eq:equi-cont_u_n}
	\abs{u_n(y) - u_n(x) + p_n(x,y)} \leq C \sqrt{x-y} \qquad 	\forall (x,y) \in \Ob \times \Ob,
	\end{equation}
	where we can take $C = \bigl(6R + 6 \bigr)^{1/2}$. Since $p_n$ tends to zero uniformly in $\Ob$ 
	we infer equi-continuity of $\{u_n\}$ and, since 
	 $u_n = 0$ on  $\Sigma_0$, in virtue of Arzel\`{a}-Ascoli theorem,
   we may extract a subsequence $u_{n_k}$ that converges uniformly to a function $u\in C_{\Sigma_0}(\Ob)$  

 Next, upon extending $w_{n_k}$ by zero to whole $\Rd$, we may choose a subsequence (without further relabelling) $w_{n_k}$ that K-converges to a multifunction $\tilde{\mbf{w}}:\Rd \to 2^{\Rd}$ of a closed graph. This multifunction $\tilde{\mbf{w}}$ is of full domain since for each $x \in \Rd$ the set $\{w_{n_k}(x)\}_k$ is bounded and hence the sequence $\bigl(x,w_{n_k}(x)\bigr)$ admits a cluster point. Moreover, obviously we have $\tilde{\mbf{w}} = \{0\}$ on $ \Ob^{\,c}$.
	For arbitrary pair $(x,y) \in \Ob \times \Ob$ we choose any $\hat{x} \in \tilde{\mbf{w}}(x)$ and $\hat{y} \in \tilde{\mbf{w}}(y)$. Then there exists a sequence $(x_k,y_k) \in \Ob \times \Ob$ such that $(x_k,y_k) \rightarrow (x,y)$ and $\bigl(w_{n_k}(x_{n_k}),w_{n_k}(y_{n_k})\bigr) \rightarrow (\hat{x},\hat{y})$. Directly from \eqref{masterineq1} we have
	\begin{align*}
	\frac{1}{2}\bigl(u_{n_k}(y_{k}) - u_{n_k}(x_{k}) + p_{n_k}(x_{k},y_{k}) \bigr)^2 + \pairing{w_{n_k}(y_{k})-w_{n_k}(x_{k}),y_{k}-x_{k}}
	+ q_{n_k}(x_{k},y_{k}) \, \abs{x_{k}-y_{k}} \leq \abs{x_{k}-y_{k}}^2.
	\end{align*}
	Since $u_n$, $p_n$ and $q_n$ converge uniformly on $\Ob$ we find that in the limit
	\begin{equation}
	\label{eq:two_point_cond_in_the_limit}
	\frac{1}{2}\, \abs{u(y)-u(x)}^2 + \pairing{\hat{y}- \hat{x},y-x} \leq \abs{x-y}^2 \qquad \forall\, \hat{x} \in \tilde{\mbf{w}}(x), \ \hat{y} \in \tilde{\mbf{w}}(y), \quad \forall (x,y) \in \Ob \times \Ob
	\end{equation}
	rendering $\tilde{\mbf{v}}:= \ident -\tilde{\mbf{w}}$ a monotone multifunction of closed graph and of full domain.
	 In case $\tilde{\mbf{v}}$ is not maximal we define $\mbf{v}(x) = \mathrm{co}\bigl(\tilde{\mbf{v}}(x) \bigr)$ arriving at a maximal monotone map $\mbf{v} \supset \tilde{\mbf{v}}$ according to \cite[Corollary 1.4]{alberti1999} ($\mbf{v}$ has a closed graph due to the finite dimension of the domain). 
	Still we have  $\mbf{v}(x)= \{x\}$ for $ x\in \R^d\setminus \Ob$ while \eqref{eq:two_point_cond_in_the_limit} 
	implies that 
	$$ \frac{1}{2}\, \abs{u(y)-u(x)}^2 \le \pairing{\hat{y}- \hat{x},y-x}  \qquad \forall\, \hat{x} \in \mbf{v}(x), \ \hat{y} \in \mbf{v}(y), \quad \forall (x,y) \in \Ob \times \Ob. $$
	Eventually, by taking the infimum with respect to admissible pairs $(\hat{x},\hat{y})$ and 
	with the notations introduced in Section \ref{sec:geodesic}, we arrive at the condition
	$ u(x)- u(y) \le \ell_{\mbf{v}}(x,y)$ holding for all $(x,y)\in \Ob\times\Ob$, 
	which implies the metric inequality  (see assertion (ii) of Proposition \ref{explicitKbar})
	$$ u(x)- u(y) \le c_{\mbf{v}}(x,y) \quad \forall (x,y)\ \in \Ob\times\Ob .$$ 
	Then, keeping in mind that $u\in C_{\Sigma_0}(\Ob)$ while $\mbf{v}$ is an element of $\mbf{M}_\O$,
	$(u,\mbf{v})$ is a competitor for the geometric  version \eqref{revised-dual} of the dual problem. Therefore $\pairing{f,u} \le I_0(f,\Sigma_0)$
	and 
	the desired lower semicontinuity inequality follows since, by the first inequality in \eqref{masterineq1}, we have	
	$$ \liminf_n h(p_n,q_n)\, \ge\,  \liminf_n \big\{-\pairing{f,u_n} \big\} = - \, \pairing{f,u} \, \ge\,  -I_0(f,\Sigma_0)\, = \, h(0,0) .$$

\end{proof}

We are now in position to state the main result of Section  \ref{sec:optimembrane}:

	\begin{theorem}\label{Z=I} \ The following (no-gap) equalities hold:
		$$\max (\mathcal{P}^*) \ = \  \inf (\mathcal{P}) \ =\ \inf (\mathscr{P}) $$ 
			Moreover the problem $(\mathcal{P})$ admits solutions and any minimizer  $(\lambda,\sigma)$ vanishes on $\Sigma_0$.
	\end{theorem}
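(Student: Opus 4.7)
The plan is to combine a weak-duality inequality obtained through integration by parts with the no-gap result already encoded in Proposition \ref{key}, and then to prove attainment in $(\mathcal{P})$ via the direct method together with a truncation argument to handle the vanishing of minimizers on $\Sigma_0$.

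First I would establish the chain of weak inequalities $\max(\mathcal{P}^*) \le \inf(\mathcal{P}) \le \inf(\mathscr{P})$. The second inequality is immediate: for any $(\pi,\Pi) \in \Atwo$ the pair $(\lambda_\pi,\sigma_\Pi)$ lies in $\A$ by the very definition of $\Atwo$, and $J(\lambda_\pi,\sigma_\Pi) \le \Jtwo(\pi,\Pi)$ by \eqref{J<Jtwo}. For the first inequality I fix $(u,w) \in \mathcal{K}$ and $(\lambda,\sigma) \in \A$ with $J(\lambda,\sigma) < +\infty$; by Lemma \ref{1hom} we have $(\lambda,\sigma) = (\theta,S)\mu$ with $\mu = \tr\sigma$, $\theta \in \IM(S)$ a.e., and Corollary \ref{dualmu} yields both the integration-by-parts formula \eqref{intpartmu} and the pointwise inequality \eqref{ineq0}, so that $\pairing{f,u} \le J(\lambda,\sigma)$. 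Because $(u,w) \mapsto \pairing{f,u}$ is continuous on $C^0(\Ob) \times L^1(\Omega;\R^d)$ and $\bK$ is by definition the closure of $\mathcal{K}$ in this Banach space, a passage to the limit extends the inequality to every $(u,w) \in \bK$, whence $\max(\mathcal{P}^*) \le \inf(\mathcal{P})$.

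Next I close the chain by invoking Proposition \ref{key} applied to $K := \Ob \times \Ob$. Writing $h := h_{\Ob\times\Ob}$, the definition \eqref{def:h} combined with Lemma \ref{lem:two_point_condition} (which identifies the two-point condition on all of $\Ob \times \Ob$ with the pointwise condition \eqref{pointwise}) gives $h(0,0) = -\sup\{\pairing{f,u} : (u,w) \in \mathcal{K}\}$; by the same continuity/closure argument as above this sup agrees with $\max(\mathcal{P}^*) = I_0(f,\Sigma_0)$, so $h(0,0) = -\max(\mathcal{P}^*)$ is finite. Assertion (i) of Proposition \ref{key} identifies $h^*$ as $\Jtwo$ on $\Atwo$ and $+\infty$ otherwise, so $\inf h^* = \inf(\mathscr{P})$. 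Assertion (iii) ensures that $h$ is convex, proper, and lower semicontinuous at the origin, so the standard Fenchel-Moreau argument (see Lemma \ref{pertu} in the appendix) gives $h(0,0) = h^{**}(0,0) = -\inf h^*$, i.e. $\max(\mathcal{P}^*) = \inf(\mathscr{P})$. Combining with the weak inequalities established above yields the desired $\max(\mathcal{P}^*) = \inf(\mathcal{P}) = \inf(\mathscr{P})$.

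For existence in $(\mathcal{P})$, I apply the direct method on $\Mes(\Ob;\R^d \times \Sdd)$ equipped with the weak-$*$ topology. By Lemma \ref{1hom}, $J$ is convex and weakly-$*$ lower semicontinuous, while the coercivity bound \eqref{coercif} gives $J(\lambda,\sigma) \ge \int|\lambda| + \tfrac12 \int \tr\sigma$, so any minimizing sequence is bounded in total variation and hence weakly-$*$ relatively compact. The affine constraints in \eqref{defA} are weakly-$*$ closed, so a minimizer $(\lambda,\sigma)$ exists. To show such a minimizer vanishes on $\Sigma_0$, set $(\tilde\lambda,\tilde\sigma) := (\lambda - \lambda\mres\Sigma_0,\ \sigma - \sigma\mres\Sigma_0)$; since the equations $-\dive\lambda = f$ in $\R^d\setminus\Sigma_0$ and $\DIV\sigma = 0$ in $\Omega$ test only against functions vanishing near $\Sigma_0 \subset \bO$, the truncated pair still lies in $\A$. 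The integral representation of $J$ gives the additivity $J(\lambda,\sigma) = J(\tilde\lambda,\tilde\sigma) + J(\lambda\mres\Sigma_0,\sigma\mres\Sigma_0)$; minimality forces the second term to vanish, and the coercivity $J \ge |\lambda| + \tfrac12 \tr\sigma$ together with $\sigma \ge 0$ forces $\lambda\mres\Sigma_0 = 0$ and $\sigma\mres\Sigma_0 = 0$. The principal hurdle in this argument is the no-gap step, but that delicate issue has already been absorbed into Proposition \ref{key}(iii), so what remains is essentially bookkeeping.
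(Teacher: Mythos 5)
Your proof is correct and follows essentially the same route as the paper: the weak inequalities come from Corollary \ref{dualmu} and \eqref{J<Jtwo}, the chain is closed via Proposition \ref{key}(i),(iii) together with the Fenchel--Moreau identity in Lemma \ref{pertu}, existence follows by the direct method using \eqref{coercif}, and the vanishing on $\Sigma_0$ uses the same truncation/additivity argument. The only cosmetic difference is that you spell out the continuity argument extending the weak-duality inequality from $\K$ to its closure $\bK$, a step the paper leaves implicit.
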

	\begin{proof} We  begin by proving inequality  $ \inf (\mathcal{P})  \ge \sup (\mathcal{P}^*)$.
		It is  a consequence of the following claim:
		\begin{equation}\label{inf>sup}
		(u,w)\in \K \ \text{and}\   (\lambda,\sigma)\in \A \quad  \Rightarrow \quad \pairing{f,u}  \ \le \ J(\lambda,\sigma).
		\end{equation}
		To show \eqref{inf>sup}, we may assume that $ J(\lambda,\sigma) <+\infty$ so that $\lambda \ll \mu := \tr\,\sigma$ and we can write 
		$(\lambda,\sigma)= (\theta,S) \, \mu$ for a suitable $(\theta,S)$ in $L^1_\mu(\Ob, \R^d \times \Sddp)$ so that
		$J(\lambda,\sigma) =  \int  \chi_\C^* (\theta,S)\, d\mu$.
		Then  we may apply the integration by parts formula \eqref{intpartmu} in Corollary \ref{dualmu} and conclude by 
		integrating the  inequality \eqref{ineq0}. 
		
		On the other hand, by the definition of $\Atwo$ (see \eqref{defAtwo}) and in virtue of inequality $J(\lambda_\pi, \sigma_\Pi) \ \le\ \Jtwo(\pi, \Pi)$ (see Lemma \ref{1hom_two}),  we infer that  $ \inf (\mathscr{P}) \ge \inf (\mathcal{P})$.
		Thus,  in order to establish the pursued equalities, it is enough showing that $\inf (\mathscr{P})\, = \, \sup (\mathcal{P}^*).$ 
		Thanks to the last assertion of  Lemma \ref{key}, the perturbation function $h$ obtained  in \eqref{def:h} for $K=\Ob\times\Ob$
		is convex, finite and l.s.c. at the origin and, by construction, such that $h(0,0)= - I_0(f,\Sigma_0) $.
Therefore,  by Lemma \ref{pertu}, it holds that  $I_0(f,\Sigma_0) =- h(0,0)= -h^{**}(0,0) = \inf h^*$. Then, by applying  assertion (i) of Lemma  \ref{key} and \eqref{supfini}, we deduce the claimed equality
$$ \max (\mathcal{P}^*) = \inf \big\{ \Jtwo(\pi,\Pi) : (\pi,\Pi)\in \Atwo \big\} =  \inf (\mathscr{P}) .$$

		Eventually it remains to show the existence of solutions to $(\mathcal{P})$.
		Let $(\lambda_n,\sigma_n)\in \Mes(\Ob, \R^d\times \Sdd)$  be a minimizing sequence, namely
	$$ J(\lambda_n,\sigma_n) \to Z_0, \qquad (\lambda_n,\sigma_n) \in \A.$$
	Since  $Z_0= I_0(f,\Sigma_0) <+\infty$ (thanks to \eqref{supfini}) and  by the coercivity condition \eqref{coercif}, we infer that
	$(\lambda_n,\sigma_n)$ is uniformly bounded hence  weakly* precompact in $\Mes(\Ob, \R^d\times \Sdd)$. 
	It follows then   from  lower semicontinuity of $J$ and from closedness of the set $\A$ that any cluster point $(\lambda,\sigma)$ is a minimizer for $(\mathcal{P})$. Eventually we observe that, for such  a minimizer,  the pair $(\lambda,\sigma) \mres (\Ob \setminus \Sigma_0)$
		still belongs to $\A$. Therefore  $Z_0= \int_\Ob \ind_\C^*(\lambda,\sigma) \ge \int_{\Ob\setminus \Sigma_0} \ind_\C^*(\lambda,\sigma)$ 
		from which follows  that $\int_{\Sigma_0} \ind_\C^*(\lambda,\sigma)=0$. Therefore, by \eqref{coercif}, we have    $(\lambda,\sigma)\res\Sigma_0=0$ as claimed.
	\end{proof}

	\section{Optimality conditions and examples} \label{opticond}
	\subsection{Optimality conditions for $(\mathcal{P})$}\label{opticalP}
	In virtue of   Theorem \ref{Z=I}, an admissible pair $(\lambda,\sigma)$ for  $(\mathcal{P})$ and a pair $(u,w)\in \bK$
	are solutions to $(\mathcal{P})$ and $(\mathcal{P}^*)$, respectively, if and only if the following extremality condition holds:
	\begin{equation} \label{extremality1}
	\pairing{f,u}\ =\ J(\lambda,\sigma) .
	\end{equation}
	The next step is to find how this equality can be localized in order to obtain a pointwise relation (associated law) 
	between the optimal $(\lambda,\sigma)$ and $(u,w)$. As we are particularly interested with situations where optimal design are concentrated on lower dimensional sets,  the validity of an integration by parts formula 
	applying to possibly singular measures $(\lambda,\sigma)$ turns out to be crucial. 
	To that aim we use some tools of tangential differential calculus which we can apply successfully only in the case of a Lipschitz solution
	to $(\mathcal{P}^*)$ (that is for $(u,w)\in \mathcal{K}$).

	\medskip
	
	Recall that a solution $(\lambda,\sigma)$ to  $(\mathcal{P})$ must satisfy $J(\lambda,\sigma)<+\infty$. Therefore, by applying assertion (i) of
	Lemma \ref{1hom} and setting $\mu =\tr\, \sigma$, such a solution can be represented in terms of a triple $(\mu, q, S)$ as follows
	\begin{equation}\label{triple}
	\sigma = S \mu, \qquad  \tr S = 1 \quad  \mu\text{-a.e.}, \qquad  \lambda = S q\, \mu,
	\end{equation}
	where $(q,S)$ is a suitable element of $L^2_\mu(\Ob;\R^d)\times L^\infty_\mu(\Ob; \Sddp)$.
	We recall that $\Omega$ is a convex domain, $\Sigma_0$ is a non-empty compact subset of $\bO$
	and $f$ is an element of $\Mes(\Ob)$. It is not restrictive to assume that $f(\Sigma_0)=0$.

	\begin{theorem}\label{Optismooth}  Let $(\lambda,\sigma)$ be an element of $\Mes(\Ob; \R^d\times \Sddp)$ given in the form
		$(\lambda,\sigma)= (S q\, \mu,S \mu)$ according to \eqref{triple} and let 
		$(u,w)\in \mathrm{Lip}(\Ob;\R^{1+d})$. 
		Then the pairs $(\lambda,\sigma)$ and $(u,w)$ are optimal for $(\mathcal{P})$ and $(\mathcal{P}^*)$, respectively,
		if and only if all the following conditions are satisfied:
		\begin{align}\label{cns-smooth}
		\begin{cases}
		(i)& u=0 \ \text{ on $\Sigma_0$},\ \  w=0  \ \text{ on $\bO$},\\
		(ii) &   -\dive( Sq \, \mu) = f \ \  \text{in $\D'(\Ob\!\setminus\! \Sigma_0)$}, \quad  \DIV (S\, \mu) =0\  \ \text{in $\D'(\Omega)$,  }\\
		(iii) & \frac{1}{2}\,\nabla u \otimes \nabla u + e(w)  \leq\, \Ident  \quad  \text{$\mathcal{L}^d$-a.e. in $\Omega$,}\\
		(iv)&   Sq = S\, \nabla_\mu u\quad   \text{ $\mu$-a.e.},\\
		(v)&  \pairing{ \frac{1}{2}\,\nabla_\mu u \otimes \nabla_\mu u + e_\mu(w), S} = \tr S \quad  \text{ $\mu$- a.e.},\\
		(vi)&  \mu(\Sigma_0) =0. \\
		\end{cases}
		\end{align}

	\end{theorem}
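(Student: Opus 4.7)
The starting point is the extremality characterisation supplied by Theorem~\ref{Z=I}: an admissible pair $(\lambda,\sigma)\in\A$ and a Lipschitz pair $(u,w)\in\bK$ are jointly optimal for $(\mathcal{P})$ and $(\mathcal{P}^*)$ if and only if $\pairing{f,u}=J(\lambda,\sigma)$. Conditions (i) and (iii) together are equivalent to $(u,w)\in\K$, condition (ii) is precisely the admissibility $(\lambda,\sigma)\in\A$, while (vi) has already been established as the last statement of Theorem~\ref{Z=I}. The heart of the argument therefore consists in showing that, under the admissibility conditions, extremality is equivalent to the pointwise pair of identities (iv) and (v).

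The plan is to localise the integrated extremality into a pointwise $\mu$-a.e.\ identity. Writing $(\lambda,\sigma)=(Sq\,\mu,\,S\,\mu)$ as in \eqref{triple}, Lemma~\ref{1hom}(i) gives $J(\lambda,\sigma)=\int(\tr S+\tfrac12\pairing{Sq,q})\,d\mu$, while the integration by parts formula \eqref{intpartmu} of Corollary~\ref{dualmu} rewrites $\pairing{f,u}=\int\!\bigl(\pairing{Sq,\nabla_\mu u}+\pairing{S,e_\mu(w)}\bigr)d\mu$. Subtracting, extremality becomes the vanishing of an integral whose integrand is non-negative $\mu$-a.e.\ by the pointwise Fenchel inequality \eqref{ineq0}, so the integrand must vanish $\mu$-a.e. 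Completing the square in the variable $q$, this pointwise identity rearranges as
\begin{equation*}
\tfrac12\,\pairing{S(q-\nabla_\mu u),\,q-\nabla_\mu u}\,+\,\Bigl(\tr S -\pairing{S,\,\tfrac12\,\nabla_\mu u\otimes \nabla_\mu u + e_\mu(w)}\Bigr)=0\qquad \mu\text{-a.e.}
\end{equation*}
Both bracketed quantities will then be shown to be individually non-negative $\mu$-a.e.: the first because $S\ge 0$, the second because of the key tangential inequality $\tfrac12\,\nabla_\mu u\otimes \nabla_\mu u + e_\mu(w)\le\Ident$ holding $\mu$-a.e., which, paired with $S\ge 0$, yields $\pairing{S,\tfrac12\,\nabla_\mu u\otimes \nabla_\mu u + e_\mu(w)}\le \tr S$. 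Forced vanishing of both summands then produces exactly (iv) together with (v). The converse direction is immediate: conditions (iv)--(v) allow one to reverse each of the algebraic steps and recover extremality.

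The main obstacle is establishing the tangential inequality $\tfrac12\,\nabla_\mu u\otimes \nabla_\mu u + e_\mu(w)\le\Ident$ $\mu$-a.e., since (iii) holds only Lebesgue-a.e. and does not transfer naively to a $\mu$-a.e.\ statement. I would derive it from the two-point characterisation of the constraint in Lemma~\ref{lem:two_point_condition}: the Lipschitz regularity of $(u,w)$ ensures that \eqref{eq:two_point_condition} holds for \emph{every} pair $(x,y)\in\Ob\times\Ob$. Fixing $\mu$-a.e.\ $x$, choosing a unit vector $\tau$ in the tangent space $T_\mu(x)$ and letting $y=x+h\tau$ with $h\to 0^+$, the characterisation of $\nabla_\mu u$ and $e_\mu(w)$ as tangential differentials (Proposition~\ref{Tmu}) yields $\tfrac12\pairing{\nabla_\mu u,\tau}^2 + \pairing{e_\mu(w)\tau,\tau}\le 1$. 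By arbitrariness of $\tau\in T_\mu(x)$, and since $\nabla_\mu u\in T_\mu$ while $e_\mu(w)=P_\mu e_\mu(w)P_\mu$ annihilates $T_\mu^{\perp}$, this amounts to the matrix inequality in the refined form $\tfrac12\,\nabla_\mu u\otimes \nabla_\mu u + e_\mu(w)\le P_\mu\le\Ident$, which is exactly what the algebraic step required.
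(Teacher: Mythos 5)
Your overall architecture matches the paper's: you correctly reduce everything to the extremality identity $\pairing{f,u}=J(\lambda,\sigma)$, use the integration-by-parts formula \eqref{intpartmu} together with the pointwise Fenchel inequality to localize extremality into a $\mu$-a.e.\ identity, complete the square in $q-\nabla_\mu u$, and then force the two summands (both seen to be non-negative) to vanish individually. This is essentially what the paper does.

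However, the ``main obstacle'' you identify -- the inequality $\tfrac12\,\nabla_\mu u\otimes\nabla_\mu u+e_\mu(w)\le P_\mu$ holding $\mu$-a.e.\ -- is \emph{not} resolved by the argument you propose, and this is a genuine gap. You want to pass $h\to0^+$ in the two-point inequality
$\tfrac12\bigl(\tfrac{u(x+h\tau)-u(x)}{h}\bigr)^2+\tfrac1h\pairing{w(x+h\tau)-w(x),\tau}\le1$
and identify the limits with $\pairing{\nabla_\mu u(x),\tau}$ and $\pairing{e_\mu(w)(x)\tau,\tau}$. But $\nabla_\mu u$ and $e_\mu(w)$, as defined in Proposition~\ref{Tmu}, are \emph{weak*} extensions of the operators $u\mapsto P_\mu\nabla u$ and $w\mapsto P_\mu\, e(w)\,P_\mu$ from $C^1$ to Lipschitz functions; for a general measure $\mu$ (possibly singular with respect to $\mathcal L^d$) there is no guarantee that $\nabla_\mu u(x)$ coincides with a pointwise directional derivative of $u$ in directions $\tau\in T_\mu(x)$. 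The two-point inequality controls genuine difference quotients, not the $\mu$-tangential differentials. Your argument would go through for measures supported on nice rectifiable sets (cf.\ Remark~\ref{k-manifold}), but not in the generality claimed. The paper sidesteps this precisely by \emph{not} attempting a pointwise characterization: it approximates $(u,w)$ uniformly by an equi-Lipschitz sequence $(u_n,w_n)\in\K$ smooth in $\Sigma_0^c\times\O$ (Lemma~\ref{density}), records the pointwise Fenchel inequality for $(u_n,w_n)$, and passes to the $L^\infty_\mu$-weak* limit (where tangentiality of $\theta$ and $S$ makes $\pairing{\theta,\nabla u_n}=\pairing{\theta,\nabla_\mu u_n}$); this is the content of assertion (ii) of Corollary~\ref{dualmu}. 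You should invoke \eqref{ineq0} with $\theta=S\nabla_\mu u$ (tangential because $S=P_\mu S P_\mu$), rather than attempting a pointwise derivation from the two-point condition.
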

	\begin{remark}\label{tangential} \ The choice of $q$ satisfying $(iv)$ does not affect the solution $\lambda$ and we may
		drop $(iv)$ taking directly $q= \, \nabla_\mu u$ in $(ii)$.
		On the other hand,  it is not mandatory to represent $\sigma$ in the form $\sigma=S\mu$ with the normalization $\mu=\tr \sigma$ (so that $\tr\, S=1$ in $(v)$). We may alternatively choose any $\tilde{\mu}$ such that $\mu\ll \tilde{\mu}$.
				\end{remark}
		
	\begin{proof}   Let $(\lambda,\sigma)$ and $(u,w)$ be admissible pairs  (that satisfy (i),\,(ii),\,(iii)). Then $(u,w)\in\K$
		and, by  Corollary \ref{dualmu},
		we have the pointwise inequality 
		\begin{equation}\label{ineq1}
		\pairing{Sq, \nabla_\mu u} + \pairing{S, e_\mu(w)}\ \le\  \chi_\C^*(Sq,S) \qquad \text{$\mu$-a.e.}
		\end{equation}
		Then, by  applying the integration by parts  formula \eqref{intpartmu} and
		integrating \eqref{ineq1} with respect to measure $\mu$, we recover the inequality
		$$ \pairing{f,u}=\  \int_{\Ob} \pairing{Sq, \nabla_\mu u}\, d\mu  +\int_\Ob \pairing{S, e_\mu(w))}\, d\mu \le J(\lambda,\sigma) .$$
		Moreover, as $\tr S=1$ and recalling \eqref{C*}, the inequality above becomes an equality if and only if
		\begin{equation}\label{iff=}
		\pairing{Sq, \nabla_\mu u  } + \pairing{S, e_\mu(w)} =  1 + \frac1{2} \pairing{S q,q} \qquad \text{$\mu$-a.e.}
		\end{equation}
		Therefore, under the admissibility conditions (i),\,(ii),\,(iii), checking the optimality of $(\lambda,\sigma)$ and $(u,w)$ (that is the extremality condition
		\eqref{extremality1}) amounts to verifying  whether  \eqref{iff=} holds true.
		
		First we observe that if (iv),\,(v)  are satisfied then by symmetry of $S$ we have  $\pairing{Sq, \nabla_\mu u}= \pairing{Sq, q}=\pairing{S,\nabla_\mu u\otimes \nabla_\mu u}$ 
		holding  $\mu$-a.e. and it is clear that \eqref{iff=} follows. 
				Conversely, assume that \eqref{iff=} holds true. Then, by integrating this  equality on $\Sigma_0$ and taking into account  assertion (i) of
		Corollary \ref{dualmu}, we get 
		$$ 0 =  \int_{\Sigma_0} (\pairing{Sq, \nabla_\mu u}+\pairing{S, e_\mu(w)})\, d\mu = \mu(\Sigma_0) + \frac1{2} \int_{\Sigma_0}\pairing{S q,q}\, d\mu \ge \mu(\Sigma_0),$$
		from which, we infer the condition $(vi)$. On the other hand, after easy computations, we may rewrite \eqref{iff=} as follows:
		$$    \Big\langle S, e_\mu(w)+ \frac1{2}\,  \nabla_\mu u\otimes \nabla_\mu u \Big\rangle = 1 + \bigl\langle S (q-\nabla_\mu u), q-\nabla_\mu u\bigr\rangle \qquad \text{$\mu$-a.e.}$$
		Since $S\in \Sddp$ the right hand side is not smaller than $1$ while, by \eqref{ineq0}  the first term  is not greater than 1. This implies that  $ \pairing{ S (q-\nabla_\mu u), q-\nabla_\mu u} \bigr\rangle$ vanishes $\mu$-a.e. therefore $(iv)$ must hold true and, in turn, so does $(v)$.
		
	\end{proof}

\subsection{Optimality conditions for $(\mathscr{P})$}\label{optiscrP}	In order to characterize truss-like solutions
 we give here  optimality conditions for $(\mathscr{P})$. Note that they do not require the Lipschitz regularity of the pair 
$(u,w)$ solving the dual problem $(\mathcal{P}^*)$ but merely its continuity. In fact a non-smooth generalization of Proposition \ref{OptiTwo} below will appear in Section \ref{sec:geodesic} where the continuity assumption on $w$ is skipped.
We recall the admissibility conditions given in \eqref{defAtwo} for the competitors of problem $(\mathscr{P})$.
\begin{proposition}
	\label{OptiTwo}
	Let $(\pi,\Pi)$ be an element of $\Mes(\Ob\times\Ob;\R\times \R_+)$ given in the form $(\pi,\Pi)= (\alpha\Pi,\Pi)$ with $ \alpha \in L^1_\Pi(\Ob\times\Ob)$ and let $(u,w)\in C^0(\Ob;\R^{d+1})$. Then the pairs $(\pi,\Pi)$ and $(u,w)$ are optimal for, respectively, $(\mathscr{P})$ and $(\mathcal{P}^*)$ if and only the following conditions are satisfied:
	\begin{align}\label{OCtwo1}
	\begin{cases}
	(i)& u=0 \ \text{ on $\Sigma_0$}, \quad  w=0  \ \text{ on $\bO$},\\
	(ii) &   (\alpha\Pi,\Pi) \in \Atwo,\\
	(iii) & \frac{1}{2}\, \abs{u(y)-u(x)}^2 + \pairing{w(y)- w(x),y-x} \leq \abs{x-y}^2 \qquad \forall\,(x,y)\in \Ob \times \Ob,\\
	(iv)&   \alpha(x,y) =\frac{u(y)-u(x)}{|y-x|} \qquad \text{for } \Pi \text{-a.e. } (x,y) ,\\
	(v)&  \frac{1}{2}\, \abs{u(y)-u(x)}^2 + \pairing{w(y)- w(x),y-x} = \abs{x-y}^2 \qquad \text{for } \Pi \text{-a.e. } (x,y). \\
	\end{cases}
	\end{align}
\end{proposition}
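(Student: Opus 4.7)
My strategy mirrors that of Theorem \ref{Optismooth}, but with the measure-theoretic integration by parts step replaced by a direct Young-type two-point computation.  The backbone is the no-gap principle of Theorem \ref{Z=I}: once $(\pi,\Pi) \in \Atwo$ and $(u,w)$ is a competitor for $(\mathcal{P}^*)$, joint optimality for $(\mathscr{P})$ and $(\mathcal{P}^*)$ is equivalent to the extremality identity $\pairing{f,u} = \Jtwo(\pi,\Pi)$.  First I would verify that (i) and (iii) precisely encode membership of $(u,w)$ in $\bK$, via the characterisation of that closure flagged in Remark \ref{rem-relaxdual} and developed in Section \ref{sec:geodesic}; in particular, since $w$ is continuous and vanishes on $\bO$, its extension by zero makes it a legitimate test field in the admissibility relation (ii) of \eqref{defAtwo}.

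The heart of the plan is a single chain of inequalities.  Starting from
\[
\pairing{f,u} \;=\; \int \bigl(u(y)-u(x)\bigr)\, \pi(dxdy) \;=\; \int \alpha(x,y)\bigl(u(y)-u(x)\bigr)\, \Pi(dxdy),
\]
which follows from (ii) applied to $u$, I apply Young's inequality $\alpha\, s \le \tfrac{\alpha^2}{2}|x-y| + \tfrac{s^2}{2|x-y|}$ with $s = u(y)-u(x)$, and then bound $\tfrac{s^2}{2|x-y|}$ from above by $|x-y| - \pairing{w(y)-w(x),\tau^{x,y}}$ using the two-point condition (iii).  Integrating against $\Pi$, the residual $\int \pairing{w(y)-w(x),\tau^{x,y}}\,\Pi(dxdy)$ vanishes by (ii) applied to the extended $w$, and the right-hand side collapses precisely to $\Jtwo(\pi,\Pi)$ in the form \eqref{formula_Jtwo}.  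This yields $\pairing{f,u} \le \Jtwo(\pi,\Pi)$ for every admissible pair, and the equality case is now transparent: Young is sharp $\Pi$-a.e.\ iff $\alpha |x-y| = u(y)-u(x)$, which is (iv), while the two-point condition (iii) is sharp $\Pi$-a.e.\ iff (v).  Hence admissibility combined with (iv) and (v) forces equality, while conversely optimality forces both intermediate estimates to be tight $\Pi$-a.e.

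The main technical subtlety I expect to manage is the behaviour on the diagonal $\Delta=\{x=y\}$, where $\tau^{x,y}$ is undefined and $|x-y|$ vanishes in the Young denominator.  I would dispose of this by observing that on $\Delta$ every term in the chain trivialises: the increment $u(y)-u(x)$ is zero, the integrand $|x-y|(1+\alpha^2/2)$ contributing to $\Jtwo$ is zero whenever $\alpha$ is finite, the $w$-increment against $\tau^{x,y}$ tested against $\Pi$ contributes nothing, and (iii) itself reduces to the trivial equality $0=0$.  The argument therefore localises to $\Obsq$, where Lemma \ref{1hom_two} already ensures $\Pi\ge 0$ and $\pi\ll \Pi$ so that the density $\alpha$ is unambiguously defined and the pointwise manipulations are all meaningful $\Pi$-a.e.
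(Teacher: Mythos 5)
Your proposal is correct and follows essentially the same route as the paper: identify (i),(ii),(iii) with mutual admissibility, establish the bound $\pairing{f,u}\le \Jtwo(\pi,\Pi)$ via a pointwise two-point estimate, invoke the no-gap equality of Theorem \ref{Z=I} to reduce optimality to tightness $\Pi$-a.e., and read off (iv) and (v) from the equality cases. The only cosmetic difference is that you split the pointwise bound into Young's inequality followed by the constraint (iii), whereas the paper performs it in one step via the support function $\ind_{\Ctwo}^*$ and then identifies the exact deficit $\frac12\bigl(u(y)-u(x)-|y-x|\alpha\bigr)^2$ so that both sides of the inequality chain are forced to vanish simultaneously; the two arguments are algebraically equivalent.
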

\begin{proof}
	The admissibility of  $(\pi,\Pi)=(\alpha\Pi,\Pi)$ in  $(\mathscr{P})$  and  of $(u,w)$ in $(\mathcal{P}^*)$ are equivalent to conditions (i),\,(ii),\,(iii). Based on (i),\,(ii) and by exploiting \eqref{defAtwo} we may write
	\begin{align*}
	\pairing{f,u} &=	\int_{\Ob\times\Ob} \left(\alpha(x,y) \bigl( u(y) - u(x) \bigr) + \Big\langle w(y)-w(x),\frac{y-x}{|y-x|} \Big\rangle \right) \Pi(dxdy) \\
	&\le \int_{\Ob\times\Ob} |x-y| \left(1 + \frac{1}{2}\bigl(\alpha(x,y)\bigr)^2\right) \Pi(dxdy)  = \Jtwo(\alpha\Pi,\Pi),	\end{align*}
where to pass to the second line we use the pointwise inequality $\alpha\, \zeta_1(u) +\zeta_2(w) \le \abs{x-y}\,\ind_\Ctwo^*(\alpha,1)$ (see \eqref{Ctwo*}) taking into account that, by condition (iii),  the pair
$\bigl(\zeta_1(u),\zeta_2(w)\bigr)$ defined in  \eqref{defLambda}  belongs to $\abs{x-y}\,\Ctwo$ for every $(x,y) \in \Ob\times \Ob$. Therefore, in virtue of the equality $\inf \mathscr{P}= \sup \mathcal{P}^*$
obtained in Theorem \ref{Z=I}, the optimality of $(\pi,\Pi)$ and $(u,w)$ is equivalent to the localized equality:
$$\alpha(x,y) \bigl( u(y) - u(x) \bigr) + \Big\langle w(y)-w(x),\frac{y-x}{|y-x|} \Big\rangle
= |x-y| \left(1 + \frac{1}{2}\bigl(\alpha(x,y)\bigr)^2\right) \qquad \text{for } \Pi \text{-a.e. } (x,y) \,$$
that, after multipying by $|x-y|$, we can rewrite as
	\begin{equation*}
		\frac{1}{2}\, \abs{u(y)-u(x)}^2 + \pairing{w(y)- w(x),y-x} -\abs{x-y}^2 = \frac{1}{2} \Big(  u(y)-u(x) - |y-x| \, \alpha(x,y) \Big)^2.
	\end{equation*}
From (iii), we see that the latter equality holds if and only if (iv) and (v) are both satisfied.
\end{proof}

\begin{remark} \ The equality in  (v) implies that $u$ and $\pairing {\ident-w, y-x}$ is affine on every segment $]x,y[$ whenever $(x,y)\in \spt \Pi$.
In particular, for  any such a pair $(x,y)$, we have $\Ha^1([x,y]\cap\Sigma_0)=0$. Indeed, if $\Ha^1([x,y]\cap\Sigma_0) >0$, then $x,y\in \bO$ and, as $u$
is affine on $[x,y]$, we get  $u(x)=u(y)=\pairing{w(y)- w(x),y-x}=0$, which is in contradiction with (v). We
thus recover condition (vi)  of Theorem \ref{Optismooth}.

\end{remark}

	%
	%


	\subsection{ Optimal configurations in the radial case} \label{circular}
		For $R>0$ we consider a circular domain $\Omega = \bigl\{x\in \R^2 \, : \,  \abs{x} < R \bigr\}$ and  a Dirichlet condition on the whole boundary
		(that is $\Sigma_0=\bO$). We also assume  that the load $f$ is radial (note that $f$ can be a signed measure). It is then completely described by the
		bounded repartition function
		\begin{equation}\label{repartition}
		F(t) \ :=\ \int_{\{|x|\le t\}} \ f(dx),\qquad t\in [0,R].
		\end{equation}
		Then, working in polar coordinates $(r,\theta)$, it is easy to find  a solution $(\bar u, \bar w), (\bar\lambda,\bar \sigma)$ to  \eqref{cns-smooth} in the form
		of radial functions:
		\begin{equation}\label{radial}
		\bar u(x)= u(r), \quad \bar w(x)= w(r)\, e_r, \quad \lambda = \alpha(r)\,z(r) \, e_r \,\mu, \quad \sigma =\alpha(r)\,  e_r\otimes e_r\, \mu, \quad   \mu =\mathcal{L}^2\mres \O 
		\end{equation}
		for suitable  $u, w\in {\mathrm{Lip}\bigl((0,R)\bigr)}$ and $z,\alpha$ in $L^1_{\rm loc}\bigl((0,R)\bigr)$.
		In fact, as noticed in Remark \ref{tangential}, we may assume that  $z= u'$ on the subset $\{\alpha\not=0\}$. Thus \eqref{cns-smooth} has a  solution of the form \eqref{radial} if and only if:
		\begin{align}
		\begin{cases}\label{cnsradial}
		(i')& u(R)=0,\quad  w(0)= w(R)=0, \quad \alpha \ge 0\  \text{a.e. in $(0,R)$,} \\
		(ii') &   - \dive \bigl(\alpha(r)\, u'(r) \, e_r\bigr) = f,\quad \DIV\bigl( \alpha(r) \,  e_r\otimes e_r\bigr)= 0 \qquad \text{in $\D'(\Omega)$}, \\
		(iii') & \frac{1}{2}\,|u'|^2+ w' \le 1, \quad \frac{w}{r}\le 1 \qquad \text{a.e. in $(0,R)$},\\
		(iv') &  \frac{1}{2}\,|u'|^2+ w'=1 \qquad \text{a.e. in $\{\alpha>0\}$},
		\end{cases}
		\end{align}
		where in $(i')$ we use $(i)$ and express that $\bar w= w(|x|)\, e_r$ is continuous at $x=0$ and in addition that $\sigma\ge 0$. The conditions $(iii'),\,(iv')$ are equivalent to $(iii),\,(iv)$ since 
		$$\frac1{2} \nabla \bar u \otimes \nabla \bar u + e(\bar w) = \Big(\frac1{2} \abs{u'}^2 + w' \Big)  e_r\otimes e_r +  \frac{w}{r}\, e_\theta\otimes e_\theta, \qquad \tr S= \alpha(r).
		$$
		It turns  out that \eqref{cnsradial} admits  a unique solution. Indeed, solving the equations $(ii')$ leads to
		$$ \alpha(r) = \frac{D}{r} \quad,\quad u'(r)= z(r)  = - \frac{F(r)}{2 \pi D} \ .$$
		for a suitable positive constant $D$. This can be checked by noticing that $\alpha$ needs to satisfy $\alpha' + \alpha /r =0$ in $(0,R)$
		and that conversely the distributional divergence of $\frac1{r} \, e_r\otimes e_r$ vanishes on the whole ball $\Omega$ (including the origin).
		On the other hand, the expression for $u'(r)$ is deduced by integrating the first equation of  $(ii')$ on the subset $\{ |x| <r\}$ 
		and by using Green's formula and \eqref{repartition}. 
		Accordingly, in order to match with $(i')$ and with the equality constraint in $(iv')$, we deduce that:
		\begin{equation}\label{explicituw}
		u(r) =  \frac1{2\pi D} \ \int_r^R F(t) \, dt, \qquad w(r)= r -   \frac1{8 \pi^2 D^2}\int_0^r F^2(t)\, dt
		\end{equation}
		where the constant $D$ is determined  by the condition  $w(R)=0$, i.e.
		\begin{equation}\label{D}
		D  = \frac1{2 \pi} \left(\frac1{2R} \int_0^R F^2(t)\, dt\right)^{\frac1{2}}.
		\end{equation}
		Note that $u$ and $w$ are both Lipschitz (since $|F(t)| \le  \int |f|$) and that the first inequality in $(iii')$ is an equality
		while the second one (i.e. $w\le r$ ) follows directly from \eqref{explicituw}. Therefore all the required conditions in \eqref{cnsradial} are satisfied. 
		
		\begin{remark}\label{wnot0}
		We observe that $w$ can never be identically zero except in the case where $F(t)$ is constant, that is when $f$ is a Dirac mass at the origin. In this special case, we have $\inf \mathrm{(FMD)}= \inf \mathrm{(OM)}$ which, upon recalling Corollary \ref{fmd<om}, relates to the fact that
		  the high ridge of $\O$  given by \eqref{def:ridge} satisfies $M(\O)=\{0\}$.
		\end{remark}
		\begin{remark}\label{notruss} If we confine ourselves to loads $f$ furnishing a strictly increasing function $F$ (a basic example of such load is a uniform density), then the solution to $(\mathcal{P}^*)$  given by \eqref{explicituw} provides  a strictly concave $u$. In particular, for any pair $(x,y) \in \Obsq$ the function $\bar u$ is not affine on $[x,y]$. Thus, owing to Lemma \ref{lem:two_point_condition},  the equality condition (v) in \eqref{OCtwo1} required for solving $\mathscr{P}$ cannot be satisfied whatever we take $(\pi,\Pi)$ as an admissible pair in $\Atwo$.  
   That way we get a {\em counter-example to the existence of a truss solution} for the optimal membrane problem.
\end{remark}
				\begin{remark}\label{mixedsol}\ In general the solution to the primal problem $({\mathcal P})$ is not unique.
				For instance let us consider a radial load $f$ which  does not charge the open subset $\{|x| < r_0\}$ for some $r_0\in (0,R)$.  
		Then, besides the optimal stress solution  
	$\bar \sigma= \frac{D}{R} \, e_r \otimes e_r $ with  $D$ given in \eqref{D}, we may also take another solution mixing a distributed stress tensor and a concentrated one, as for instance 
	\begin{equation*}\label{sigma-variant}
		 \sigma = D \, \left( \frac{e_r \otimes e_r }{r} \ \mathcal{L}^2\mres\{ r >r_0 \} + 
		e_\theta \otimes e_\theta \ \Ha^1\mres \{ |x|=r_0\}\right) ,
		\end{equation*}
		which clearly satisfies $\DIV \,\sigma=0$ in $\D'(\O)$. 
 One checks easily the optimality conditions \eqref{cns-smooth} for $u,w$ defined by \eqref{explicituw} while taking 
 $\mu = \mu_a+\mu_s$ where  $\mu_a= \mathcal{L}^2\mres\{ r >r_0 \}$ and $\mu_s= \Ha^1\mres \{ |x|=r_0\}$. The $\mu$ tangential projector $P_\mu$ is then 
  the identity $\mu_a$-a.e. while $P_\mu= e_\theta\otimes e_\theta$ holds  $\mu_s$-a.e. 
 Noticing that $F$ defined in \eqref{repartition} vanishes on $[0,r_0)$, we have  $u(r)=u(r_0)$ and $w(r)=r$ for all $r\in [0,r_0]$. Then, writing $\sigma=S \mu$, we get 
 $$S =\begin{cases} \frac{ D }{r} \, e_r \otimes e_r & \mu_a \text{-a.e.,}\\ D\, e_\theta \otimes e_\theta& \mu_s\text{-a.e.,} \end{cases} \qquad     \big(\nabla_\mu \bar u,e_{\mu}(\bar w)\bigr)
		=\begin{cases}  \big( u'(r) \, e_r,\, w'(r)\, e_r\otimes e_r + \frac{w(r)}{r}\, e_\theta\otimes e_\theta \big)   & \text{$\mu_a$-a.e.,} \\ 
		\big(0, \, \frac{1}{r_0}\,e_\theta\otimes e_\theta\big)  & \text{$\mu_s$-a.e.}\end{cases}$$
By taking $q=\nabla_\mu \bar u$ we see that all the conditions \eqref{cns-smooth} are satisfied.
%
	\end{remark}
\medskip

\subsection {Optimal configurations  for a  one-force load} \label{disk}\

\subsubsection*{ Case of a disk}\ 	Let $\Omega \subset \R^2$ be a disk of radius $R_0$ centered at the origin. We consider a load $f = \delta_{x_0}$ with $x_0 \in \Omega$ and any Dirichlet zone $\Sigma_0 \subset \bO$ such that $x_0 \in \mathrm{co}(\Sigma_0)$. It is convenient to introduce the geometric parameter  $d_0:= \sqrt{R_0^2- |x_0|^2}$.
	Let $p\in \PP(\Sigma_0)$ be any probability measure satisfying the barycenter condition 
\begin{equation}\label{barycenter}
x_0 = \int_{\Sigma_0} x\, p(dx).
\end{equation} 
Then we consider the pair $(\pi,\Pi) = (\a \Pi,\Pi) \in \big(\Mes_+(\Ob\times \Ob)\big)^2$ defined  by
\begin{equation}\label{optipiPi}
\pi=  p(dx) \otimes \delta_{x_0}(dy), \qquad \Pi= \frac{|x_0-x|}{\sqrt{2}\, d_0}  \, p(dx) \otimes \delta_{x_0}(dy), \qquad\alpha(x,x_0)=\frac{\sqrt{2}\, d_0}{|x-x_0|}\quad  \text{for  $x\in \Sigma_0$}.  \end{equation}
We claim that such a pair solves  the problem $(\mathscr{P})$, thus providing  a truss solution $(\lambda_\pi,\sigma_\Pi)$ to $(\mathcal{P})$ in virtue of Theorem \ref{Z=I}. 	
Clearly $(\pi,\Pi)$ is an admissible competitor. Indeed $\Pi\ge 0$ and  condition (i) of  in \eqref{OCtwo1}  is fulfilled since $p(\Sigma_0)=1$ 
while condition (ii) follows from \eqref{barycenter}. In view of Proposition \ref{OptiTwo}, it remains to find an admissible pair  $(\bar u,\bar w) \in C_0(\Omega;\R^{d+1})$
satisfying the conditions $(iii)$,\,$(iv)$,\,$(v)$ in \eqref{OCtwo1}. 
To that aim, we proceed in polar coordinates $(r,\theta)$ with respect to $x_0$ so that the domain $\Omega$ is parametrized as
	$$ \Omega :=\Big\{ x_0 + r \, e_r(\theta)\ :\ \theta\in [0,2\pi),\ \ 0 \leq r < \rho(\theta) \Big\}\ ,$$
	where $e_r(\theta):= (\cos \theta, \sin \theta)$ and  $\rho(\theta):[0,2\pi) \to [d_0,2R_0-d_0]  $  a Lipschitz function. 
	Then we propose the following $\bar u,\bar w$ constructed from the function $h$ whose graph coincides with the conical surface in $\R^3$ with $(x_0,1)$ as its vertex and containing  $\bO \times \{0\}$, namely:
	\begin{equation}
		\label{cone_uw}
		(\bar u,\bar w) = \left(\sqrt{2}\, d_0, 2\, x_0\right)  h \qquad \text{where} \quad   h(x_0 + r \, e_r) := 1- \frac{r}{\rho(\theta)}.
	\end{equation}
Since   $(\bar u,\bar w)$ vanishes on the whole boundary $\bO$ and  $(\bar u , \bar w)(x_0) = (\sqrt{2} \, d_0, 2\, x_0)$,
the equalities in $(iv)$ and $(v)$ in \eqref{OCtwo1} become, respectively
$$  \alpha(x,x_0) = \frac{\bar u(x_0)}{|x-x_0|} = \frac{\sqrt{2}\, d_0}{|x-x_0|}, \qquad d_0^2 +  \pairing{2 x_0,x_0-x} = |x_0-x|^2. $$
The first equality then follows  from the explicit form of  $\alpha(x,x_0)$ in \eqref{optipiPi} while the second equality equivalent to $d_0^2 + |x_0|^2= |x|^2$  holds at every $x\in \bO$ where $\abs{x} = R_0$, hence  $p$-a.e. 
Eventually it remains  to check the two-point condition $(iii)$ which is the tricky  part. 
As  $(\bar u,\bar w)$ is Lipschitz, we may check the equivalent condition \eqref{pointwise} (see Lemma \ref{lem:two_point_condition}), namely
that at every point  of differentiability $x\in \Omega$ the eigenvalues of the symmetric tensor $A(x):= \frac{1}{2}\, \nabla \bar u(x) \otimes \nabla \bar u(x) +e(\bar w)(x)$ do not exceed 1. 
We compute using the frame $(e_r,e_\theta)$ 
(where $e_\theta(\theta) = (-\sin \theta, \cos \theta)$). In view of  \eqref{cone_uw} we have for $x=x_0 + r\, e_r$:
$$  \nabla \bar u = \sqrt{2}\, d_0\,  \nabla h\ =\ \sqrt{2}\, d_0\, (a\, e_r + a'  e_\theta), \qquad  e(\bar w) = x_0 \otimes \nabla h+ \nabla h \otimes x_0 \qquad \text{where}\quad  a(\theta) :=- \frac1{\rho(\theta)}.$$
 We obtain the decomposition \ $A(x) = A_{1,1}\, e_r\otimes e_r + A_{1,2}\, (e_r\otimes e_\theta + e_\theta \otimes e_r)
+ A_{2,2}\,   e_\theta \otimes e_\theta$ \ in which
\begin{align}\label{tensorA}  A_{1,1}= d_0^2 \, a^2 + 2 \pairing{x_0,e_r}\, a, \ \ \
A_{1,2}= d_0^2\,  a a' +  \pairing{x_0,e_r}\, a' +  \pairing{x_0,e_\theta}\, a,\ \ \
A_{2,2}= d_0^2\, (a')^2 + 2 \pairing{x_0,e_\theta}\, a'.  
\end{align}
Fortunately  the later expressions simplify. Indeed, since  $|x_0 + \rho\, e_r|^2=R_0^2$ , we have $\rho^2 + 2 \rho  \pairing{x_0,e_r}= R_0^2-|x_0|^2= d_0^2$ \
and therefore:
$$  d_0^2\, a^2= 1- 2 \pairing{x_0,e_r}\, a, \qquad   d_0^2\,  a a'= - \pairing{x_0,e_r}\, a' -  \pairing{x_0,e_\theta}\, a,$$
where the second equality is obtained by differentiating the first one with respect to $\theta$. Thus $A_{1,1}=1$ and $A_{1,2}=0$. Next we prove that $A_{2,2}\le 0$.
To that aim we exploit the fact that $\bar u=0$ on $\bO$ so that $\nabla \bar{u}$ is parallel to $x_0 + \rho\,  e_r$. This implies  more relations:
 $$ (\pairing{x_0,e_r} + \rho  ) \, a' =  \pairing{x_0,e_\theta}\, a, \qquad d_0^2\, (a')^2 = \frac{\pairing{x_0,e_\theta}^2}{(\pairing{x_0,e_r} + \rho  )^2} \big(1- 2 \pairing{x_0,e_r}\, a\big).  $$
 Substituting in the expression of $A_{2,2}$ in \eqref{tensorA}, we get:
 $$ A_{2,2} = \frac{\pairing{x_0,e_\theta}^2}{(\pairing{x_0,e_r} + \rho  )^2} \Big( 1- 2 \pairing{x_0,e_r}\, a +  2 (\pairing{x_0,e_r} + \rho  )\, a\Big)
 =   -\frac{\pairing{x_0,e_\theta}^2}{(\pairing{x_0,e_r}+\rho)^2}.   $$
  Summarizing we have proved that $A(x)$ is a diagonal tensor with eigenvalues not larger that $1$. This implies the admissibility condition $(iii)$ and the optimality of  
$(\bar u,\bar w)$ for the dual problem while $(\pi,\Pi)$ given by \eqref{optipiPi} is optimal for $(\mathscr{P})$.
Moreover the minimal energy is given by \ $\min (\mathscr{P}) = \min (\mathcal{P}) = \max (\mathcal{P}^*) = \sqrt{2}\, d_0$.

\subsubsection*{Case of a general convex domain}
The construction used above for the disk paves a way for finding a solution for other shapes of design domains.
More precisely let  $\Omega \subset \R^2$ be a general bounded convex domain, $x_0\in\O$ and $\Sigma_0 \subset \bO$ a closed Dirichlet region. 
 We reinforce the condition $x_0 \in \mathrm{co}(\Sigma_0)$ as follows:
  \begin{equation}\label{bary+}
  \exists y_0 \in\O \ :\ x_0 \in \mathrm{co}\big(\Sigma_0(y_0)\big)\quad \text{where}\quad\Sigma_0(y_0):=\Sigma_0\cap p_{\bO}(y_0)
\end{equation}
where $p_{\bO}(y_0)$ is the minimal set of  $d(y_0,\cdot)$ on $\bO$, {see \eqref{Gamma0}}.
Then we define  
 $$d_0(y_0) :=\sqrt{\bigl(d(y_0,\Sigma_0)\bigr)^2 - \abs{x_0-y_0}^2}. $$
 \begin{proposition}
	\label{one-force_construction}
	Assume that \eqref{bary+} is satisfied for a suitable $y_0\in\O$ and let  $\rho$ be any probability supported  on $\Sigma_0(y_0)$ such that $x_0 = \int x\, \rho(dx).$	  Then the pair $(\pi,\Pi)$ given by
	\begin{equation}\label{optipiPi-y0}
\pi=  \rho \otimes \delta_{x_0} , \qquad \Pi= \frac{|x_0-x|}{\sqrt{2}\ d_0(y_0)}  \, \rho(dx) \otimes \delta_{x_0}(dy) 
	\end{equation}	
	solves the problem $(\mathscr{P})$ for $f = \delta_{x_0}$ and we have\ $\min(\mathcal{P})=\min (\mathscr{P})= \sqrt{2} \, d_0(y_0)$.
	\end{proposition}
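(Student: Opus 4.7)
The plan is to apply Proposition \ref{OptiTwo}: I construct a Lipschitz pair $(\bar u, \bar w)$ which, together with $(\pi,\Pi)$, satisfies conditions (i)--(v) of \eqref{OCtwo1}. Admissibility of $(\pi,\Pi)$ is checked routinely: $\Pi\ge 0$ is immediate; condition (i) of \eqref{defAtwo}, for $u\in C_{\Sigma_0}(\Ob)$, reads $\int(u(y)-u(x))\,\pi(dxdy) = u(x_0) = \pairing{f,u}$ since $\spt\,\rho \subset \Sigma_0(y_0)\subset \Sigma_0$; condition (ii), for $w\in C_0(\Omega;\R^d)$, becomes $\tfrac{1}{\sqrt{2}\,d_0(y_0)}\pairing{w(x_0), x_0 - \int x\,\rho(dx)} = 0$ by the barycenter hypothesis.

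The construction of $(\bar u,\bar w)$ is the heart of the matter. Together with the boundary conditions $\bar u = 0$ on $\Sigma_0$ and $\bar w=0$ on $\bO$, conditions (iv)--(v) force $\bar u(x_0) = \sqrt{2}\,d_0(y_0)$ and $\bar w(x_0) = 2(x_0-y_0)$: indeed (v) then amounts to $d_0(y_0)^2 + 2\pairing{x_0-y_0, x_0-x} = |x_0-x|^2$, i.e. $|y_0-x|^2 = d(y_0,\Sigma_0)^2$, which holds precisely on $\Sigma_0(y_0)$. To realize these values I import the disk construction above on the \emph{inscribed ball} $B := B(y_0, r_1)$ where $r_1 := d(y_0,\Sigma_0)$: since $\Sigma_0(y_0)\neq \varnothing$ one has $r_1 = d(y_0,\bO)$, so $\overline{B}\subset \overline{\Omega}$ and $\partial B$ touches $\bO$ precisely along $\Sigma_0(y_0)$. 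Let $h$ be the cone $h(x_0 + r\,e_r(\theta)) = 1 - r/\ell(\theta)$ on $B$, with $\ell(\theta)$ the distance from $x_0$ to $\partial B$ in direction $e_r(\theta)$, extended by zero to $\Ob\setminus B$. Set $\bar u := \sqrt{2}\,d_0(y_0)\,h$ and $\bar w := 2(x_0-y_0)\,h$: this yields a Lipschitz pair on $\Ob$ with the required values at $x_0$ and the correct boundary vanishing, since $\bO\cap B=\varnothing$ and $h$ vanishes on $\partial B$.

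It remains to verify (iii), i.e. $A := \tfrac{1}{2}\,\nabla\bar u\otimes\nabla\bar u + e(\bar w)\le \Ident$ a.e. Outside $\overline{B}$, $A\equiv 0$. Inside $B$ the computation is literally the one performed for the disk case above after the substitution $(R_0,0)\leftrightarrow (r_1, y_0)$: the defining equation of $\partial B$ yields precisely $\ell^2 + 2\ell\pairing{x_0-y_0, e_r} = d_0(y_0)^2$, the identity driving the disk argument, and reduces $A$ in the frame $(e_r,e_\theta)$ to a diagonal tensor with $A_{rr}=1$ and $A_{\theta\theta}\le 0$, while the parallelism of $\nabla\bar u$ with $x_0 + \ell\,e_r - y_0$ imposed by $\bar u|_{\partial B}=0$ kills $A_{r\theta}$.

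The step I expect to require the most care is the verification of (iii) across the interface $\partial B$, where $\nabla h$ jumps discontinuously from a finite vector to zero; however, since $\bar w$ vanishes on $\partial B$ together with $\bar u$, no singular part arises in $e(\bar w)$ and the a.e. inequality $A\le \Ident$ is preserved. Once all conditions of Proposition \ref{OptiTwo} are met, the optimality of $(\pi,\Pi)$ and $(\bar u,\bar w)$ follows, and a direct evaluation gives $\Jtwo(\pi,\Pi) = \int_{\Sigma_0(y_0)} \frac{|x_0-x|^2 + d_0(y_0)^2}{\sqrt{2}\,d_0(y_0)}\,\rho(dx)$, which by the identity $|x_0-x|^2 = d_0(y_0)^2 + 2\pairing{x_0-y_0, x_0-x}$ and the barycenter equation collapses to $\sqrt{2}\,d_0(y_0)$; combined with Theorem \ref{Z=I} this yields $\min(\mathcal{P}) = \min(\mathscr{P}) = \sqrt{2}\,d_0(y_0)$.
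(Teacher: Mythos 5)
Your proof is correct and follows essentially the same route as the paper: translate to the inscribed ball $B(y_0, d(y_0,\bO))$, take the zero extension of the conical pair $(\bar u,\bar w)$ constructed in the disk case, and verify the conditions of Proposition~\ref{OptiTwo}. You spell out a few points the paper leaves implicit (the interface $\partial B$, the explicit energy computation), but these are elaborations rather than a different argument.
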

%

\begin{proof} \ In the same way as for the disk, the equalities $\rho(\Sigma_0)=1$ and $x_0 = \int x\, \rho(dx)$ imply that $(\pi,\Pi)$ given in \eqref{optipiPi-y0} is an admissible pair for $(\mathscr{P})$.  Next we observe that \eqref{bary+}
 implies that $x_0 \in B(y_0,R_0)$ where $R_0= d(y_0,\bO)$. Without loss of generality we may assume that $y_0$ is the origin so that $d_0(y_0)= \sqrt{R_0^2-|x_0|^2}$. Therefore $(\pi,\Pi)$  given in \eqref{optipiPi-y0} coincides with the optimal pair we found for the disk $B(0,R_0)$ (see \eqref{optipiPi}). 
Let us consider the zero extension to $\O$ of the pair $(\bar u, \bar w)$ given by \eqref{cone_uw} in the disk $B(0,R_0)$. Clearly it is Lipschitz continuous, vanishes on all $\bO$ and  satisfies 
 the pointwise gradient constraint \eqref{pointwise} on all $\O$.  Therefore the condition $(iii)$  of Proposition \ref{OptiTwo} is fulfilled on $\Ob\times\Ob$. On the other hand, the  conditions $(iv)$ and $(v)$ are obviously satisfied in the same way as for the disk. That way we recover the optimality of $(\pi,\Pi)$ for the problem $(\mathscr{P})$ set on the domain $\Omega$. The corresponding minimal 
 energy is given by $\min (\mathscr{P}) = \sqrt{2} \, \sqrt{R_0^2-|x_0|^2}=  \sqrt{2}\, d_0(y_0)$. 
 
\end{proof}
\begin{remark}\label{bary-}
Assume for simplicity that $\Sigma_0=\bO$. Due to the condition \eqref{bary+} which is  crucial in our previous construction, we can only handle  single loads $f=\delta_{x_0}$
when  $x_0$ belongs to the compact subset  
$ K_0:= \bigcup \left\{ \mathrm{co}\big( p_{\bO}(y_0)\big): y_0\in \Ob\right\}$  
which is determined by those points $y_0$ for which $p_{\bO}(y_0)$ has more than one element 
 (i.e. $y_0$ belongs to the skeleton of $\O$).  In contrast  with the square for which  $K_0=\Ob$,
  it happens that $K_0$ is a strict subset of $\Ob$ for an ellipse of large excentricity.

\end{remark}
As an illustration of Proposition \ref{one-force_construction} we can solve the one-force problem for a rectangular design domain
$\Omega $ of sides $2R$ and $2R+L$ where $L > 0$ as presented in Fig. \ref{fig:one_point_force} where also locations of strategic points $a_1,\ldots,a_6$ are specified. We choose $\Sigma_0 = \bO$ and $f = \delta_{x_0}$ for $x_0\in\Omega$.
\begin{figure}[h]
	\centering
	\subfloat[]{\includegraphics*[trim={0cm 0cm -0cm -0cm},clip,width=0.23\textwidth]{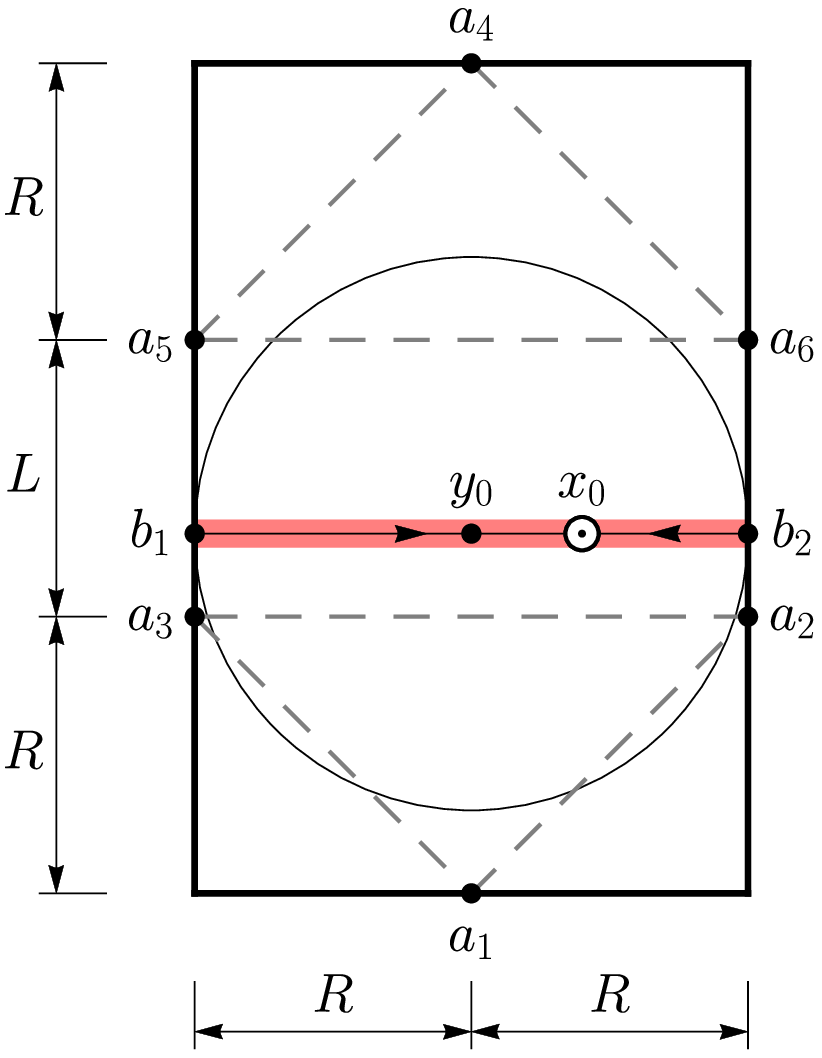}}\hspace{1cm}
	\subfloat[]{\includegraphics*[trim={0cm 0cm -0cm -0cm},clip,width=0.23\textwidth]{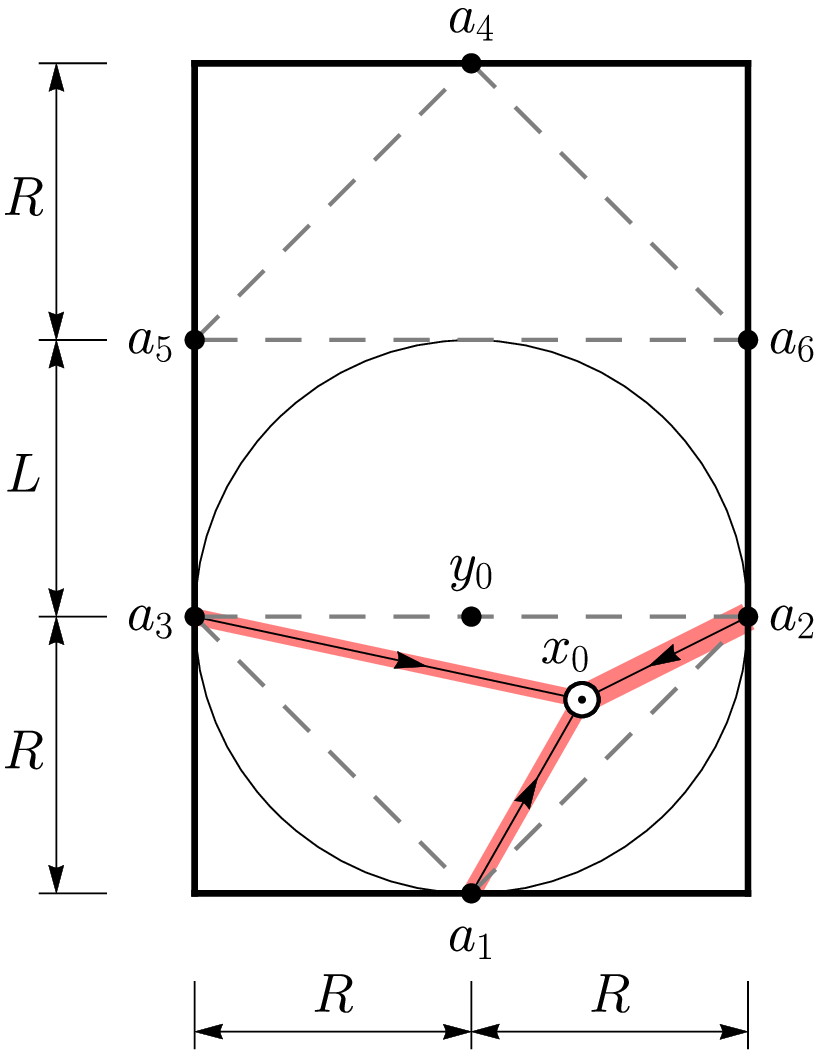}} \hspace{1cm}
	\subfloat[]{\includegraphics*[trim={0cm 0cm -0cm -0cm},clip,width=0.23\textwidth]{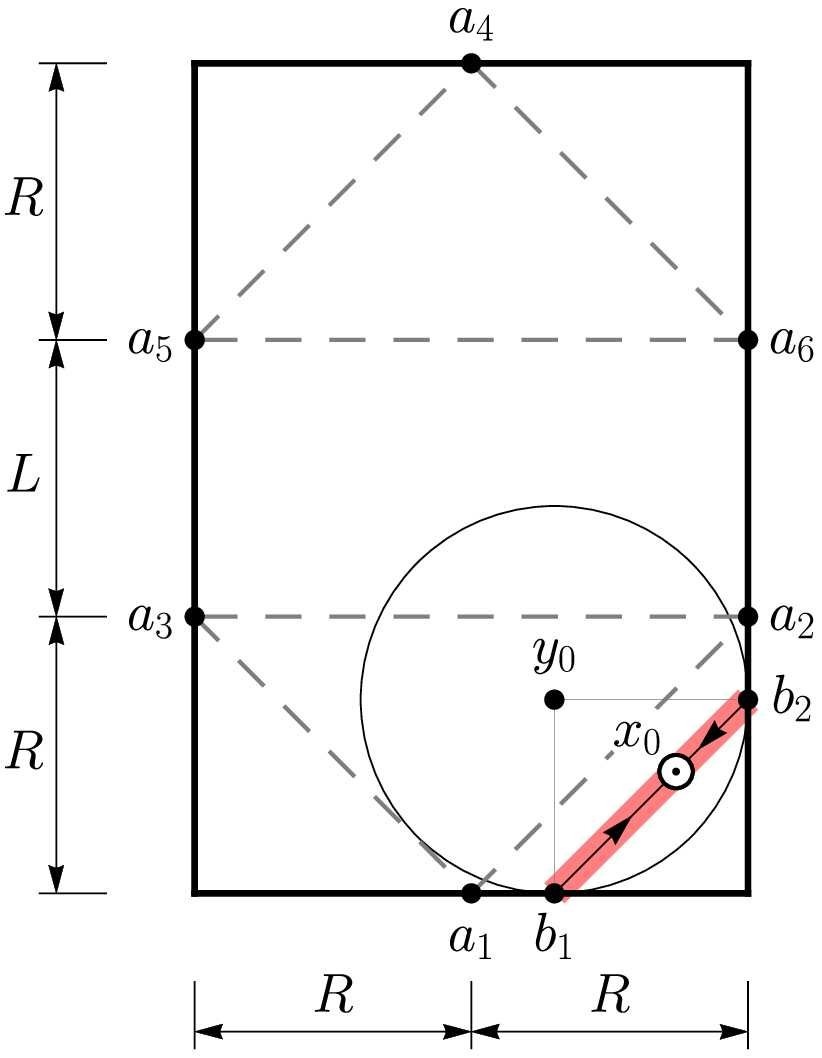}}
	\caption{$y_0$ and  $\bar \sigma$ for three locations of $x_0$}
	\label{fig:one_point_force}       
\end{figure}
The dashed lines in Fig \ref{fig:one_point_force} partition the domain into seven regions.
  For the position $x_0$ lying inside each of those regions, the solution $(\lambda,\sigma)$ given by \eqref{optipiPi-y0}  differs. By symmetry it is not restrictive to assume that $x_0$ belongs to the south-east part of $\Omega$
so that we are reduced to three  cases. For each of them we precise the selected $y_0$ and the probability $\rho$. 
The corresponding optimal $\bar \sigma$ is depicted  in Fig \ref{fig:one_point_force}.

	\smallskip
	\noindent {\bf Case (a)}:  {\em $x_0$ belongs to the interior of $\co\{a_2, a_3, a_5, a_6\}$.}
\  By $b_1$ and $b_2$ we denote the orthogonal projections of $x_0$ onto the longer sides of the domain, see Fig. \ref{fig:one_point_force}(a). The unique $y_0$ satisfying \eqref{bary+} is given by $y_0 = (b_1+b_2)/2$ (thus  $\Sigma_0(y_0) = \{b_1,b_2\}$). The unique element $\rho$ of $\PP(\Sigma_0(y_0))$ such that $x_0 = \int x \,\rho(dx)$ is given by $\rho = \frac{\abs{x_0-b_2}}{\abs{b_1-b_2}} \delta_{b_1}+\frac{\abs{x_0-b_1}}{\abs{b_1-b_2}} \delta_{b_2}$.
	After some simplifications, we arrive at a minimal energy given by  $\min(\mathcal{P}) =\sqrt{2 \,\abs{x_0-b_{1}}\abs{x_0-b_{2}}}$.
The optimal pair $(\lambda,\sigma)$  represented in Fig. \ref{fig:one_point_force}(a) corresponds to a one-dimensional string
$[b_1,b_2]$. 
	
	
	\smallskip
	\noindent{\bf Case (b):} \ {\em  $x_0$ belongs to the closure of $\co\{a_1, a_2, a_3\}$.}
	\ The unique $y_0$ satisfying \eqref{bary+} is given by  $y_0=(a_2+a_3)/2$. Then $\Sigma_0(y_0) = \{a_1,a_2,a_3\}$ and $\rho$ is the unique probability  $\rho = \sum_{i=1}^3 \a_i\, \delta_{a_i}$ with $\a_i\geq 0$ such that $x_0 =  \int x \,\rho(dx) =\sum_{i=1}^3 \a_i\, a_i $. The optimal membrane $\sigma$ consists of three strings tied at $x_0$ as depicted in Fig. \ref{fig:one_point_force}(b).


	\smallskip
	\noindent{\bf Case (c):}  \  {\em  $x_0$ lies in $\co\{a_1, a_2, a_7\}$ .}
	\  The  unique possible $y_0$ is determined by its projections $b_1, b_2$ on the boundary where $b_1 \in [a_1,a_7]$ and $b_2 \in [a_2,a_7]$ are chosen so that the segment $[b_1,b_2]$ contains $x_0$ and  is parallel to $[a_1,a_2]$ (see Fig. \ref{fig:one_point_force}(c)). 
 Then  $\Sigma_0(y_0)=\{b_1,b_2\}$ and $\rho = \frac{\abs{x_0-b_2}}{\abs{b_1-b_2}} \delta_{b_1}+\frac{\abs{x_0-b_1}}{\abs{b_1-b_2}} \delta_{b_2}$.
 Similarly to the Case (a), the optimal membrane is a one dimensional string $[b_1,b_2]$.	
 
 \medskip
 
In addition we can handle the case of a square where $L=0$, $a_3=a_5$ and  $a_2=a_6$. 
For $x_0$ lying in the triangle $a_1 a_2 a_7$, the construction of $y_0$ and $\rho$ are the same as in the Case (c) above rendering the one dimensional string $[b_1,b_2]$ the unique solution. 
In contrast, if  $x_0$ lies in the interior of the rotated square $\co\{a_1, a_2, a_3, a_4\}$, the unique possible $y_0$ is the center
	of the square while  $\Sigma_0(y_0)=\{a_1,a_2,a_3,a_4\}.$ Then there are infinitely many probability measures $\rho= \sum_{i=1}^4 \a_i\, \delta_{a_i}$ that give $x_0 = \sum_{i=1}^4 \a_i\, a_i$. Among the corresponding solutions, we recover in particular the four-string structure that was advertised in the introduction, see Fig. \ref{fig:intro}(b).

	\section{Kantorovich-Rubinstein duality for optimal metrics.}\label{sec:geodesic}
	
	In this section our aim is to attack the dual problem $(\mathcal{P}^*)$ from a different viewpoint which relies on the observation that,
 for every Lipschitz admissible pair $(u,w)\in\K$, the function $v:=\mathrm{id}- w$  satisfies $e(v)\ge 0$ and therefore is a {\em monotone map} from $\Omega$ to $\R^d$. Then the two-point condition \eqref{eq:two_point_condition} rewritten as
	 $$  u(x) - u(y) \le \ell_v(x,y),    \quad \text{where}\quad \ell_v(x,y):= \sqrt{2\pairing{v(y)-v(x), y-x}},$$
can be extended  to a suitable class of functions which allows to characterize $\bK$. 
Moreover, by considering a regularization of the function $\ell_v$  (see Subsection \ref{subsec:monotone}), we attach to
 every $v$ a sub-additive transport cost $c_v$ so that $(\mathcal{P}^*)$ can be rewritten in the form:
	$$  \sup_v \left\{ \sup_u \Big\{  \pairing{f,u}\ :\ u(x) - u(y) \le c_v(x,y)  \ \ \forall\,(x,y) \in \Ob\times\Ob \Big\}\right\},$$
where the second  supremum falls into  the classical Kantorovich-Rubinstein duality framework
 (see for instance  \cite{villani} for more details).

\subsection{ Pseudo-metrics associated with monotone maps}\label{subsec:monotone} 

Given  a multifunction $\mbf{v}:\Rd \rightarrow \R^d$ (being a map from $\Rd$ to subsets of $\Rd$)  we denote by $G_\mbf{v}$ its graph that is
$G_\mbf{v}= \big\{ (x,v)\in \Rd\times\Rd : v\in \mbf{v}(x) \big\}$. Recall that $\mbf{v}$ is  called \textit{monotone} whenever
	\begin{equation*}
		{\pairing{x'_1 - x'_2,x_1-x_2} \geq 0 \qquad \forall\, (x_1, x'_1), (x_2,x'_2)  \in G_\mbf{v}.}
	\end{equation*}
	Moreover we say that $\mbf{v}$ is \textit{maximal} monotone if for any monotone $\tilde{\mbf{v}}$ such that 
	$  G_\mbf{v}\subset  G_{\tilde{\mbf{v}}}$ there holds $\tilde{\mbf{v}} = \mbf{v}$. This implies in particular that $ G_\mbf{v}$ is a closed subset of $\R^d\times\R^d$ (or equivalently that $\mbf{v}$ is upper semicontinuous as a multifunction).
	
	Given a bounded convex domain $\O\subset\R^d$,  
	we will consider the class of maps
	$$ \mbf{M}_\O \ :=\Big\{ \mbf{v}: \R^d \to \R^d \ :\ \mbf{v} \ \text{maximal monotone},\ 
	\mbf{v}= \mathrm{id} \quad \text{in $\R^d\setminus \Ob$}\ \Big\},$$
	and in parallel the following subset of $BV_{\rm loc}(\R^d)\cap L^\infty(\R^d)$:
	$$ \mathcal{B}_\O \ :=\ \Big\{ v\in BV_{\rm loc}(\R^d;\R^d) \ :\ e(v)\ge 0, \ \ v= \mathrm{id} \ \ \text{a.e. in $\R^d\setminus \Ob$}\Big\}.
	$$
	It is important to notice that $ \mbf{M}_\O$ and $\mathcal{B}_\O$ are convex sets. 
	For every $\mbf{v}\in \mbf{M}_\O$ and $a,b \in\R^d$, we define:
	 \begin{equation}\label{lv}
\ell_{\mbf{v}}(a,b):= \min \left\{  \sqrt{2\pairing{b' - a',b-a}}\ :\  a'\in \mbf{v}(a),\ \ b' \in \mbf{v}(b) \right\} 
\end{equation}
and the scalar monotone function:
\begin{equation}\label{v_ab}
\mbf{v}_{a,b}(t):= \pairing{\mbf{v}\big((1-t) \,a + t\, b\big),b-a}, \quad t\in\R.
\end{equation}
Note that $\ell_{\mbf{v}}(a,b)= \sqrt{2} |b-a|$ if $a,b \in \Rd\setminus\Ob$. It turns out that the function  $\ell_{\mbf{v}}$ is  lower semicontinous but the continuity may fail out of the diagonal.
 \begin{lemma} \label{mono1} Let $R$ denote the diameter of $\Omega$ and let $\mbf{v}$ be an element of $\mbf{M}_\O$.  Then:
\begin{itemize}[leftmargin=1.5\parindent]
\item [(i)] For every $x\in \R^d$, $\mbf{v}(x)$ is a non-empty compact convex subset of $\ov {B_R(x)}$ and there holds  $\mbf{v}(x)\supset\{x\}$ 
for every $x\in \bO$.  Moreover, the subset $\{x\in\R^d: \mbf{v}(x)\ \text {is not a singleton}\}$  is contained in a $d- 1$ rectifiable subset of $\Ob$.
\item [(ii)]  The function $\ell_{\mbf{v}}$ is lower semicontinuous on $\Rd\times\Rd$ and the minimum in \eqref{lv} is attained.
\item [(iii)] Let $a,b\in \Rd$ and  $\mbf{v}_{a,b}$ as defined in \eqref{v_ab}.
Then $\mbf{v}_{a,b}$ is a maximal monotone map from $\R$ to $\R$ and  
$$\ell_{\mbf{v}}(a,b)\ =\ \sqrt{2}\, \left( \mbf{v}_{a,b}(1_-) - \mbf{v}_{a,b}(0_+)\right)^{\frac1{2}}.$$ 
Moreover we have the following upper bounds
\begin{equation}\label{lv<}
\ell_{\mbf{v}}(a,b) \le \sqrt{2} \, |b-a|^{1/2} ( R + |b-a|)^{1/2},  \quad    \ell_{\mbf{v}}(a,b)\le \sqrt{2R}\, |b-a|^{1/2} \ \ \text{if  $(a,b) \in \Ob^2$}.
\end{equation}
\end{itemize}
\end{lemma}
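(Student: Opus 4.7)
For part (i), I begin from classical structural results on maximal monotone maps \cite{alberti1999}: the interior of the domain $D(\mbf{v})$ is convex and $\mbf{v}$ is locally bounded on $\mathrm{int}\,D(\mbf{v})$, with nonempty closed convex values there. Since $D(\mbf{v}) \supset \R^d \setminus \Ob$ is open and its convex hull is all of $\R^d$, the interior must be $\R^d$, so $\mbf{v}$ is everywhere defined, locally bounded, and closed-convex valued. To obtain $\mbf{v}(x) \subset \ov{B_R(x)}$, I fix $v \in \mbf{v}(x)$ with $v \neq x$ and apply monotonicity to the pair $\bigl((x,v),(y,y)\bigr)$ with $y = x + s(v-x)/|v-x|$, $s > R$: since $|y - x| > R = \mathrm{diam}\,\Ob$, one has $y \in \R^d \setminus \Ob$, hence $\mbf{v}(y) = \{y\}$, and the inequality reduces to $|v-x| \leq s$; let $s \to R^+$. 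The inclusion $\{x\} \subset \mbf{v}(x)$ for $x \in \bO$ follows from closedness of $G_\mbf{v}$ applied to a sequence $y_n \in \R^d\setminus \Ob$ converging to $x$, and the $(d-1)$-rectifiability of the multivalued set is Alberti's theorem \cite{alberti1999}.

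For (ii), attainment of the minimum in \eqref{lv} is immediate from compactness of $\mbf{v}(a) \times \mbf{v}(b)$ (by (i)) and continuity of $(a',b') \mapsto \pairing{b'-a',b-a}$. For lower semicontinuity, if $(a_n,b_n) \to (a,b)$ and $(a'_n,b'_n)$ achieves the minimum at $(a_n,b_n)$, local boundedness of $\mbf{v}$ yields a cluster point $(a',b')$; closedness of $G_\mbf{v}$ gives $a' \in \mbf{v}(a)$ and $b' \in \mbf{v}(b)$, and passing to the limit produces $\liminf_n \ell_\mbf{v}(a_n,b_n) \geq \sqrt{2\pairing{b'-a',b-a}} \geq \ell_\mbf{v}(a,b)$.

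For (iii), monotonicity of $\mbf{v}_{a,b}$ is direct: for $t_1 < t_2$ and $v_i \in \mbf{v}(x_i)$ with $x_i := (1-t_i)a + t_i b$ one has $x_2 - x_1 = (t_2 - t_1)(b - a)$, so monotonicity of $\mbf{v}$ gives $\pairing{v_2 - v_1, b-a} \geq 0$. The scalar multifunction $\mbf{v}_{a,b}$ has closed graph and closed-interval values (convex projections of closed convex sets), and the crux is to verify $\mbf{v}_{a,b}(t) = [\mbf{v}_{a,b}(t_-),\mbf{v}_{a,b}(t_+)]$ at every $t$, which is exactly the one-dimensional characterization of maximal monotonicity. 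The inclusion $[\mbf{v}_{a,b}(t_-),\mbf{v}_{a,b}(t_+)] \subset \mbf{v}_{a,b}(t)$ comes from extracting cluster points of sequences $t_n \to t_\pm$ using local boundedness and closedness of $G_\mbf{v}$; the reverse is immediate from monotonicity. Consequently $\min \mbf{v}_{a,b}(1) = \mbf{v}_{a,b}(1_-)$ and $\max \mbf{v}_{a,b}(0) = \mbf{v}_{a,b}(0_+)$, so minimizing $\pairing{b'-a', b-a} = \pairing{b', b-a} - \pairing{a',b-a}$ independently over $a' \in \mbf{v}(a)$ and $b' \in \mbf{v}(b)$ yields $\ell_\mbf{v}(a,b) = \sqrt{2\bigl(\mbf{v}_{a,b}(1_-) - \mbf{v}_{a,b}(0_+)\bigr)}$. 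This identification of left/right limits with extrema of the multi-valued fibres is the main technical obstacle I anticipate.

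For the upper bounds I exploit that $\mbf{v}_{a,b}(t)$ coincides with the affine function $t \mapsto \pairing{a, b-a} + t|b-a|^2$ whenever $x(t) := (1-t)a + tb \notin \Ob$. Setting $[T_-, T_+] := \{t \in \R : x(t) \in \Ob\}$ (a closed interval, possibly empty), monotonicity of $\mbf{v}_{a,b}$ gives $\mbf{v}_{a,b}(1_-) \leq \pairing{a, b-a} + \max(1,T_+)|b-a|^2$ and $\mbf{v}_{a,b}(0_+) \geq \pairing{a, b-a} + \min(0,T_-)|b-a|^2$. Since the chord $[x(T_-),x(T_+)]$ lies in $\Ob$ one has $(T_+ - T_-)|b-a| \leq R$, and a short case analysis yields $\mbf{v}_{a,b}(1_-) - \mbf{v}_{a,b}(0_+) \leq |b-a|\bigl(|b-a| + R\bigr)$, whence the first bound. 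When $a, b \in \Ob$, convexity of $\Omega$ forces $[0,1] \subset [T_-,T_+]$, the affine contributions disappear, and the estimate refines to $(T_+ - T_-)|b-a|^2 \leq R|b-a|$, giving $\ell_\mbf{v}(a,b) \leq \sqrt{2R}\,|b-a|^{1/2}$.
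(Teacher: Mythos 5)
Your proof is correct and follows essentially the same strategy as the paper's. The main differences are expository: you derive the inclusion $\mbf{v}(x)\subset\ov{B_R(x)}$ directly by testing monotonicity against a probe point $y=x+s(v-x)/|v-x|\notin\Ob$, whereas the paper cites the estimate for Lipschitz pairs from Lemma~\ref{esti}(i); and you spell out the maximal monotonicity of the scalar restriction $\mbf{v}_{a,b}$ and the identification $\min\mbf{v}_{a,b}(1)=\mbf{v}_{a,b}(1_-)$, $\max\mbf{v}_{a,b}(0)=\mbf{v}_{a,b}(0_+)$, which the paper takes as known. Your direct argument for the ball containment is arguably cleaner, since Lemma~\ref{esti}(i) is stated only for Lipschitz competitors and an extra word is needed to apply it to a general multivalued $\mbf{v}$.

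For the upper bounds your bookkeeping via $[T_-,T_+]$ and the quantity $\max(1,T_+)-\min(0,T_-)$ is organized a bit differently from the paper, which proves the inner estimate on $\O\times\O$ first, extends it to $\Ob\times\Ob$ by the lower semicontinuity from (ii), and then handles the mixed case $a\in\O$, $b\notin\Ob$ by splitting at the unique boundary crossing; but both arguments rest on the same two ingredients (the chord-length bound $(T_+-T_-)|b-a|\le R$ and the fact that $\mbf{v}_{a,b}$ is explicitly affine with slope $|b-a|^2$ outside $[T_-,T_+]$). One small gap: you should say what happens when the set $\{t:x(t)\in\Ob\}$ is empty, so that $T_\pm$ are undefined; in that case $a,b\notin\Ob$, both fibres are singletons, $\ell_\mbf{v}(a,b)=\sqrt{2}\,|b-a|$, and the first bound holds trivially.
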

\begin{proof}\ The  first property in assertion (i) follows from \cite[Prop.1.2]{alberti1999} and from assertion (i) of Lemma \ref{esti}. 
Since $\mbf{v}(x)=\{x\}$ holds for every $x\in \R^d\setminus\Ob$,  the closed graph of $\mbf{v}$ contains $\{(x,x): x\in \bO\}$
hence  $\mbf{v}(x)\supset\{x\}$ for all $x\in\bO$.  
The second property in assertion (i) is a direct consequence of \cite[Thm 2.2 ]{alberti1999}. 
The lower semicontinuity property stated in (ii)  is straightforward once we know that
for any sequence $(a_n,b_n)\to (a,b)$ the elements $a'_n\in \mbf{v}(a_n) ,b'_n\in \mbf{v}(a_n)$ which realize the minimum  in \eqref{lv} remain in a compact subset of $\Rd$, thus  (possibly after extraction of a subsequence) converge respectively to $a'\in \mbf{v}(a)$ and $b'\in \mbf{v}(b)$.
It is in fact the case since, by the assertion (i), one has $ |a'_n-a_n|+ |b'_n-b_n|\le 2R$ and also due to closedness of the graph $\mbf{v}$. 
 The equality in assertion (iii) is trivial since $\ell_{\mbf{v}}(a,b)= \ell_{\mbf{v}_{a,b}}(0,1)$
while one has  $ \mbf{v}_{a,b}(t)= [\mbf{v}_{a,b}(t_-),\mbf{v}_{a,b}(t_+)]$ for every $t\in\R$.
On the other hand, if $a,b\in \O \times\O$ the intersection of the  line $\big\{(1- t) a + t b: t\in\R \big\}$ with $\bO$ gives two values $t_a, t_b$ such that
$t_a < 0 <1< t_b$ and $(t_b-t_a) |b-a| \le R $. Therefore, since $\mbf{v}$ agrees with the identity in $\Rd\setminus\Ob$, 
we get  
\begin{equation}\label{estiv0}
\mbf{v}_{a,b}(1_-) -\mbf{v}_{a,b}(0_+)\ \le\ \mbf{v}_{a,b}(t_{b}-0) -\mbf{v}_{a,b}(t_{a}+0)\ \le\  R\, |b-a|
\end{equation}
hence we obtain the inequality $ \ell_{\mbf{v}}(a,b)\le  \sqrt{2R} |b-a|^{1/2}$ for all $(a,b)\in\O\times\O$ which can be extended to $\Ob\times\Ob$ by using the lower semicontinuity of $\ell_{\mbf{v}}$ obtained in assertion (ii). Thus the second  inequality in \eqref{lv<} is proved. The first one follows if $(a,b)\in \Ob\times\Ob$ whereas it obviously holds true also  for $(a,b)\in (\R^d\setminus \O )^2$. 
Eventually it is enough to check the case where $a\in \O$ while $b\notin\Ob$. This is done by considering the unique $b'\in\bO\cap [a,b]$
which is associated with a value $t_{b'}<1$. Then 
$$\mbf{v}_{a,b}(1_-) -\mbf{v}_{a,b}(0_+) \le \big(\mbf{v}_{a,b}(t_{b'}-0) - \mbf{v}_{a,b}(t_a+0)\big) +  \big(\mbf{v}_{a,b}(1_-) -  \mbf{v}_{a,b}(t_{b'}+0) \big)
\le  R |b-a| + |b-a|^2.$$
The proof of Lemma \ref{mono1} is complete
\end{proof}

\med
The main properties of  $\mbf{M}_\O$ and its relation with $\mathcal{B}_\O$ are summarized in the next two lemmas.

\begin{lemma}\label{mono2}  For every  $\mbf{v}\in \mbf{M}_\O$ there exists a unique $v\in \mathcal{B}_\O$ such that $\mbf{v}(x)=\{v(x)\}$ for a.e. $x\in \O$. Moreover, for $R$ being the diameter of $\Omega$, one has:
\begin{equation}\label{estiv}
\int_\Ob |Dv|\le 2 C_d R^d \qquad \text{and} \qquad  \pairing{v(y) -v(x),y-x} \le R \, |x-y| \quad \text{a.e. in $\O\times\O$}. 
\end{equation}
Conversely, for every $v\in\mathcal{B}_\O$ there exists a unique $\mbf{v}\in \mbf{M}_\O$ such that  $\mbf{v}(x)=\{v(x)\}$ for a.e. $x\in \O$. Moreover $\mbf{v}(x)$ is a one dimensional segment $[v^-(x),v^+(x)]$ for $\Ha^{d-1}$ a.e. $x$ in the jump set of $v$.
\end{lemma}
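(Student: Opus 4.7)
For the first direction, given $\mbf{v} \in \mbf{M}_\O$ I will use Lemma~\ref{mono1}(i) to select a representative: since $\mbf{v}$ is single-valued off a $(d\!-\!1)$-rectifiable subset of $\Ob$, the selection $v(x) \in \mbf{v}(x)$ is unique $\mathcal{L}^d$-a.e.\ and coincides with $\ident$ on $\R^d \setminus \Ob$. For the BV estimate I would invoke the oscillation bound in \cite[Prop.~5.1 and Rem.~5.2]{alberti1999}: by Lemma~\ref{mono1}(i) one has $|v(x)-x|\le R$, so $\mathrm{osc}(v,\Ob)\le 2R$ and $\int_\Ob |Dv| \le 2 C_d R^d$. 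The inequality $\pairing{v(y)-v(x),y-x}\le R|y-x|$ a.e.\ follows directly from the estimate obtained in \eqref{estiv0} applied to $(a,b)=(x,y)\in\O^2$, observing that $v(x)$, $v(y)$ realize values of the scalar monotone function $\mbf{v}_{x,y}$ at $t=0$ and $t=1$.

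To check that $e(v)\ge 0$, I note that pointwise monotonicity of $\mbf{v}$ yields $\pairing{v(y)-v(x),y-x}\ge 0$ for $\mathcal{L}^d\otimes\mathcal{L}^d$-a.e.\ $(x,y)$. Fix $\tau\in S^{d-1}$ and $\varphi\in C_c^\infty(\R^d)$ with $\varphi\ge 0$; for every $h>0$ the inequality $\pairing{v(x+h\tau)-v(x),\tau}\ge 0$ holds a.e., and by Fubini
\begin{equation*}
\frac1h \int_{\R^d} \pairing{v(x+h\tau)-v(x),\tau}\,\varphi(x)\,dx \ge 0 .
\end{equation*}
Letting $h\to 0^+$ gives $\pairing{e(v)\tau,\tau}(\varphi)\ge 0$, so the scalar distribution $\pairing{e(v)\tau,\tau}$ is a non-negative measure. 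As $\tau$ is arbitrary, $e(v)\ge 0$ in $\Mes(\Ob;\Sddp)$ and $v\in\mathcal{B}_\O$. Uniqueness is immediate since two a.e.\ equal selections represent the same element of $BV_{\rm loc}$.

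For the converse, given $v\in\mathcal{B}_\O$ I first note that mollification yields smooth monotone maps $v^\e:=v*\rho_\e$ satisfying $e(v^\e)\ge 0$; the usual inequality $\pairing{v^\e(y)-v^\e(x),y-x}\ge 0$ passes to the limit, giving pointwise a.e.\ monotonicity of $v$. Let $\tilde G$ be the closure in $\R^d\times\R^d$ of $\{(x,v(x)): x \text{ Lebesgue point of } v\}$; the associated multifunction $\tilde{\mbf{v}}$ has closed graph, full domain, is monotone, and equals $\ident$ outside $\Ob$ by continuity of $v$ there. Its pointwise convex hull $\mbf{v}(x):=\co\bigl(\tilde{\mbf{v}}(x)\bigr)$ is then maximal monotone by \cite[Cor.~1.4]{alberti1999}, belongs to $\mbf{M}_\O$, and agrees with $v$ at every Lebesgue point; this argument mirrors the construction already carried out in the proof of Proposition~\ref{key}(iii). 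Uniqueness of $\mbf{v}$ in $\mbf{M}_\O$ follows since any maximal monotone extension agreeing with $v$ a.e.\ must contain $\tilde{\mbf{v}}$, hence coincide with $\mbf{v}$.

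The main obstacle is the last assertion that $\mbf{v}(x)=[v^-(x),v^+(x)]$ for $\Ha^{d-1}$-a.e.\ $x\in J_v$. I would rely on the structure theorem for BV: the jump part of $Dv$ is $(v^+-v^-)\otimes\nu\,\Ha^{d-1}\mres J_v$, whose symmetric part $(v^+-v^-)\otimes_s \nu\,\Ha^{d-1}\mres J_v$ must be PSD-valued because $e(v)\ge 0$. An elementary eigenvalue computation on symmetric rank-$\le 2$ tensors of the form $a\otimes_s \nu$ shows PSD forces $a=\alpha\nu$ with $\alpha\ge 0$; hence $v^+-v^-$ is a non-negative multiple of $\nu$ at $\Ha^{d-1}$-a.e.\ jump point. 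Approaching such $x$ through Lebesgue points on either side of $J_v$, both $v^\pm(x)$ lie in $\tilde{\mbf{v}}(x)$, yielding $\mbf{v}(x)\supset[v^-(x),v^+(x)]$; the reverse inclusion, i.e.\ that no further point can enter $\mbf{v}(x)$, is the delicate step: it follows by restricting the one-dimensional maximal monotone map $\mbf{v}_{x-\epsilon\nu,\,x+\epsilon\nu}$ to a short transverse segment across $J_v$, whose total variation is entirely concentrated at the jump and whose values off the jump are pinned by Lebesgue points converging to $v^\pm(x)$.
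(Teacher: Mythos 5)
Your proposal takes a much more self-contained route than the paper, which dispatches the entire lemma in two lines by quoting Alberti--Ambrosio: the second pointwise inequality in \eqref{estiv} from \eqref{estiv0}, and everything else (BV estimate, a.e.\ single-valuedness, uniqueness in both directions, and the jump-set structure) from \cite[Thm 5.3 and Cor.~1.5]{alberti1999}. Reproving parts of this is fine for pedagogy, and the forward direction of your argument is essentially sound: the selection via Lemma~\ref{mono1}(i), the oscillation bound for the total variation, and the reduction to \eqref{estiv0} all work. One small remark on the $e(v)\ge 0$ step: the Fubini phrasing is not the clean way to justify that $\pairing{v(x+h\tau)-v(x),\tau}\ge 0$ a.e.\ for \emph{every} $h>0$ (as opposed to a.e.\ $h$). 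What actually makes this work is that by Lemma~\ref{mono1}(i) the set where $\mbf{v}$ is multivalued is $\mathcal{L}^d$-null, hence so is its translate by $h\tau$; outside the union of these two null sets both $\mbf{v}(x)$ and $\mbf{v}(x+h\tau)$ are singletons and the inequality follows from monotonicity directly, for every fixed $h$.

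The genuine gap is in the last assertion, the reverse inclusion $\mbf{v}(x)\subset[v^-(x),v^+(x)]$ at $\Ha^{d-1}$-a.e.\ jump point. You correctly extract from PSD-ness of the jump part that $v^+-v^-$ is a non-negative multiple of the jump normal $\nu$, and the inclusion $\mbf{v}(x)\supset[v^-(x),v^+(x)]$ via approach through Lebesgue points on either side. But the concluding sentence --- that no further point can enter $\mbf{v}(x)$, to be deduced ``by restricting the one-dimensional maximal monotone map $\mbf{v}_{x-\epsilon\nu,\,x+\epsilon\nu}$ to a short transverse segment'' --- is not an argument; it names the claim rather than proving it. Since $\mbf{v}(x)$ is by construction the convex hull of \emph{all} cluster values of $v$ at Lebesgue points $x_n\to x$, you would need a blow-up/rectifiability argument showing that for $\Ha^{d-1}$-a.e.\ $x\in J_v$, approach from any direction $\tau$ with $\pairing{\tau,\nu}>0$ yields only the cluster value $v^+(x)$ (and $v^-(x)$ for $\pairing{\tau,\nu}<0$), and that tangential approach cannot escape the segment. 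This is precisely the content of \cite[Thm 5.3]{alberti1999}, and without it your proof of the last assertion is incomplete. Either invoke that theorem here, as the paper does, or supply the blow-up argument.
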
 
\begin{proof} \ The second inequality in \eqref{estiv} can be deduced from \eqref{estiv0}. The other statements are consequences of \cite[Thm 5.3 and Corollary 1.5]{alberti1999}.

\end{proof}

\med
By the assertion (i) of Lemma \ref{mono1} the Hausdorff distance $D_H$ in $\R^d\times \R^d$ between the graphs of two elements  ${\mbf{v}_1}, {\mbf{v}_2}\in \mbf{M}_\O$ is finite 
and induces a metric $ \mbf{h}({\mbf{v}_1}, {\mbf{v}_2}):=  D_H( G_{{\mbf{v}_1}},  G_{{\mbf{v}_2}}).$
Similarly, we may embed  $\mathcal{B}_\O$ with the $L^1$-distance $d(v_1,v_2):= \int_\O |v_1-v_2| dx$ (recall that $v_1=v_2=\mathrm{id} $ a.e in $\R^d\setminus \O$). 
In view of Lemma \ref{mono2}, we may consider the one to one map between metric spaces  
$$\mbf{i}: \mbf{v} \in (\mbf{M}_\O, \mbf{h}) \mapsto  v\in  (\mathcal{B}_\O,d) .$$

\begin{lemma}\label{homeo}  The set $\mbf{M}_\O$  is a compact metric space and  $\mathbf{i}$ is a homeomorphism between $\mbf{M}_\O$ and $\mathcal{B}_\O$.  
Furthermore, let  $(\mbf{v}_n)$ be a sequence $\mbf{M}_\O$ such that $\mbf{v}_n \to \mbf{v}$. Then $\ell_{\mbf{v}_n} \to \ell_{\mbf{v}}$
in the sense of $\Gamma$-convergence that is
\begin{subnumcases}{\label{Gamma-lv}}
     \liminf_n \ell_{\mbf{v}_n}(a_n,b_n) \ge \ell_{\mbf{v}}(a,b) \ \text{ whenever }\ (a_n,b_n)\to (a,b)\ \text{in}\ \R^d\times\R^d,& \label{gammaliminf}\\
\forall (a,b)\quad    \exists  \ (a_n,b_n) \to (a,b)   \ :\    \limsup_n \ell_{\mbf{v}_n}(a_n,b_n) \le \ell_{\mbf{v}}(a,b).& \label{gammalimsup}
\end{subnumcases}

\end{lemma}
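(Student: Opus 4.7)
The plan is to prove the three claims in turn, with compactness of $\mbf{M}_\O$ serving as the main ingredient from which the homeomorphism and the $\Gamma$-convergence follow rather directly.

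\textbf{Compactness of $\mbf{M}_\O$.} I assume WLOG that $0 \in \Omega$ and let $R$ denote the diameter. By Lemma \ref{mono1}(i), for every $\mbf{v} \in \mbf{M}_\O$ and $x \in \Ob$ one has $\mbf{v}(x) \subset \ov{B_{2R}(0)}$, while every graph agrees with the fixed set $\{(x,x) : x \in \R^d \setminus \Ob\}$ on the exterior. Hence $\mbf{h}$ reduces to the Hausdorff distance between closed subsets of the compact box $K := \Ob \times \ov{B_{2R}(0)}$, and Blaschke's selection principle yields from any sequence $\mbf{v}_n \in \mbf{M}_\O$ a subsequence whose graphs converge in Hausdorff distance to some closed $G^* \subset K$; reattaching the fixed exterior part produces a candidate $G \subset \R^d \times \R^d$. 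Monotonicity of $G$ and the identity off $\Ob$ pass trivially to the limit, and fullness of the domain on $\Ob$ follows by extracting a cluster point of arbitrary $(x, y_n) \in G_{\mbf{v}_n}$ with $y_n \in \mbf{v}_n(x)$. The delicate step, which I expect to be the main obstacle, is maximality of the limit. For this I would invoke Minty's surjectivity criterion: given $z \in \R^d$, maximality of each $\mbf{v}_n$ provides $x_n$ with $(x_n, z - x_n) \in G_{\mbf{v}_n}$; the sequence $x_n$ stays bounded (if $x_n \notin \Ob$ then $z = 2 x_n$, and if $x_n \in \Ob$ then $|z - x_n| \le 2R$), so a subsequence converges to $(x^*, z - x^*) \in G$. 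This gives surjectivity of $\ident + \mbf{v}$, where $\mbf{v}$ has graph $G$, and hence maximal monotonicity by Minty's theorem; the resulting $\mbf{v}$ belongs to $\mbf{M}_\O$.

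\textbf{Homeomorphism.} The map $\mathbf{i}$ is a bijection by Lemma \ref{mono2}. Since $(\mbf{M}_\O, \mbf{h})$ is compact and $(\mathcal{B}_\O, d)$ is Hausdorff, it suffices to prove $\mathbf{i}$ continuous. Suppose $\mbf{v}_n \to \mbf{v}$ in $\mbf{h}$ and set $v_n := \mathbf{i}(\mbf{v}_n)$. The bound $\int_\Ob |D v_n| \le 2 C_d R^d$ from Lemma \ref{mono2} combined with the uniform $L^\infty$-estimate gives $L^1$-precompactness of $(v_n)$; it is enough to show every $L^1$-cluster point $\tilde v$ equals $v := \mathbf{i}(\mbf{v})$. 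Passing to a subsequence converging pointwise a.e., at any $x \in \Omega$ where $v_n(x) \to \tilde v(x)$ and $\mbf{v}(x)$ is a singleton (which holds a.e. by Lemma \ref{mono2}), the point $(x, v_n(x)) \in G_{\mbf{v}_n}$ has limit $(x, \tilde v(x))$, which by Hausdorff convergence lies in $G_{\mbf{v}}$; hence $\tilde v(x) = v(x)$ a.e.

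\textbf{$\Gamma$-convergence of $\ell_{\mbf{v}_n}$.} For \eqref{gammaliminf}, given $(a_n, b_n) \to (a, b)$, select $a'_n \in \mbf{v}_n(a_n)$ and $b'_n \in \mbf{v}_n(b_n)$ realizing the minimum in \eqref{lv} for $\ell_{\mbf{v}_n}(a_n, b_n)$. By Lemma \ref{mono1}(i) the sequences $(a'_n), (b'_n)$ are bounded; extract so that $(a'_n, b'_n) \to (a', b')$. Since $(a_n, a'_n), (b_n, b'_n) \in G_{\mbf{v}_n}$ and $G_{\mbf{v}_n} \to G_{\mbf{v}}$ in Hausdorff, we obtain $a' \in \mbf{v}(a)$ and $b' \in \mbf{v}(b)$, whence
\[
\ell_{\mbf{v}}(a, b)^2 \le 2\,\pairing{b' - a', b - a} = \lim_n \ell_{\mbf{v}_n}(a_n, b_n)^2.
\]
For \eqref{gammalimsup}, take $a', b'$ optimal for $\ell_{\mbf{v}}(a, b)$ and use Hausdorff convergence to pick $(a_n, a'_n), (b_n, b'_n) \in G_{\mbf{v}_n}$ with $(a_n, a'_n) \to (a, a')$ and $(b_n, b'_n) \to (b, b')$; the (possibly non-optimal) selection $(a'_n, b'_n)$ together with the monotonicity of $\mbf{v}_n$ yields
\[
\ell_{\mbf{v}_n}(a_n, b_n)^2 \le 2\,\pairing{b'_n - a'_n, b_n - a_n} \longrightarrow 2\,\pairing{b' - a', b - a} = \ell_{\mbf{v}}(a, b)^2,
\]
providing the required recovery sequence.
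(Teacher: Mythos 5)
Your proof is correct, and the interesting divergence from the paper is in the compactness step. The paper proves compactness of $(\mbf{M}_\O,\mbf{h})$ simply by noting the uniform bound $\mbf{h}(\mbf{v},\ident)\le R$ from Lemma~\ref{mono1}(i) and then citing \cite[Proposition 1.7]{alberti1999}; you instead unpack that black box into a self-contained argument via Blaschke's selection principle to get a Hausdorff-convergent subsequence of graphs, followed by Minty's surjectivity criterion to verify maximality of the limit. This is a legitimate and more elementary route; the Minty step is the right tool, and your boundedness argument for the solutions $x_n$ of $z\in(\ident+\mbf{v}_n)(x_n)$ works (though the displayed estimate ``$|z-x_n|\le 2R$'' should read $|z-2x_n|\le R$; the conclusion that $x_n$ is bounded is of course trivially true anyway since $x_n\in\Ob$ in that case). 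For the continuity of $\mathbf{i}$, the paper shows directly that $v_n(x)\to v(x)$ a.e.\ via uniqueness of the cluster point forced by graph convergence and then applies dominated convergence; you reach the same conclusion by a slightly longer detour through $L^1$-precompactness (using the $BV$ and $L^\infty$ bounds from Lemma~\ref{mono2}) and identification of the unique $L^1$-cluster point. Either works, and your $BV$ estimate is not strictly needed for the dominated-convergence route. The $\Gamma$-convergence part is essentially identical to the paper's (which in fact has typos writing $\mbf{v}$ where it means $\mbf{v}_n$; you state it correctly).
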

\begin{proof}  \ We observe that, thanks to the assertion (i) of Lemma \ref{mono1},  we have   the estimate $\mbf{h}(\mbf{v}, \mbf{{\rm id}})\le R$  for every
$\mbf{v} \in \mbf{M}_\O$ (this prevents the graph of $\mbf{v}$ to converge to an empty set).  The compactness of $(\mbf{M}_\O, \mbf{h})$ is then a consequence of \cite[Proposition 1.7]{alberti1999}. Let us prove that the one-to-one  map $\mbf{i}$ is continuous. Assume that $\mbf{v}_n \to \mbf{v}$ in $\mbf{M}_\O$
and consider a Lebesgue negligible Borel subset $N$ such that $\mbf{v}_n(x)=\{v_n(x)\}$ and $\mbf{v}(x)=\{v(x)\}$ for all $x\notin N$ and for every $n\in\N$.
Then, for such an $x$, the Hausdorff convergence of the graphs  $G_{\mbf{v}_n}$ implies that $v(x)$ is the unique possible cluster of the sequence $\big(v_n(x)\big)$.
Since this sequence is bounded we infer that $v_n\to v$ a.e. on $\O$ and, by the dominated convergence theorem,  the convergence 
$d(v_n,v)= \int_\O |v_n-v| \, dx\,  \to 0$ follows.

It remains to show the $\Gamma$-convergence property \eqref{Gamma-lv}. For proving \eqref{gammaliminf} we write $\ell_{\mbf{v}_n}(a_n,b_n)= \sqrt{2\pairing{b'_n - a'_n,b_n-a_n}}$
for suitable $a'_n\in \mbf{v}(a_n) \ ,\ b'_n \in \mbf{v}(b_n)$ (which is plausible owing to assertion (ii) of Lemma \ref{mono1}). As $a'_n,\, b_n'$ remain bounded (by assertion (i) of Lemma \ref{mono1})
we may assume  that, up to extraction of a subsequence, it holds that $a'_n\to a', \  b'_n\to b'$. Then the  graph convergence of $\mbf{v}_n$ implies that 
$a'\in \mbf{v}(a), \ b' \in \mbf{v}(b)$, hence $ \liminf_n \ell_{\mbf{v}_n}(a_n,b_n)= \sqrt{2\pairing{b' - a',b-a}}\ge \ell_{\mbf{v}}(a,b)$.
On the other hand, for given $a,b$ we can choose $a'\in \mbf{v}(a) \ ,\ b' \in \mbf{v}(b)$ so that $\ell_{\mbf{v}}(a,b)= \sqrt{2\pairing{b' - a',b-a}}$.
Then there exists $(a_n,a'_n)\to (a,a')$ and $(b_n,b'_n)\to (b,b')$ such that $a'_n\in \mbf{v}(a_n),\ b'_n \in \mbf{v}(b_n)$. Then the property \eqref{gammalimsup} follows since
$$\limsup_n \ell_{\mbf{v}_n}(a_n,b_n) \le  \limsup_n  \sqrt{2\pairing{b'_n - a'_n,b_n-a_n}} = \sqrt{2\pairing{b' - a',b-a}} = \ell_{\mbf{v}}(a,b) .$$

\end{proof}

Next we associate to every $\mbf{v}\in \mbf{M}_\O$ a cost function $c_{\mbf{v}}: \R^d\times \R^d \to \R_+$ defined as follows:
\begin{equation}\label{def:cost}
c_{\mbf{v}}(a,b) :=  \inf\left\{ \sum_{i=1}^{N-1}  \ell_{\mbf{v}}(x_i,x_{i+1})\ :\ x_1=a, \ \ x_N=b,\ \ N\ge 2\right\} 
\end{equation}
where $\ell_{\mbf{v}}$ is defined by \eqref{lv}.
Note that  the infimum in \eqref{def:cost} 
is not reached in general since the number $N$ of intermediate points is not upper bounded.   
Let us also remark that, by using the homeomorphism $\mbf{i}$,  we may associate the same cost to any $v\in \mathcal{B}_\O$  so that the notation $c_v$ could be used as well. 
\begin{theorem}\label{cv-dual}\ Let $\mbf{v}$ be an element of $\mbf{M}_\O$. Then:
\begin{itemize}[leftmargin=1.5\parindent]
\item [(i)]  $c_{\mbf{v}}$ is    continuous sub-additive  and satisfies  
\begin{equation}\label{bounds-c_v}
    \sqrt{2}\ | d(a,\O) -d(b,\O) | \ \le \ c_\mbf{v}(a,b) \ \le\ \ell _\mbf{v}(a,b) \le \sqrt{2} \, |b-a|^{1/2} ( R + |b-a|)^{1/2} \qquad \forall  (a,b) \in \Rd\times\Rd;
  \end{equation} 
\item[(ii)] the following dual representation of the pseudo-metric $c_\mbf{v}$ holds true
 \begin{equation}\label{c_v=sup}
c_\mbf{v}(a,b)  = \max \Big\{ u(b) - u(a)\ :\ u\in C^0(\R^d), \ \ u(x)-u(y)\ \le\ \ell_{\mbf{v}}(x,y) \quad \forall  (x,y) \Big\};
  \end{equation}
  \item[(iii)] for every $(a,b)\in \Rd\times\Rd$ the evaluation function  $\mbf{v}\in (\mbf{M}_\O,\mbf{h})  \mapsto\  c_\mbf{v}(a,b)$  is concave 
  and upper semicontinuous.
  \end{itemize}
  \end{theorem}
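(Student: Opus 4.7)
\medskip
\noindent\textbf{Proof proposal.}\ My plan is to prove the three assertions in order, each feeding into the next. For (i), the sub-additivity of $c_{\mbf{v}}$ is immediate from the definition \eqref{def:cost} by concatenating chains. The upper bound $c_{\mbf{v}}(a,b)\le \ell_{\mbf{v}}(a,b)$ follows by taking $N=2$ in \eqref{def:cost}, and the final upper bound on $\ell_{\mbf{v}}(a,b)$ is exactly the first inequality of \eqref{lv<} in Lemma \ref{mono1}. The key point is the lower bound: I will show that the function $\phi(x):=\sqrt{2}\,d(x,\Omega)$ satisfies $\phi(y)-\phi(x)\le \ell_{\mbf{v}}(x,y)$ for every $(x,y)\in\R^d\times\R^d$. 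For $x,y\in\Ob$ both sides are $\ge 0$ with $\phi\equiv0$, and for $x,y\notin\Ob$ the map $\mbf{v}$ is the identity and the inequality reduces to the $\sqrt{2}$-Lipschitz property of $d(\cdot,\Omega)$. The remaining case $x\in\Ob$, $y\notin\Ob$ (the reverse being handled by symmetry of $\ell_{\mbf{v}}$) is the subtle one: let $z$ be the intersection of $[x,y]$ with $\bO$ and write $z=(1-t^*)x+t^*y$. Using Lemma \ref{mono1}(iii) together with the fact that $\mbf{v}=\ident$ on $[z,y]$, the monotone scalar map $\mbf{v}_{x,y}$ satisfies $\mbf{v}_{x,y}(1-)-\mbf{v}_{x,y}(t^*+)=(1-t^*)|y-x|^2=|y-z|\,|y-x|$, so that $\ell_{\mbf{v}}(x,y)^2\ge 2\,|y-z|\,|y-x|\ge 2\,|y-z|^2\ge 2\,d(y,\Omega)^2=\phi(y)^2$. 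Summing along any chain then gives $|\phi(b)-\phi(a)|\le c_{\mbf{v}}(a,b)$. Continuity of $c_{\mbf{v}}$ follows from sub-additivity plus the Hölder bound $c_{\mbf{v}}(a,a')\le \ell_{\mbf{v}}(a,a')\le \sqrt{2}|a-a'|^{1/2}(R+|a-a'|)^{1/2}$.

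\medskip
For (ii), the inequality $\ge$ is straightforward: if $u$ belongs to the admissible class, summing $u(x_{i+1})-u(x_i)\le \ell_{\mbf{v}}(x_i,x_{i+1})$ along any chain joining $a$ to $b$ and taking the infimum yields $u(b)-u(a)\le c_{\mbf{v}}(a,b)$. For the converse, I take $u_a(x):=c_{\mbf{v}}(a,x)$. By (i) this function is continuous, $u_a(a)=0$ and $u_a(b)=c_{\mbf{v}}(a,b)$. Subadditivity gives $c_{\mbf{v}}(a,x)\le c_{\mbf{v}}(a,y)+c_{\mbf{v}}(y,x)\le c_{\mbf{v}}(a,y)+\ell_{\mbf{v}}(y,x)$, i.e.\ $u_a(x)-u_a(y)\le \ell_{\mbf{v}}(x,y)$ (recall $\ell_{\mbf{v}}$ is symmetric), so $u_a$ lies in the admissible class and attains the supremum.

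\medskip
For (iii), concavity will be obtained through a key computation: given $\mbf{v}_1,\mbf{v}_2\in\mbf{M}_\Omega$ with BV representatives $v_1,v_2$ (Lemma \ref{mono2}), the map $v_t:=tv_1+(1-t)v_2$ belongs to $\mathcal{B}_\Omega$, whence it corresponds via the homeomorphism $\mathbf{i}$ to some $\mbf{v}_t\in\mbf{M}_\Omega$. Restricting to the line $[a,b]$, for a.e.\ $s\in[0,1]$ one has $(\mbf{v}_t)_{a,b}(s)=t\,(\mbf{v}_1)_{a,b}(s)+(1-t)\,(\mbf{v}_2)_{a,b}(s)$, so the one-sided limits at $0$ and $1$ are additive and Lemma \ref{mono1}(iii) yields $\ell_{\mbf{v}_t}(a,b)^2=t\,\ell_{\mbf{v}_1}(a,b)^2+(1-t)\,\ell_{\mbf{v}_2}(a,b)^2$. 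Concavity of $\sqrt{\cdot}$ (or rather the reverse inequality $\sqrt{t\alpha^2+(1-t)\beta^2}\ge t\alpha+(1-t)\beta$) then gives $\ell_{\mbf{v}_t}\ge t\,\ell_{\mbf{v}_1}+(1-t)\,\ell_{\mbf{v}_2}$. Using this concavity of $\ell_{\mbf{v}}$ in $\mbf{v}$ together with the dual representation (ii): if $u_i$ is an optimizer for $c_{\mbf{v}_i}(a,b)$, then $tu_1+(1-t)u_2$ is admissible for $\mbf{v}_t$ and evaluating at $b$ minus $a$ gives $t\,c_{\mbf{v}_1}(a,b)+(1-t)\,c_{\mbf{v}_2}(a,b)\le c_{\mbf{v}_t}(a,b)$.

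\medskip
For upper semicontinuity, assume $\mbf{v}_n\to\mbf{v}$ in $(\mbf{M}_\Omega,\mathbf{h})$ and, after passing to a subsequence, suppose $c_{\mbf{v}_n}(a,b)$ realizes the $\limsup$. Let $u_n$ be the optimizer from (ii) normalized by $u_n(a)=0$; the uniform Hölder bound $|u_n(x)-u_n(y)|\le \ell_{\mbf{v}_n}(x,y)\le\sqrt{2}|x-y|^{1/2}(R+|x-y|)^{1/2}$ makes $\{u_n\}$ equi-continuous and equi-bounded, so Arzelà--Ascoli yields (up to subsequence) a uniform limit $u\in C^0(\R^d)$. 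For any $(x,y)$, invoke the $\Gamma$-$\limsup$ property \eqref{gammalimsup} from Lemma \ref{homeo} to find $(x_n,y_n)\to(x,y)$ with $\limsup_n\ell_{\mbf{v}_n}(x_n,y_n)\le \ell_{\mbf{v}}(x,y)$; passing to the limit in $u_n(x_n)-u_n(y_n)\le \ell_{\mbf{v}_n}(x_n,y_n)$ gives $u(x)-u(y)\le \ell_{\mbf{v}}(x,y)$, so that $u$ is admissible for $\mbf{v}$, and $\limsup c_{\mbf{v}_n}(a,b)=u(b)-u(a)\le c_{\mbf{v}}(a,b)$. The main obstacle is precisely this last step: one must take advantage of the recovery sequences supplied by \eqref{gammalimsup} rather than the easier $\Gamma$-$\liminf$ inequality, since the latter points the wrong way for proving admissibility of the limit $u$.
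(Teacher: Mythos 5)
Your treatment of (i) and (ii) coincides in substance with the paper's. One small presentational difference: for the lower bound in (i) the paper defers to the dual representation (ii) applied to $\phi=\sqrt{2}\,d(\cdot,\Omega)$, whereas you sum the one-step estimate $|\phi(y)-\phi(x)|\le\ell_{\mbf{v}}(x,y)$ directly along a chain and pass to the infimum; this is equivalent but avoids a forward reference to (ii), which is slightly cleaner logically. Your verification of $\phi(y)-\phi(x)\le\ell_{\mbf{v}}(x,y)$ in the mixed case is correct and essentially identical to the paper's.

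Your upper-semicontinuity argument in (iii) is a genuinely different route. The paper argues primally: it fixes a near-optimal chain for $\mbf{v}$, uses the $\Gamma$-$\limsup$ recovery sequences from \eqref{gammalimsup} to approximate each node, appends the endpoints $a,b$, and lets the two extra terms vanish by \eqref{lv<}. You argue dually: take the Kantorovich potentials $u_n$ from (ii), normalize at $a$, extract a uniform-on-compacts limit $u$ via Arzelà--Ascoli (the modulus from \eqref{lv<} being independent of $n$), and then verify $u(x)-u(y)\le\ell_{\mbf{v}}(x,y)$ for the limit by testing against the recovery sequences from \eqref{gammalimsup}. Both proofs are valid. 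The paper's is more elementary (no compactness theorem needed); yours reuses (ii) and makes very transparent why the $\Gamma$-$\limsup$, not the $\liminf$, is the inequality that matters.

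In the concavity argument there is a genuine gap. You assert that $(\mbf{v}_t)_{a,b}(s)=t\,(\mbf{v}_1)_{a,b}(s)+(1-t)\,(\mbf{v}_2)_{a,b}(s)$ for a.e.\ $s\in[0,1]$, but this does not follow from the definition of $\mbf{v}_t$ through the homeomorphism $\mathbf{i}$. The latter only gives $\mbf{v}_t(x)=\{t\,v_1(x)+(1-t)\,v_2(x)\}$ for $\mathcal{L}^d$-a.e.\ $x\in\Omega$, and a set of full $\mathcal{L}^d$-measure conveys no information on a fixed segment $[a,b]$ when $d\ge 2$. The correct justification is to observe that the pointwise Minkowski combination $\hat{\mbf{v}}:=t\,\mbf{v}_1+(1-t)\,\mbf{v}_2$ is itself a maximal monotone map: it is monotone, has full domain, compact convex values, and a closed graph (by local boundedness of the $\mbf{v}_i$), hence is maximal by the convex-hull criterion of \cite[Corollary 1.4]{alberti1999} invoked elsewhere in the paper. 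Since $\hat{\mbf{v}}$ agrees a.e.\ with $v_t$, the uniqueness in Lemma~\ref{mono2} forces $\mbf{v}_t=\hat{\mbf{v}}$ at \emph{every} point, and then the one-sided limits used in Lemma~\ref{mono1}(iii) are indeed additive in $t$. Once this identity is in place, your derivation of $\ell_{\mbf{v}_t}^2(a,b)=t\,\ell_{\mbf{v}_1}^2(a,b)+(1-t)\,\ell_{\mbf{v}_2}^2(a,b)$ is correct and is in fact a stronger statement than the paper's inequality \eqref{lv2-concave}; the concavity of $\ell_{\mbf{v}}$ in $\mbf{v}$ and of $c_\mbf{v}(a,b)$ then follow as you indicate. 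Note that the paper deliberately avoids the exact identity: it uses only the inclusion giving a decomposition $\hat x=\frac12(\hat x_1+\hat x_2)$ with $\hat x_i\in\mbf{v}_i(x)$, together with the elementary bound $\frac14(\ell_1+\ell_2)^2\le\frac12(\ell_1^2+\ell_2^2)$, and hence gets by with the weaker inequality \eqref{lv2-concave}. Either route works, but you should record the Minkowski-sum identity explicitly, because without it the ``a.e.\ $s$'' claim is unsupported.
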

  
  \begin{remark}\label{nocontinuous}  \ The evaluation map in  assertion (iii) above is not continuous in general. Indeed, consider for instance 
  the case where $\O=(0,1)$ and $\mbf{v}_n$ is the monotone map associated with the step function $v_n$ on $[0,1]$ such that $v_n(0)=0,\ v_n(1)=1$ and 
  $ v_n'= \frac1{n-1} \sum_{i=1}^{n-1} \delta_{\frac{i}{n}}$. Then, by taking  intermediate points in \eqref{def:cost} very close to each $i/n$ from the left and from the right, it is easy to check that $c_{\mbf{v}_n}(0,1)=0$. Clearly $\mbf{v}_n \to \mbf{v}$ for $\mbf{v}$ being the identity.
 Hence $0=\lim_n c_{\mbf{v}_n}(0,1)< c_\mbf{v}(0,1)=1$.

\end{remark}

  \begin{proof}  The sub-additivity of $c_{\mbf{v}}$ is straightforward from the definition  \eqref{def:cost}; the second inequality in \eqref{bounds-c_v} is obtained  by taking $N=2$ and $(x_1,x_2)=(a,b)$ while the third one (already in \eqref{lv<}) implies that is $c_\mbf{v}$ is continuous on the diagonal hence everywhere by exploiting the sub-additivity property.  
  The first inequality in \eqref{bounds-c_v} will be obtained by applying \eqref{c_v=sup} to the function
  $u(x) = \sqrt{2}\, d(x,\Omega)$ once we can check that $u(x)-u(y)\ \le\ \ell_{\mbf{v}}(x,y)$ for all  $(x,y)$.
  This is trivially the case if $(x,y)\in \Ob^2 \cup (\Rd\setminus\Ob)^2  $ whereas for $x\in\Ob$ and $y\notin \Ob$ it follows from
   inequalities $\ell_\mbf{v} (x,y) \ge  \ell_\mbf{v} (z,y) =  \sqrt{2} (\mbf{v}_{z,y}(1_+) - \mbf{v}_{z,y}(0_-))^{\frac1{2}} =  \sqrt{2} \, |y-z| \ge u(y)$ (note that $u(x)=0$) where $z= [x,y]\cap \bO$.
  
  Let us prove (ii). By the very definition \eqref{def:cost},  for every  $u\in C^0(\Rd)$, we have the equivalence
  $$  u(x)-u(y)\ \le\ \ell_{\mbf{v}}(x,y) \quad \forall  (x,y) \quad \Longleftrightarrow\quad u(x)-u(y)\ \le\ c_{\mbf{v}}(x,y) \quad \forall  (x,y).$$
  This clearly implies that $c_\mbf{v}(a,b)$ is not larger than the right hand side of \eqref{c_v=sup}. The converse inequality is obtained by considering the function $u(x) =c_\mbf{v}(a,x)$ which satisfies $ u(x)-u(y) \le c_{\mbf{v}}(x,y)$ by sub-additivity  and symmetry of $c_\mbf{v}$.
  In addition,  this function $u$ is H\" older continuous by the assertion (i) and \eqref{lv<}. This proves the equality in \eqref{c_v=sup}. Moreover, since the H\"older coefficient is uniformly bounded for all admissible $u$, the supremum  is
  actually a maximum. 
 
  It remains to prove assertion (iii). Let us check first that that the map $\mbf{v} \mapsto c_\mbf{v}(a,b)$ is upper semicontinuous. Let ${\mbf{v}_n} \to \mbf{v}$ in $\mbf{M}_\O$
 and let us show that $\limsup_n c_{\mbf{v}_n} (a,b)  \le c_{\mbf{v}} (a,b)$ for fixed $(a,b)\in (\Rd)^2$. Given a finite set $\{x_i,\ 1\le i\le  N\}$  such that $x_1=a \, ,\, x_N=b$,
 we can choose, for each $i$,  an approximating sequence $x_{i,n}\to x_i$ such that  $\ell_{\mbf{v}_n} (x_{i,n},x_{i+1,n} ) \to \ell_\mbf{v} (x,y)$.
This is indeed a consequence of  the $\Gamma$-convergence of $\ell_{\mbf{v}_n}$ to $\ell_\mbf{v}$ proved in Lemma \ref{homeo}. In accordance with \eqref{def:cost}, for each $n$ 
 we may estimate $c_{\mbf{v}_n} (a,b)$ from above by using the finite sequence of $N+2$ points $\{a,x_{1,n}, \ldots,x_{N,n},b\}$
 and we are led to
 $$\limsup_n c_{\mbf{v}_n} (a,b)  \le \limsup_n \left\{ \ell_{{\mbf{v}_n}} (a, x_{1,n}) + \sum_{i=1}^{N-1}  \ell_{{\mbf{v}_n}}(x_{i,n},x_{i+1,n})  + \ell_{{\mbf{v}_n}} (x_{N,n}, b) \right\}
 =  \sum_{i=1}^{N-1}  \ell_{\mbf{v}}(x_i,x_{i+1}),$$
where to obtain the latter equality we additionally notice that $\lim_n \ell_{{\mbf{v}_n}} (a, x_{1,n}) = \lim_n \ell_{{\mbf{v}_n}} (x_{N,n}, b) = 0$ as a consequence of the estimates \eqref{lv<}. By minimizing the right hand sum with respect to  $x_i$'s we obtain the claimed upper semicontinuity inequality.

Let us prove now the concavity property. It is enough to check the middle point property that is, for given ${\mbf{v}_1}, {\mbf{v}_2} \in \mbf{M}_\O$ and 
setting $\mbf{v} = \frac1{2} ({\mbf{v}_1}+{\mbf{v}_2})$:
\begin{equation}\label{concave-claim}
 c_{\mbf{v}}(a,b) \ge \frac1{2} \big(c_{\mbf{v}_1}(a,b)+ c_{\mbf{v}_2}(a,b)\big).
\end{equation}
In view of assertion (ii), there exist two elements  $u_1, u_2\in C^0(\R^d)$ such that for $i\in \{1,2\}$
  $$ c_\mbf{v_i}(a,b)  = u_i(b) - u_i(a)  \qquad \text{and}\qquad  u_i(x)-u_i(y)\ \le\ \ell_{\mbf{v}_i}(x,y) \quad \forall\,  (x,y) .$$
 Then \eqref{concave-claim} follows once we can show that $u= \frac1{2}(u_1+u_2)$ satisfies the constraint $u(x)-u(y)\ \le\ {\ell_{\mbf{v}}(x,y)}$ 
 for every $(x,y) \in (\R^d)^2$ as well. This is a consequence of the following concavity property of the map $\mbf{v} \mapsto \ell_{\mbf{v}}^2(x,y)$:
 \begin{equation}\label{lv2-concave} 
\ell_{\frac{\mbf{v_1}+ \mbf{v_2}}{2} }^2 (x,y) \ \ge  \  \frac1{2}   \left(  \ell_{\mbf{v_1}}^2 (x,y)+  \ell_{\mbf{v_2}}^2(x,y)\right).
\end{equation}
Indeed, since $|u_i(x)-u_i(y)|\le \ell_{\mbf{v_i}}(x,y)$,  by straightforward computations we will get 
$$ |u(x)-u(y)|^2 \le \frac1{4} \big(\ell_{\mbf{v_1}} (x,y)+  \ell_{\mbf{v_2}}(x,y)\big)^2
 \le \frac1{2} \big(\ell_{\mbf{v_1}} (x,y)^2+ \ell_{\mbf{v_2}} (x,y)^2\big) \le  \ell_{\frac{\mbf{v_1}+ \mbf{v_2}}{2} }^2 (x,y) = \ell_{\mbf{v}} (x,y)^2. $$
Let us proof the claim \eqref{lv2-concave}. Let $\hat x \in  \frac1{2} ({\mbf{v}_1}+{\mbf{v}_2})(x)$ and $\hat y \in  \frac1{2} ({\mbf{v}_1}+{\mbf{v}_2})(y)$.
Then we have $\hat x=  \frac1{2} (\hat x_1 +\hat x_2),\ \hat y=  \frac1{2} (\hat y_1 +\hat y_2) $ for suitable 
$\hat x_i, \hat y_i$ in $\Rd$. Therefore, by the definition of $\ell_{\mbf{v_i}}(x,y)$, we infer that
$$ 2\,  \pairing{\hat{y} - \hat{x},y-x} =  \pairing{\hat{y_1} - \hat{x_1},y-x} +  \pairing{\hat{y_2} - \hat{x_2},y-x} 
\ge \frac1{2} \big(\ell_{\mbf{v_1}}^2 (x,y)+  \ell_{\mbf{v_2}}^2(x,y)\big),$$
hence the  claim  by choosing $(\hat x, \hat y)$ optimal for $\ell_{\frac{\mbf{v_1}+ \mbf{v_2}}{2} } (x,y)$.
The proof of Theorem \ref{cv-dual} is now complete.
\end{proof}

The construction of $c_{\mbf{v}}$ described above in fact provides the largest  sub-additive function below $\ell_{\mbf{v}}$. 
It induces a pseudo-distance in $\R^d$, i.e.  the value $c_{\mbf{v}}(a,b)$ for two distinct points can be zero. In particular this happens if $\mbf{v}$ is tangentially flat  on $[a,b]$. In order to obtain a metric we  need to consider the quotient space  $X_{\mbf{v}}$  of $\R^d$ with respect to the relation $x\sim y$ iff $c_\mbf{v}(x,y)=0$ and to extend the definition of $c_\mbf{v}$ accordingly. Note that, owing to assertion (i) of Theorem \ref{cv-dual}, the class $\dot x$ of an element $x$ is a closed subset of $\Rd$  and that $\dot x=\{x\}$ for $x\in \R^d\setminus\Ob$.
A natural local pseudo-metric associated with $\mbf{v}$ will be given by the Finsler-type function
\begin{equation}\label{pseudoFinsler}
\varphi_\mbf{v}(x,z) =\limsup_{h\to 0+} \frac1{h}\, c_\mbf{v}\big(x,x+ h z\big)
\end{equation}
It is a Borel integrand from $\R^d\times\R^d$ to $[0,+\infty]$ which is convex l.s.c. and positively one homogeneous in $z$. 

We shall consider continuous parametrized curves in the metric space $X_{\mbf{v}}$. Such a curve can 
be represented by a continuous function $\gamma: [0,1] \to \R^d$  where $\Rd$ is equipped with the pseudo-distance $c_\mbf{v}$.
We will abbreviate by saying that $\gamma $ is $c_\mbf{v}$-continuous 
(note that $\gamma$ may jump  from $x$ to $y$ if $c_\mbf{v}(x,y)=0$ !).
To such a curve we associate its length defined by
\begin{equation}\label{def:length}
 L_{\mbf{v}}(\gamma) = \sup\left\{  \sum_{i=0}^{n-1} c_\mbf{v}(\gamma(t_{i+1}), \gamma(t_{i}))\ :\  0\le t_0< t_1 <\dots <t_n \le 1\right\}.
\end{equation}
A curve  $\gamma$ of finite length is called rectifiable. 

The geodesic (or inner) distance $c^g_\mbf{v}(a,b)$ between two points $a,b$  is given by
$$  c^g_\mbf{v}(a,b) \ :=\  \inf \Big\{ L_{\mbf{v}}(\gamma) \, : \, \gamma \ \text{$c_\mbf{v}$-continuous}\ ,  \gamma(0) =a, \ \gamma(1) =b \Big\}.$$
If a minimizer $\gamma$ exists, the image of $\gamma$ will be called a geodesic curve for $c_{\mbf{v}}$ joining $a$ to $b$.
In general one has  $c_\mbf{v}\le c^g_\mbf{v}$ possibly with a strict inequality. Fortunately, in our case, the equality holds true.

\begin{proposition}\label{geo-segment} Let $\mbf{v}$ be an element of $\mbf{M}_\O$ and $a,b \in \R^d$. Then:
\begin{itemize}[leftmargin=1.5\parindent]
\item [(i)] Let $\gamma(t)=(1- t) \, a + t\, b$ for $t\in [0,1]$. Then it holds that 
$c^g_\mbf{v}(a,b) \le L_{\mbf{v}}(\gamma)  \le \ell_\mbf{v}(a,b).$ As a result we have $c^g_\mbf{v}(a,b)= c_\mbf{v}(a,b)$. 
Moreover, the equality $c_\mbf{v}(a,b)= \ell_\mbf{v}(a,b)$ is true if and only if the scalar monotone function 
$\mbf{v}_{a,b}$ has a constant slope.
 \item [(ii)]  Assume that the infimum in \eqref{def:cost} is attained for suitable points $x_1,x_2,\dots , x_N \in \R^d$. Then 
 the polygonal curve  $C= \cup_{i=1}^{N-1} [x_i,x_{i+1}]$ is a geodesic joining $a$ to $b$ while for each $i$ the scalar function  $\mbf{v}_{x_i,x_{i+1}}$ 
is affine on $(0,1)$ .
\item [(iii)]   Assume that $\mbf{v}(x)= \{v(x)\}$ where $v$ is a Lipschitz map 
 and let $\varphi_\mbf{v}$ be given by \eqref{pseudoFinsler}. Then it holds that
\begin{equation}\label{integral-length}
 L_{\mbf{v}}(\gamma) = \int_0^1 \varphi_\mbf{v} \big(\gamma(t),\gamma'(t)\big)\ dt \qquad   \text{for every $\gamma\in {\rm Lip}([0,1];\Rd)$.}
\end{equation}

 \end{itemize}
\end{proposition}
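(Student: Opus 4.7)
The plan for (i) starts with the straight segment $\gamma(t)=(1-t)a+tb$. Applying Lemma \ref{mono1}(iii) to the reparametrization $\mbf{v}_{\gamma(s),\gamma(t)}(\tau)=(t-s)\,\mbf{v}_{a,b}(s+\tau(t-s))$, one gets the key formula $\ell_{\mbf{v}}(\gamma(s),\gamma(t))^2=2(t-s)\big(\mbf{v}_{a,b}(t^-)-\mbf{v}_{a,b}(s^+)\big)$ for $s<t$. Cauchy--Schwarz combined with the non-negativity of the interior jumps of the monotone function $\mbf{v}_{a,b}$ then yields $\sum_i\ell_{\mbf{v}}(\gamma(t_i),\gamma(t_{i+1}))\le\ell_{\mbf{v}}(a,b)$ for every partition of $[0,1]$. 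Since $c_{\mbf{v}}\le\ell_{\mbf{v}}$, this bounds $L_{\mbf{v}}(\gamma)$ by $\ell_{\mbf{v}}(a,b)$ and thus $c_{\mbf{v}}^g(a,b)\le L_{\mbf{v}}(\gamma)\le\ell_{\mbf{v}}(a,b)$. The identity $c_{\mbf{v}}=c^g_{\mbf{v}}$ splits into two steps: sub-additivity of $c_{\mbf{v}}$ applied along any rectifiable curve gives $c_{\mbf{v}}\le c^g_{\mbf{v}}$, while running the segment bound piece by piece on the polygonal connector of a partition gives $L_{\mbf{v}}(\gamma_{\mathrm{poly}})\le\sum_i\ell_{\mbf{v}}(x_i,x_{i+1})$, and infimizing over partitions yields $c^g_{\mbf{v}}\le c_{\mbf{v}}$.

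For the equality criterion in (i), the direction $(\Rightarrow)$ is a saturation argument: if $c_{\mbf{v}}(a,b)=\ell_{\mbf{v}}(a,b)$ then every partition-sum on the segment must equal $\ell_{\mbf{v}}(a,b)$, forcing both the absence of interior jumps of $\mbf{v}_{a,b}$ and constancy of the ratio $(\mbf{v}_{a,b}(t_{i+1}^-)-\mbf{v}_{a,b}(t_i^+))/(t_{i+1}-t_i)$ at every refinement, so $\mbf{v}_{a,b}$ is affine. The converse $(\Leftarrow)$ is the subtle step: writing $K$ for the common slope, I would exhibit an admissible competitor in the dual formula of Theorem \ref{cv-dual}(ii) by defining $u(z_t):=t\sqrt{2K}$ on the segment $z_t=(1-t)a+tb$, observing that $\mbf{v}_{z_s,z_t}$ inherits a constant slope $K(t-s)^2$ and hence $u(z_t)-u(z_s)=(t-s)\sqrt{2K}=\ell_{\mbf{v}}(z_s,z_t)$ for $0\le s<t\le 1$, and extending by the $c_{\mbf{v}}$-transform $\bar u(x):=\sup_{z\in[a,b]}\{u(z)-c_{\mbf{v}}(z,x)\}$; admissibility is automatic from the definition, and showing $\bar u(b)-\bar u(a)=\sqrt{2K}$ amounts to propagating the on-segment identity $c_{\mbf{v}}(z_s,z_t)=(t-s)\sqrt{2K}$ through the self-similar inheritance of constant slope by sub-segments.

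For (ii), once the infimum in \eqref{def:cost} is attained at $x_1,\dots,x_N$, the polygonal path $\gamma_{\mathrm{poly}}$ through these points satisfies $L_{\mbf{v}}(\gamma_{\mathrm{poly}})\le\sum_i\ell_{\mbf{v}}(x_i,x_{i+1})=c_{\mbf{v}}(a,b)$ by applying the segment bound from (i) to each piece, while $L_{\mbf{v}}(\gamma_{\mathrm{poly}})\ge c^g_{\mbf{v}}(a,b)=c_{\mbf{v}}(a,b)$; equality forces $\gamma_{\mathrm{poly}}$ to be a geodesic. Optimality of the partition rules out any refinement that decreases the sum: inserting extra points inside some $[x_i,x_{i+1}]$ cannot reduce $\ell_{\mbf{v}}(x_i,x_{i+1})$, hence $c_{\mbf{v}}(x_i,x_{i+1})=\ell_{\mbf{v}}(x_i,x_{i+1})$ for each $i$, and the equality criterion of (i) then delivers that each $\mbf{v}_{x_i,x_{i+1}}$ is affine on $(0,1)$.

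For (iii), with $v$ Lipschitz and single-valued, I would first identify $\varphi_{\mbf{v}}$. At every Lebesgue point $x$ of $Dv$ and direction $z$, a first-order Taylor expansion gives $\ell_{\mbf{v}}(x,x+hz)=h\sqrt{2\pairing{e(v)(x)z,z}}+o(h)$; the slope of $\mbf{v}_{x,x+hz}$ becomes arbitrarily close to constant as $h\to0^+$, so the equality criterion in (i) forces $c_{\mbf{v}}(x,x+hz)$ to share the same leading-order behavior, yielding $\varphi_{\mbf{v}}(x,z)=\sqrt{2\pairing{e(v)(x)z,z}}$ for a.e.\ $x$. The identity $L_{\mbf{v}}(\gamma)=\int_0^1\varphi_{\mbf{v}}(\gamma,\gamma')\,dt$ for Lipschitz $\gamma$ is then the standard Finsler-length calculation: the upper bound uses Riemann sums $\sum c_{\mbf{v}}(\gamma(t_i),\gamma(t_{i+1}))\le\sum\ell_{\mbf{v}}(\gamma(t_i),\gamma(t_{i+1}))$ converging to the integral via Lipschitz continuity of $v$ and dominated convergence; the lower bound uses absolute continuity of $\gamma$ together with the pointwise limit characterization of $\varphi_{\mbf{v}}$ a.e.\ and Fatou's lemma. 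The main technical obstacle throughout is the $(\Leftarrow)$ direction of the equality criterion in (i), where no naive affine potential on $\R^d$ is admissible and one must exploit the propagation of the constant-slope hypothesis to every sub-segment of $[a,b]$.
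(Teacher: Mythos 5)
Your strategy for parts (i)--(ii) is sound and follows the paper closely: reduce the segment $\gamma$ to the one-dimensional monotone trace $\mbf{v}_{a,b}$, use the identity $\ell_{\mbf{v}}(\gamma(s),\gamma(t))^2=2(t-s)(\mbf{v}_{a,b}(t^-)-\mbf{v}_{a,b}(s^+))$, and estimate partition sums. Where the paper isolates a self-contained one-dimensional statement (Lemma \ref{esti-1d}, giving the intermediate bound $m_f(0,1)\le\int_0^1\sqrt{f'}\,dx\le\sqrt{f(1_-)-f(0_+)}$ together with its equality case), you use Cauchy--Schwarz on the partition sum directly together with the sign of the interior jumps. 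These are essentially the same calculation; the paper's version has the advantage of packaging the equality-case characterization once and for all, which it then quotes in (i) and (ii) without further argument. Your derivation of $c^g_{\mbf{v}}=c_{\mbf{v}}$ (sub-additivity along curves in one direction, the polygonal connector in the other) and your proof of (ii) via the non-improvability of the optimal chain are both correct and equivalent to the paper's argument.

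There is, however, a genuine gap in your $(\Leftarrow)$ direction for the equality criterion in (i), and you flag it yourself. The $c_{\mbf{v}}$-transform $\bar u(x)=\sup_{z\in[a,b]}\{u(z)-c_{\mbf{v}}(z,x)\}$ is indeed admissible for the dual formula \eqref{c_v=sup}, and $\bar u(b)\ge u(b)=\sqrt{2K}$ is immediate. But to control $\bar u(a)$ you must rule out $u(z_t)-c_{\mbf{v}}(z_t,a)>0$ for interior $t$, and that requires precisely the lower bound $c_{\mbf{v}}(z_t,a)\ge t\sqrt{2K}$, i.e.\ $c_{\mbf{v}}(z_s,z_t)\ge(t-s)\sqrt{2K}$ on every sub-segment --- which for $(s,t)=(0,1)$ is the very inequality you are trying to prove. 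Saying that this ``amounts to propagating the on-segment identity'' names the circle rather than breaking it. The forward direction (equality forces affine) is fine by the sandwich argument, and in fact that is the only direction the paper uses downstream (in the proof of (ii) and in Remark \ref{u-affine}). The paper's own treatment of the converse relies on the equality case of Lemma \ref{esti-1d}, which characterizes when $m_f(0,1)=\sqrt{f(1_-)-f(0_+)}$; your Cauchy--Schwarz phrasing can be upgraded to the same statement, but you still need some argument that off-segment chains cannot undercut the on-segment infimum, and neither your write-up nor the $c_{\mbf{v}}$-transform construction supplies it.

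For (iii) the proposal is too optimistic. The identification $\varphi_{\mbf{v}}(x,z)=\sqrt{2\pairing{e(v)(x)z,z}}$ at points of differentiability, and the upper bound $L_{\mbf{v}}(\gamma)\le\int_0^1\varphi_{\mbf{v}}(\gamma,\gamma')\,dt$ by Riemann sums plus $c_{\mbf{v}}\le\ell_{\mbf{v}}$, are reasonable. But the lower bound is not just Fatou: you must turn a supremum over partitions into a lower bound on the integral of the pointwise $\limsup$ in \eqref{pseudoFinsler}, handling the measurability and the possible mismatch between $\limsup$ and $\liminf$, the dependence of the rate of convergence on the base point $\gamma(t)$, and the fact that the partitions realizing the sup need not refine. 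This is exactly why the paper invokes \cite[Thm 1.2]{venturini} (and explicitly notes that the representation holds with either the upper or lower one-sided derivative). If you want to avoid citing that result, you need a genuine argument --- e.g.\ an Egoroff/Vitali covering scheme adapted to length functionals --- not a one-line appeal to Fatou.
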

\begin{remark}\label{Finsler}  The extension of the integral representation \eqref{integral-length} to general $\mbf{v}\in \mbf{M}_\O$ is a delicate issue.
It turns out that, for a Lipschitz $\mbf{v}$, the Finsler pseudo-metric $\varphi_\mbf{v}$ satisfies
$$ \varphi_\mbf{v}(x,z) =\limsup_{h\to 0+} \frac1{h}\, \ell_\mbf{v} \big(x,x+ h z\big) = \sqrt{2}\,\big(\pairing{e(v)(x), z\otimes z}\big)^{1/2},$$
 at every point $x$ where $v$ is differentiable.
A natural guess would be that the formula \eqref{integral-length} is still valid for a general $\mbf{v}$ if we take  $\varphi_\mbf{v}(x,z)= \limsup_{h\to 0+} \frac1{h} \ell_\mbf{v}(x,x+ h z)$  allowing infinite values. 
\end{remark}

The main point of the proof of the assertion (i) relies on the following one dimensional lemma:
\begin{lemma}\label{esti-1d}
Let $f: [0,1] \to \R$ be a bounded non-decreasing function and denote by $f'(x)$ the a.e. defined derivative of $f$.
   Then the function 
   $$m_f(s,t)\, := \inf_{\{s= t_0<t_1 < \dots < t_N= t\}}  \sum_{i=0}^{N-1}  \sqrt{\big(t_{i+1}- t_i\big)\big(f(t_{i+1}-0)- f(t_i+0)\big)}$$ 
  is such that for every $0\le s<t\le 1$: 
\begin{equation}\label{masterineq2}
m_f(s,t)   \le\  \int_s^t \sqrt{f'(x)}\, dx 
\ \le\  \sqrt{f(1_-)- f(0_+)}.
\end{equation}
  Moreover, if $(s,t)=(0,1)$, the inequalities above are equalities if and only if $f$ is affine in $(0,1)$. 
\end{lemma}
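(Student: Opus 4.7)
For the right-hand inequality $\int_s^t \sqrt{f'}\, dx \le \sqrt{f(1_-) - f(0_+)}$, I would simply apply Cauchy--Schwarz to $\int_s^t 1\cdot \sqrt{f'}$, yielding $\sqrt{t-s}\,\sqrt{\int_s^t f'}$, and then combine this with $\sqrt{t-s} \le 1$ and the chain $\int_s^t f' \le f(t_-) - f(s_+) \le f(1_-) - f(0_+)$ (the last bound using non-negativity of the singular part of the Lebesgue--Stieltjes measure $df$). I would already record that equality in Cauchy--Schwarz here demands $f'$ constant a.e. on $(s,t)$, while equality in $\int f' \le f(t_-)-f(s_+)$ demands that the singular part of $df$ vanishes on $(s,t)$---two conditions that together characterize $f$ being affine on $(s,t)$.

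The substantial work is the left-hand inequality $m_f(s,t) \le \int_s^t \sqrt{f'}$. The plan is to fix $\epsilon>0$ and construct a partition whose associated sum is $\le \int_s^t \sqrt{f'} + \epsilon$. Writing the Lebesgue decomposition $df = f'(x)\, dx + d\mu_s$ with $\mu_s \ge 0$ singular, for any partition one has $f(t_{i+1}-) - f(t_i+) = \int_{t_i}^{t_{i+1}} f'\, dx + \mu_s\bigl((t_i,t_{i+1})\bigr)$, so via $\sqrt{a+b}\le \sqrt{a}+\sqrt{b}$ the partition sum splits into an absolutely continuous piece $\sum_i \sqrt{h_i\int_{I_i} f'}$ and a singular piece $\sum_i \sqrt{h_i\, \mu_s(I_i)}$. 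I would then fix $\eta > 0$, cover the compact support of $\mu_s\mres(s,t)$ by a finite union $U = \bigcup_j (\alpha_j,\beta_j)$ of disjoint open intervals of total length $<\eta$, include all $\alpha_j, \beta_j$ as partition points, and refine each of the resulting sub-intervals with mesh $\delta$. On sub-intervals lying in $U^c$ the measure $\mu_s$ vanishes and, defining the step function $\Phi_\delta(x) = \sqrt{h_i^{-1}\int_{I_i} f'}$ for $x\in I_i\subset U^c$, Lebesgue's differentiation theorem gives $\Phi_\delta\to\sqrt{f'}$ a.e. on $U^c$; combined with the uniform bound $\int_{U^c}\Phi_\delta^2\, dx = \int_{U^c}f' \le \|f'\|_1$, which secures uniform integrability, Vitali's convergence theorem yields $\int_{U^c}\Phi_\delta\, dx \to \int_{U^c}\sqrt{f'}\, dx \le \int_s^t \sqrt{f'}\, dx$. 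For sub-intervals inside $U$, the discrete Cauchy--Schwarz inequality $\sum\sqrt{a_i b_i}\le\sqrt{\sum a_i}\sqrt{\sum b_i}$ bounds the AC contribution by $\sqrt{\eta\,\|f'\|_1}$ and the singular contribution by $\sqrt{\eta\,\mu_s((s,t))}$, both vanishing as $\eta\to 0$, the former by absolute continuity of the indefinite integral of $f'$.

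The main difficulty I anticipate is this simultaneous handling of the singular and absolutely continuous parts: the singular mass has to be isolated by a covering adapted to the partition, while on the complement one needs a genuine $L^1$-limit of $\Phi_\delta$ (rather than just a.e.\ convergence), which forces the uniform integrability argument through the $L^2$-boundedness instead of a simpler dominated-convergence strategy. Finally, for the equality case with $(s,t) = (0,1)$, I would observe that applying Cauchy--Schwarz sub-interval by sub-interval to any partition of $(0,1)$ gives the reverse bound $m_f(0,1) \ge \int_0^1 \sqrt{f'}$, so the first inequality in \eqref{masterineq2} is actually always an equality; hence both inequalities being simultaneously tight reduces to tightness of the right-hand inequality, which, by the characterization from the first paragraph, is equivalent to $f$ being affine on $(0,1)$.
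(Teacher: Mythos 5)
Your route differs genuinely from the paper's, which linearizes the square root via $2\sqrt{ab}\le\theta a+\theta^{-1}b$ with a continuous $\theta>0$ evaluated at sub-interval midpoints, passes to the Darboux limit $m_f(s,t)\le\int\frac{\theta}{2}\,df+\int\frac{1}{2\theta}\,dx$, extends this to nonnegative Borel $\theta$ by density of $C^0$ in $L^1_\mu$ for $\mu=\mathcal{L}^1\mres[0,1]+f'_s$, and then optimizes with $\theta_\e=(f'\vee\e)^{-1/2}$ supported off a carrier of $f'_s$. Your plan is a direct partition construction, and it contains a genuine gap at the covering step: one cannot, in general, cover the compact \emph{topological} support of $\mu_s\mres(s,t)$ by finitely many open intervals of total length $<\eta$. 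Singularity of $\mu_s$ guarantees only a Lebesgue-null Borel carrying set, which need not be closed, and $\spt(\mu_s)$ can fill all of $[s,t]$ --- e.g.\ $\mu_s=\sum_n 2^{-n}\delta_{q_n}$ with $\{q_n\}$ enumerating $\mathbb{Q}\cap(s,t)$, so that every open $U\supset\spt(\mu_s)$ has $|U|\ge t-s$. Your assertion that $\mu_s$ vanishes on the sub-intervals of $U^c$ therefore fails as stated.

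The gap is repairable but requires a change: by inner regularity pick a compact $K$ inside a Lebesgue-null carrier of $\mu_s$ with $\mu_s\bigl((s,t)\setminus K\bigr)<\eta'$, cover $K$ by finitely many open intervals $U$ with $|U|<\eta$ (possible now since $|K|=0$), and bound the residual singular contribution from the sub-intervals in $U^c$ by the same discrete Cauchy--Schwarz: it is at most $\sqrt{(t-s)\,\mu_s(U^c)}<\sqrt{(t-s)\,\eta'}$, which is harmless. One should also take the refining mesh uniform on each sub-interval, so that $|I_i|$ is comparable to $\delta$; this is what the Lebesgue-differentiation step actually needs. With these corrections your argument works. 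Your closing observation --- that $m_f(s,t)\ge\int_s^t\sqrt{f'}$ always, by running Cauchy--Schwarz sub-interval by sub-interval in the reverse direction, so the left inequality in \eqref{masterineq2} is in fact always an equality --- is correct and is a strengthening the paper does not record; the paper only characterizes equality in the right-hand bound and infers the stated equivalence from the chain.
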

\begin{proof} 
Without loss of generality we may assume that $f$ is left continuous, in particular that $f(1_-)=f(1)$. Let $df = f'(x) \, dx + f'_s$ denote the decomposition of the Lebesgue-Stieltjes measure $df$ (where $f'_s$ is the singular part of $df$).
Let $\{t_i,\, 0\le i\le N\}$ be a subdivision of $[s,t]$ and $\theta: [0,1]\to \R_+$ a continuous function.
Let $\theta_i= \theta \big(\frac{t_i+ t_{i+1}}{2}\big)$. Then, from inequality
$2 \sqrt{(t_{i+1}- t_i)(f(t_{i+1})- f(t_i+0))} \le  \theta_i\cdot \big(f(t_{i+1})- f(t_i+0)\big) + \frac1{\theta_i}(t_{i+1}- t_i) $
we infer that 
$$m_f(s,t)\ \le\  \sum_{i=0}^N  \frac1{2}\, \theta_i \cdot \big(f(t_{i+1})- f(t_i+0)\big) + \frac1{2\theta_i} (t_{i+1}- t_i) ,$$ 
where  in the right hand side we recognize a Darboux sum related to the integral
 $\int_{]s,t]} \frac{\theta}{2} \, df +  \int_{]s,t]} \frac{1}{2\theta} \, dx.$
 Thus taking the limit as the size of subdivisions tends to zero and by density of continuous functions in $L^1_\mu\big([0,1]\big)$,
 for $\mu= \mathcal{L}^1\mres [0,1] + f'_s$, we derive that
 $$ m_f(s,t)\ \le  \int_s^t   \frac1{2} \left(\theta(x) f'(x) +\frac{1}{\theta(x)}\right)  dx + \int_{[s,t]} \frac{\theta(x)}{2} \, f'_s(dx) $$
for  every positive Borel function $\theta$. Let $\eps>0$ and let $B$ be a Borel subset of full Lebesgue measure in $[0,1]$ such that
 $f'_s(B)=0$. Then, by taking $\theta_\e (x) = (f'(x)\vee \eps)^{-1/2}$ if $x\in B$ while $\theta_\e (x)=0$ otherwise and by applying the dominated convergence theorem,
  we infer that:
 $$ m_f(s,t)  \le  \limsup_{\e\to 0}  \int_s^t    \frac1{2} \left(\theta_\e(x) f'(x) +\frac{1}{\theta_\e(x)}\right)  dx  = \int_s^t \sqrt{f'(x)}\, dx.$$
Furthermore we notice that $f(1)- f(0_+)= \int_0^1 f'(x)\, dx + \int_{(0,1]}  f_s'$. Thus, as a consequence of Schwarz's inequality, we have
$$\int_0^1 \sqrt{f'(x)}\, dx \le \Big(\int_0^1 f'(x)\, dx\Big)^{1/2} \le  \sqrt{f(1)- f(0_+)},$$
with  equalities if and only if $f_s'=0$ on $(0,1]$ and $f'=f(1)- f(0_+)$ a.e.
\end{proof}

\begin{proof}[Proof of Proposition \ref{geo-segment}]\  Let $\gamma(t)=(1- t)\,a + t \,b$ for $t\in [0,1]$ and $f(t)=2 \, \mbf{v}_{a,b}(t)$.
We claim that for every $0\le s<t\le 1$ it holds that $c_\mbf{v}\big(\gamma(s),\gamma(t)\big)\le \int_s^t \sqrt{f'(x)} \, dx$.
Indeed by definition \eqref{def:cost}, we have
$$ c_\mbf{v}(\gamma(s),\gamma(t))\le \inf_{\{s=t_0<t_1 < \dots < t_N=t\}}  \sum_{i=0}^{N-1} \ell_\mbf{v}\big(\gamma(t_i),\gamma(t_{i+1})\big) = m_f(s,t).$$
where in the last equality we used the fact that $\ell_\mbf{v}\big(\gamma(t_i),\gamma(t_{i+1})\big)=  \sqrt{(t_{i+1}- t_i)(f(t_{i+1}-0)- f(t_i+0))}$ 
and the definition of  $m_f(s,t)$ introduced in Lemma \ref{esti-1d}. The claim then follows directly from \eqref{masterineq2}.
Next we can easily deduce an upper bound for the length $L_\mbf{v}(\gamma)$:
$$ L_\mbf{v}(\gamma):= \sup\left\{  \sum_{i=0}^N c_\mbf{v}\big(\gamma(t_{i+1}), \gamma(t_{i})\big)\ :\  t_0=0< t_1 <\dots <t_N =1\right\} \le \int_0^1  \sqrt{f'(x)} \, dx.$$
Since $\int_0^1  \sqrt{f'(x)} \, dx\le \sqrt{f(1_-)-f(0_+)}  = \ell_\mbf{v}(a,b)$, we infer that $c^g_\mbf{v}(a,b) \le  L_\mbf{v}(\gamma) \le \ell_\mbf{v}(a,b)$. As it is true for any
 $(a,b)\in (\Rd)^2$, the geodesic pseudo-distance $c^g_\mbf{v}$ is a sub-additive minorant of $\ell_\mbf{v}$. Therefore it cannot be larger that $c_\mbf{v}$
and the equality  $c^g_\mbf{v}=c_\mbf{v}$ follows.

Eventually, we observe that the equality $c_\mbf{v}(a,b)= \ell_\mbf{v}(a,b)$ is equivalent to equalities $L_\mbf{v}(\gamma)=m_f(0,1)=(f(1_-)-f(0_+))^{1/2}$
which by Lemma \ref{esti-1d} amounts to saying that $f$ is affine. The assertion (i) is proved.
The  assertion (ii) is a straightforward consequence of the former equivalence. Indeed, assume that there exits $x_1,x_2,\dots , x_N \in \R^d$ such that
$  c_\mbf{v}(a,b) = \sum_{i=1}^{N-1} \ell_\mbf{v}(x_i,x_{i+1}).$
Then, by the sub-additivity of $c_\mbf{v}$, we infer that $\sum_{i=1}^{N-1} \big(\ell_\mbf{v}(x_i,x_{i+1})-c_\mbf{v}(x_i,x_{i+1})\big) =0$, from which follows equalities  $\ell_\mbf{v}(x_i,x_{i+1})=c_\mbf{v}(x_i,x_{i+1})= L_\mbf{v}\big([x_i, x_{i+1}]\big) $ for every $i$. Accordingly, the scalar functions
$\mbf{v}_{x_i,x_{i+1}}$  are affine on $(0,1)$ and the length of the polygonal curve $C$ consisting of the union of the segments $[x_i, x_{i+1}]$ satisfies 
$L_\mbf{v}(C) =\sum_{i=1}^{N-1} L_\mbf{v}([x_i, x_{i+1}])=  \sum_{i=1}^{N-1} \ell_\mbf{v}(x_i, x_{i+1})= c_\mbf{v}(a,b)$.
The assertion (iii) is a consequence of \cite[Thm 1.2]{venturini}) where it is proved that the integral 
length representation \eqref{integral-length} holds with $\varphi_\mbf{v}$ defined by \eqref{pseudoFinsler} as well as with its lower version  where  the upper limit as $h\to 0+$ in \eqref{pseudoFinsler} is replaced  by the lower limit.
\end{proof}

\begin{corollary}\label{distance-cost}\ Let $\mbf{v}$ be an element of $\mbf{M}_\O$. Then the quotient space $(X_{\mbf{v}},c_\mbf{v}) $  is a geodesic locally compact metric space. For every $a,b\in \R^d$, one has
$
c_\mbf{v}(a,b)  =
\min \big\{ L_{\mbf{v}}(\gamma)  :  \gamma(0) =a \ ,\ \gamma(1) =b \big\} ,$
where the minimum is reached over parametrized curves $\gamma: [0,1] \to \R^d$ such that\
$ c_\mbf{v}\big(\gamma(s), \gamma(t)\big)\,=\, c_\mbf{v}(a,b) \, |s-t|$ for  all $ s,t \in [0,1].$
 
\end{corollary}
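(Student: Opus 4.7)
The plan is to proceed in three stages: (1) verify the metric-space and local-compactness structure, (2) produce a length-minimizing curve via a compactness argument built on Proposition \ref{geo-segment}, and (3) reparametrize by $c_{\mbf{v}}$-arc length to get the constant-speed representative.

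First, I would check that $(X_{\mbf{v}},c_{\mbf{v}})$ is a metric space. Symmetry and non-negativity of $c_{\mbf{v}}$ on $\R^d$ are built into the definition \eqref{def:cost}; the triangle inequality is the sub-additivity established in Theorem \ref{cv-dual}(i); and after quotienting by the equivalence $x\sim y\iff c_{\mbf{v}}(x,y)=0$ the resulting $c_{\mbf{v}}$ separates points of $X_{\mbf{v}}$. For local compactness, I would use the two-sided bound \eqref{bounds-c_v}. The upper bound shows that the canonical projection $p:\R^d\to X_{\mbf{v}}$ is continuous, so the image of a Euclidean compact set is compact. The lower bound $c_{\mbf{v}}(a,b)\ge\sqrt{2}|d(a,\O)-d(b,\O)|$ combined with the fact that $c_{\mbf{v}}$ agrees with $\sqrt{2}|\cdot|$ outside $\Ob$ ensures that each closed $c_{\mbf{v}}$-ball is contained in $p(K)$ for some Euclidean compact $K\subset\R^d$, giving local compactness of $X_{\mbf{v}}$.

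Second, I would establish that $c_{\mbf{v}}(a,b)$ is attained as the length of some curve. By Proposition \ref{geo-segment}(i), the geodesic distance $c^g_{\mbf{v}}$ equals $c_{\mbf{v}}$, so there exist polygonal curves $\gamma_n:[0,1]\to\R^d$ with $L_{\mbf{v}}(\gamma_n)\to c_{\mbf{v}}(a,b)$. I would reparametrize each $\gamma_n$ so that the $c_{\mbf{v}}$-length of $\gamma_n|_{[0,t]}$ equals $t\cdot L_{\mbf{v}}(\gamma_n)$, which yields $c_{\mbf{v}}(\gamma_n(s),\gamma_n(t))\le L_{\mbf{v}}(\gamma_n)\,|s-t|$. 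This bound makes $(\gamma_n)$ equi-continuous as maps into $(X_{\mbf{v}},c_{\mbf{v}})$, and all images are contained in the closed $c_{\mbf{v}}$-ball of radius $\sup_n L_{\mbf{v}}(\gamma_n)$ around $a$, which is compact by step one. A standard Arzelà--Ascoli argument in the locally compact metric space $X_{\mbf{v}}$ then produces a subsequence converging uniformly to a continuous curve $\gamma:[0,1]\to X_{\mbf{v}}$ joining $a$ to $b$. Lower semicontinuity of the length functional \eqref{def:length} under $c_{\mbf{v}}$-uniform convergence yields $L_{\mbf{v}}(\gamma)\le\liminf_n L_{\mbf{v}}(\gamma_n)=c_{\mbf{v}}(a,b)$; the reverse inequality is automatic from the definition of $c^g_{\mbf{v}}$, so $L_{\mbf{v}}(\gamma)=c_{\mbf{v}}(a,b)$.

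Finally, the constant-speed parametrization follows from the standard reparametrization lemma in metric geometry: setting $\varphi(t):=L_{\mbf{v}}(\gamma|_{[0,t]})/L_{\mbf{v}}(\gamma)$, which is continuous and non-decreasing from $[0,1]$ onto $[0,1]$, one defines $\tilde\gamma$ by $\tilde\gamma(\varphi(t))=\gamma(t)$; the fact that sub-additivity of $c_{\mbf{v}}$ becomes an equality along a length-minimizer forces $c_{\mbf{v}}(\tilde\gamma(s),\tilde\gamma(t))=c_{\mbf{v}}(a,b)\,|s-t|$. The main obstacle I anticipate is the Arzelà--Ascoli step: one has to be careful that $c_{\mbf{v}}$-uniform convergence on $[0,1]$ together with lower semicontinuity of $L_{\mbf{v}}$ actually applies in the quotient space, which requires exploiting that each $c_{\mbf{v}}$-equivalence class is closed in $\R^d$ (assertion (i) of Theorem \ref{cv-dual}) so that limits of polygonal paths are honest $c_{\mbf{v}}$-continuous curves.
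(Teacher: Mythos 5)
Your plan is sound and leads to a correct proof, but it diverges from the paper's argument in one substantive way: after establishing that $(X_{\mbf{v}},c_\mbf{v})$ is a proper (boundedly compact) length space, the paper simply invokes the Hopf--Rinow theorem for length spaces (citing \cite{Papadopoulos}) to obtain the geodesic, whereas you re-prove the relevant existence statement from scratch via Arzel\`a--Ascoli applied to reparametrized near-minimizers and lower semicontinuity of the length functional \eqref{def:length}. The two routes are mathematically equivalent --- your direct compactness argument is essentially the proof of the Hopf--Rinow--Cohn-Vossen theorem specialized to this setting --- but the paper's version is shorter because it offloads the selection argument to a cited theorem, while yours is more self-contained and makes visible exactly which properties of $c_\mbf{v}$ are being used (properness from the lower bound in \eqref{bounds-c_v}, continuity of the quotient projection from the upper bound). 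For the properness step, the paper proves the asymptotic identity \eqref{cv-bounded} $\lim_{|x|\to\infty}c_{\mbf{v}}(a,x)/|x|=\sqrt{2}$ by a sub-additivity argument, while you extract the needed coercivity directly from the lower bound $c_{\mbf{v}}(a,b)\ge\sqrt{2}\,|d(a,\O)-d(b,\O)|$; both work.

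One inaccuracy you should fix: the auxiliary claim that ``$c_{\mbf{v}}$ agrees with $\sqrt{2}\,|\cdot|$ outside $\Ob$'' is false in general. The cost $c_\mbf{v}(a,b)$ is defined as an infimum over chains in all of $\R^d$, so even when $a,b\notin\Ob$ the optimal chain may pass through $\Ob$ and be strictly shorter than $\sqrt{2}\,|a-b|$; the one-dimensional example of Remark \ref{nocontinuous} (with $a=-1$, $b=2$) gives $c_{\mbf{v}_n}(a,b)\le 2\sqrt{2}<3\sqrt{2}=\sqrt{2}\,|a-b|$. Fortunately the claim is redundant: the lower bound in \eqref{bounds-c_v} alone already shows that $\{b:c_\mbf{v}(a,b)\le r\}$ is contained in a Euclidean-bounded set, which is all the properness argument needs. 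Remove the spurious sentence and your step (1) is exactly what is required.
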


\begin{remark*}  The existence of a geodesic  curve $\gamma: [0,1] \to \R^d$ with a finite Euclidean length is not known
unless $\mbf{v}$ is assumed to be strongly monotone. 
\end{remark*}

\begin{proof}\  $X_{\mbf{v}}$ is a metric space when it is equipped with the distance $d_\mbf{v}(\dot x, \dot y):=  c_\mbf{v}(x,y)$
(definition independent of the choice of the representative in each class).
To show that it is locally compact, it is enough to check that any bounded sequence $(\dot x_n)$ admits at least one cluster point.
It turns out that $(x_n)$ is bounded in $\Rd$ (for the Euclidean norm) as a consequence of the following equality:
\begin{equation}\label{cv-bounded}
\forall a\in\Rd\qquad \lim_{|x| \to \infty}  \frac{c_{\mbf{v}}(a,x)}{|x|} \ =\  
\sqrt{2}.
\end{equation}
Therefore $|x_{n_k}-x|\to 0$ for a suitable subsequence $(x_{n_k})$ and $x\in\Rd$. Since by \eqref{cv-dual} and assertion (i) of Theorem 
\ref{cv-dual} we have
$  c_{\mbf{v}}(x_{n_k},x) \le  \ell_{\mbf{v}}(x_{n_k},x) \le \sqrt{2} \, |x_{n_k}-x|^{1/2} ( R + |x_{n_k}-x|)^{1/2}\ ,$
we conclude that $d_{\mbf{v}}(\dot x_{n_k},\dot x)= c_{\mbf{v}}(x_{n_k},x) \to 0$.
Summarizing, we have shown that $X_{\mbf{v}}$ is a locally compact metric space. Moreover the length of a curve $u:[0,1]\to X_{\mbf{v}}$  can be 
recast from the length $L_{\mbf{v}}(\gamma)$ defined in \eqref{def:length} by taking any $\gamma$ such that $\gamma(t)\in u(t)$ (note that $\gamma$ can be discontinuous if we consider $\Rd$ with the Euclidean norm).
 Then, by  assertion (i) of Proposition \ref{geo-segment}, we have the equality
 $$ d_{\mbf{v}}(\dot a,\dot b) =\inf \Big\{ L_\mbf{v}(u) \ : \ u\in C^0\big([0,1];X_{\mbf{v}}\big), \  \ \gamma(0)=\dot a, \  \gamma(1)=\dot b\, \Big\},$$
  from wich follows that  $(X_{\mbf{v}}, d_{\mbf{v}})$ is a length space.
 The existence of an optimal curve $u$ is then a consequence of Hopf-Rinow theorem  for which we refer to the book \cite{Papadopoulos} (Theorem 2.4.6 in particular). Moreover, $u$ can be constructed  so that it is injective with a constant speed i.e.  $d_\mbf{v}\big(u(s), u(t)\big)\, =\, d_\mbf{v}(a,b)\,  |s-t|$ for all $s,t\in [0,1]$. This is precisely the statement of our corollary.

Eventually, it remains to show the claim \eqref{cv-bounded}. 
The inequality $ \limsup_{|x| \to \infty}  \frac{c_{\mbf{v}}(a,x)}{|x|}\le \sqrt{2}$
follows directly from \eqref{lv<}. To obtain the converse inequality, we consider a positive real $L$ large enough that
 $\Ob \subset\{|x| < L\}$. Since $\mbf{v}={\rm id}$ on the complement of $\Ob$, the tangential component of $\mbf{v}$ is affine on the segment
$[z,x]$ where $z= L \frac{x}{|x|}$ when $x \in \Rd \setminus \Ob$. Therefore, by  Proposition \ref{geo-segment}, we have $c_{\mbf{v}}(x,z)=\sqrt{2}\, |x-z|=\sqrt{2}\, (|x|-L)$.
By exploiting the sub-additivity of $c_{\mbf{v}}$, we end up with $c_{\mbf{v}}(a,x)\ge \sqrt{2} (|x|-L) -M_L(a) $ 
where $M_L(a)=\sup\{ c_{\mbf{v}}(a,z): |z|\le L\} <  +\infty$. It follows that $\liminf_{|x| \to \infty}  \frac{c_{\mbf{v}}(a,x)}{|x|}\ge \sqrt{2}$. 
\end{proof}

\subsection{Dual achievement through maximal monotones maps}
We are now in position to revisit the dual problem $(\mathcal{P}^*)$ introduced in Subsection \ref{duality}
that we are going to recast in the following geometric form:
\begin{equation}\label{revised-dual}
I_0(f,\Sigma_0)= \sup_{ (u,\mbf{v})\in C_{\Sigma_0}(\Ob) \times\mbf{M}_\O} \Big\{  \pairing{f,u} \ :\  u(x_1) - u(x_2)  \le  c_{\mbf{v}}(x_1,x_2) \quad \forall\, (x_1,x_2) \in \Ob \times \Ob\Big\} \tag{$\mathcal{P}_{\rm geo}^*$}
\end{equation}
 \begin{theorem}\label{sol-rev-dual} Assume that $\Sigma_0$ is non empty. Then the supremum in \eqref{revised-dual} is a maximum
 and we have the equality  $ \max (\mathcal{P}^*)=\max (\mathcal{P}_{\rm geo}^*)$. Moreover,
a pair $(u,\mbf{v})\in C_{\Sigma_0}(\Ob) \times\mbf{M}_\O$ solves \eqref{revised-dual} if and only if  
 $(u,w)$ is optimal for  $(\mathcal{P}^*)$, where $w={\rm id}-\mbf{i}(\mbf{v})$. 
 \end{theorem}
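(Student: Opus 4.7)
The plan is to deduce the theorem from the correspondence $w \leftrightarrow \mbf{v}$ given by $w = \mathrm{id} - \mbf{i}(\mbf{v})$, combined with the equivalence between the two-point condition defining $\K$ and an $\ell_{\mbf{v}}$ (or $c_{\mbf{v}}$) inequality. Specifically, for a Lipschitz admissible pair $(u,w) \in \K$, Lemma \ref{lem:two_point_condition} rewrites the pointwise constraint as $|u(y) - u(x)| \le \ell_{\mbf{v}}(x,y)$, which by Theorem \ref{cv-dual}(ii) is equivalent to $|u(y) - u(x)| \le c_{\mbf{v}}(x,y)$. Thus the admissibility of $(u,\mbf{v})$ for \eqref{revised-dual} and the admissibility of $(u,w)$ for $(\mathcal{P}^*)$ should coincide under the change of variables, and since the objective $\int u\, df$ is identical, the two maxima should coincide together with their maximizers.

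For the inequality $\max(\mathcal{P}^*) \le \max(\mathcal{P}^*_{\rm geo})$ together with attainment, I would start from a maximizer $(u^*, w^*) \in \bK$ of $(\mathcal{P}^*)$, whose existence follows from the compactness of $\bK$ (Lemma \ref{esti}) and Theorem \ref{Z=I}. Approximating by Lipschitz $(u_n, w_n) \in \K$ with $u_n \to u^*$ uniformly on $\Ob$ and $w_n \to w^*$ in $L^1$, and setting $\mbf{v}_n = \mbf{i}^{-1}(\mathrm{id} - w_n) \in \mbf{M}_\O$, the two-point condition combined with Theorem \ref{cv-dual}(ii) yields $|u_n(y) - u_n(x)| \le c_{\mbf{v}_n}(x,y)$ on $\Ob \times \Ob$. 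Since $\mbf{M}_\O$ is compact (Lemma \ref{homeo}), a subsequence of $\mbf{v}_n$ converges to some $\mbf{v} \in \mbf{M}_\O$, and by continuity of $\mbf{i}$ one has $w^* = \mathrm{id} - \mbf{i}(\mbf{v})$. The upper semicontinuity of $\mbf{v} \mapsto c_{\mbf{v}}(x,y)$ (Theorem \ref{cv-dual}(iii)) combined with uniform convergence $u_n \to u^*$ then gives $u^*(y) - u^*(x) \le c_{\mbf{v}}(x,y)$ for every $(x,y) \in \Ob \times \Ob$, so $(u^*, \mbf{v})$ is admissible for \eqref{revised-dual} and realizes the desired value, yielding both the inequality and attainment.

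For the reverse inequality $\max(\mathcal{P}^*_{\rm geo}) \le \max(\mathcal{P}^*)$, I would rely on the converse statement that any admissible $(u,\mbf{v})$ of \eqref{revised-dual} yields a pair $(u, \mathrm{id} - \mbf{i}(\mbf{v})) \in \bK$. Equivalently, $\bK$ admits the characterization as the set of $(u,w)$ with $u \in C_{\Sigma_0}(\Ob)$, $w = \mathrm{id} - \mbf{i}(\mbf{v})$ for some $\mbf{v} \in \mbf{M}_\O$, and $u(y) - u(x) \le \ell_{\mbf{v}}(x,y)$ on $\Ob \times \Ob$ --- precisely the kind of result announced as Proposition \ref{explicitKbar} in the proof of Proposition \ref{key}(iii). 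Granting it, the reverse inclusion is immediate via Theorem \ref{cv-dual}(ii), and combined with the previous paragraph this yields the equality of the two problems together with the announced bijection between their maximizers.

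The hard part will be the non-trivial inclusion in the characterization of $\bK$: constructing, for a generic and possibly multi-valued and jumping $\mbf{v} \in \mbf{M}_\O$, Lipschitz monotone approximants $\mbf{v}_\varepsilon \in \mbf{M}_\O$ that \emph{preserve} the identity outside $\Omega$, together with Lipschitz $u_\varepsilon$ satisfying the corresponding pointwise constraint and converging uniformly to $u$. The usual Moreau--Yosida regularization fails here, since outside $\Omega$ it shrinks the identity to $\mathrm{id}/(1+\varepsilon)$; a workable strategy is to mollify $\mbf{i}(\mbf{v})$ inside $\Omega$ and interpolate with the identity through a thin boundary layer, while simultaneously replacing $u$ by an inf-convolution against the pseudo-cost $c_{\mbf{v}_\varepsilon}$. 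Controlling the interplay between these two regularizations so as to maintain the pointwise constraint on all of $\Omega$ is where the real technical work will lie.
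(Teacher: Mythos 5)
Your proposal is correct and follows essentially the same route as the paper: attainment comes from the compactness of $\mbf{M}_\O$ (Lemma \ref{homeo}) together with the upper semicontinuity of the evaluation map $\mbf{v}\mapsto c_\mbf{v}(x,y)$ (Theorem \ref{cv-dual}(iii)) and uniform equicontinuity of $u$, while the equivalence with $(\mathcal{P}^*)$ is deferred to the characterization of $\bK$ in Proposition \ref{explicitKbar}. The one structural difference is minor and inconsequential: the paper extracts a cluster point directly from a maximizing sequence for $(\mathcal{P}^*_{\rm geo})$, whereas you start from a maximizer of $(\mathcal{P}^*)$ (whose existence is already known from Lemma \ref{esti}) and convert it; both variants rely on exactly the same compactness and semicontinuity ingredients.

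One small caveat on your closing remarks about the ``hard part'' of Proposition \ref{explicitKbar} ((ii)$\Rightarrow$(i)): the boundary-layer interpolation and inf-convolution strategy you sketch is not what the paper does, and as stated it would indeed run into the difficulties you anticipate. The paper instead passes through the intermediate condition (iii), which recasts the constraint as the measure inequality $\tfrac12\nabla u\otimes\nabla u\,\mathcal L^d \le e(v)$, and the key observation that makes the convolution argument go through is the sign condition $e_s(w)\le 0$ on the singular part of $e(w)$ (equivalently, the jump contribution to $e(v)$ is nonnegative). Combined with the recession-function reformulation and Lemma \ref{mollih}, this shows that convolving only improves the constraint, so the approximation in $\K$ is obtained by the same extend--contract--mollify scheme already used in Lemma \ref{density}, with no boundary-layer interpolation needed. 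This does not affect the validity of your proof of the theorem proper, which is a legitimate reduction to Proposition \ref{explicitKbar}, but your roadmap for that proposition would not succeed as written.
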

 \begin{proof} 
  The existence of an optimal pair $(u,\mbf{v})$ for \eqref{revised-dual} is straightforward. Indeed if $(u_n, \mbf{v}_n)$ is 
 a maximizing sequence, then $\{u_n\}$ is equicontinuous as a consequence of the uniform upper bound estimate in \eqref{bounds-c_v} 
 hence relatively compact in $C_{\Sigma_0}(\Ob)$ (by Ascoli's theorem) while we recall that $\mbf{M}_\O$ is a compact metric space (see Lemma \ref{homeo}).
 By exploiting the upper semicontinuity property of the map $\mbf{v} \in \mbf{M}_\O \mapsto c_\mbf{v}(x_1,x_2)$ holding for all $x_1,x_2$ 
 (see assertion (iii) in Theorem \ref{cv-dual}), we see that any cluster point $(u, \mbf{v})$ satisfies the inequality constraint $u(x_1) - u(x_2)  \le  c_{\mbf{v}}(x_1,x_2)$ and therefore is a solution of \eqref{revised-dual}.
 The equivalence between the formulations $(\mathcal{P}^*)$ and $(\mathcal{P}_{\rm geo}^*)$ of the dual problem is a 
 straightforward consequence of the forthcoming  Proposition \ref{explicitKbar}.

 \end{proof}
Recall that the set of competitors for $(\mathcal{P}^*)$ is the closure in $C^0(\Rd)\times L^1(\Rd;\Rd)$ of the subset $\K$  defined in \eqref{def:calK}.
By Lemma \ref{esti}, we already know that $\bK$  is a bounded convex subset
of $\big(C^{0,\frac1{2}}\cap W^{1,2}(\Omega)\big) \times \big((BV\cap L^\infty)(\Omega;\R^d)\big)$.
The complete characterization of $\bK$ given below is  crucial:  

\begin{proposition}
		\label{explicitKbar}
	Let us be given  $u \in C^0(\Ob)$, such that $u = 0$ on $\Sigma_0$, and $w \in L^1(\O;\Rd)$. Then, the following conditions are equivalent:
		\begin{enumerate}[leftmargin=1.5\parindent]
			\item [(i)] $(u,w) \in \bK$;
			
			\item  [(ii)]  there exists an element $\mbf{v}\in \mbf{M}_\O $ such that $\mbf{v}(x)=\{x- w(x)\}$ \ for a.e.  $x\in\Omega$ and
			\begin{equation}\label{cv2points}
			u(x_1) - u(x_2)\  \le\  c_{\mbf{v}}(x_1,x_2) \quad\quad \forall\, (x_1,x_2) \in \Ob \times \Ob 
			\end{equation}
			(or equivalently \
			$u(x_1) - u(x_2) \le  \ell_{\mbf{v}}(x_1,x_2) \quad \forall\, (x_1,x_2) \in \Ob \times \Ob$);

			\item [(iii)] $u\in W^{1,2}(\O)$ and there exists  $v\in \mathcal{B}_\O $ such that $v={\rm id}-w$ a.e. in $\Omega$ and
			\begin{equation}\label{ineq-measure}
  \frac{1}{2}\,\nabla u \otimes \nabla u \ \mathcal {L}^d \mres\O \ \le\  e(v) \  \quad \text{in $\Mes(\Rd;\Sdd)$ }
\end{equation}
where $e(v)$ is the symmetric part of the distributional derivative $D v \in \Mes(\Rd;\Rd)$.
		\end{enumerate}
		\end{proposition}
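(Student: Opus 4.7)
The plan is to prove the three conditions form a cycle $(i)\Rightarrow(iii)\Rightarrow(ii)\Rightarrow(i)$. The parenthetical equivalence inside $(ii)$ (using $c_\mbf{v}$ or $\ell_\mbf{v}$) is immediate from the definition \eqref{def:cost}: an inequality $u(x_1)-u(x_2)\le \ell_\mbf{v}(x_1,x_2)$ telescopes along any polygonal chain and, after taking the infimum, gives the same inequality for $c_\mbf{v}$; the converse is trivial since $c_\mbf{v}\le \ell_\mbf{v}$.

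For $(i)\Rightarrow(iii)$, I would pick a sequence $(u_n,w_n)\in\K$ with $(u_n,w_n)\to(u,w)$ in $C^0(\Ob)\times L^1(\Omega;\Rd)$. The estimates of Lemma \ref{esti} ensure $\nabla u_n$ is bounded in $L^2(\Omega)$ and $Dw_n$ in $\Mes(\Omega;\Rdd)$, so up to subsequences $\nabla u_n\rightharpoonup \nabla u$ weakly in $L^2$ (hence $u\in W^{1,2}$) and, upon setting $v_n:=\ident-w_n\in\mathcal{B}_\O$, we have $e(v_n)\weakstar e(v)$ with $v:=\ident-w$, preserving $e(v)\ge 0$. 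For any $\Phi\in C_c(\Rd;\Sddp)$, the map $p\mapsto \int\langle \Phi,p\otimes p\rangle\,dx$ is convex hence weakly $L^2$ lower semicontinuous, so
\[
\tfrac{1}{2}\int\langle\Phi,\nabla u\otimes\nabla u\rangle\,dx\le\liminf_n\tfrac{1}{2}\int\langle\Phi,\nabla u_n\otimes\nabla u_n\rangle\,dx\le\lim_n\int\langle\Phi,e(v_n)\rangle\,dx=\int\langle\Phi,de(v)\rangle,
\]
which is exactly \eqref{ineq-measure}.

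For $(iii)\Rightarrow(ii)$, set $\mbf{v}:=\mbf{i}^{-1}(v)$. Testing \eqref{ineq-measure} against $\Phi(x)=\psi(x)\,\tau\otimes\tau$ with $\tau\in S^{d-1}$ and $\psi\in C_c(\Rd;\R_+)$ yields the scalar distributional inequality $\tfrac{1}{2}(\partial_\tau u)^2\,\mathcal{L}^d\le \partial_\tau\langle v,\tau\rangle$. A Fubini argument gives that for $\Ha^{d-1}$-a.e.\ line $L$ parallel to $\tau$, the trace $u|_L\in H^1(L)$ and $\langle v,\tau\rangle|_L$ is non-decreasing with $\tfrac{1}{2}|u'|^2\mathcal{L}^1\le (\langle v,\tau\rangle|_L)'$ on $L$. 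Cauchy--Schwarz on $[a,b]\subset L$ produces
\[
\tfrac{1}{2}|u(b)-u(a)|^2\le (b-a)\int_a^b|u'|^2\,dt\le 2(b-a)\bigl(\langle v,\tau\rangle(b_-)-\langle v,\tau\rangle(a_+)\bigr),
\]
which coincides with $\tfrac{1}{2}|u(b)-u(a)|^2\le \langle\hat{b}-\hat{a},b-a\rangle$ for suitable one-sided selections $\hat{a}\in\mbf{v}(a)$, $\hat{b}\in\mbf{v}(b)$. The uniform upper bound $\ell_\mbf{v}\le \sqrt{2R}|b-a|^{1/2}$ from Lemma \ref{mono1}(iii) together with the Cauchy--Schwarz estimate above shows that the continuous representative of $u$ is $1/2$-Hölder, and the lower semicontinuity of $\ell_\mbf{v}$ (Lemma \ref{mono1}(ii)) extends the inequality to every pair $(x_1,x_2)\in\Ob^2$.

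For $(ii)\Rightarrow(i)$ — which I expect to be the main obstacle — given $(u,\mbf{v})$ satisfying the two-point condition, by the implication just derived we may assume the measure inequality \eqref{ineq-measure} holds with $v:=\mbf{i}(\mbf{v})$. Following the scheme of Lemma \ref{density}, I first contract $v$ and $u$ slightly so that $v=\ident$ on a neighbourhood of $\bO$ and $u=0$ on a neighbourhood of $\Sigma_0$, then mollify with a common kernel $\rho_\eps$ to obtain smooth $v_\eps:=v*\rho_\eps$ and $u_\eps:=u*\rho_\eps$. Since $e(v_\eps)=e(v)*\rho_\eps$ remains non-negative, monotonicity is preserved. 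The key step is Jensen's inequality for the matrix-convex map $p\mapsto p\otimes p$:
\[
\nabla u_\eps(x)\otimes\nabla u_\eps(x)\le\bigl(\nabla u\otimes\nabla u\bigr)*\rho_\eps(x),
\]
which combined with the convolved form of \eqref{ineq-measure} gives the \emph{pointwise} Lagrangian bound $\tfrac{1}{2}\nabla u_\eps\otimes\nabla u_\eps\le e(v_\eps)$. Setting $w_\eps:=\ident-v_\eps$, one has $(u_\eps,w_\eps)\in\K$, and the boundary-adjusting truncations spelled out in Lemma \ref{density} ensure that $u_\eps=0$ on $\Sigma_0$ and $w_\eps=0$ on $\bO$ without breaking the pointwise constraint. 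Since $(u_\eps,w_\eps)\to(u,w)$ in $C^0(\Ob)\times L^1(\Omega;\Rd)$, we conclude $(u,w)\in\bK$. The delicate point is coordinating the contraction on $u$ and on $v$ so that the boundary adjustments remain compatible with the Jensen estimate; this is the same bookkeeping already carried out in Lemma \ref{density}.
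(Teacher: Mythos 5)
Your implication $(i)\Rightarrow(iii)$ is correct and takes a genuinely different route from the paper: the paper first goes $(i)\Rightarrow(ii)$ via the upper semicontinuity of $\mbf{v}\mapsto c_{\mbf{v}}(x_1,x_2)$ on $\mbf{M}_\O$ and then $(ii)\Rightarrow(iii)$ via a difference-quotient argument, whereas you pass directly from $(i)$ to $(iii)$ using the weak $L^2$-lower semicontinuity of $p\mapsto \int\langle\Phi,p\otimes p\rangle\,dx$ for $\Phi\ge 0$. That step is fine, and arguably tidier. The problems are in the other two arrows.

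First, $(iii)\Rightarrow(ii)$ has a genuine gap. The slicing/Fubini argument only yields the two-point inequality for $\Ha^{d-1}$-a.e.\ line parallel to a given $\tau$ and for $\mathcal{L}^1$-a.e.\ pair on such a line, i.e.\ for $\mathcal{L}^{2d}$-a.e.\ $(a,b)$, not for every pair. You then appeal to the $1/2$-H\"older estimate on $u$ together with the lower semicontinuity of $\ell_{\mbf{v}}$ (Lemma~\ref{mono1}(ii)) to extend the inequality to all pairs. But this cannot work: the function $(a,b)\mapsto u(a)-u(b)-\ell_{\mbf{v}}(a,b)$ is \emph{upper} semicontinuous (a continuous function minus an l.s.c.\ one), and an u.s.c.\ function can be strictly positive at a single point while being $\le 0$ on a dense set. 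In fact $\ell_{\mbf{v}}$ genuinely jumps downward at pairs $(a,b)$ where $\mbf{v}$ is multivalued, which is precisely where you cannot afford to miss the inequality. To close the gap you would need a limsup-type argument along a carefully chosen approximating sequence $(a_n,b_n)$ whose minimal selections converge to the minimal selection at $(a,b)$, or argue through the continuous quantity $c_{\mbf{v}}$, which in turn needs the $\ell_{\mbf{v}}$ inequality at the (arbitrary, not a.e.) intermediate points of the chains in \eqref{def:cost}. None of this is spelled out. Incidentally, the written chain also carries a spurious factor of $2$: Cauchy--Schwarz gives $\tfrac12|u(b)-u(a)|^2\le\tfrac12(b-a)\int_a^b|u'|^2\le(b-a)\bigl(\langle v,\tau\rangle(b_-)-\langle v,\tau\rangle(a_+)\bigr)$, which is the bound that actually matches $\langle\hat b-\hat a,b-a\rangle$; your displayed chain ends with twice that quantity, which does not ``coincide'' with the target as claimed.

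Second, $(ii)\Rightarrow(i)$ is logically circular. You begin by saying that ``by the implication just derived we may assume the measure inequality \eqref{ineq-measure} holds.'' But the implication you just derived is $(iii)\Rightarrow(ii)$; to deduce the measure inequality from the two-point condition you need $(ii)\Rightarrow(iii)$, which you never prove and which cannot be borrowed from the rest of the cycle while $(ii)\Rightarrow(i)$ is still open. The paper avoids this by proving $(ii)\Rightarrow(iii)$ directly (integrate the finite-difference inequality $\frac12|u(x+h)-u(x)|^2\le\langle v(x+h)-v(x),h\rangle$ against test functions and let $h\to 0$) and then closing the cycle via $(iii)\Rightarrow(i)$; $(iii)\Rightarrow(ii)$ as a stand-alone arrow is never needed and is never proved in the paper. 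Once the measure inequality is granted, your Jensen argument $\nabla u_\eps\otimes\nabla u_\eps\le(\nabla u\otimes\nabla u)*\rho_\eps$ combined with the contraction/truncation of Lemma~\ref{density} is a sound and somewhat lighter alternative to the paper's use of the one-homogeneous integrand $h$ and Lemma~\ref{mollih}; but it rests on an implication you don't have.
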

\begin{remark}\label{u-affine}  In view of Theorem \ref{cv-dual}, the condition \eqref{cv2points} in (ii)  is in fact equivalent to the weaker condition
 $u(x_1) - u(x_2) \le  \ell_{\mbf{v}}(x_1,x_2)$ for all $(x_1,x_2)\in \Ob\times\Ob$. 
 In addition, if such a function $u$ realizes the equality $u(a) - u(b) = \ell_{\mbf{v}}(a,b)$ at some $(a,b)\in\Ob^2$, then
 $u$ has be affine on $[a,b]$. Indeed, the equality above implies that $\ell_{\mbf{v}}(a,b)=c_{\mbf{v}}(a,b)$ and, by applying assertion (ii) of Proposition \ref{geo-segment}, we know that $\mbf{v}\cdot \tau^{a,b}$ is affine on $]a,b[$. Then the condition \eqref{cv2points} holding as an equality for any $(x_1,x_2)\in [a,b]^2$
 enforces $u$ to be affine as well.
 This observation can be useful when dealing with polygonal geodesic curves corresponding to some optimal $\mbf{v}$.
 \end{remark}
 
\begin{remark}\label{revis-2points}   If $w$ is Lipschitz as it was assumed in Section \ref{sec:optimembrane}
it holds that $\mbf{v}(x)=\{x-w(x)\}$ for all $x\in\Rd$ and  we recover the two-point condition \eqref{eq:two_point_condition} from   
\eqref{cv2points} whilst condition (iii) is  equivalent to \eqref{pointwise}.
\end{remark}

\begin{proof} \ First we show that $(i)\Rightarrow (ii)$: let $(u_n,w_n)\in \K$ be a sequence such that $u_n\to u$ in $C^0(\Ob)$ and $w_n \to w$ in $L^1(\O;\Rd)$. Then $u=0$ on $\Sigma_0$  and  $(u_n,w_n)$ satisfies the two-point conditions \eqref{eq:two_point_condition}. Setting $\mbf{v}_n = \ident - w_n$ we 
easily infer  that $(u_n,\mbf{v}_n)$ satisfies \eqref{cv2points} while $\mbf{v}_n \in \mbf{M}_\O$. Due to the homeomorphism between $\mbf{M}_\O$ and $\mathcal{B}_\O$ (see Lemma \ref{homeo}) we deduce the convergence $\mbf{v}_n \to \mbf{v}$ and $(u, \mbf{v})$ satisfies \eqref{cv2points} thanks to 
the upper semicontinuity of the map $\mbf{v} \mapsto c_\mbf{v}(x_1,x_2)$ (see the assertion (iii) in Theorem \ref{cv-dual}). 

Let us now prove the implication  $(ii)\Rightarrow (iii)$. We know that $v:=\mbf{i}(\mbf{v})$ belongs to $BV\cap L^\infty(\O;\Rd)$.
Let $\omega\Subset \O$ and fix $h\in\Rd$ such that $|h|<\delta:= {\rm dist}(\omega,\bO)$. As the set $\big\{x\in \omega: \mbf{v}(x)=\{v(x)\},\
\mbf{v}(x+h)=\{v(x+h)\} \big\}$ is of full Lebesgue measure in $\omega$, 
 the condition  \eqref{cv2points}  implies that:
\begin{equation} \label{ineqh}
\frac1{2} \big(u(x+h) -u(x)\big)^2 \ \le\ \pairing {v(x+h) - v(x), h}\ \qquad \text{for a.e. $x\in \omega$}.
\end{equation}
  By integrating over $\omega$ and with the help of Schwarz inequality, we deduce that  
$$ \frac1{2} \int_\omega \big(u(x+h) -u(x)\big)^2\, dx\ \le\ |h|^2\, \int_\omega \left|\frac{v(x+ h) - v(x)}{\abs{h}}\right|\, dx \ \le\ C \, |h|^2\ ,$$
where the constant $C$ is finite and depends only of  $\int_\O |Dv|$ (the variation of $v$ as an element of $BV(\O;\Rd)$).  
Since $ \omega$ and $h$ with $|h|<\delta$ can be chosen arbitrarily, the latter uniform upper bound entails that $u\in W^{1,2}(\O)$ (see for instance \cite[Proposition 9.3]{brezis} and subsequent Remark 6).
Now we may also multiply \eqref{ineqh} by a test function $\f\in \D(\O;\R_+)$ and integrate over $\O$ so that, setting  $h=\e \, z$ with $z\in S^{d-1}$ and dividing  by $\e^2$, we obtain:
\begin{equation} \label{ineq2eps}
 \frac1{2} \int_\O \left(\frac{u(x+\e z) -u(x)}{\e}\right)^2 \f(x)\, dx\ \le\  \int_\O \pairing {\frac{v(x+\e z) - v(x)}{\e}, z}\, \f(x) \, dx\ .
 \end{equation}
 As $u\in W^{1,2}(\O)$,  $\bigl(\frac{u(x+\e z) -u(x)}{\e}\bigr)$ remains bounded in $L^2(\O)$ and converges to $\pairing {\nabla u,z}$ therein. Besides we have:
 \begin{align*} \lim_{\e\to 0} \int_\O \pairing {\frac{v(x+\e z) - v(x)}{\e}, z}\, \f(x) \, dx&= \lim_{\e\to 0}  \int_\O \pairing {v(x), z} \left(\frac{\f(x-\e z)-\f(x)}{\e}\right)  dx \\
 &=  -\int_\O \pairing {v(x), z} \, \dive( z\f )\, dx  = \big \langle\pairing {Dv\, z, z}, \f \big \rangle.
\end{align*} 
 Hence, from \eqref{ineq2eps} we infer that
 $\frac1{2} \int_\O  | \pairing{\nabla u(x),z}|^2\, \f(x)\, dx \le  \big \langle\pairing {Dv\, z, z}, \f \big\rangle =  
 \big \langle\pairing {e(v)\, z, z}, \f \big\rangle.$
 The inequality \eqref{ineq-measure} follows by the arbitrariness of $z\in S^{d-1}$ and of $\f\in \D(\O;\R_+)$.

%
Eventually we turn to the implication $(iii) \Rightarrow (i)$. Let $w= {\rm id}-v$. Note that, as $v\in\mathcal{B}_\O $, $w$ vanishes a.e. in $\Rd\setminus \Ob$  and may have a jump on $\bO$ giving a contribution to the singular part $e_s(w)$ of the symmetric tensor measure $e(w)$.
 We need to construct a sequence of Lipschitz functions  $(u_\e,w_\e)\in \K$ converging to $(u,w)$ in $C^0(\Ob)\times L^1(\O;\Rd)$. 
 In fact, we will mostly use  the same arguments as in  the proof of Lemma \ref{density}. 
First we construct an extension $\tilde u$ of $u$ using the same method as in the Step 1 of the proof of Lemma \ref{density}. 
This extension belongs to $C^0\cap W^{1,2}_{\rm loc}(\Rd)$  whilst, due to \eqref{ineq-measure} and recalling the dedinition of $g$ in \eqref{def:g}, the pair $(\tilde u,  w)$ satisfies the inequality
\begin{equation}\label{ineq2uw}
g\big(\nabla \tilde u, \{e( w)\}\big) \le 1 \ \text{a.e. in $\Rd$}, \qquad e_s( w)\le 0\ \text{  in $\Mes(\Rd;\Sdd)$}.
\end{equation}
where $\{e(w)\}$ denotes the Lebesgue density of the absolutely continuous part  of  the measure $e(w)$ (in fact
$\{e(v)\}$ coincides with the symmetric part of the approximate gradient  $w$ which exists a.e., see for instance \cite[Thm 3.84]{ambrosio-fusco}).
Next we associate to the integrand $g$  the  convex l.s.c. and one homogeneous integrand $h:\R^d \times \Sdd\times \R \to [0,+\infty]$ defined by
$$ h(z,M,t) = \begin{cases} t\, \rho_+\left( \frac{z\otimes z}{2 t^2} + \frac{M}{t}\right)  & \text{if $t>0$}\\
\rho_+(M) &\text{if $t=0$ and $z=0$}\\
+\infty &\text{if $t<0$}
\end{cases} 
$$
that is $t g(z/t,M/t)$ for $t>0$  and its limit $g^\infty(z,M):=\lim_{t\to 0} t g(z/t,M/t)$ for $t=0$ (recession function of $g$).  
 Then we may rewrite \eqref{ineq2uw} as an inequality between scalar measures  as follows:
 \begin{equation}\label{ineq3uw}
 h\left(\nabla \tilde u\, \mathcal {L}^d, e(w), \mathcal {L}^d\right)  \le\   \mathcal {L}^d   \quad \text{in $\Mes_+(\Rd)$}.
\end{equation}
Indeed  this equivalence can be easily checked by noticing that the  measure in the left hand side (intended in the sense of Goffman-Serrin \cite{Goffman})  admits the Lebesgue-Nikodym decomposition
$$h\left(\nabla \tilde u\, \mathcal {L}^d, e(w), \mathcal {L}^d\right) = h\big(\nabla \tilde u, \{e(w)\}, 1\big) \mathcal {L}^d  + h\big(0, e_s(w),0\big)
= g\big(\nabla \tilde u, \{e( w)\}\big)\, \mathcal {L}^d + \rho_+\big(e_s(w)\big).$$
Assume for a moment that  $\spt(\tilde u) \Subset\R^d\backslash \Sigma_0$ while $\spt( w)\Subset \O$.
Then we consider $(u_\e, w_\e) :=({\tilde u} * \theta_\e, {w} * \theta_\e)$ for $\theta_\e(x)= \e^{-d} \theta\big(\frac{x}{\e}\big)$ being  a smooth convolution kernel (with  $\theta$  radial symmetric and  $\int \theta =1$). Then $u_\e=0$ on $\Sigma_0$ and $(u_\e,w_\e) \to (u,w)$ in  $C^0(\Ob)\times L^1(\O;\Rd)$  as $\e\to 0$. Moreover, denoting  $\xi:=\left(\nabla \tilde u\, \mathcal {L}^d, e(w), \mathcal {L}^d\right)$, we have
$ \xi * \theta_\e= \left(\nabla \tilde u_\e, e(w_\e), 1\right) \mathcal {L}^d$, hence   by 
applying Lemma \ref{mollih} and in virtue of \eqref{ineq3uw}, we infer  the following inequalies in $\Mes_+(\Rd)$:
		$$   g \big( \nabla u_\e, e(w_\e)\big) \, \mathcal{L}^d  \ \le\  g\big( \nabla \tilde u, \{e(w)\}\big) \, \mathcal{L}^d \, +
		 \, \rho_+ \big (e_s( w )\big) \  \le\       \mathcal{L}^d.   $$
 In particular we have   $g\big( \nabla u_\e, e(w_\e)\big)\le 1$  a.e. and by taking  the restriction of $(u_\e, w_\e)$ to $\Ob$
we obtain  the desired sequence in $\K$ converging to $(u,w)$.
Eventually the restriction on the support of $(u,w)$ can be dropped by repeating word for word the dilation argument used in the second step of the proof of Lemma \ref{density}.

\end{proof}

\subsection{Maximal Monge-Kantorovich metrics and saddle point formulation}\label{MK_sec}

All along this subsection we will assume that the load $f=f_+-f_-$ is a signed measure where $f_+, f_-\in\Mes_+(\Ob\setminus \Sigma_0)$. It will be sometimes useful to consider alternative  decompositions $f=\mu - \nu$ where $\mu, \nu\in\Mes_+(\Ob\setminus \Sigma_0)$ are not necessarily singular with respect to each other 
(in contrast with the case of the usual Jordan decomposition).
 Up to measures concentrated on $\Sigma_0$, such measures $\mu$ and $\nu$ will act as a source and target measures in the Monge-Kantorovich transport problem associated with the $c_{\mbf{v}}$-cost.
Let us fix  an element $\mbf{v}\in \mbf{M}_\O$ and define 
\begin{equation}\label{distMKv}
 W^{\Sigma_0}_{c_{\mbf{v}}}(\mu,\nu) := \inf \Big\{ W_{c_{\mbf{v}}}(\mu + \mu_0, \nu + \nu_0 )\ :\ \mu_0,\nu_0 \in \Mes_+(\Sigma_0)  \Big\} 
\end{equation}
%
where the Monge-Kantorovich $c_{\mbf{v}}$-distance between two elements $\rho_1,\rho_2\in\Mes_+(\Ob)$ is defined  by
\begin{equation*}
	W_{c_{\mbf{v}}}(\rho_1,\rho_2) := \inf \left\{\int_{\Ob\times\Ob} c_{\mbf{v}}(x,y) \,\gamma(dxdy)\ :\ \gamma \in \Gamma(\rho_1,\rho_2) \right\}.
\end{equation*}
Note that $W_{c_{\mbf{v}}}$ and $W^{\Sigma_0}_{c_{\mbf{v}}}$ are symetric and that $W_{c_{\mbf{v}}}(\rho_1,\rho_2)<+\infty$ if and only if $ \int \rho_1= \int \rho_2$. By using the lower semicontinuity of the map $(\rho_1,\rho_2) \mapsto W_{c_{\mbf{v}}}(\rho_1,\rho_2)$, one checks easily that the infimum
in \eqref{distMKv} is actually a minimum. 

\begin{remark}\label{MKdSigma} \  If $f$ is a probability on $\Ob\setminus \Sigma_0$, it is easy to check that 
 $$W^{\Sigma_0}_{c_{\mbf{v}}}(f,0)=\min \Big\{ W_{c_{\mbf{v}}}(f, g)\ :\ g \in \PP(\Sigma_0)  \Big\} = \int c_{\mbf{v}}(x,\Sigma_0)\, f(dx).$$  
Then we recover the Monge-Kantorovich $c_{\mbf{v}}$-distance between $f$ and
 $\Sigma_0$ (i.e. $W_{c_{\mbf{v}}}(f,\Sigma_0)$) in a similar way as in \eqref{def:W1-Sigma}, \eqref{dualMK}. 
In this sense \eqref{distMKv} generalizes the notion of distance to $\Sigma_0$ for a general signed measure {$f = \mu - \nu$}.  
\end{remark}


The next preliminary result is an adaptation of the Kantorovich-Rubinstein duality theorem  (see for instance \cite[Thm 1.14]{villani}) to our framework:

\begin{lemma}\label{Rubin}\ Let $\mu,\nu\in\Mes_+(\Ob\setminus\Sigma_0)$ and $\mbf{v}\in \mbf{M}_\O$. Then
\begin{equation}\label{MKv}
 W^{\Sigma_0}_{c_{\mbf{v}}}(\mu,\nu) =  \max_{ u\in C_{\Sigma_0}(\Ob)} \Big\{  \pairing{\mu-\nu,u} \ :\  u(x_1) - u(x_2)\  \le\  c_{\mbf{v}}(x_1,x_2) \quad \forall\, (x_1,x_2) \in \Ob \times \Ob\Big\}. 
\end{equation}
As a consequence $ W^{\Sigma_0}_{c_{\mbf{v}}}(\mu,\nu)= W^{\Sigma_0}_{c_{\mbf{v}}}(f_+,f_-)$ whenever $\mu-\nu=f$.
Furthermore, the infimum in \eqref{distMKv} is attained on the convex weakly* compact subset 
 \begin{equation}\label{gbound}
	\Mes(\mu,\nu;\Sigma_0) := \left\{ (\mu_0,\nu_0)\in \bigl( \Mes_+(\Sigma_0) \bigr)^2 \  :\ \text{$ \int (\mu + \mu_0)= \int (\nu + \nu_0),\ \ \int \nu_0 \le \int \mu ,\ \  \int \mu_0 \le \int \nu$}\right\}.
\end{equation} 
\end{lemma}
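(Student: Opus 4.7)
The plan is to prove first the Kantorovich--Rubinstein duality \eqref{MKv}, then to deduce the invariance under the choice of decomposition $\mu - \nu = f$, and finally to address the attainment of the infimum in \eqref{distMKv} on the convex weakly$^*$ compact set $\Mes(\mu,\nu;\Sigma_0)$.

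For \eqref{MKv}, the first step is to rewrite $W^{\Sigma_0}_{c_{\mbf{v}}}(\mu,\nu)$ as a relaxed Kantorovich problem with free marginals on $\Sigma_0$:
\[
W^{\Sigma_0}_{c_{\mbf{v}}}(\mu,\nu) = \inf\Bigl\{ \textstyle\int_{\Ob\times\Ob} c_{\mbf{v}}(x,y)\,\gamma(dxdy) :\, \gamma \in \Mes_+(\Ob\times\Ob),\ \pi_1\gamma \res (\Ob\!\setminus\!\Sigma_0) = \mu,\ \pi_2\gamma \res (\Ob\!\setminus\!\Sigma_0) = \nu \Bigr\},
\]
since in \eqref{distMKv} the measures $\mu_0,\nu_0$ are free in $\Mes_+(\Sigma_0)$. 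Applying the Fenchel--Rockafellar scheme to this linear programming problem, with the marginal constraints tested against $\phi,\psi \in C_{\Sigma_0}(\Ob)$, yields the dual
\[
\sup\Bigl\{ \textstyle\int \phi\,d\mu + \int \psi\,d\nu :\, \phi,\psi \in C_{\Sigma_0}(\Ob),\ \phi(x) + \psi(y) \le c_{\mbf{v}}(x,y)\ \forall (x,y)\in\Ob\times\Ob \Bigr\};
\]
no duality gap occurs thanks to the continuity of $c_{\mbf{v}}$ on the compact $\Ob \times \Ob$ (Theorem~\ref{cv-dual}). The usual $c$-conjugation for a continuous symmetric sub-additive pseudo-metric then reduces the optimization to $\psi = -\phi$ with $\phi$ being $c_{\mbf{v}}$-Lipschitz; the two Dirichlet-type constraints $\phi = \psi = 0$ on $\Sigma_0$ collapse to the single condition $\phi \in C_{\Sigma_0}(\Ob)$, and \eqref{MKv} follows. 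Attainment of the maximum is then obtained via Arzel\`a--Ascoli applied to the admissible family, which is uniformly bounded (since $u = 0$ on the non-empty $\Sigma_0$ and $c_{\mbf{v}}$ is bounded on $\Ob \times \Ob$) and equicontinuous (by the H\"older-type upper bound \eqref{bounds-c_v}). The asserted identity $W^{\Sigma_0}_{c_{\mbf{v}}}(\mu,\nu) = W^{\Sigma_0}_{c_{\mbf{v}}}(f_+,f_-)$ is then immediate, as the right-hand side of \eqref{MKv} depends on $(\mu,\nu)$ only through the signed measure $\mu - \nu = f = f_+ - f_-$.

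The attainment of the infimum \eqref{distMKv} on $\Mes(\mu,\nu;\Sigma_0)$ will proceed in three sub-steps. First, I would check that $\Mes(\mu,\nu;\Sigma_0)$ is non-empty (if, say, $\int\nu \ge \int\mu$, then $\mu_0 = (\int\nu - \int\mu)\delta_z$, $\nu_0 = 0$ for any $z\in\Sigma_0$ lies in it), convex, and weak$^*$ compact (the uniform mass bounds combined with Banach--Alaoglu on the compact $\Sigma_0$, the linear constraints passing to the weak$^*$ limit). Second, a plan-pruning argument shows that the infimum in \eqref{distMKv} over all admissible $(\mu_0,\nu_0)$ equals the infimum restricted to $\Mes(\mu,\nu;\Sigma_0)$: given any admissible $(\mu_0,\nu_0)$ and an optimal transport plan $\gamma$ for $W_{c_{\mbf{v}}}(\mu+\mu_0,\nu+\nu_0)$, decompose $\gamma = \gamma^{11}+\gamma^{12}+\gamma^{21}+\gamma^{22}$ according to whether source and target lie in $\Ob\setminus\Sigma_0$ or in $\Sigma_0$, and set $\tilde\gamma := \gamma - \gamma^{22}$; its new marginals on $\Sigma_0$ are $\mu_0' := \pi_1\gamma^{21}$ and $\nu_0':=\pi_2\gamma^{12}$, satisfying $\int\mu_0' \le \int\gamma^{11}+\int\gamma^{21}=\int\nu$ and $\int\nu_0' \le \int\gamma^{11}+\int\gamma^{12}=\int\mu$, so that $(\mu_0',\nu_0') \in \Mes(\mu,\nu;\Sigma_0)$ and $W_{c_{\mbf{v}}}(\mu+\mu_0',\nu+\nu_0') \le \int c_{\mbf{v}}\,d\tilde\gamma \le \int c_{\mbf{v}}\,d\gamma = W_{c_{\mbf{v}}}(\mu+\mu_0,\nu+\nu_0)$. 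Third, the attainment on the compact $\Mes(\mu,\nu;\Sigma_0)$ follows from the weak$^*$ lower semicontinuity of $(\mu_0,\nu_0)\mapsto W_{c_{\mbf{v}}}(\mu+\mu_0,\nu+\nu_0)$, a standard consequence of the continuity of $c_{\mbf{v}}$.

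The step I expect to be the most delicate is the no-duality-gap assertion underlying \eqref{MKv}: although Fenchel--Rockafellar is classical for optimal transport with continuous cost on compact spaces, the modification brought by allowing free marginals on $\Sigma_0$ (equivalently, restricting the dual potentials to vanish on $\Sigma_0$) must be verified cleanly, for instance via a primal perturbation argument checking continuity of the value function at the origin, or alternatively by reducing to a classical transport problem on the auxiliary compact metric space obtained by collapsing $\Sigma_0$ to a single point.
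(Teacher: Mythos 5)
Your overall strategy is sound, and the last part of your proof (plan-pruning on $\gamma$, reducing to $\Mes(\mu,\nu;\Sigma_0)$, weak$^*$ compactness, lower semicontinuity) is essentially identical to the paper's; the four-block decomposition $\gamma^{ij}$ and the removal of $\gamma^{22}$ is exactly the paper's $\tilde\gamma = \gamma - \gamma\mres\Sigma_0\times\Sigma_0$.

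For \eqref{MKv}, however, the paper takes a materially different and cleaner route. Rather than setting up a Fenchel--Rockafellar duality for the relaxed transport problem with free marginals on $\Sigma_0$, the paper first applies the \emph{classical} Kantorovich--Rubinstein duality for each fixed pair $(\mu_0,\nu_0)$ (so no new duality statement needs to be proved), expressing $W_{c_{\mbf{v}}}(\mu+\mu_0,\nu+\nu_0)$ as a supremum over a single convex compact set $\B$ of $c_{\mbf{v}}$-Lipschitz functions normalized by $u(x_0)=0$. It then invokes Ky-Fan's minimax theorem to commute the $\inf$ over $(\mu_0,\nu_0)\in(\Mes_+(\Sigma_0))^2$ with the $\sup$ over $u\in\B$; the inner infimum over $(\mu_0,\nu_0)$ is $-\infty$ unless $u\equiv 0$ on $\Sigma_0$, which produces the constraint $u\in C_{\Sigma_0}(\Ob)$ automatically. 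This avoids entirely the two steps you correctly flag as delicate. Where you buy a one-shot linear programming dual, the paper pays with one minimax application but gets to reuse off-the-shelf KR duality.

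Within your own route, the $c$-conjugation reduction (``the two Dirichlet-type constraints collapse to the single condition $\phi\in C_{\Sigma_0}(\Ob)$'') has a real gap as stated. Starting from an admissible pair $(\phi,\psi)$ and passing to $u:=\phi^{cc}$, one finds $u$ is $c_{\mbf{v}}$-Lipschitz with $u\ge\phi$ and $-u = u^c = \phi^c\ge\psi$; but $u\ge 0$ on $\Sigma_0$ rather than $u=0$ there, so $u\notin C_{\Sigma_0}(\Ob)$ in general. To fix this, set $d(x):=c_{\mbf{v}}(x,\Sigma_0)$ (which is $c_{\mbf{v}}$-Lipschitz and vanishes on $\Sigma_0$) and replace $u$ by $\hat u := \min(u,d)$: then $\hat u$ is still $c_{\mbf{v}}$-Lipschitz, $\hat u = 0$ on $\Sigma_0$, and, using that $\phi\le d$ (which follows from $\phi(x)+\psi(y)\le c_{\mbf{v}}(x,y)$ with $y\in\Sigma_0$ and $\psi=0$ there) as well as $\phi\le u$, one gets $\phi\le\hat u$ and $\psi\le -u\le -\hat u$, hence $\int\phi\,d\mu + \int\psi\,d\nu\le\pairing{\mu-\nu,\hat u}$. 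With this truncation the reduction goes through. Finally, the no-gap assertion for your modified primal also needs a full argument (the operator $\gamma\mapsto(\pi_1\gamma\mres(\Ob\setminus\Sigma_0),\pi_2\gamma\mres(\Ob\setminus\Sigma_0))$ does not fit the usual compact-space Kantorovich duality verbatim); your suggestion of collapsing $\Sigma_0$ to a point is a workable remedy, but it is additional work that the paper's Ky-Fan commutation simply avoids.
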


\begin{proof}\  Let $x_0\in \Sigma_0$. By the uniform H\"older estimate \eqref{lv<},\,\eqref{bounds-c_v} and Ascoli's theorem, the set
 $$ \B:=\Big\{u\in C^0(\Ob)\, :\ u(x_0)=0, \ \ u(x_1)-u(x_2)  \le  c_{\mbf{v}}(x_1,x_2) \ \ \forall\, (x_1,x_2) \in \Ob\times\Ob\Big\}$$
is a convex compact subset of  $C^0(\Ob)$.
Since the cost function $c_{\mbf{v}}$ is continuous and sub-additive, we know from Kantorovich-Rubinstein duality theorem 
that for every $(\mu_0,\nu_0)\in ( \Mes_+(\Sigma_0))^2$, the following equality holds:
$$  W_{c_{\mbf{v}}}(\mu+ \mu_0,\nu + \nu_0)\ =\ \sup_{ u\in \B} \Big\{ \pairing{(\mu +\mu_0)-(\nu+ \nu_0) , u} \Big\}. $$
By applying Ky-Fan's result (Theorem \ref{ky-Fan}) which allows switching the symbols $\inf$ and $\sup$, we are led to 
\begin{align*}
W^{\Sigma_0}_{c_{\mbf{v}}}(\mu,\nu) \ &=\ \inf_{(\mu_0,\nu_0)\in  (\Mes_+(\Sigma_0))^2} \, \sup_{ u\in \B}\  \pairing{(\mu+ \mu_0)-(\nu +\nu_0), u} \\
&\ = \sup_{ u\in \B}\, \left( \pairing{\mu-\nu,u} + \inf_{(\mu_0,\nu_0)\in (\Mes_+(\Sigma_0))^2} \pairing{\mu_0-\nu_0, u} \right)
\end{align*}
hence \eqref{MKv} by noticing that the infimum in the bottom line is finite if and only if $u=0$ on $\Sigma_0$.

As a consequence of \eqref{MKv}, we have $W^{\Sigma_0}_{c_{\mbf{v}}}(\mu,\nu)=W^{\Sigma_0}_{c_{\mbf{v}}} (f_+,f_-)$   whenever $\mu-\nu= f_+-f_-$.
It remains to show the last assertion of Lemma \ref{Rubin}. As $\Sigma_0$ is compact, it can be readily checked that
 $\Mes(\mu,\nu;\Sigma_0)$ is a weakly* convex compact subset of $(\Mes_+(\Sigma_0))^2$ thus, recalling \eqref{distMKv}, we have:
$$ W^{\Sigma_0}_{c_{\mbf{v}}}(\mu,\nu) \le   \min \Big\{ W_{c_{\mbf{v}}}(\mu + \mu_0, \nu + \nu_0 )\ :\ (\mu_0,\nu_0)\in \Mes(\mu,\nu;\Sigma_0) \Big\} \,$$
where the existence of a minimizer in the right side is ensured by the lower semicontinuity of the function $(\mu_0,\nu_0)\mapsto W_{c_{\mbf{v}}}(\mu + \mu_0, \nu + \nu_0 )$.
Consequently, our argument boils down to showing that the  inequality above is in fact an equality.  This will be indeed the case if for every $(\mu_0,\nu_0)$  we are able to construct measures $(\tilde{\mu}_0,\tilde{\nu}_0)\in \Mes(\mu,\nu;\Sigma_0)$ such that 
 $W_{c_{\mbf{v}}}(\mu + \tilde{\mu}_0, \nu + \tilde{\nu}_0 )\le W_{c_{\mbf{v}}}(\mu + \mu_0, \nu + \nu_0 )$. 
 To show this claim, we may assume that $W_{c_{\mbf{v}}}(\mu + \mu_0, \nu + \nu_0 )<+\infty$. Hence $\int (\mu + \mu_0)= \int (\nu + \nu_0)$ and 
there exists  an element $\gamma \in \Gamma(\mu + \mu_0, \nu + \nu_0 )$  such that $W_{c_{\mbf{v}}}(\mu + \mu_0, \nu + \nu_0 ) = 
 \int c_{\mbf{v}} \, d\gamma$.  Set $\tilde\gamma= \gamma - \gamma_0$ where $\gamma_0:= \gamma\mres \Sigma_0\times\Sigma_0$.
 Clearly the marginals of $\tilde\gamma$, further denoted by $\tilde{\mu}$ and $\tilde{\nu}$, satisfy 
 $\tilde{\mu}\mres \Ob\setminus\Sigma_0 =\mu$ and $\tilde{\nu}\mres \Ob\setminus\Sigma_0 =\nu$.  
 Let $\tilde{\nu}_0 = \tilde{\nu}\mres \Sigma_0$ and $\tilde \mu_0 = \tilde{\mu}\mres \Sigma_0$.
 Then it holds that $\tilde{\mu}=\mu + \tilde{\mu}_0$ and $\tilde{\nu}=\nu + \tilde{\nu}_0 $. Furthermore we have
 $ \int \tilde{\mu}_0 = \tilde{\mu}(\Sigma_0) = \gamma(\Sigma_0\times\Sigma_0^c) \le \int \nu $ while $\int \tilde{\nu}_0 = \tilde{\nu}(\Sigma_0) = \gamma(\Sigma_0^c\times\Sigma_0)\le \int \mu.$
 It follows that $(\tilde{\mu}_0,\tilde{\nu}_0)$ belongs to $\Mes(\mu,\nu;\Sigma_0)$, hence follows our claim since:
 $$  W_{c_{\mbf{v}}}(\mu + \tilde{\mu}_0, \nu + \tilde{\nu}_0 )\le \int c_{\mbf{v}} \, d\tilde\gamma\le \int c_{\mbf{v}}\, d\gamma \le W_{c_{\mbf{v}}}(\mu + \mu_0, \nu + \nu_0 ) .  $$
\end{proof}
 
An important consequence of Lemma \ref{Rubin} is that solving the dual problem $(\mathcal{P}^*)$ amounts to identifying a maximal monotone map $\mbf{v}$ for which the 
Monge-Kantorovich distance of $f$ to $\Sigma_0$ is maximal:

 \begin{theorem}\label{worsemetric}  Let $\Sigma_0\subset \bO$ be closed non empty and $f\in \Mes(\Ob\backslash \Sigma_0)$. Then
\begin{equation}\label{maximalMK}  I_0(f,\Sigma_0) = \max (\mathcal{P}^*) \ =\ \max\, \Big\{  W^{\Sigma_0}_{c_{\mbf{v}}}(f_+,f_-)\ : \  \mbf{v}\in \mbf{M}_\O \Big\}.
\end{equation}
\end{theorem}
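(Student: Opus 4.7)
The plan is to derive the equality by combining Theorem \ref{sol-rev-dual} (the geometric reformulation of the dual problem) with the Kantorovich--Rubinstein duality established in Lemma \ref{Rubin}. The first equality $I_0(f,\Sigma_0) = \max(\mathcal{P}^*)$ is part of the definition of $I_0$ together with the attainment statement from Lemma \ref{esti} and the discussion preceding it. So all the work is concentrated in the second equality.

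First I would rewrite the dual problem in its geometric form: by Theorem \ref{sol-rev-dual},
\begin{equation*}
I_0(f,\Sigma_0) \ =\ \max_{(u,\mbf{v})\in C_{\Sigma_0}(\Ob)\times \mbf{M}_\O}\Big\{ \pairing{f,u}\ :\ u(x_1) - u(x_2) \le c_{\mbf{v}}(x_1,x_2) \ \ \forall (x_1,x_2)\in \Ob\times\Ob\Big\}.
\end{equation*}
Since the supremum is attained, I would fix any optimal pair $(\bar u,\bar{\mbf{v}})$ and then split the maximization in two steps: first over $u$ with $\mbf{v}$ fixed, then over $\mbf{v}\in\mbf{M}_\O$. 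For the inner problem with fixed $\mbf{v}\in\mbf{M}_\O$, noting that $\pairing{f,u}=\pairing{f_+-f_-,u}$ and that $f_\pm \in \Mes_+(\Ob\setminus\Sigma_0)$, Lemma \ref{Rubin} applied to $(\mu,\nu)=(f_+,f_-)$ gives directly
\begin{equation*}
\sup_{u\in C_{\Sigma_0}(\Ob)} \Big\{ \pairing{f,u}\ :\  u(x_1) - u(x_2)\le c_{\mbf{v}}(x_1,x_2) \ \forall (x_1,x_2)\Big\}\ =\ W^{\Sigma_0}_{c_{\mbf{v}}}(f_+,f_-).
\end{equation*}

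Taking the supremum over $\mbf{v}\in \mbf{M}_\O$ and using the previous identity yields
\begin{equation*}
I_0(f,\Sigma_0)\ =\ \sup_{\mbf{v}\in \mbf{M}_\O} W^{\Sigma_0}_{c_{\mbf{v}}}(f_+,f_-),
\end{equation*}
so it only remains to upgrade the supremum to a maximum. Here I would use the optimal pair $(\bar u,\bar{\mbf{v}})$ from Theorem \ref{sol-rev-dual}: since $\bar u$ is an admissible competitor in the Kantorovich-Rubinstein supremum associated with $\bar{\mbf{v}}$, one has
\begin{equation*}
I_0(f,\Sigma_0)\ =\ \pairing{f,\bar u}\ \le\ W^{\Sigma_0}_{c_{\bar{\mbf{v}}}}(f_+,f_-)\ \le\ \sup_{\mbf{v}\in \mbf{M}_\O} W^{\Sigma_0}_{c_{\mbf{v}}}(f_+,f_-)\ =\ I_0(f,\Sigma_0),
\end{equation*}
forcing all inequalities to be equalities. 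Thus $\bar{\mbf{v}}$ realizes the maximum and \eqref{maximalMK} follows.

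No serious obstacle is expected: the argument is essentially a bookkeeping combination of the geometric dual reformulation (Theorem \ref{sol-rev-dual}) and the Kantorovich-Rubinstein identity for the cost $c_{\mbf{v}}$ (Lemma \ref{Rubin}). The only subtle point is the attainment of the outer maximum in $\mbf{v}$, which is not a priori granted by Lemma \ref{homeo} and assertion (iii) of Theorem \ref{cv-dual} alone (the map $\mbf{v}\mapsto W^{\Sigma_0}_{c_{\mbf{v}}}(f_+,f_-)$ is only upper semicontinuous, cf.\ Remark \ref{nocontinuous}), but it is supplied for free by the attainment in $(\mathcal{P}^*_{\rm geo})$ proved in Theorem \ref{sol-rev-dual}.
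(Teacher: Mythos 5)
Your proof is correct and follows essentially the same route as the paper: rewrite $(\mathcal{P}^*)$ in the geometric form of Theorem \ref{sol-rev-dual}, compute the inner supremum over $u$ for fixed $\mbf{v}$ via the Kantorovich--Rubinstein identity of Lemma \ref{Rubin} with $(\mu,\nu)=(f_+,f_-)$, and obtain attainment in $\mbf{v}$ from the already-proved attainment in $(\mathcal{P}^*_{\rm geo})$. The concluding chain of inequalities you write to extract the maximizer is precisely the argument the authors use.
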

\begin{proof}  The equality $\sup (\mathcal{P}^*)= \sup \left\{  W^{\Sigma_0}_{c_{\mbf{v}}}(f_+,f_-)\, :\, \mbf{v}\in \mbf{M}_\O\right\}$
follows from the formulation \eqref{revised-dual} where, for every  $\mbf{v}\in \mbf{M}_\O$, we compute the supremum with respect to $u$  by applying \eqref{MKv} for $\mu=f_+$ and $\nu=f_-$. Clearly if $(\bar u, \bar{ \mbf{v}})$ solves \eqref{revised-dual}, then $\bar {\mbf{v}}
$ satisfies
$W^{\Sigma_0}_{c_{\bar {\mbf{v}}}}(f_+,f_-)\ge W^{\Sigma_0}_{c_{\mbf{v}}}(f_+,f_-)$ for all $\mbf{v}\in \mbf{M}_\O$.

 \end{proof} 
As a corollary we now state  a saddle point formulation for an optimal pair $(\gamma,\mbf{v}) \in \Gamma(\mu,\nu;\Sigma_0) \times \mbf{M}_\O$
with respect to the Lagrangian $ L(\mbf{v}, \gamma) = \int_{\Ob\times\Ob} c_{\mbf{v}}\, d\gamma$ where
$$\Gamma(\mu,\nu;\Sigma_0) := \Big\{ \gamma \in \Mes_+(\Ob\times\Ob)\ :\ \exists (\mu_0,\nu_0)\in\Mes(\mu,\nu;\Sigma_0), \ \ \gamma\in \Gamma(\mu + \mu_0,\nu + \nu_0 ) \Big\}. $$

 \begin{corollary}\label{saddle1}  For every $\mu,\nu \in \Mes_+(\Ob\backslash \Sigma_0)$ such that $f=\mu-\nu$ there exists a pair $(\ov \gamma, \ov {\mbf{v}}) \in  \Gamma(\mu,\nu;\Sigma_0) \times \mbf{M}_\O$ solving the saddle point problem:
 \begin{equation}\label{saddle-gamma-v}
\int_{\Ob\times\Ob}  \, c_\mbf{v}\, d \ov\gamma \ \le\ \int_{\Ob\times\Ob}  \, c_\mbf{\ov v}\, d \ov\gamma \ \le\ \int_{\Ob\times\Ob}  \, c_\mbf{\ov v}\, d\gamma  \qquad \forall \, \gamma \in  \Gamma(\mu,\nu;\Sigma_0), \ \ \forall\, \mbf{v}\in \mbf{M}_\O .
\end{equation}
Furthermore, for any such an optimal pair  $(\ov \gamma, \ov {\mbf{v}})$, there exists an optimal potential $\ov u\in C_{\Sigma_0}(\Ob)$ such that $(\ov u, \ov {\mbf{v}})$ solves the dual problem \eqref{revised-dual} and satisfies the relations
\begin{equation}\label{extremality}
\ov u(y)- \ov u(x)  \le c_\mbf{\ov v} (x,y) \quad \forall\, (x,y)\in \Ob^2, \qquad \ov u(y)- \ov u(x)  = c_\mbf{\ov v} (x,y) \quad \ \text{for $\ov\gamma$-a.e. $(x,y)\in\Ob^2$}.
\end{equation}
\end{corollary}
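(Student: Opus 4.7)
The plan is to derive the corollary by applying a minimax theorem to the Lagrangian $L(\mbf{v},\gamma):=\int_{\Ob\times\Ob} c_\mbf{v}(x,y)\,\gamma(dxdy)$ on the product $\mbf{M}_\O\times\Gamma(\mu,\nu;\Sigma_0)$, and then invoking the Kantorovich--Rubinstein duality of Lemma \ref{Rubin} to construct the optimal potential $\bar u$.

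First I would verify the hypotheses of Ky Fan's theorem (Theorem \ref{ky-Fan}). The set $\mbf{M}_\O$ is convex and, by Lemma \ref{homeo}, compact metric, while $\Gamma(\mu,\nu;\Sigma_0)$ is convex and weakly* compact in $\Mes_+(\Ob\times\Ob)$: its elements have total mass uniformly bounded by $\int\mu+\int\nu$ thanks to the constraints in \eqref{gbound}, and it is weakly* closed since so is $\Mes(\mu,\nu;\Sigma_0)$ and since $\Sigma_0$ is closed. For fixed $\gamma$, the map $\mbf{v}\mapsto L(\mbf{v},\gamma)$ is concave and upper semicontinuous: pointwise concavity and upper semicontinuity of $\mbf{v}\mapsto c_\mbf{v}(x,y)$ come from assertion (iii) of Theorem \ref{cv-dual}, and passage to the integral relies on the uniform bound $c_\mbf{v}(x,y)\le\sqrt{2R\,\mathrm{diam}(\Omega)}$ on $\Ob\times\Ob$ furnished by \eqref{bounds-c_v}, which enables reverse Fatou. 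For fixed $\mbf{v}$, the map $\gamma\mapsto L(\mbf{v},\gamma)$ is linear and weakly* continuous since $c_\mbf{v}\in C^0(\Ob\times\Ob)$ by Theorem \ref{cv-dual}.

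Ky Fan's theorem then delivers a saddle point $(\bar{\mbf{v}},\bar\gamma)$, which is precisely the double inequality \eqref{saddle-gamma-v}. The saddle value is identified by chaining the equalities
\[
\max_{\mbf{v}}\min_{\gamma} L(\mbf{v},\gamma) \;=\; \max_{\mbf{v}} W^{\Sigma_0}_{c_\mbf{v}}(\mu,\nu) \;=\; I_0(f,\Sigma_0),
\]
where the first equality is Lemma \ref{Rubin} (which also ensures that the inner minimum is attained by a transport plan between $\mu+\bar\mu_0$ and $\nu+\bar\nu_0$ for some $(\bar\mu_0,\bar\nu_0)\in\Mes(\mu,\nu;\Sigma_0)$) and the second is Theorem \ref{worsemetric}. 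In particular $\bar{\mbf{v}}$ is a maximizer for $W^{\Sigma_0}_{c_\mbf{v}}(\mu,\nu)$.

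For the optimal potential, I would apply Lemma \ref{Rubin} to the cost $c_{\bar{\mbf{v}}}$: this yields $\bar u\in C_{\Sigma_0}(\Ob)$ satisfying the pointwise bound in \eqref{extremality} and $\langle f,\bar u\rangle=W^{\Sigma_0}_{c_{\bar{\mbf{v}}}}(\mu,\nu)=I_0(f,\Sigma_0)$, so that Theorem \ref{sol-rev-dual} gives that $(\bar u,\bar{\mbf{v}})$ solves \eqref{revised-dual}. The $\bar\gamma$-a.e.\ equality in \eqref{extremality} follows by standard complementary slackness: integrating the pointwise bound against $\bar\gamma$, whose marginals are $\mu+\bar\mu_0$ and $\nu+\bar\nu_0$, and using $\bar u|_{\Sigma_0}=0$, produces $\langle f,\bar u\rangle\le\int c_{\bar{\mbf{v}}}\,d\bar\gamma$; since both sides equal the saddle value $I_0(f,\Sigma_0)$ by the previous step, equality must hold $\bar\gamma$-a.e. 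The main obstacle I anticipate is the upper semicontinuity of $\mbf{v}\mapsto L(\mbf{v},\gamma)$, delicate because Remark \ref{nocontinuous} shows that $\mbf{v}\mapsto c_\mbf{v}(x,y)$ is only upper, not jointly, semicontinuous; it is the uniform boundedness of $c_\mbf{v}$ on the compact $\Ob\times\Ob$ that lets one promote the pointwise upper semicontinuity to its integral counterpart.
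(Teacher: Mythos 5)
Your proof follows essentially the same route as the paper's: apply Ky Fan's theorem (Theorem \ref{ky-Fan}) to the Lagrangian $L(\mbf{v},\gamma)=\int c_\mbf{v}\,d\gamma$ on the compact convex product $\mbf{M}_\O\times\Gamma(\mu,\nu;\Sigma_0)$, using assertion (iii) of Theorem \ref{cv-dual} with Fatou for upper semicontinuity in $\mbf{v}$ and continuity of $c_\mbf{v}$ for lower semicontinuity in $\gamma$, then identify the saddle value via Lemma \ref{Rubin} and Theorem \ref{worsemetric} and extract the potential $\ov u$ again from Lemma \ref{Rubin}, closing with complementary slackness. The argument is correct and matches the paper's; you merely spell out a couple of points (the mass bound ensuring compactness of $\Gamma(\mu,\nu;\Sigma_0)$, the uniform bound on $c_\mbf{v}$ justifying reverse Fatou) that the paper leaves implicit.
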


 \begin{remark}\label{optimaltriple} \ By applying  Corollary \ref{saddle1} with $\mu=f_+$ and $\nu=f_-$, we see that a pair $(\ov u, \ov {\mbf{v}})$ is a solution of the dual problem \eqref{revised-dual} if and only if there exists a suitable $\ov \gamma$ such that the triple $(\ov \gamma, \ov {\mbf{v}}, \ov u)$ 
  belongs to   $\Gamma(f_+,f_-;\Sigma_0) \times \mbf{M}_\O\times C_{\Sigma_0}(\Ob)$ and satisfies 
the two conditions \eqref{saddle-gamma-v} and \eqref{extremality}. 

 \end{remark}

 \begin{proof} \  As can be readily checked $\Gamma(\mu,\nu;\Sigma_0)$ is a convex compact subset of $\Mes_+(\Ob\times \Ob)$ endowed with the weak* topology. 
 Besides, from  Lemma \ref{homeo}  we know that the convex set $\mbf{M}_\O$ equipped with the distance $\mbf{h}$ is compact.
 The existence of a saddle point for \eqref{saddle-gamma-v} then follows from  the last assertion of Theorem \ref{ky-Fan}. Indeed,   
the Lagrangian $L(\mbf{v}, \gamma) := \int_{\Ob\times\Ob} c_{\mbf{v}}\, d\gamma$ is convex lower semicontinuous with respect to $\gamma\in \Gamma(\mu,\nu;\Sigma_0) $  and, as a consequence of combining assertion (iii) of Theorem \ref{cv-dual} with Fatou's lemma, it is  concave upper semicontinuous with respect to $\mbf{v}\in  \mbf{M}_\O$.

Let now $(\ov \gamma, \ov {\mbf{v}})$ satisfy \eqref{saddle-gamma-v}. Then, the left hand side inequality in \eqref{saddle-gamma-v} 
 implies that $W^{\Sigma_0}_{c_{\mbf{ v}}}(\mu ,\nu)\le \int_{\Ob^2}\, c_\mbf{\ov v} \, d \ov\gamma$ while the right hand side  implies that
 $\int_{\Ob^2}\, c_\mbf{\ov v} \, d \ov\gamma = W^{\Sigma_0}_{c_{\mbf{\ov v}}}(\mu ,\nu)$. Therefore $\mbf{\ov v}$ is a maximizer in  the right hand side of \eqref{maximalMK} and  we obtain the equalities 
\begin{equation}\label{optiv}
I_0(f,\Sigma_0) = \max \eqref{revised-dual} =  W^{\Sigma_0}_{c_{\mbf{\ov v}}}(\mu ,\nu) = \int_{\Ob^2}\, c_\mbf{\ov v} (x,y)\, \ov\gamma(dxdy) .
\end{equation}
Next, by applying Lemma \ref{Rubin} with $\mbf{v}=\mbf{\ov v}$, there exists an optimal potential $\ov u$ for \eqref{MKv}.  
Clearly the equalities in \eqref{optiv} imply that $(\ov u, \ov {\mbf{v}})$ solves \eqref{revised-dual}
 while we have 
 $$\int_{\Ob^2}\, c_\mbf{\ov v} (x,y)\, \ov\gamma(dxdy) = \pairing{f,\ov u} = \pairing{\mu - \nu,\ov u} =\int_{\Ob^2} (\ov u(y)- \ov u(x)) \, \ov\gamma(dxdy).$$
Since $\ov u$ stisfies  $\ov u(y)- \ov u(x) \le  c_\mbf{\ov v} (x,y)$ for all $(x,y)$, the  equality above is possible if and only if  $ \ov u$ satisfies the equality condition  in \eqref{extremality}. The last assertion of Corollary \ref{saddle1} follows.
  \end{proof}

  \subsection{Extended characterization of optimal truss solutions}
  
   As announced in Section \ref{opticond}, we can now derive the non-smooth extension of the optimality conditions given in Proposition \ref{OptiTwo}. 
 For every $\mbf{v}\in \mbf{M}_\O$ and  $(a,b)\in \R^d\times\R^d$, we define 
 \begin{equation}\label{def:ell+}
\ell_{\mbf{v}}^+(a,b):= \max \left\{  \sqrt{2\pairing{b' - a',b-a}}\ :\  a'\in \mbf{v}(a), \ b' \in \mbf{v}(b) \right\}. \end{equation}
Note that $\ell_{\mbf{v}}^+=\ell_{\mbf{v}}$ at those points where $\mbf{v}$  is single valued or regular enough. Moreover, for a boundary point $a\in\bO$, we may assign $a'=a$ in the maximum  \eqref{def:ell+}, namely (see the proof of Lemma \ref{masterineq} below):
 \begin{equation}\label{bO-ell+}
\frac1{2} \big(\ell_{\mbf{v}}^+(a,b)\big)^2 = \begin{cases} \max \big\{ \pairing{b'- a,b-a} : \ b' \in \mbf{v}(b) \big\}&  \text{if $(a,b)\in \bO\times\O$,}\\
|b-a|^2 & \text{if $(a,b)\in \bO\times\bO$.}
\end{cases}\end{equation}
The following technical result holds:
 \begin{lemma}\label{masterineq} Let us be given $\mbf{v}\in \mbf{M}_\O$ and $\Pi\in\Mes_+(\Ob\times\Ob)$ satisfying condition $(ii)$ in \eqref{defAtwo}.
 Then we have
\begin{equation}\label{magicineq}  \frac1{2} \int_{\Ob\times\Ob} \frac {\ell_{\mbf{v}}^2}{|x-y|}\, \Pi(dxdy) \le   \int_{\Ob\times\Ob} |x-y|\, \Pi(dxdy) ,
\end{equation}
 with an equality if only if $\ell_{\mbf{v}}= \ell_{\mbf{v}}^+$ \ $\Pi$-a.e. 
\end{lemma}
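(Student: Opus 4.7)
The plan is to reduce to a Lipschitz case, in which the statement follows by testing the admissibility condition $(ii)$ of \eqref{defAtwo} against the explicit test function $w=\ident-\mbf{v}$, and then to pass to the limit through an approximation of $\mbf{v}$ by Lipschitz elements of $\mbf{M}_\O$.

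First, I would treat the case in which $\mbf{v}$ is Lipschitz (hence single-valued). Setting $v := \mbf{v}$ and $w := \ident - v$, the map $w$ is Lipschitz on $\Rd$, vanishes on $\Rd \setminus \O$ and therefore satisfies $|w(x)| \le \Lip(w)\, d(x,\bO)$ on $\O$. Approximating $w$ by cutoffs $\phi_n w$ with $\phi_n \in C_c^\infty(\O)$ and $\phi_n\equiv 1$ on $\{d(\cdot,\bO) \ge 1/n\}$ produces a sequence in $C_0(\O;\Rd)$ that converges uniformly to $w$ in $\Ob$, so by dominated convergence against the finite measure $\Pi$ the admissibility condition $(ii)$ of \eqref{defAtwo} extends to $w$ itself, giving
\[
0 \ =\ \int \pairing{w(y)-w(x),\tau^{x,y}}\, \Pi(dxdy) \ =\ \int |x-y|\, \Pi(dxdy) \ -\ \int \frac{\pairing{v(y)-v(x),\, y-x}}{|x-y|}\, \Pi(dxdy).
\]
Since $\mbf{v}$ is single-valued one has $\pairing{v(y)-v(x),\, y-x} = \frac{1}{2}\ell_{\mbf{v}}^2(x,y) = \frac{1}{2}\bigl(\ell_{\mbf{v}}^+(x,y)\bigr)^2$, and the two sides of \eqref{magicineq} coincide.

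For a general $\mbf{v} \in \mbf{M}_\O$, the key point is to produce a Lipschitz approximation within $\mbf{M}_\O$ by means of the characterization of $\bK$. Setting $v:=\mbf{i}(\mbf{v}) \in \mathcal{B}_\O$ and $w:=\ident - v$, the inequality $e(v) \ge 0$ together with the fact that $w$ vanishes outside $\O$ is precisely condition $(iii)$ of Proposition \ref{explicitKbar} for the pair $(0,w)$, so that $(0,w)\in \bK$. The definition of $\bK$ as the closure of $\K$ in $C^0(\Ob)\times L^1(\O;\Rd)$ then yields a sequence $(u_n,w_n)\in \K$ with $w_n \to w$ in $L^1(\O;\Rd)$. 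Each $v_n := \ident - w_n$ is Lipschitz and belongs to $\mathcal{B}_\O$, since $w_n = 0$ on $\bO$ and $e(v_n) = \Ident - e(w_n) \ge \frac{1}{2}\,\nabla u_n \otimes \nabla u_n \ge 0$, whence the corresponding Lipschitz maps $\mbf{v}_n := \mbf{i}^{-1}(v_n) \in \mbf{M}_\O$ converge to $\mbf{v}$ in $\mbf{M}_\O$ by the homeomorphism property of Lemma \ref{homeo}. Applying the first step to each $\mbf{v}_n$ yields $\int |x-y|\,\Pi(dxdy) = \int \ell_{\mbf{v}_n}^2(x,y)/(2|x-y|)\,\Pi(dxdy)$, and the pointwise $\Gamma$-liminf inequality \eqref{gammaliminf} combined with the uniform bound $\ell_{\mbf{v}_n}^2(x,y)/|x-y| \le 2(R + |x-y|)$ given by Lemma \ref{mono1}(iii) let Fatou's lemma conclude
\[
\int \frac{\ell_{\mbf{v}}^2(x,y)}{2|x-y|}\,\Pi(dxdy) \ \le\ \liminf_n \int \frac{\ell_{\mbf{v}_n}^2(x,y)}{2|x-y|}\,\Pi(dxdy) \ =\ \int |x-y|\,\Pi(dxdy),
\]
which is \eqref{magicineq}.

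The implication $\ell_{\mbf{v}} = \ell_{\mbf{v}}^+$ $\Pi$-a.e.\ $\Rightarrow$ equality in \eqref{magicineq} then follows from dominated convergence: when the two extremes coincide, the value $\pairing{\hat y - \hat x,\, y-x}$ does not depend on the selection $(\hat x,\hat y) \in \mbf{v}(x)\times\mbf{v}(y)$, so that any cluster value of $\pairing{v_n(y)-v_n(x),y-x}$ equals $\frac{1}{2}\ell_{\mbf{v}}^2(x,y)$ and the identity of Step~1 passes to the limit. The converse direction, which is the most delicate point of the argument, requires realising $\ell_{\mbf{v}}^+$ as the limit along a second Lipschitz approximation tracking the upper selection in each $\mbf{v}(x)$: this would yield the companion identity $\int \bigl(\ell_{\mbf{v}}^+(x,y)\bigr)^2/(2|x-y|)\,\Pi(dxdy) = \int |x-y|\,\Pi(dxdy)$ which, combined with the pointwise bound $\ell_{\mbf{v}} \le \ell_{\mbf{v}}^+$ and the assumed equality in \eqref{magicineq}, forces $\ell_{\mbf{v}} = \ell_{\mbf{v}}^+$ $\Pi$-a.e.
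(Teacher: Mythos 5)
Your proof of the inequality \eqref{magicineq} is correct and follows a genuinely different route than the paper. The paper applies the density of $C_0(\O;\R^d)$ in $L^1_\eta$ (where $\eta$ is the sum of the marginals of $\Pi$) to pass the identity \eqref{magic=} to a canonical Borel selection of $\mbf{v}$ — specifically the Steiner center $v_0(x)=s_d(\mbf{v}(x))$ — and then reads off \eqref{magicineq} from the pointwise sandwich $\frac12\ell_{\mbf{v}}^2\le\pairing{v_0(y)-v_0(x),y-x}\le\frac12(\ell^+_{\mbf{v}})^2$. You instead regularize $\mbf{v}$ itself by a Lipschitz sequence $\mbf{v}_n\to\mbf{v}$ in $\mbf{M}_\O$ (obtained via $(0,\ident-\mbf{i}(\mbf{v}))\in\bK$ and the approximating sequence built into the definition of $\bK$), apply the admissibility test with $w_n\in\D(\O;\R^d)$ to each $\mbf{v}_n$, and pass to the limit with the $\Gamma$-liminf of Lemma \ref{homeo} and Fatou. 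Both arguments buy the inequality; the paper's selection argument has the advantage of producing a pointwise sandwich around an \emph{exact} identity, which it then exploits for the equality characterization.

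This is exactly where your argument develops a gap. The implication ``$\ell_{\mbf{v}}=\ell^+_{\mbf{v}}$ $\Pi$-a.e. $\Rightarrow$ equality'' goes through by dominated convergence as you describe (any cluster of $v_n(x)$ lies in $\mbf{v}(x)$ by the graph convergence, and the spread $\mbf{v}(x)\times\mbf{v}(y)\ni(\hat x,\hat y)\mapsto\pairing{\hat y-\hat x,y-x}$ collapses to a single value). But the converse implication, which is the content of Lemma \ref{masterineq}, cannot be obtained via a ``second Lipschitz approximation tracking the upper selection''. The maximizer $(\hat x,\hat y)$ defining $\ell^+_{\mbf{v}}(x,y)$ depends on the direction $y-x$, not on $x$ and $y$ separately, so there is no Borel \emph{selection} $v^+:\Ob\to\R^d$ of $\mbf{v}$ with $\pairing{v^+(y)-v^+(x),y-x}=\frac12(\ell^+_{\mbf{v}})^2(x,y)$ for all pairs. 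Without such a selection neither the density argument nor a Lipschitz approximation can produce the "companion identity" $\int(\ell^+_{\mbf{v}})^2/(2|x-y|)\,d\Pi=\int|x-y|\,d\Pi$ — and indeed that identity is false in general: in the one-dimensional example following Theorem~\ref{extendedOptiTwo} (with $\O=(-1,1)$, $\Pi=\delta_{(-1,0)}+\delta_{(0,1)}$) one computes $\int(\ell^+_{\mbf{v}})^2/(2|x-y|)\,d\Pi=3\ne 2=\int|x-y|\,d\Pi$. What makes the paper's argument succeed is precisely the point you are missing: the Steiner center $v_0(x)$ lies in the \emph{relative interior} of $\mbf{v}(x)$, so when equality in \eqref{magicineq} forces the linear form $(\hat x,\hat y)\mapsto\pairing{\hat y-\hat x,y-x}$ to attain its minimum at $(v_0(x),v_0(y))$ (a relative interior point of $\mbf{v}(x)\times\mbf{v}(y)$), that form must be constant on $\mbf{v}(x)\times\mbf{v}(y)$, hence $\ell_{\mbf{v}}(x,y)=\ell^+_{\mbf{v}}(x,y)$.
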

\begin{proof} \ If $\mbf{v}$ is smooth and single valued, then there exists  $w\in C_0(\Ob;\R^d)$  such that $\mbf{v}(x) =\{x-w(x)\}$
 and the assumption on $\Pi$ implies that 
\begin{equation}\label{magic=}
\int_{\Ob\times\Ob} 
\Big \langle v(y)-v(x), \frac{y-x}{|y-x|} \Big \rangle \, \Pi(dxdy)   =  \int_{\Ob\times\Ob} |x-y|\, \Pi(dxdy),
\end{equation}
 which, in view of \eqref{lv}, means that \eqref{magicineq} holds with an equality.
 Consider now a general $\mbf{v}\in \mbf{M}_\O$.
We observe that, if we let $\eta$ be the sum of the marginals of $\Pi$, the equality \eqref{magic=} can  be extended by density 
to all element $v$ in the closure of $C_0(\O;\R^d)$ in $L^1_\eta(\Ob;\R^d)$, in particular to  any bounded Borel function $v:\Ob\to \R^d$ such that $v=\ident$ \ $\eta$-a.e. in $\bO$. The idea consists now in choosing for $v$ a suitable $\eta$-measurable selection  of the  multifunction
$x\mapsto \mbf{v}(x)$. More precisely, let  $v_0: \Ob \to \R^d$ be  defined by:
$$ v_0(x) = s_d\big(\mbf{v}(x)\big) \quad \text{if $x\in \O$}, \qquad  v_0(x) =x   \quad \text{if $x\in \bO$}\,$$
where, for every convex compact subset $A\subset \R^d$, $s_d(A)$ stands for the Steiner center given by  
$$ s_d(A) :=\ \frac1{\omega_d} \int_{S^{d-1}} z \ \ind_A^*(z) \, \Ha^{d-1} (dz). $$ 
where $\omega_d$ denotes the volume of the unit ball $B^d$.
 It is well established that the map $A\mapsto s_d(A)$ is Lipschitz with respect to the Hausdorff distance. In addition, (see \cite[page 50]{Schneider} and \cite{Saint-Pierre})   $s_d(A)$ belongs to the relative interior of $A$ (note that  this includes the case of a singleton $A=\{a\}$ where  $s_d(A)=a$). It follows that the function $v_0$ defined above is Borel and 
satisfies $v_0(x)=x$ \ $\eta$-a.e. in $\bO$ and  $v_0(x)\in \mathrm{ri}(\mbf{v})(x)$ \ $\eta$-a.e. in $\O$.
  Therefore   $v_0$ satisfies the equality \eqref{magic=}  while $\Pi$-a.e.
 $  \frac1{2} \ell_{\mbf{v}}^2(x,y) \le \pairing{v_0(y)-v_0(x), x-y}\le  \frac1{2} (\ell_{\mbf{v}}^+)^2 (x,y).$
 By integrating with respect to $\Pi$ we are led to
 $$\frac1{2} \int_{\Ob\times\Ob} \frac {\ell_{\mbf{v}}^2}{|x-y|}\, \Pi(dxdy) \le   \int_{\Ob\times\Ob} |x-y|\, \Pi(dxdy) \le \frac1{2} \int_{\Ob\times\Ob} \frac {(\ell^+_{\mbf{v}})^2}{|x-y|}\, \Pi(dxdy).$$
The inequality \eqref{magicineq} follows and it becomes an equality provided that $\ell_{\mbf{v}}^+=\ell_{\mbf{v}}$ holds $\Pi$-a.e. 
Conversely, let us assume that in \eqref{magicineq} we have equality. Then, since the equality \eqref{magic=} holds for $v_0$, for $\Pi$-a.e. $(x,y)$ we obtain:
\begin{equation}\label{v0opti}
\pairing{v_0(x)-v_0(y), x-y} = \frac1{2} \,\ell_{\mbf{v}}^2(x,y) =  \min \Big\{ \pairing{y'-x', y-x}\, :\, x'\in \mbf{v}(x), \  y'\in \mbf{v}(y)  \Big \}.
\end{equation}
This means that the minimum of the linear form $(x',y') \mapsto \pairing{y'-x', y-x}$ on the convex compact subset $\mbf{v}(x)\times \mbf{v}(y)$
is reached at $\big(v_0(x),v_0(y)\big)$. In particular, for $\Pi$-a.e. $(x,y)\in \O^2$, we will have that $\big(v_0(x),v_0(y)\big) \in \mathrm{ri}(\mbf{v})(x)\times \mathrm{ri}(\mbf{v})(y)$
and this is not possible unless the linear form remains constant on $\mbf{v}(x)\times \mbf{v}(y)$. We thereby deduce that $\ell_{\mbf{v}}^+=\ell_{\mbf{v}}$ is satisfied  $\Pi$-a.e on $\O\times\O$. 
Let us now consider a pair $(x,y) \in \bO\times\O$ that satisfies \eqref{v0opti}.
By prescribing  $x'=x$ in the infimum defining $\ell_{\mbf{v}}$ we get  
$$ \frac1{2}\, \ell_{\mbf{v}}^2(x,y)=\pairing{x-v_0(y), x-y}  \ge  \min \Big\{ \pairing{ y' - x, y-x} \,:\,  y'\in \mbf{v}(y) \Big\}  \geq \frac1{2}\, \ell_{\mbf{v}}^2(x,y)
,$$
which proves that each inequality is an equality and, as a result, the affine function $y'\in \mbf{v}(y) \mapsto \pairing{ y' - x, y-x}$ is minimal at $v_0(y)\in \mathrm{ri}(\mbf{v})(y)$ thus it must be constant. It follows that  
$$\frac1{2} (\ell_{\mbf{v}})^2(x,y) = \max \Big\{ \pairing{ y' - x, y-x} \, : \,  y'\in \mbf{v}(y)   \Big\}= \frac1{2} (\ell^+_{\mbf{v}})^2(x,y), $$
where in the second equality we used \eqref{bO-ell+}.
If $(x,y) \in \bO\times\bO$, \eqref{bO-ell+} provides the equality $\frac1{2} (\ell^+_{\mbf{v}})^2(x,y) = |x-y|^2 $ while, owing to the first equality in \eqref{v0opti} and the fact that $v_0(x) = x$ on $\bO$,
we have $\frac1{2}(\ell_{\mbf{v}})^2(x,y)= |x-y|^2$.  

Eventually we are left to prove  \eqref{bO-ell+} while showing the inequality $\ge$ is the trivial part.  We recall that $\mbf{v}(x)=\{x\}$ if $x\notin \Ob$
while $\mbf{v}(x)\supset\{x\}$ if $x \in \bO$  (see (i) in Lemma \ref{mono1}). 
Let $(a,b)\in\bO\times\Ob$. By convexity of $\O$ we have $a_n = a + \frac1{n} (a-b)\notin \O$, thus $\mbf{v}(a_n)\supset\{a_n\}$  for every $n\in\N^*$. By the monotonicity property we deduce that, for every $a' \in \mbf{v}(a)$,  
$ 0\le n \pairing{a_n-a', a_n-a} = \frac1{n} |a-b|^2 + \pairing{ a-a', a-b}$
hence $\pairing{ a-a', b-a}\le 0$   by sending $n\to\infty$. 
Accordingly we get $ \pairing{ b'-a', b-a}\le \pairing{ b'-a, b-a}$ for every $a' \in \mbf{v}(a)$ and $b'\in \mbf{v}(b)$.  If in addition  $b\in\bO$,  we may chose $b'=b$ on the right hand side of the latter inequality. This proves the desired upper bound for
$\ell^+_{\mbf{v}}(a,b)$ hence  \eqref{bO-ell+}.

 \end{proof}
 
 \begin{theorem}\label{extendedOptiTwo}
	Let $(\pi,\Pi)$ be an element of $\Mes(\Ob\times\Ob;\R\times \R_+)$ given in the form $(\pi,\Pi)= (\alpha\Pi,\Pi)$ with $ \alpha \in L^1_\Pi$ and let $(u,\mbf{v})\in C_{\Sigma_0}(\Ob)\times \mbf{M}_\O$. Then the pairs $(\pi,\Pi)$ and $(u,\mbf{v})$ are optimal for, respectively, $(\mathscr{P})$ and \eqref{revised-dual} if and only the following conditions are satisfied:
	\begin{align}\label{OCtwo}
	\begin{cases}
	(i)&  (\alpha\Pi,\Pi) \in \Atwo ,\\
	(ii) &  u(y) - u(x) \leq \ell_{\mbf{v}}(x,y) \qquad  \forall (x,y) \in \Ob \times \Ob    ,\\
	(iii)& |u(y) - u(x)| = \ell_{\mbf{v}}(x,y) \qquad  \text{for } \Pi \text{-a.e. } (x,y) , \\
	(iv) & \alpha(x,y) =\frac{u(y)-u(x)}{|y-x|} \qquad \text{for } \Pi \text{-a.e. } (x,y) ,\\
	(v)& \ell_{\mbf{v}}(x,y)= \ell_{\mbf{v}}^+(x,y)\qquad \text{for } \Pi \text{-a.e. } (x,y).		\end{cases}
	\end{align}
	\end{theorem}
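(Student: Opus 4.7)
The plan is to mimic the proof of the smooth counterpart Proposition \ref{OptiTwo}, replacing the pointwise use of the convex set $\Ctwo$ by the non-smooth transport inequality from Lemma \ref{masterineq}, and invoking the no-gap equality $\inf (\mathscr{P}) = \max(\mathcal{P}_{\rm geo}^*)$ which is obtained by combining Theorem \ref{Z=I} with Theorem \ref{sol-rev-dual}. First, I would observe that (i) is exactly the admissibility of $(\alpha\Pi,\Pi)$ for $(\mathscr{P})$, while $u \in C_{\Sigma_0}(\Ob)$ together with (ii) gives admissibility for $(\mathcal{P}_{\rm geo}^*)$ (recall $c_{\mbf{v}} \le \ell_{\mbf{v}}$ and that the two constraints $u(x_1)-u(x_2) \le c_{\mbf{v}}$ and $u(x_1)-u(x_2) \le \ell_{\mbf{v}}$ are equivalent by assertion (ii) of Theorem \ref{cv-dual}).

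Next, from (i) and property (i) in \eqref{defAtwo} applied to the continuous test function $u$, I would write
\begin{equation*}
\pairing{f,u} \;=\; \int_{\Ob\times\Ob} \bigl(u(y)-u(x)\bigr)\,\alpha(x,y)\,\Pi(dxdy).
\end{equation*}
Combining the elementary Young inequality $\alpha\, t \le \frac{|x-y|}{2}\,\alpha^2 + \frac{t^2}{2|x-y|}$ (applied with $t = u(y)-u(x)$) together with (ii) and Lemma \ref{masterineq}, I would then chain
\begin{equation*}
\pairing{f,u} \;\le\; \int \frac{|x-y|}{2}\,\alpha^2\,\Pi \,+\, \int \frac{(u(y)-u(x))^2}{2|x-y|}\,\Pi \;\le\; \int \frac{|x-y|}{2}\,\alpha^2\,\Pi \,+\, \int \frac{\ell_{\mbf{v}}^2}{2|x-y|}\,\Pi \;\le\; \Jtwo(\alpha\Pi,\Pi),
\end{equation*}
where the last inequality uses the main estimate of Lemma \ref{masterineq} applied to $\Pi$ (legitimate since $\Pi$ satisfies property (ii) of \eqref{defAtwo} via (i)).

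Since $\inf(\mathscr{P}) = \max(\mathcal{P}_{\rm geo}^*)$, the pairs are simultaneously optimal if and only if each of the three inequalities above is an equality. I would then decode each equality: equality in Young forces $|x-y|\,\alpha(x,y) = u(y)-u(x)$ for $\Pi$-a.e. $(x,y)$, which is exactly (iv); equality in the second bound requires $(u(y)-u(x))^2 = \ell_{\mbf{v}}^2(x,y)$ $\Pi$-a.e., which is (iii); equality in Lemma \ref{masterineq} gives $\ell_{\mbf{v}} = \ell_{\mbf{v}}^+$ $\Pi$-a.e., which is (v). Conversely, if (i)--(v) hold, reading the chain backwards yields $\pairing{f,u} = \Jtwo(\alpha\Pi,\Pi)$, hence joint optimality.

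The main obstacle is the last equivalence: in the smooth case of Proposition \ref{OptiTwo} the map $\mbf{v}$ is single-valued, so $\ell_{\mbf{v}} = \ell_{\mbf{v}}^+$ holds trivially and condition (v) is automatic; here, because $\mbf{v}$ may be multi-valued on a $(d\!-\!1)$-rectifiable set possibly intersecting $\spt(\Pi)$, the transport cost can genuinely drop through the multi-valued region and one needs the delicate Steiner-center selection argument of Lemma \ref{masterineq} to convert the integral equality into the pointwise constraint $\ell_{\mbf{v}}^+ = \ell_{\mbf{v}}$ along the support of $\Pi$. Everything else is a direct, essentially algebraic, unpacking of the master chain of inequalities.
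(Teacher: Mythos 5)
Your proof is correct and follows essentially the same path as the paper's: admissibility from (i)--(ii), the weighted Young/Cauchy--Schwarz chain to get $\pairing{f,u}\le \Jtwo(\alpha\Pi,\Pi)$, Lemma \ref{masterineq} for the last integral bound, the no-gap equality from Theorems \ref{Z=I} and \ref{sol-rev-dual} to force all three inequalities to be equalities, and then reading off (iii), (iv), (v) as the pointwise ($\Pi$-a.e.) equality cases. Your variant of applying Young's inequality directly to $\alpha\,(u(y)-u(x))$ and then using $|u(y)-u(x)|\le \ell_{\mbf{v}}$ is marginally cleaner than the paper's intermediate chain $\alpha(u(y)-u(x))\le\alpha\,\ell_{\mbf{v}}$ (which tacitly presumes $\alpha\ge 0$), but this is a cosmetic rearrangement of the same argument, not a different route.
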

	 \begin{proof} Conditions $(i)$ and $(ii)$ are equivalent to the admissibility of $(\pi,\Pi)$ and $(u,\mbf{v})$ for  $(\mathscr{P})$ and \eqref{revised-dual} respectively. Therefore they are assumed to hold true along the proof. By using (ii) and Minkowski inequality, we derive the following  inequalities
 \begin{equation}\label{minko}
\a(x,y) \big( u(y)-u(x) \big)\, \le \, \a(x,y)\, \ell_{\mbf{v}}(x,y) \, \le\, \frac1{2}\, |x-y|\, \a^2(x,y)  + \frac1{2}\, \frac {\ell_{\mbf{v}}^2(x,y)}{|x-y|},
\end{equation}
which clearly  become equalities  if and if the equalities of conditions (iii) and (iv) are satisfied.
By integrating with respect to $\Pi$ and thanks to \eqref{magicineq} we infer that
 \begin{align*}\pairing{f,u}&
 = \int_{\Ob\times\Ob} \a(x,y) \big( u(y)-u(x)\big)\, \Pi(dxdy)\\
 &\le   \frac1{2} \int_{\Ob\times\Ob} |x-y|\, \a^2(x,y) \, \Pi(dxdy) \ + \  
 \frac1{2}  \int_{\Ob\times\Ob}  \frac {\ell_{\mbf{v}}^2(x,y)}{|x-y|}\, \Pi(dxdy)\\
 &\le   \int_{\Ob\times\Ob} |x-y| \left(1+ \frac{\a^2}{2}\right)\, \Pi(dxdy) = \Jtwo(\pi,\Pi).
\end{align*}
It follows that  the extremality condition $\pairing{f,u} = \Jtwo(\pi,\Pi)$ holds true if and only the inequalities above are equalities. 
In view of \eqref{minko} and of Lemma \ref{masterineq} this is equivalent to the triple of conditions $(iii),(iv),(v)$.
We conclude the proof by recalling that, in virtue of Theorem \ref{Z=I}, equality $\pairing{f,u} = \Jtwo(\pi,\Pi)$  characterizes  optimal 
 pairs $(\pi,\Pi)$ and $(u,\mbf{v})$ among the admissible ones.
 \end{proof}
 
 \begin{remark}\label{equirepartition}  \ From conditions $(iii),(iv),(v)$ and Lemma \ref{masterineq}, we get the equality
  \begin{equation}\label{equitwo}
\int  |x-y| \, d\Pi =  \int  |x-y| \, \frac{\alpha^2}{2} \, d\Pi  \quad \left(= \frac{Z_0}{2}\right)
\end{equation}
which  is in fact  an alternative form of the energy equi-repartition principle \eqref{equipart}. Note that \eqref{equitwo} can be recovered  
from the minimality at $t=1$ of $t\mapsto \Jtwo(\alpha \Pi, t \Pi)$ (where $(\alpha \Pi, t \Pi)\in \Atwo$ for all $t>0$).	
	\end{remark}  	
\begin{remark} \ Recall that condition $(ii)$  can be written equivalently with  $\ell_{\mbf{v}}(x,y)$ replaced by $c_{\mbf{v}}(x,y)$ while
condition  $(iv)$ implies that $\ell_{\mbf{v}}(x,y)=c_{\mbf{v}}(x,y)\ \ \Pi \text{-a.e.}$
On the other hand the regularity condition $(v)$ cannot be dropped.	Indeed, let us consider the following one dimensional example where $\O=(-1,1)$, $\Sigma_0=\{-1\}$ and $f=\sqrt{2} \, \delta_0$.
Then, with $ \Pi= \delta_{(-1,0)}+ \delta_{(0,1)}$ and $ \pi=\sqrt{2}\, \delta_{(-1,0)}$, we obtain an admissible pair $(\pi,\Pi)\in\Atwo$.
Next we define a maximal monotone map $\mbf{v}$ by setting 
$$  \mbf{v}(x)= \{x\} \quad \text{ if $-1\leq x < 0$}, \qquad \mbf{v}(0)= [0,1], \qquad \mbf{v}(x)= \{1\} \quad \text{ if $0<x\le 1$}.$$
and an element of $u\in C_{\Sigma_0}(\Ob)$ by setting $u(x) = \sqrt{2}\, (1+ x\wedge 0)$.
It can be easily checked that all the conditions required in Theorem \ref{extendedOptiTwo} are satisfied except  $(v)$ since we have
$\ell_{\mbf{v}}(-1,0)=\a(-1,0)= \sqrt{2}$ and  $\ell_{\mbf{v}}(0,1)=\a(0,1) =0$ while $\ell_{\mbf{v}}^+(-1,0) = 2$ and $\ell_{\mbf{v}}^+(0,1) = \sqrt{2}$. Then\
$   \int_{\Ob\times\Ob} |x-y|\, d\Pi  =
 \int_{\Ob\times\Ob} |x-y|\, \a^2\, d\Pi=2 .$
Accordingly we get a
 a duality gap since $ \pairing{f,u} = 2  <  \Jtwo(\pi,\Pi)=3$.
 The interested reader can check easily that $I_0(f,\Sigma_0) = 2 \sqrt{2}$ and optimality is reached by taking instead\
 $  \Pi=\frac{\sqrt{2}}{2} \left(\delta_{(-1,0)}+ \delta_{(0,1)}\right)$ , $ \pi=\sqrt{2}\, \delta_{(-1,0)}$,
while, for $x\in [-1,1]$,  $ u(x) = 2 + 2 (x\wedge 0)$,\  $\mbf{v}(x)=\{ 1 + 2 (x\wedge 0)\}$.

 \end{remark}
  
 As a consequence of  Theorem \ref{extendedOptiTwo},  from a solution to $(\mathscr{P})$
  we may recover a  saddle point solution as given in Corollary \ref{saddle1}:
\begin{corollary}\label{saddlePi}
	 Let  $(\ov\pi,\ov\Pi)$ and $(\ov u,\ov {\mbf{v}})$ be optimal pairs for, respectively $(\mathscr{P})$ and \eqref{revised-dual}.
	 Then the pair
$\bigl(\ov{\mbf{v}},\ov{\gamma} \bigr)$ with $\ov{\gamma}=\abs{\ov{\pi}} $
solves \eqref{saddle-gamma-v} with $\mu$ and $\nu$  being the traces on $\Ob\setminus\Sigma_0$ of  the left and right marginal of $\abs{\ov{\pi}}$ respectively. 
\end{corollary}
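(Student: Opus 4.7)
The plan is to verify the two saddle-point inequalities in \eqref{saddle-gamma-v} after first establishing that $\ov\gamma \in \Gamma(\mu,\nu;\Sigma_0)$. As a preliminary normalization, I would invoke Remark \ref{pi>0} to assume without loss of generality that $\ov\alpha \ge 0$, so that $\ov\gamma = \ov\pi = \ov\alpha\,\ov\Pi$, and I would discard any mass of $\ov\Pi$ concentrated on the diagonal (which contributes trivially both to $\Jtwo$ and to any transport cost). Under these normalizations, condition $(iv)$ of Theorem \ref{extendedOptiTwo} applied on $\Sigma_0 \times \Sigma_0$ (where $\ov u$ vanishes identically) forces $\ov\alpha = 0$, hence $\ov\gamma(\Sigma_0 \times \Sigma_0) = 0$. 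The bounds in \eqref{gbound} are then automatically satisfied by setting $\mu_0 := \mu_1 \mres \Sigma_0$ and $\nu_0 := \mu_2 \mres \Sigma_0$, showing $\ov\gamma \in \Gamma(\mu,\nu;\Sigma_0)$. Moreover, condition $(i)$ of Theorem \ref{extendedOptiTwo} together with $\ov\alpha \ge 0$ yields the crucial identity $\nu - \mu = f$ on $\Ob \setminus \Sigma_0$.

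Next, I would compute $\int c_{\ov{\mbf{v}}}\,d\ov\gamma = I_0(f,\Sigma_0)$. Combining conditions $(ii)$, $(iii)$ of Theorem \ref{extendedOptiTwo} with the sandwich $\ov u(y) - \ov u(x) \le c_{\ov{\mbf{v}}}(x,y) \le \ell_{\ov{\mbf{v}}}(x,y)$ (the first from admissibility of $\ov u$ in \eqref{revised-dual}, the second from the definition of $c_{\mbf{v}}$ in Theorem \ref{cv-dual}) yields $c_{\ov{\mbf{v}}}(x,y) = \ell_{\ov{\mbf{v}}}(x,y) = |\ov u(y) - \ov u(x)|$ for $\ov\Pi$-a.e. $(x,y)$. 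Since the signs of $\ov\alpha$ and $\ov u(y) - \ov u(x)$ agree by condition $(iv)$, this produces $c_{\ov{\mbf{v}}}\,\ov\alpha = (\ov u(y)-\ov u(x))\,\ov\alpha$ on $\spt \ov\Pi$, and therefore
\begin{equation*}
\int_{\Ob\times\Ob} c_{\ov{\mbf{v}}}\, d\ov\gamma \,=\, \int_{\Ob\times\Ob} \bigl(\ov u(y)-\ov u(x)\bigr)\, d\ov\pi \,=\, \pairing{f,\ov u} \,=\, I_0(f,\Sigma_0).
\end{equation*}

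The heart of the proof is the left saddle inequality $\int c_{\mbf{v}}\,d\ov\gamma \le I_0(f,\Sigma_0)$ for arbitrary $\mbf{v} \in \mbf{M}_\O$. My plan is to bound $c_{\mbf{v}} \le \ell_{\mbf{v}}$ and then apply Young's inequality $\ell_{\mbf{v}}\,\ov\alpha \le \tfrac{1}{2}\tfrac{\ell_{\mbf{v}}^2}{|x-y|} + \tfrac{1}{2}\ov\alpha^2 |x-y|$ pointwise on $\Obsq$, before integrating against $\ov\Pi$; Lemma \ref{masterineq} (which is applicable because $\ov\Pi$ satisfies condition $(ii)$ of \eqref{defAtwo}) then controls the first term by $\int |x-y|\,d\ov\Pi$, giving
\begin{equation*}
\int c_{\mbf{v}}\,d\ov\gamma \,\le\, \int_{\Ob\times\Ob}\Bigl(1+\tfrac{\ov\alpha^2}{2}\Bigr)|x-y|\,d\ov\Pi \,=\, \Jtwo(\ov\pi,\ov\Pi) \,=\, I_0(f,\Sigma_0).
\end{equation*}
The right inequality $\int c_{\ov{\mbf{v}}}\,d\gamma \ge I_0(f,\Sigma_0)$ for every $\gamma \in \Gamma(\mu,\nu;\Sigma_0)$ follows from Lemma \ref{Rubin}, which identifies the infimum of $\int c_{\ov{\mbf{v}}}\,d\gamma$ over this class with $W^{\Sigma_0}_{c_{\ov{\mbf{v}}}}(\mu,\nu) = W^{\Sigma_0}_{c_{\ov{\mbf{v}}}}(f^+,f^-)$ (using $\nu - \mu = f$ on $\Ob\setminus\Sigma_0$ together with the symmetry of $W^{\Sigma_0}_{c_{\ov{\mbf{v}}}}$), combined with the fact that $\ov{\mbf{v}}$ attains the maximum in formulation \eqref{maximalMK} of Theorem \ref{worsemetric}. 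I anticipate that the main subtlety will be the admissibility and sign-normalization step, since the constraints $\int \mu_0 \le \int \nu$ and $\int \nu_0 \le \int \mu$ in \eqref{gbound} demand that no transport mass of $\ov\gamma$ be wasted on $\Sigma_0 \times \Sigma_0$, which in turn relies on combining Remark \ref{pi>0} with the vanishing of $\ov u$ on $\Sigma_0$ and condition $(iv)$ of Theorem \ref{extendedOptiTwo}.
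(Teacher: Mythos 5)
Your proposal is correct and follows essentially the same route as the paper's own proof: you use the extended optimality conditions of Theorem \ref{extendedOptiTwo} to establish that $c_{\ov{\mbf{v}}}=\ell_{\ov{\mbf{v}}}$ holds $\ov\Pi$-a.e., you invoke Young's inequality together with Lemma \ref{masterineq} to get the concavity-side inequality \eqref{saddle<}, and you invoke Lemma \ref{Rubin} with the maximality principle of Theorem \ref{worsemetric} for the convexity-side inequality \eqref{saddle>}. These are precisely the ingredients the paper uses.

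A few points are worth flagging. First, your computation of the saddle value $\int c_{\ov{\mbf{v}}}\,d\ov\gamma = \pairing{f,\ov u} = I_0(f,\Sigma_0)$, which passes through condition (i) of $\Atwo$, is a slightly more direct derivation than the paper's, which instead invokes the equi-repartition identity \eqref{equitwo}; both are valid and give the same number $Z_0$. Second, your preliminary normalization $\ov\alpha\ge 0$ via Remark \ref{pi>0} is actually not quite ``without loss of generality'': the reflection $\ov\Pi\mapsto\tilde\Pi$ on $\{\ov\alpha<0\}$ preserves $\Jtwo$ and the induced measures $(\lambda_\pi,\sigma_\Pi)$, but it \emph{does} change the left and right marginals of $|\ov\pi|$, hence the specific $\mu,\nu$ the corollary singles out. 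Strictly speaking, if $\ov\alpha$ changes sign then $\nu-\mu\ne f$, which would obstruct the application of Lemma \ref{Rubin} in the right inequality; so the normalization is in fact a \emph{necessary} step for the stated $\mu,\nu$ to satisfy $\mu-\nu=\pm f$, rather than a harmless convenience. The paper leaves this implicit, and you are right to flag it, but one should phrase it as ``replace $(\ov\pi,\ov\Pi)$ by the (also optimal) sign-normalized pair $(\tilde\pi,\tilde\Pi)$ and apply the corollary to that pair'' rather than as a strict WLOG. Third, your verification of $\ov\gamma\in\Gamma(\mu,\nu;\Sigma_0)$ (via $\ov\alpha=0$ on $\Sigma_0\times\Sigma_0$ from condition (iv), hence $\ov\gamma(\Sigma_0\times\Sigma_0)=0$, hence the mass bounds in \eqref{gbound}) is a point the paper does not spell out explicitly; this is a welcome addition, and it is correct.
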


\begin{proof}\ We will utilize the optimality conditions $(iii), (iv), (v)$ given in \eqref{OCtwo} (Theorem \ref{extendedOptiTwo}). In particular, we will use the fact that $c_\mbf{\ov v}(x,y)=\ell_\mbf{\ov v}(x,y)= |\ov \a| \, |x-y|$ for $\Pi$-a.e. $(x,y) \in \Ob\times\Ob$ while 
$\ov{\gamma}=\abs{\ov{\pi}} = |\ov\a| \, \ov\Pi$. Thus, taking \eqref{equitwo} into account, we obtain
\begin{equation}\label{saddle=}  \int_{\Ob\times\Ob}  \, c_\mbf{\ov v}\, d \ov\gamma  = \int_{\Ob\times\Ob}  \, \ell_\mbf{\ov v}\ |\ov \a|\,  d \ov\Pi =
\int_{\Ob\times\Ob}  \, \frac{\ov\a^2}{ |x-y|} \,  d \ov\Pi = Z_0 .\end{equation}
On the other hand,  by exploiting \eqref{magicineq} and Minkowski's inequality, for every $\mbf{v}\in \mbf{M}_\O$ we have:
\begin{equation} \label{saddle<}\int_{\Ob\times\Ob}  \, c_\mbf{ v}\, d \ov\gamma  \le \int_{\Ob\times\Ob}  \, \ell_\mbf{v}\ |\ov \a|\,  d \ov\Pi 
 \le  \int_{\Ob\times\Ob} \frac{\left(\ell_\mbf{v}\right)^2}{2|x-y|} \,d \ov\Pi +\int_{\Ob\times\Ob} 
|x-y| \, \frac{\ov \a^2}{2} \, d \ov\Pi \le\  \Jtwo\big(\ov\a\, \ov\Pi,\ov\Pi \big)\ =\ Z_0.
\end{equation}
Eventually, by employing the principle of maximal Monge distance \eqref{maximalMK}, we infer that for every $\gamma\in  \Gamma(\mu,\nu;\Sigma_0)$:
\begin{equation}\label{saddle>}  \int_{\Ob\times\Ob}  \, c_\mbf{\ov v}\, d \gamma\ \ge\   W^{\Sigma_0}_{c_{\mbf{v}}}(\mu,\nu)
\ =\ W^{\Sigma_0}_{c_{\mbf{v}}}(f_+,f_-)\ =\ Z_0\ .
\end{equation}
In view of \eqref{saddle=}, \eqref{saddle<} and \eqref{saddle>}, we deduce that $(\ov{\mbf{v}},\ov{\gamma})$ is a saddle point. 
\end{proof}

 \subsection{Finitely supported loads: a route to existence of truss solutions}
 
 Despite the lack of solutions to problem $(\mathscr{P})$ in general (see Remark \ref{notruss}), we strongly believe 
 that such solutions do exist  in the case of a finitely supported load. Our argument rests upon an extension property for
 $c_{\mbf{v}}$ pseudo-metrics that we shall propose here as a conjecture. 

\subsubsection*{Extension property}  Let us be given a monotone multi-function $\mbf{v}_0$ whose domain $D_0:=\mathrm{dom} (v_0)$ contains $\R^d\setminus\Ob$.
Then  $\mbf{v}_0$ admits at least one maximal monotone extension and any such  extension is defined over whole $\R^d$, see \cite{alberti1999}. 
With $\mbf{v}_0$ we may associate the sub-additive function $c_{\mbf{v}_0}: D_0\times D_0 \to \R_+$ given by 
$$c_{\mbf{v}_0}(a,b) :=  \inf\left\{ \sum_{i=1}^{N-1}  \ell_{\mbf{v}_0}(x_i,x_{i+1})\ :\ x_1=a, \ x_N=b, \ \ N\ge 2, \ \ \{x_i\} \subset D_0 \right\}$$
where $\ell_{\mbf{v}_0}$ is defined as in \eqref{lv}. 
We will say that the monotone map $\mbf{v}_0$ of domain $D_0$  has the extension property if there exits a maximal monotone map $\mbf{v}$ of domain $\R^d$ such that $  \mbf{v}\supset \mbf{v}_0$ and  $c_{\mbf{v}}= c_{\mbf{v}_0}$ in  $D_0\times D_0$. 

 \begin{conjecture}  \label{finiteconj} Let $S$ be a finite subset of $\O$ and $D_0=S \cup (\R^d\setminus \Ob)$. Then any monotone map 
 $\mbf{v}_0:D_0\to\R^d$  such that $\mbf{v}_0=\ident$ in $\R^d\setminus\Ob$ 
 has the extension property.
 
 \end{conjecture}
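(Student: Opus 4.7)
The plan rests on splitting the equality $c_{\mbf{v}} = c_{\mbf{v}_0}$ on $D_0\times D_0$ into two opposite inequalities, one of which comes for free while the other is the heart of the matter.

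First I would observe that \emph{any} maximal monotone extension $\mbf{v} \supset \mbf{v}_0$ satisfies $c_{\mbf{v}}(a,b) \le c_{\mbf{v}_0}(a,b)$ for every $(a,b)\in D_0\times D_0$. Indeed, every chain $a = x_1,\ldots,x_N = b$ with $x_i\in D_0$ competing in the infimum defining $c_{\mbf{v}_0}(a,b)$ is also a chain in $\R^d$ competing in $c_{\mbf{v}}(a,b)$, and at each node $x_i\in D_0$ the inclusion $\mbf{v}(x_i)\supset\{\mbf{v}_0(x_i)\}$ enlarges the feasible set in \eqref{lv}, hence $\ell_{\mbf{v}}(x_i,x_{i+1})\le \ell_{\mbf{v}_0}(x_i,x_{i+1})$. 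So the entire difficulty is to construct an extension $\bar{\mbf{v}}$ realizing the reverse inequality $c_{\bar{\mbf{v}}}(a,b) \ge c_{\mbf{v}_0}(a,b)$.

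Next I would exploit the variational structure recorded in Section~\ref{subsec:monotone}. By Lemma~\ref{homeo} the set $\mbf{M}_\Omega^{\mbf{v}_0}:= \bigl\{\mbf{v}\in\mbf{M}_\Omega : \mbf{v}\supset \mbf{v}_0\bigr\}$ is a closed, hence compact, subset of $\mbf{M}_\Omega$. By Theorem~\ref{cv-dual}(iii) each map $\mbf{v}\mapsto c_{\mbf{v}}(a,b)$ is upper semicontinuous and concave. Since $S$ is finite and $c_{\mbf{v}}(x,y)=\sqrt{2}|x-y|$ is independent of $\mbf{v}$ whenever $(x,y)\in \bigl(\R^d\setminus \Ob\bigr)^2$, the relevant pairs are only those in $S\times (S\cup\bO)$, a finite collection after one selects a dense finite set on $\bO$ for approximation purposes. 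Taking a convex combination of individual maximizers, concavity yields a single $\bar{\mbf{v}}\in \mbf{M}_\Omega^{\mbf{v}_0}$ that simultaneously maximizes $c_{\mbf{v}}(a,b)$ over all pairs $(a,b)\in S\times S$, and a density argument extends this to every $(a,b)\in D_0\times D_0$.

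The crux is then to show that this ``best extension'' actually achieves $c_{\bar{\mbf{v}}}(a,b) = c_{\mbf{v}_0}(a,b)$. I would attempt this via a constructive route: triangulate $\Ob$ with nodes $S\cup P$, where $P\subset \bO$ is a carefully chosen finite set, and let $v$ be the continuous piecewise affine map with $v(s_i)=\mbf{v}_0(s_i)$ on $S$ and $v=\ident$ on $P$, extended by the identity outside $\Ob$. Monotonicity of the resulting multifunction $\mbf{v}$ is equivalent to $e(v)\ge 0$ on each simplex, which, given the data $\mbf{v}_0$, constrains the triangulation (think of a Delaunay-type compatibility condition). For such a $\mbf{v}$, Proposition~\ref{geo-segment}(iii) and Remark~\ref{Finsler} identify $c_{\mbf{v}}$ with the Finsler geodesic distance for $\varphi_{\mbf{v}}(x,z)=\sqrt{2\,\pairing{e(v)(x)\, z,z}}$, and the polyhedral structure permits a ``straightening'' argument: any length-minimizing curve between $a,b\in D_0$ can be perturbed into a polygonal path whose breakpoints lie in $S\cup P$, giving cost equal to $c_{\mbf{v}_0}(a,b)$.

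The main obstacle is the interplay between two conflicting requirements in this construction: the triangulation must be fine enough for the piecewise affine interpolant to be globally monotone (which forces certain compatibility conditions between the values $\mbf{v}_0(s_i)$ and the geometry of the simplices), and yet must also ensure that no geodesic ``escapes'' $D_0$ through interior nodes of $P$. Controlling both conditions simultaneously is delicate; one might proceed by an iterative refinement (whenever a geodesic visits a non-$D_0$ point, add it to $P$ and re-triangulate) and seek a compactness/limit argument producing $\bar{\mbf{v}}$ as the limit, invoking the $\Gamma$-convergence property \eqref{Gamma-lv}. This is the precise reason the statement is left as a conjecture rather than a theorem.
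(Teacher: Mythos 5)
The statement you are attempting is a conjecture that the paper explicitly leaves open, so there is no argument of the authors to compare yours against; I will instead flag where your reasoning does not go through. The easy half is correct: for any maximal monotone extension $\mbf{v}\supset\mbf{v}_0$, every $D_0$-chain competing in $c_{\mbf{v}_0}(a,b)$ also competes in $c_\mbf{v}(a,b)$, and since $\mbf{v}(x_i)\supset\{\mbf{v}_0(x_i)\}$ at each node of the chain one has $\ell_\mbf{v}\le\ell_{\mbf{v}_0}$ on $D_0\times D_0$, whence $c_\mbf{v}\le c_{\mbf{v}_0}$; the whole content of the conjecture is the reverse inequality for a well-chosen extension. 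However, the step that is meant to produce a simultaneous maximizer does not work: if $\mbf{v}_1$ maximizes $c_\cdot(a_1,b_1)$ and $\mbf{v}_2$ maximizes $c_\cdot(a_2,b_2)$, concavity of $\mbf{v}\mapsto c_\mbf{v}(a_i,b_i)$ only gives $c_{\frac{1}{2}(\mbf{v}_1+\mbf{v}_2)}(a_i,b_i)\ge\frac{1}{2}\bigl(c_{\mbf{v}_1}(a_i,b_i)+c_{\mbf{v}_2}(a_i,b_i)\bigr)$, which is not $\ge c_{\mbf{v}_i}(a_i,b_i)$. Producing a common maximizer for all the pairs is essentially what the conjecture asserts, so this cannot be assumed. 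A separate error: the claim that $c_\mbf{v}(x,y)=\sqrt{2}\,|x-y|$ is independent of $\mbf{v}$ for $(x,y)\in(\R^d\setminus\Ob)^2$ holds for $\ell_\mbf{v}$, not for $c_\mbf{v}$. A minimizing chain between $x$ and $y$ may enter $\Ob$, and when the segment $[x,y]$ crosses $\O$ and $\mbf{v}$ is tangentially flat there one obtains $c_\mbf{v}(x,y)<\sqrt{2}\,|x-y|$ and dependent on $\mbf{v}$ (for instance $\mbf{v}$ single-valued constant inside $\O$ gives arbitrarily cheap crossings); symmetrically $c_{\mbf{v}_0}(x,y)$ can also be smaller than $\sqrt{2}\,|x-y|$ because a $D_0$-chain may pass through $S$, so these pairs are not negligible.

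You do isolate the real obstruction correctly: one would need a piecewise affine, globally monotone interpolant whose $c_\mbf{v}$-geodesics between points of $D_0$ never ``leak'' through the extra triangulation nodes (or do so at zero cost), and the monotonicity and geodesic-confinement requirements pull in opposite directions under refinement. Your iterative-refinement/\,$\Gamma$-limit sketch does not close this gap. The $\Gamma$-convergence \eqref{Gamma-lv} controls $\ell_{\mbf{v}_n}$, but the evaluation map $\mbf{v}\mapsto c_\mbf{v}(a,b)$ is only upper semicontinuous on $\mbf{M}_\O$, and Remark~\ref{nocontinuous} exhibits a sequence where $c_{\mbf{v}_n}(0,1)\to 0<1=c_\mbf{v}(0,1)$; so along your approximating sequence one would still need to prove $c_{\mbf{v}_n}(a,b)\to c_{\mbf{v}_0}(a,b)$, which is again the missing piece. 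In short, the proposal correctly frames the problem and the ``free'' inequality, but both the selection of a single extension and the constructive step remain unproved, which is why the statement is left as a conjecture.
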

 
 \begin{proposition}\label{optitruss} Assume that  $\Sigma_0=\bO$ and that $S:=\spt(f)$ is a finite subset of $\O$. Then, if Conjecture \ref{finiteconj}
 holds  true, there exists a   solution  $(\pi,\Pi)$  to $(\mathscr{P})$ that satisfies 
 $$ \spt(\Pi) \subset (S\times \bO) \cup  (S\times S \setminus\Delta).$$
 
 \end{proposition}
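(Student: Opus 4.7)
The plan is to reduce the existence of a truss solution to the extension property by starting from a saddle-point triple furnished by Corollary \ref{saddle1}, rerouting the optimal transport through the finite set $S$ via Conjecture \ref{finiteconj}, and reading off the measures $(\pi,\Pi)$ from the resulting polygonal geodesics.

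First, I would apply Corollary \ref{saddle1} with $\mu = f_+$, $\nu = f_-$ and $\Sigma_0 = \bO$ to produce an optimal triple $(\ov u, \ov{\mbf{v}}, \ov\gamma) \in C_{\bO}(\Ob) \times \mbf{M}_\O \times \Gamma(f_+, f_-; \bO)$ satisfying the extremality relations \eqref{extremality}. Both marginals of $\ov\gamma$ being supported in $S \cup \bO$, one has $\spt\ov\gamma \subset (S \cup \bO)^2$. Setting $D_0 := S \cup (\R^d \setminus \Ob)$ and $\mbf{v}_0 := \ov{\mbf{v}}|_{D_0}$, Conjecture \ref{finiteconj} then yields an extension $\widetilde{\mbf{v}} \in \mbf{M}_\O$ with $c_{\widetilde{\mbf{v}}} = c_{\mbf{v}_0}$ on $D_0 \times D_0$. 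Since $c_{\mbf{v}_0} \geq c_{\ov{\mbf{v}}}$ there (the infimum defining $c_{\mbf{v}_0}$ runs over a smaller class of polygonal paths), the $c_{\widetilde{\mbf{v}}}$-envelope
\begin{equation*}
	\widetilde u(y) := \min_{z \in S \cup \bO} \bigl\{\ov u(z) + c_{\widetilde{\mbf{v}}}(z,y)\bigr\}
\end{equation*}
defines an element of $C_{\bO}(\Ob)$ satisfying $\widetilde u(y) - \widetilde u(x) \leq c_{\widetilde{\mbf{v}}}(x,y)$ for all $(x,y)$ and agreeing with $\ov u$ on $S \cup \bO$ (using $c_{\widetilde{\mbf{v}}} \geq c_{\ov{\mbf{v}}}$ on $D_0$, extended by continuity to $\bO$); hence $\pairing{f,\widetilde u} = \pairing{f,\ov u} = I_0(f,\Sigma_0)$, so by Theorem \ref{worsemetric} the pair $(\widetilde u, \widetilde{\mbf{v}})$ is optimal for \eqref{revised-dual}. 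Reapplying Corollary \ref{saddle1} produces an optimal plan $\widetilde\gamma$ supported in $(S \cup \bO)^2$ with $\widetilde u(y) - \widetilde u(x) = c_{\widetilde{\mbf{v}}}(x,y) = c_{\mbf{v}_0}(x,y)$ for $\widetilde\gamma$-a.e.\ $(x,y)$; the optimality of $\widetilde\gamma$ moreover permits assuming $\widetilde\gamma(\bO \times \bO) = 0$, since any boundary-to-boundary transport contributes zero cost and can be eliminated.

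Second, for each $(x,y) \in \spt\widetilde\gamma$ with $x \neq y$, the identity $c_{\widetilde{\mbf{v}}}(x,y) = c_{\mbf{v}_0}(x,y)$ realizes the cost as an infimum over polygonal paths with vertices in $D_0$. Finiteness of $S$, together with $\mbf{v}_0 = \ident$ outside $\Ob$ which makes consecutive boundary hops collapse to a single Euclidean segment, forces this infimum to be attained by a path whose vertices lie in $S \cup \bO$ and whose interior vertices lie in $S$. A Borel selection provides a $\widetilde\gamma$-measurable assignment $(x,y) \mapsto \{x_0(x,y),\dots,x_{N(x,y)}(x,y)\}$, and by assertion (ii) of Proposition \ref{geo-segment} each edge $[x_i,x_{i+1}]$ supports an affine scalar map $\widetilde{\mbf{v}}_{x_i,x_{i+1}}$, giving $\ell_{\widetilde{\mbf{v}}}(x_i,x_{i+1}) = c_{\widetilde{\mbf{v}}}(x_i,x_{i+1}) = \widetilde u(x_{i+1}) - \widetilde u(x_i)$. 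I would then define
\begin{equation*}
	\Pi := \int \sum_{i=0}^{N(x,y)-1} \delta_{(x_i(x,y),\,x_{i+1}(x,y))} \,\widetilde\gamma(dx\,dy), \qquad \pi := \alpha\,\Pi, \qquad \alpha(a,b) := \frac{\widetilde u(b)-\widetilde u(a)}{|b-a|}.
\end{equation*}
Since no edge has both endpoints in $\bO$ (by the interior-vertices-in-$S$ property and by $\widetilde\gamma(\bO\times\bO)=0$), and since Remark \ref{pi>0} allows reorienting pairs, one obtains $\spt\Pi \subset (S \times \bO) \cup (S \times S \setminus \Delta)$. Admissibility condition (i) of Theorem \ref{extendedOptiTwo} follows, via telescoping along each polygonal path, from the marginal conditions on $\widetilde\gamma$; condition (ii) is the admissibility of $(\widetilde u,\widetilde{\mbf{v}})$ for \eqref{revised-dual}; conditions (iii)--(iv) are the edge-wise identities above.

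The main obstacle will be condition (v) of Theorem \ref{extendedOptiTwo}, namely $\ell_{\widetilde{\mbf{v}}} = \ell^+_{\widetilde{\mbf{v}}}$ on $\Pi$-a.e.\ edge. This pertains to the multi-valued part of $\widetilde{\mbf{v}}$ at the endpoints $x_i,x_{i+1}$, which Conjecture \ref{finiteconj} in its stated form does not regulate; it would be addressed either by strengthening the conjecture to produce a ``minimal'' extension, or by replacing $\widetilde{\mbf{v}}$ along each selected edge by the piecewise-affine interpolation of its tangential values at $x_i,x_{i+1}$, a modification that preserves monotonicity, leaves the relevant costs $c_{\widetilde{\mbf{v}}}(x_i,x_{i+1})$ invariant, and enforces $\ell = \ell^+$ along the edge.
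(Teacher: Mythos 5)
Your proposal takes a genuinely different, \emph{constructive} route: start from a saddle-point triple $(\ov u, \ov{\mbf{v}}, \ov\gamma)$, replace $\ov{\mbf{v}}$ by a conjectured extension, decompose the resulting plan $\widetilde\gamma$ into polygonal geodesics, and read off $(\pi,\Pi)$. The paper instead works purely at the level of optimal \emph{values}: since $S$ is finite, the compact $K := (S\times\bO)\cup(S\times S\setminus\Delta)$ is disjoint from $\Delta$, so Proposition \ref{key}(ii) gives both the existence of a minimizer for the restricted problem $(\mathscr{P}_K)$ and the no-gap identity $\min(\mathscr{P}_K) = -h_K(0,0)$. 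The entire remaining burden is to show $\min(\mathscr{P}_K)\le\inf(\mathscr{P})$, and for this the paper takes an \emph{arbitrary} competitor $(u,v)\in\mathscr{B}_K$ (not an optimal one), restricts $v$ to $D_0 = S\cup(\R^d\setminus\Ob)$, invokes Conjecture \ref{finiteconj} to extend it, builds $\tilde u$ by the same $c$-envelope you use, and concludes $\pairing{f,u}=\pairing{f,\tilde u}\le I_0(f,\bO)=\inf(\mathscr{P})$. Taking the supremum over $\mathscr{B}_K$ closes the argument. A solution of $(\mathscr{P}_K)$ is then automatically a solution of $(\mathscr{P})$ with the required support. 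Crucially, the paper never needs to exhibit an optimal $(\pi,\Pi)$ or check the pointwise optimality conditions of Theorem \ref{extendedOptiTwo}.

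The gap you flag at condition (v) of Theorem \ref{extendedOptiTwo} is a real one, and it is precisely what makes your route substantially harder. The requirement $\ell_{\widetilde{\mbf{v}}}=\ell^+_{\widetilde{\mbf{v}}}$ $\Pi$-a.e.\ controls the multivalued part of $\widetilde{\mbf{v}}$ at the edge endpoints, and Conjecture \ref{finiteconj} as stated says nothing about it. Your proposed repair — replacing $\widetilde{\mbf{v}}$ along each selected edge by a directionally affine interpolation — is not established: making a monotone map tangentially affine on one segment can break the two-point monotonicity inequality against \emph{other} pairs of points, so it is not clear the modification stays in $\mathbf{M}_\O$, nor that it can be done simultaneously for all the (possibly overlapping, uncountably many) edges produced by the Borel selection. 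There is a second, smaller issue you should make explicit: the definition $\pi := \alpha\,\Pi$ with $\alpha = (\widetilde u(b)-\widetilde u(a))/|b-a|$ is not \emph{a priori} admissible; you must separately verify the balance condition (\ref{defAtwo})(i), which requires identifying your $\pi$ with the edge-decomposition of $-\widetilde\gamma$ and using the telescoping identity $\sum_i(\widetilde u(x_{i+1})-\widetilde u(x_i)) = \widetilde u(y)-\widetilde u(x) = c_{\widetilde{\mbf{v}}}(x,y)$; it follows, but it is a nontrivial step that should not be folded into ``telescoping along the path''. In short, the construction is instructive and could in principle yield a more explicit solution, but as it stands it proves less than the paper's value-level argument, which needs only the extension property and the duality machinery already in place.
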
 
 
 \begin{proof} Since $S$ is finite, the set $K:= \big(S\times \bO \big) \cup  \big((S\times S) \setminus\Delta\big)$ is closed and at a positive distance from the diagonal $\Delta$. We may then apply the assertion (ii) of Proposition \ref{key} obtaining the existence of an optimal solution
 for the problem  \eqref{PK}.
 Clearly we have $\min (\mathscr{P}_K) \ge \inf (\mathscr{P})$. To show the converse inequality, we use the duality relation 
 \begin{equation}\label{nogapK} \min (\mathscr{P}_K)=- h_K^{**}(0,0) =- h_K(0,0) = \sup \Big\{  \pairing{f,u} \ :\ (u,v) \in \mathscr{B}_K \Big\},
 \end{equation}
 where $h_K$ is given by \eqref{def:h} while, after the change of variable $v= \ident-w$, the admissible set associated with $h_K(0,0)$
 becomes
 \begin{equation*}
 \mathscr{B}_K :=\Big\{ 
 (u,v)\in \big({\rm Lip}(\Ob)\big)^{d+1}\ : \  u=0 \ \text{on $\bO$},\ v=\ident \ \text{on $\bO$}, \ \ 
		 u(x) -u(y) \le  \ell_v(x,y)\quad 
\forall (x,y)\in K 
		 \Big\}.
		\end{equation*}
Let us set $D_0:= S \cup \R^d\setminus \Ob$ and let $(u,v)$ be an element of $\mathscr{B}_K$ that we implicitly extend by setting $u(x)=0,\ v(x)=x$
for $x\in \R^d\setminus\Ob$. 
 Then, denoting by $\mbf{v}_0: D_0 \to \Rd$ the monotone map defined by ${\mbf{v}_0} =\{v(x)\}$, we obviously obtain
$ u(x)-u(y) \le \ell_{{\mbf{v}_0}}(x,y) $ for all $(x,y)\in D_0\times D_0$. Indeed the condition $ u(x) -u(y) \le  \ell_v(x,y)$ holding for $(x,y)\in K$
extends to $D_0\times D_0$ by the symmetry of  $\ell_v$ and the fact that $u=0$ on $\R^d\setminus\O$.  Owing to the definition of $c_{{\mbf{v}_0}}$ given above (and the fact that $\ell_{{\mbf{v}_0}}= \ell_{{v}}$ on $D_0\times D_0$),  it is then straightforward that $ u(x)-u(y) \le c_{{\mbf{v}_0}}(x,y) $ for all $(x,y)\in D_0\times D_0$.
Following the conjecture such a ${\mbf{v}_0}$ enjoys the extension property, thus there exist  a maximal monotone extension $\tilde{\mbf{v}}\in \mbf{M}_\O$
such that $c_{\tilde{\mbf{v}}}= c_{{\mbf{v}_0}}$ in  $D_0\times D_0$. Then we define $\tilde{u}: \O \to \R$ by
$$  \tilde{u} (x) := \inf \Big\{  u(y) + c_{\tilde{\mbf{v}}}(x,y)\ :\ y\in D_0 \Big\}.$$
By construction  $\tilde{u}$  satisfies $\tilde{u}(x) - \tilde{u}(y) \le c_{\tilde{\mbf{v}}}(x,y)$ for all $(x,y)\in \Ob\times\Ob$
while  $\tilde{u}=u$ on $D_0$. It follows that  $\tilde{u}\in C_0(\O)$ and that $\pairing{f,\tilde{u}} =\pairing{f,u}$.
The new pair $(\tilde{u}, \tilde{\mbf{v}})$ is therefore
admissible for the dual problem \eqref{revised-dual}. Accordingly, thanks to Theorem \ref{Z=I}, we infer that 
$$  \inf (\mathscr{P}) =I_0(f,\bO) \ge   \pairing{f,u}.$$
The latter inequality being true for all $(u, v)\in \mathscr{B}_K$, we may pass to the supremum with the help of \eqref{nogapK} thus concluding
 with the desired
inequality $ \inf (\mathscr{P}) \ge  \sup \big\{  \pairing{f,u}: (u,v) \in \mathscr{B}_K \big\} =\min (\mathscr{P}_K)$.

 \end{proof}


\section{Approximation by finite truss structures and numerical simulations}
\label{sec:numerics}

In virtue of Theorem \ref{Z=I}, we know that solutions $(\lambda,\sigma)$ to $(\mathcal{P})$ can be reached as weak* limits of sequences of the kind $(\lambda_{\pi_n},\sigma_{\Pi_n})$ for $(\pi_n,\Pi_n)$ being  a minimizing sequence for the problem $(\mathscr{P})$ 
(recall that such measures $(\lambda_{\pi_n},\sigma_{\Pi_n})$ represent truss-like membrane structures which are composed of families of straight strings). This legitimates employing a two-point numerical scheme treatment of  problem $(\mathscr{P})$ where  we narrow the search down to finite trusses spanned by a fixed finite grid populating $\Ob$. Accordingly, we will be using the discrete variant of the dual problem $(\mathcal{P}^*)$ where the two-point condition \eqref{eq:two_point_condition} is set only for pairs of points in the grid.

For simplicity  we will assume from now on that $\O$ is  a  square domain in $\R^2$, namely we take  $\O$ to be the unit square $Q=(-1/2, 1/2)^2$.
 A priori the load will be any signed measure $f \in \Mes(\Ob;\R)$. For any mesh parameter $h>0$, we consider a  finite  grid of regularly spaced nodes:
\begin{equation*}
\mathrm{X}_h = \Ob \cap \left\{  (k_1 h,k_2 h) : (k_1,k_2) \in \mathbbm{Z}^2  \right\}.
\end{equation*}
By a suitable choice of $h$, we may assume that every vertex of $\Omega$ is an element of $\mathrm{X}_h$. 
In the sequel  $f_h \in \Mes(\mathrm{X}_h;\R)$ denotes a discrete measure that approximates $f$
in the sense of  tight convergence as $h \to 0$. A natural choice  is
$f_h = \sum_{x\in \mathrm{X}_h}  f\big(Q_h(x)\big)\, \delta_x$ where $Q_h(x)=x+ hQ$.
Then we set the discretized problem $({\mathscr{P}}_h)$ to match with \eqref{PK} in Proposition \ref{key} while taking $K = K_h := \mathrm{X}_h \times \mathrm{X}_h \setminus \Delta$. If we agree that the set $\mathscr{A}_{K_h}$ of admissible
 measures $(\pi,\Pi)$ is defined for the discrete load $f_h$ in the same way as in \eqref{defAtwo}, it is easy to check that $\Atwo_{K_h}$ is non-empty whenever $\Sigma_0 \cap \mathrm{X}_h \neq \varnothing$. The dual discrete problem $(\mathcal{P}^*_h)$ is recast as $-h_{K_h}(0,0)$ according to  \eqref{def:h}. Since $K_h \cap \Delta = \varnothing$, by applying the assertion (ii) of Proposition \ref{key} we obtain the zero-gap equality $\inf ({\mathscr{P}}_h) = \sup (\mathcal{P}^*_h)$ and the existence of a solution for problem $({\mathscr{P}}_h)$. Following \cite{bolbotowski2021}, the problems $({\mathscr{P}}_h)$ and  $(\mathcal{P}^*_h)$ are  handled as a pair of \textit{conic quadratic programs} \cite{andersen2003} that we implement in $\text{MATLAB}^{\tiny{\textregistered}}$ with the use of the $\text{MOSEK}^{\tiny{\textregistered}}$  toolbox. 

Along the forthcoming examples, we will display the numerical solution $(\pi,\Pi)$ to $(\mathscr{P})$ for  which 
$\pi\le 0$ (see Remark \ref{pi>0}). With regard to the solution $(u,w)$ to $(\mathcal{P}^*)$, we will often limit ourselves to display  the scalar function $u$
(interpolated from its values on $\mathrm{X}_h$).

\begin{example}[\textbf{Four point forces}]
\label{ex:four_forces}

For the Dirichlet zone  $\Sigma_0=\bO$ the load $f = \sum_{i=1}^4 \delta_{x_i}$ consists of four symmetrically spaced point forces: $x_i = (\pm \alpha  ,\pm \alpha )$ with $\alpha = 0.2$ (symbolized by $\odot$). In Fig. \ref{fig:4_point_loads}(a,b) we present measures  $\sigma_\Pi, \lambda_\pi$ arising from a solution $(\pi,\Pi)$ of $({\mathscr{P}}_h)$, whereas Fig. \ref{fig:4_point_loads}(c) shows an optimal $u$.
 Direction of $\lambda_\pi$, being a vector valued measure, is marked by the use of arrows.

\begin{figure}[h]
		\centering
		\subfloat[]{\includegraphics*[trim={0cm 0cm -0cm -0cm},clip,height=0.28\textwidth]{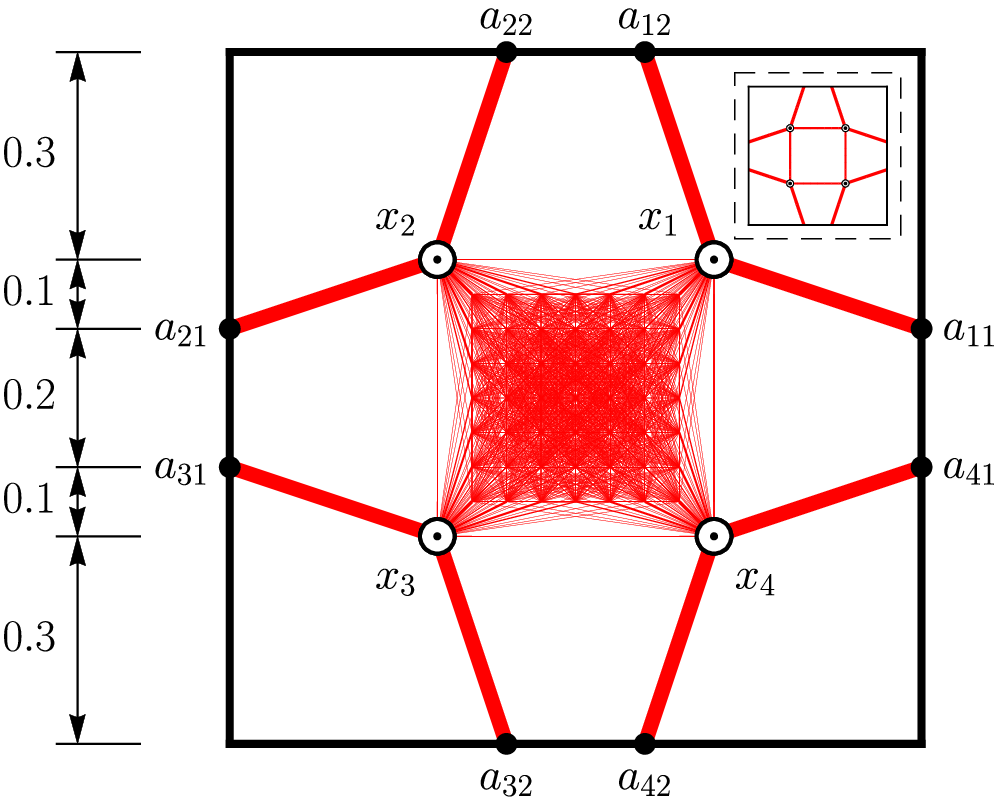}}\hspace{0.4cm}
		\subfloat[]{\includegraphics*[trim={0cm 0cm -0cm -0cm},clip,height=0.28\textwidth]{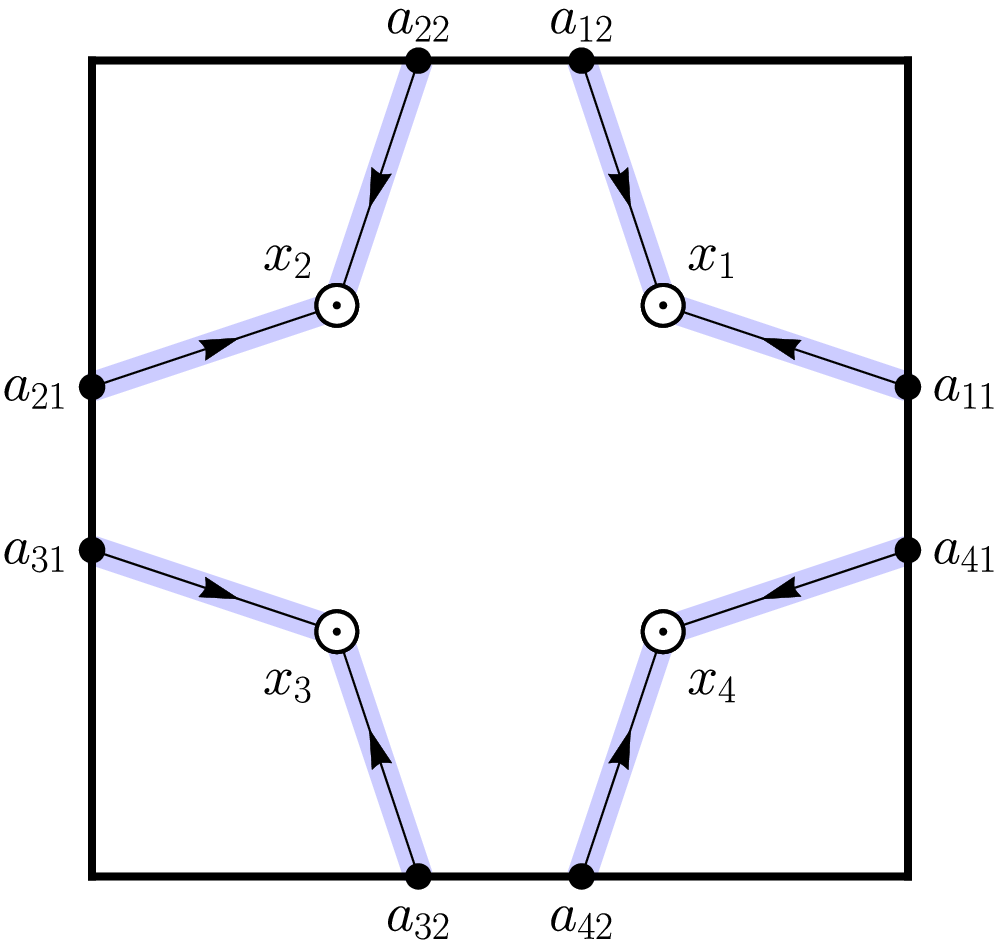}}\hspace{0.4cm}
		\subfloat[]{\includegraphics*[trim={1.2cm -1cm 1.cm -0cm},clip,width=0.28\textwidth]{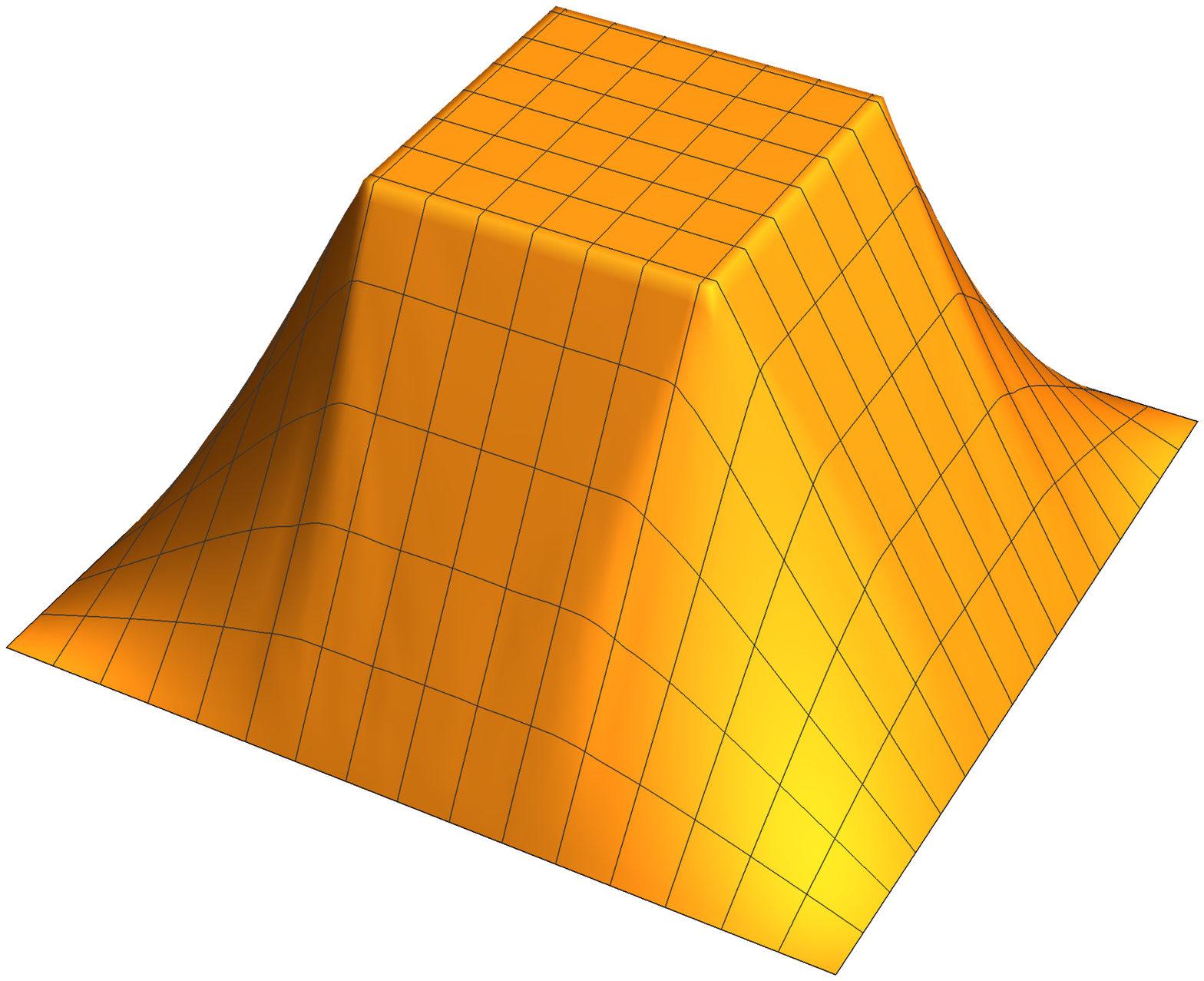}}
		\caption{Numerical prediction of solution of the four force problem: (a) optimal $\sigma_\Pi$ (and an alternative solution in the top right corner); (b) optimal $\lambda_\pi$; (c) optimal $u$.}
		\label{fig:4_point_loads}
\end{figure}
Let us now interpret these results. The truss, represented by the prestress $\sigma_{\Pi}$, consists of eight strings connecting points of load's application to the boundary and of a large family of thin strings contained in the central square $D=\mathrm{co}(\{x_i\})$; the non-zero transverse force $\lambda_\pi$ is present only on the eight strings. The function $u$ admits a plateau in $D$ where  $u \sim 0.529 $, whilst the optimal $w$  numerically matches with the identity map in this square and appears to be continuous on whole $\Ob$. For each of the eight strings we have:
\begin{equation} \label{expli12}
	\Pi\bigl(\bigl\{(x_i,a_{ij}) \bigr\} \bigr) = \frac{1}{2}, \qquad 	\pi\bigl(\bigl\{(x_i,a_{ij}) \bigr\} \bigr) = - \frac{u(x_i)}{2 \abs{x_i-a_{ij}}} \,  = - \frac{\sqrt{0.28}}{2 \abs{x_i-a_{ij}}} \qquad \qquad  \forall\,i,  j
\end{equation}
where $\sqrt{0.28}$ ($=\sqrt{1/2- \alpha -\alpha^2/2}$ for $\a=0.2$) is the expected exact plateau value $u$ on $D$.
 By slightly perturbing the algorithm with a penalty on the total number of strings,
we converge to the finite truss solution $(\tilde{\pi}, \tilde{\Pi})$ supported on $K:=\bigcup_{ij} \big\{ \{x_i\}\times \{a_{ij}\} \big\}$  
whose miniature is displayed in the top right corner of Fig. \ref{fig:4_point_loads}(a). Observe that $\spt\, (\tilde{\pi},\tilde{\Pi}) \subset (\bO \cup (\spt\, f))^2$ while $\lambda_{\tilde{\pi}} = \lambda_\pi$ on whole $\Ob$ and $\sigma_{\tilde{\Pi}} = \sigma_{{\Pi}}$ on $\Ob \setminus D$ (note that $D$ is closed).

An interesting issue would be to confirm that this 12-string structure suggested by the numerics (and encoded by \eqref{expli12})
 is indeed an optimal one. 
In view of  applying Theorem \ref{extendedOptiTwo}, we need to determine a suitable pair $(u,\mbf{v})$. 
One checks easily that if $\mbf{v}$ is an element of $\mbf{M}_\O$  vanishing in $D$ and if $u\in C_0(\Ob)$ admits $D$ as maximal plateau, then
$ \ell_{\mbf{v}}(x_i, a_{i,j}) = \min \{ \ell_{\mbf{v}}(x_i, y)\ :\ y\in \bO\}$. On the other hand, we have $u(x)-u(y) \le \ell_{\mbf{v}}(x,y) \ \ \forall (x,y) \in \spt (f)\times \bO,$ with an equality if $(x,y)\in K$. In order to satisfy all optimality conditions \eqref{OCtwo}, it remains  to find   $(u,\mbf{v})\in C_0(\R^d) \times \mbf{M}_\O$ matching  with the values prescribed above on $\bO \cup D$ and satisfying the admissibility condition $u(x)-u(y) \le \ell_{\mbf{v}}(x,y) $ for all 
$(x,y)\in\Ob^2$. Consequently, we are done if we can prove conjecture \eqref{finiteconj} or alternatively if the construction of such extension can be done by hand.
We know  for instance that $u$ (resp. $\mbf{v}$) need to be affine (resp. directionally affine) on the eight straight segments $[x_i,a_{ij}]$ which in turn are geodesics for the $c_\mbf{v}$ distance. 
To conclude this example, let us remark that if $({\pi}, {\Pi})$ given by \eqref{expli12} is optimal and if $(u,\mbf{v})$ can be constructed, then 
$(-\pi, \mbf{v})$ will satisfy the saddle-point relations
\eqref{extremality}  while taking $\mu=f$ and $\nu=0$. The minimum in \eqref{distMKv} is obtained for
$\mu_0=0$ and  $\nu_0=\sum_{i=1}^{4} \sum_{j=1}^{2} \frac{1}{2}\,\delta_{a_{ij}}$.

\end{example}

\begin{example}[\textbf{Five point forces}]
\label{ex:five_forces}
We modify the previous example by adding one point force in the centre of the square, i.e. $f =\delta_{x_0} + \sum_{i=1}^4 \delta_{x_i}$ where $x_0 = (0,0)$. Fig. \ref{fig:5_point_loads} presents numerical solutions $\sigma_{\Pi}$, $\lambda_{\pi}$ and $u$. We can see that the plateau is no longer present in the graph of $u$ and the  12 strings-structure $\sigma_\Pi$ seems stable (with respect to perturbations of the algorithm). In contrast to Example \ref{ex:four_forces} here the points $a_{ij}$ where strings connect to the boundary are slightly shifted towards the square's corners.

\begin{figure}[h]
		\centering
		\subfloat[]{\includegraphics*[trim={0cm 0cm -0cm -0cm},clip,height=0.28\textwidth]{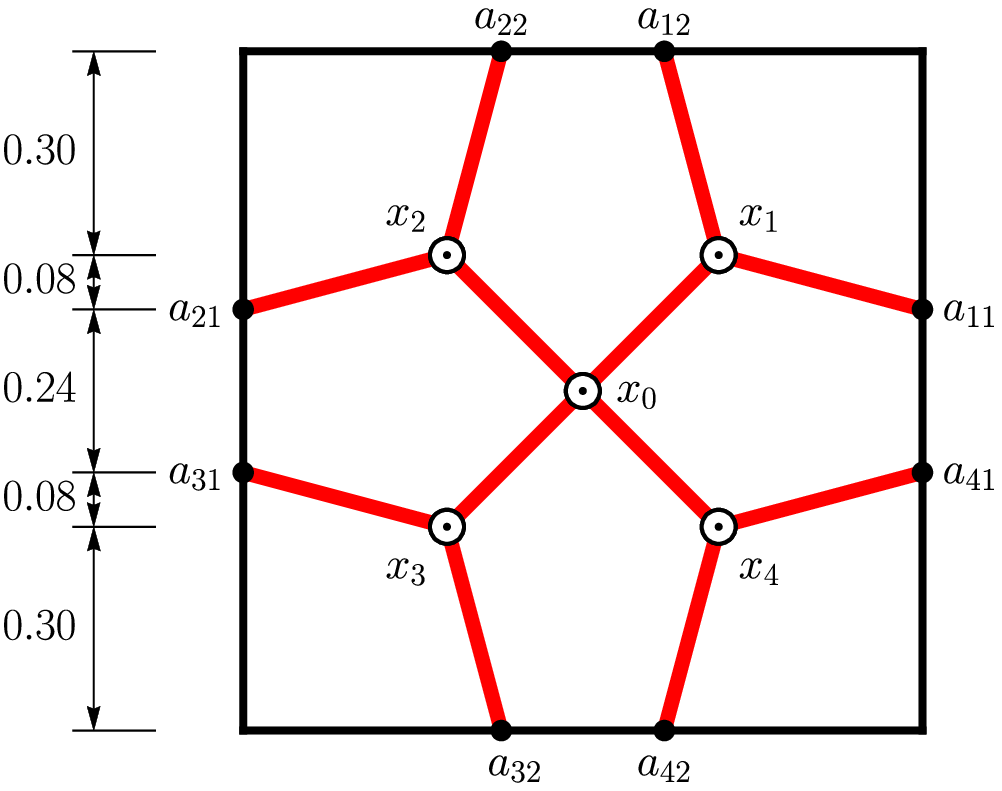}}\hspace{0.2cm}
		\subfloat[]{\includegraphics*[trim={0cm 0cm -0cm -0cm},clip,height=0.28\textwidth]{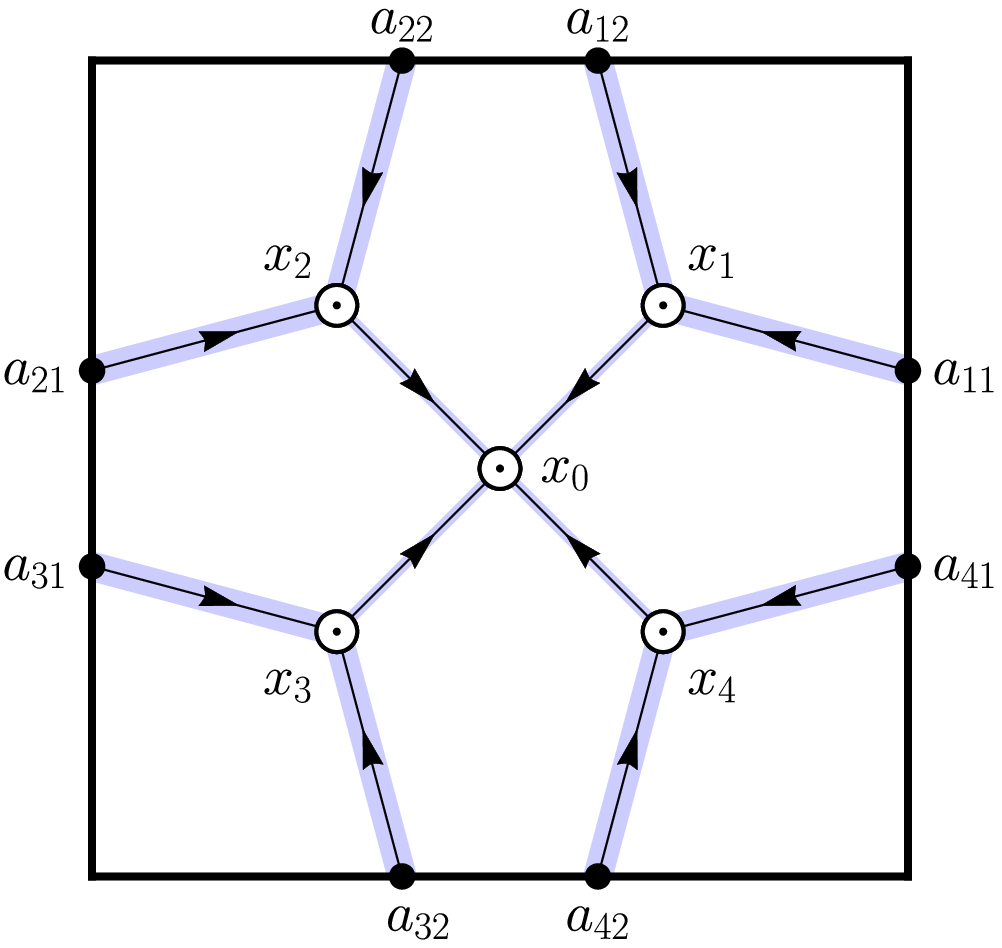}}\hspace{0.4cm}
		\subfloat[]{\includegraphics*[trim={1.2cm -1cm 1.cm -0cm},clip,width=0.28\textwidth]{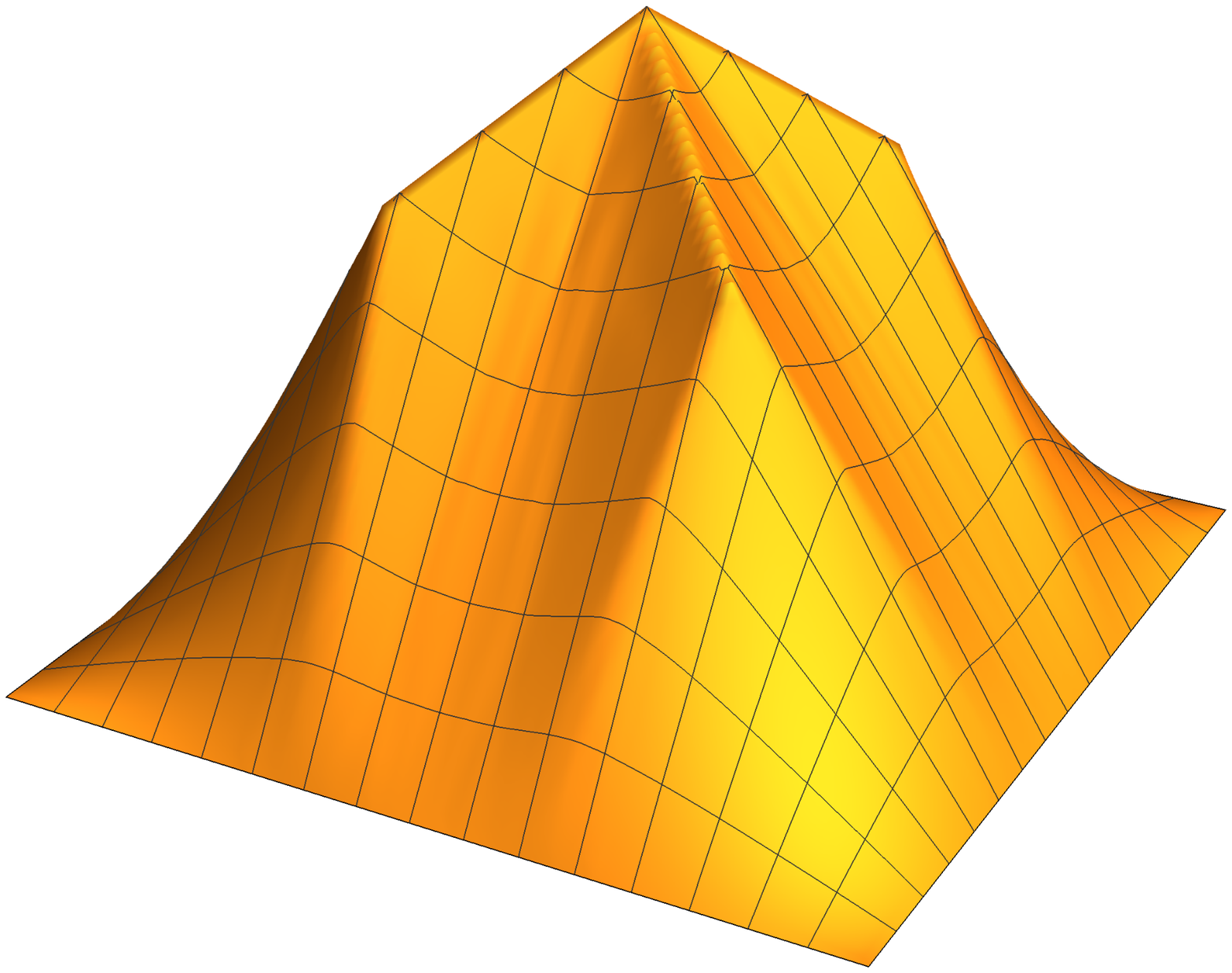}}
		\caption{Numerical prediction of solution of the five force problem: (a) optimal $\sigma_\Pi$; (b) optimal $\lambda_\pi$; (c) optimal $u$.}
		\label{fig:5_point_loads}
\end{figure}

The optimal $\pi$ given by the algorithm (which, up to the change of sign, can be seen also as the optimal transport plan $\gamma$ from the Monge-Kantorovich view-point) is given by
\begin{equation*}
\gamma = - \pi = \sum_{i=1}^4 \frac{1}{4}\, \delta_{(x_0,x_i)} + \sum_{i=1}^{4} \sum_{j=1}^{2} \frac{5}{8} \, \delta_{(x_i,a_{ij})}.
\end{equation*}
It encodes twelve $c_v$-geodesics connecting pairs of points $(x_0,x_i)$ and $(x_i,a_{ij})$ which are straight segments.
It is interesting to notice that the measure $\lambda_\pi$ can be also reconstructed from the alternative transport plan 
\begin{equation*}
	\tilde{\gamma} = \sum_{i=1}^{4} \sum_{j=1}^{2} \frac{1}{8}\, \delta_{(x_0,a_{ij})} + \sum_{i=1}^{4} \sum_{j=1}^{2} \frac{1}{2} \, \delta_{(x_i,a_{ij})}.
\end{equation*}
which involves  $c_v$-geodesics associated to pairs of points $(x_i,a_{ij})$, which  remain straight segments, and pairs $(x_0,a_{ij})$ whose $c_v$-geodesics become polygonal chains $[x_0,x_i] \cup [x_i,a_{ij}]$.
%
%
\end{example}

\begin{example}[\textbf{Pressure load}]
	\label{ex:pressure_load}
	
We consider now a distributed  pressure load, namely. $f = \mathcal{L}^2 \mres \Omega$.   Solutions $\sigma_\Pi$ and $u$ are showed in Fig. \ref{fig:pressure_load}(a) and (b), respectively.
	
	\begin{figure}[h]
		\centering
		\subfloat[]{\includegraphics*[trim={0cm -0.58cm -0cm -0cm},clip,width=0.28\textwidth]{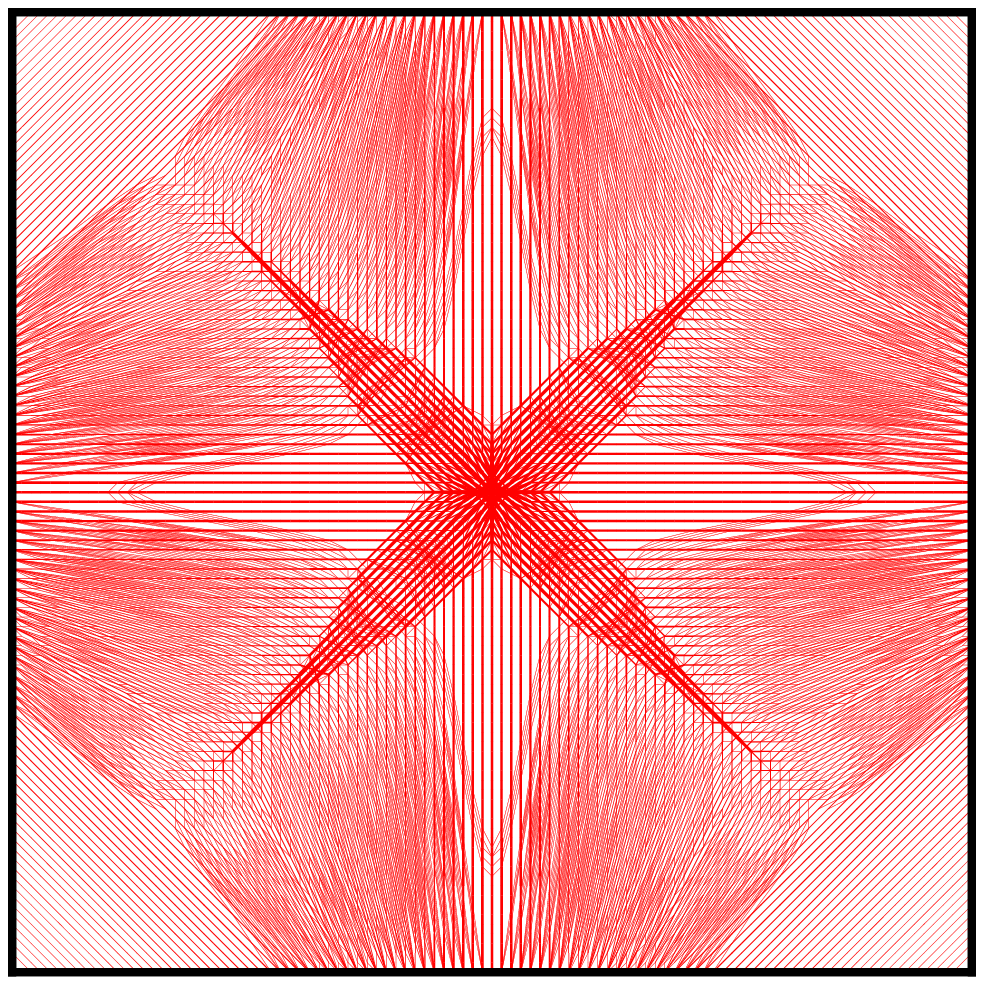}}\hspace{0.4cm}
		\subfloat[]{\includegraphics*[trim={1.2cm -1.5cm 1.cm -0cm},clip,width=0.30\textwidth]{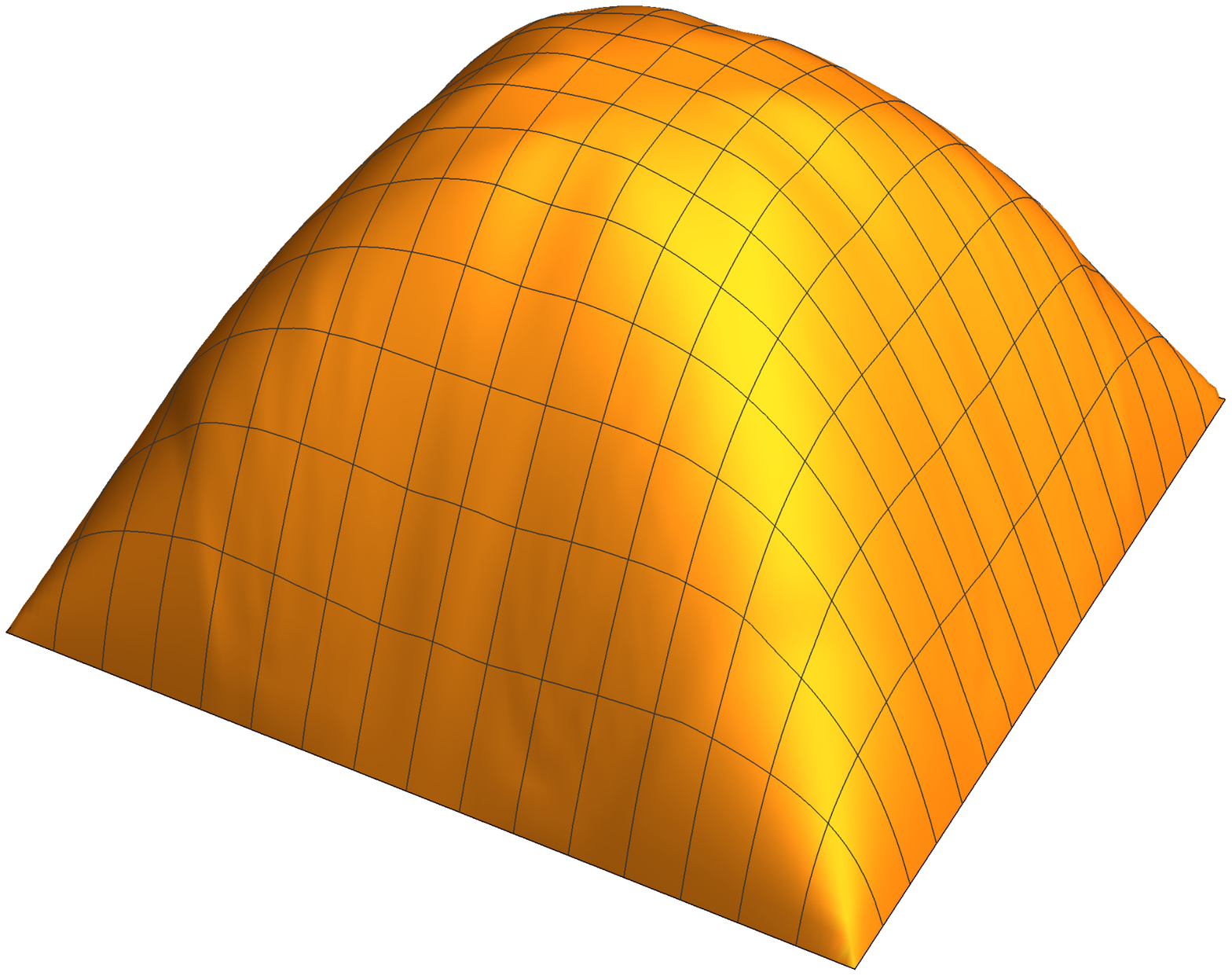}}\hspace{0.4cm}
		\subfloat[]{\includegraphics*[trim={0cm 0cm -0cm -0cm},clip,width=0.315\textwidth]{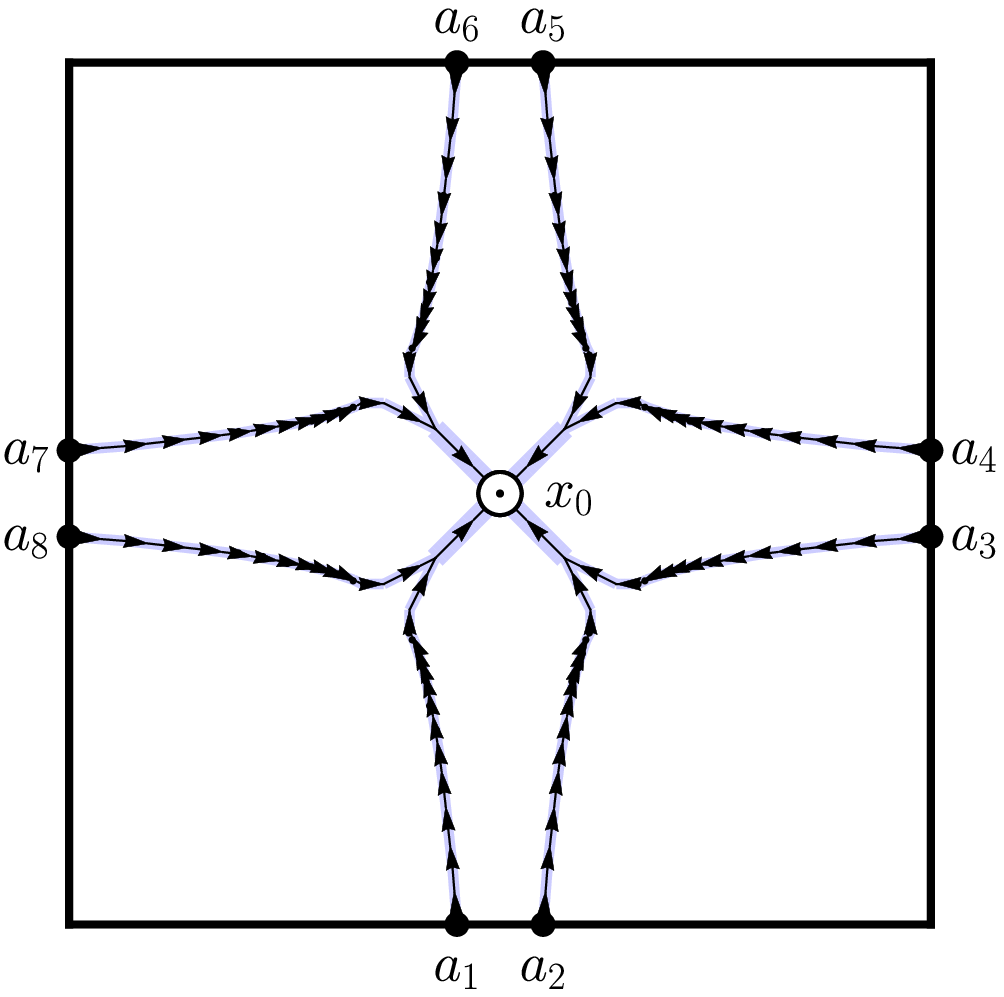}}
		\caption{Numerical prediction of optimal membrane for the uniform pressure load: (a) optimal $\sigma_\Pi$; (b) optimal $u$; (c) eight equivalent $c_{\mbf{v}}$-geodesics from the central point $x_0$ to $\bO$
		(computed for the numerical prediction of optimal $\mbf{v}$).}
		\label{fig:pressure_load}
	\end{figure}
	
The prediction $\sigma_{\Pi}$, that approximates an exact solution $\sigma \in \Mes(\Ob;\Sddp)$, is difficult to analyse. It seems that $\spt(\sig)=\Ob$ while $\sig \ll \mathcal{L}^2$. The exact pre-stress $\sigma$ is also suspected to be rank-one in a large portion of the domain except perhaps in the neighbourhood of diagonals, far from the corners, where $\sigma$ seems to be of full rank and possibly non-unique (the presumed rank-two region stands out visually in Fig. \ref{fig:pressure_load}(a)).
	Instead of presenting the solution $\lambda_{\pi}$, we investigate the form of geodesics with respect to the estimated  $c_{\mbf{v}}$-distance. Fig. \ref{fig:pressure_load}(c) shows a numerical solution of \textit{optimal transshipment problem} (OTP with constraint on the difference of marginals, cf. \cite{villani}) with respect to the $\ell_{\mbf{v}}$-cost. The eight paths numerically obtained thus approximate geodesics connecting pairs of points $(x_0,a_i)$. Based on the simulation it is fair to assume that these geodesics are curved and piecewise smooth. 
\end{example}

\begin{example}[\textbf{Diagonal load}]
\label{ex:diag_load}

While keeping  $\Sigma_0 = \bO$, we continue with a load  concentrated along the square's diagonals, namely  $f = \Ha^1\mres[a_1,a_3] + \Ha^1 \mres[a_2,a_4]$. Numerical computations provide  $\sigma_{\Pi}, \lambda_{\pi}, u$ displayed in Fig. \ref{fig:diagonal_load}
(for the sake of clarity, $\lambda_{\pi}$ is displayed with a lower resolution).

\begin{figure}[h]
		\centering
		\subfloat[]{\includegraphics*[trim={0cm 0cm -0cm -0cm},clip,height=0.315\textwidth]{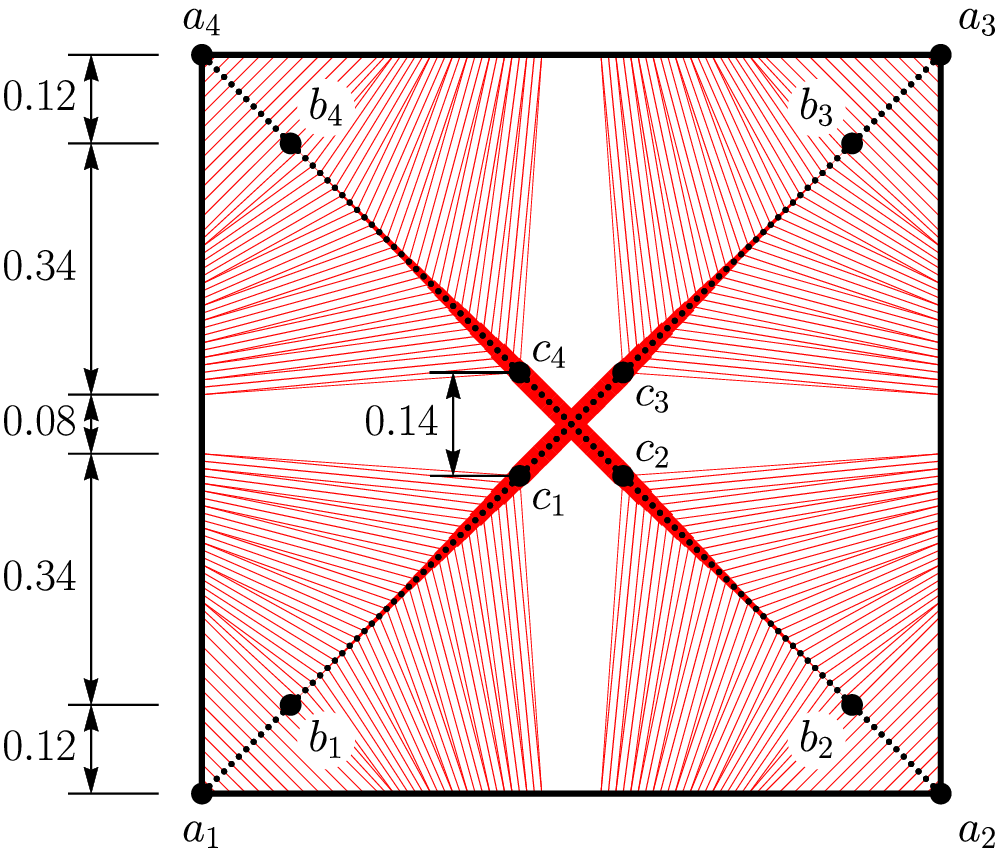}}\hspace{0.4cm}
		\subfloat[]{\includegraphics*[trim={0cm -0.55cm -0cm -0cm},clip,width=0.28\textwidth]{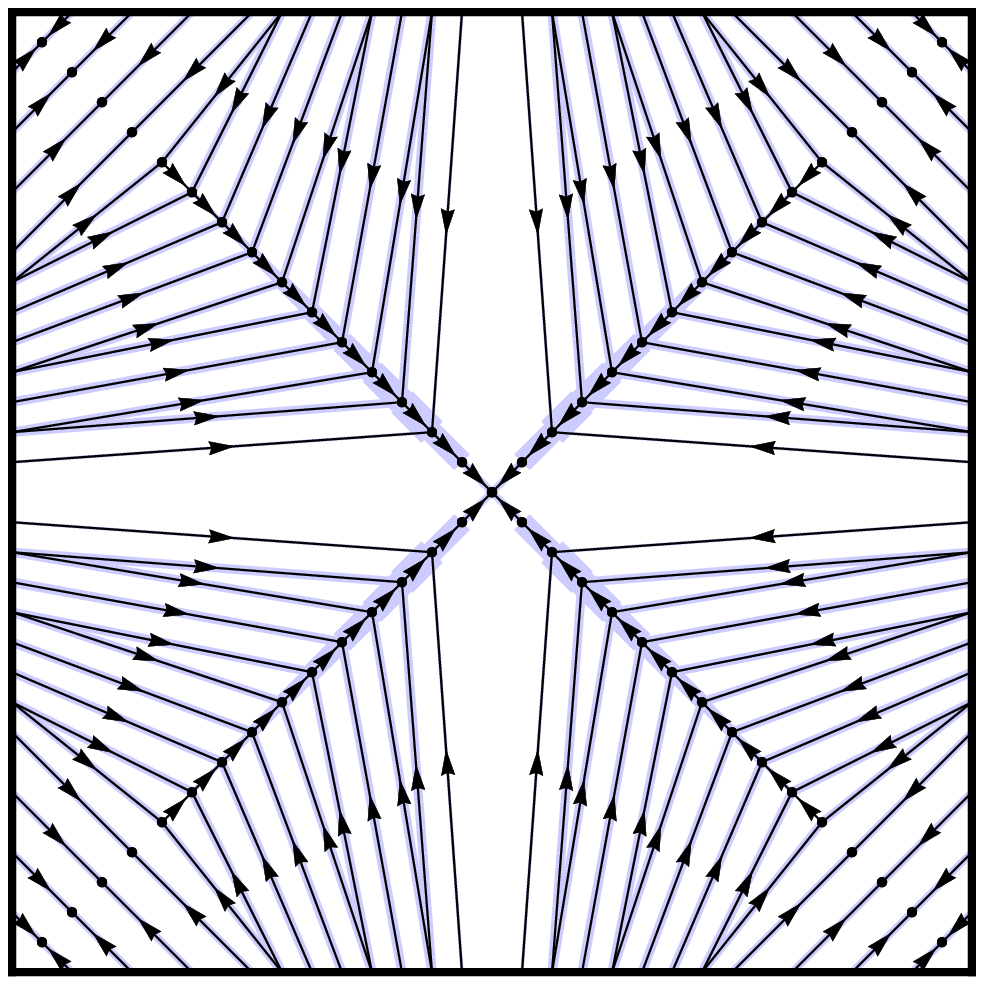}}\hspace{0.3cm}
		\subfloat[]{\includegraphics*[trim={1.2cm -2cm 1.cm -0cm},clip,width=0.28\textwidth]{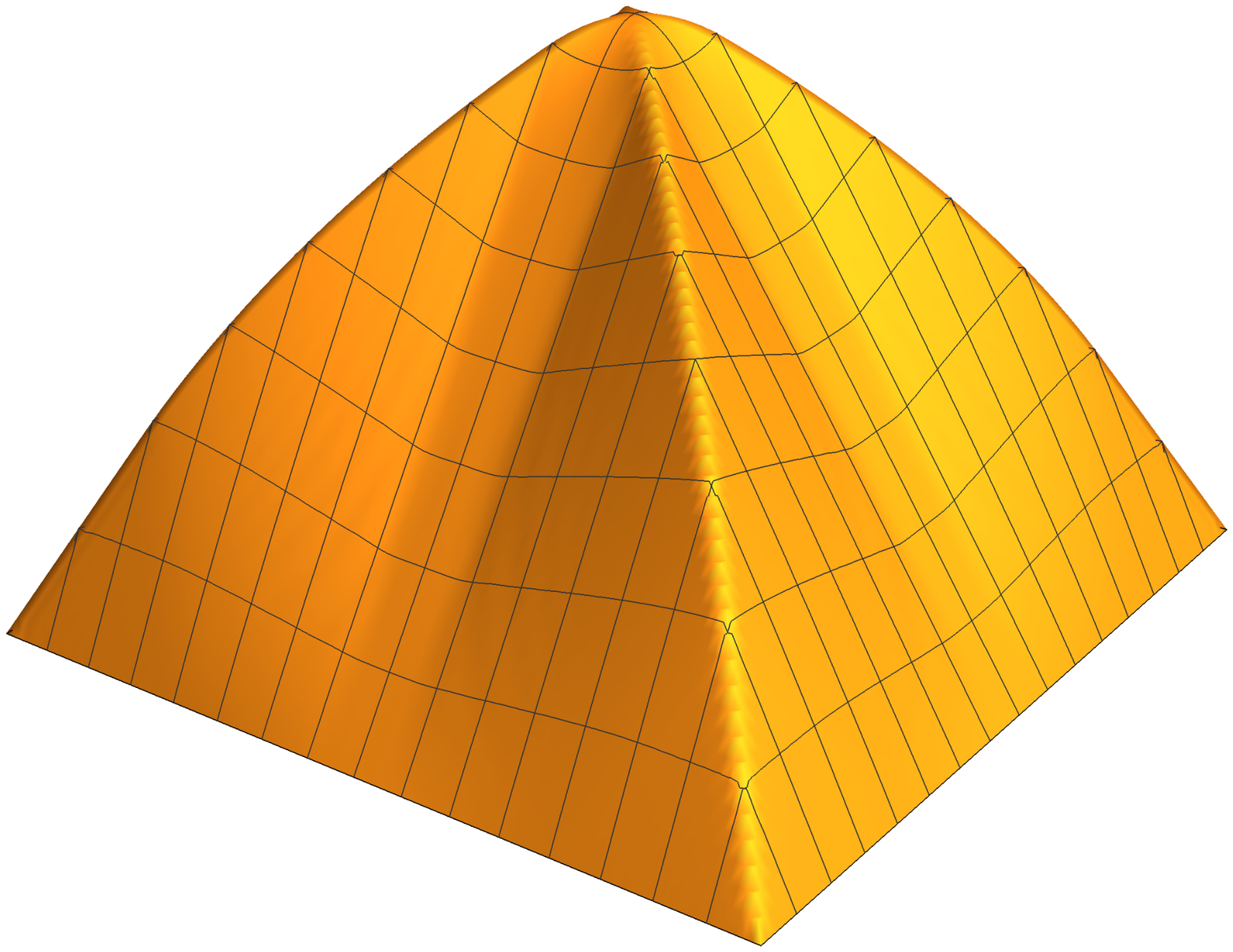}}
		\caption{Numerical prediction of optimal membrane for the diagonal load (the discretized load is denoted by black dots): (a) optimal $\sigma_\Pi$ (higher resolution); (b) optimal $\lambda_\pi$ (lower resolution); (c) optimal $u$.}
		\label{fig:diagonal_load}
\end{figure}

Similarly as in previous examples, the support of $(\pi,\Pi)$ is contained in $(\bO \cup \spt\, f)^2$, i.e. no intermediate points are essential for the optimal transmission of the load. The numerical display strongly indicate that exact optimal $\sigma$ splits  into an absolutely continuous part with respect to $\Ha^1 \mres [b_1,b_3] + \Ha^1 \mres [b_2,b_4]$ and into an absolutely continuous part with respect to Lebesgue measure restricted to four quadrilaterals. Based on Fig. \ref{fig:diagonal_load}(c) we may predict that the restrictions of $u\vert_{[b_1,b_3]}$ and $u\vert_{[b_2,b_4]}$ are strictly concave hence we expect that the problem $(\mathscr{P})$ with the initial load $f$ has no solution (see Remark \ref{notruss}). However, from the numerical solution $\lambda_\pi$, we can predict the structure of $c_v$-geodesics connecting points in $\spt f$ to boundary points. To this aim it is crucial to observe that $\lambda_{\pi}$ charges segments $[b_1,b_3]$, $[b_2, b_4]$. With $x_0$ denoting the centre of the square we can foresee that for an exact solution $v \in \mathrm{Lip}(\Ob;\Rd)$ there holds:
\begin{enumerate}[label={(\roman*)},leftmargin=1.5\parindent]
	\item for $\Ha^1$-a.e. $x \in [x_0,c_i]$ the geodesics may be characterized as polygonal chains
	\begin{equation}
		[x,z] \cup [z, \bar{y}_1(z)] \qquad \text{or} \qquad  [x,z] \cup [z, \bar{y}_2(z)]
	\end{equation}
	where $z$ is an arbitrary element in $[c_i,b_i]$ while $\bar{y}_1(z), \bar{y}_2(z) \in \bO$ are the boundary points uniquely determined for each such $z$ and positioned symmetrically with respect to diagonal ending at $a_i$;
	\item for $\Ha^1$-a.e. $x \in [c_i,b_i]$ the geodesic may be either a single segment or again a polygonal chain:
	\begin{equation}
	[x,\bar{y}_1(x)]\quad  \text{or} \quad [x,\bar{y}_2(x)] \qquad \text{or} \qquad [x,z] \cup [z, \bar{y}_1(z)] \quad \text{or} \quad  [x,z] \cup [z, \bar{y}_2(z)]
	\end{equation}
	where $z$ is an arbitrary element in $]x,b_i]$;
	\item for $\Ha^1$-a.e. $x \in [b_i,a_i]$ the geodesics are segments
	\begin{equation}
		[x,\hat{y}_1(x)]\quad  \text{or} \quad [x,\hat{y}_2(x)]
	\end{equation}
	where $\hat{y}_1(x), \hat{y}_2(x) \in \bO$ are projections of $x$ onto the boundary along direction orthogonal to diagonal ending at $a_i$.
\end{enumerate}
These predictions were confirmed numerically, i.e. by solving a suitable optimal transshipment problem as we did in  Example \ref{ex:pressure_load}. The (presumed) freedom of choosing the geodesics for points in $[x_0,b_i]$ is remarkable.  
\end{example}

\begin{example}[\textbf{Signed load}]
\label{eq:signed_forces}

Still taking $\Sigma_0 = \bO$, we investigate the case of a signed discrete load: $f = \delta_{x_0} - \delta_{x_1} + \delta_{x_2} - \delta_{x_3}+ \delta_{x_4}$, see Fig. \ref{fig:signed_load} where negative point forces are marked by $\otimes$ and numerical solutions $\sigma_{\Pi}$, $\lambda_{\pi}$, $u$ are showed.

\begin{figure}[h]
	\centering
	\subfloat[]{\includegraphics*[trim={0cm 0cm -0cm -0cm},clip,height=0.28\textwidth]{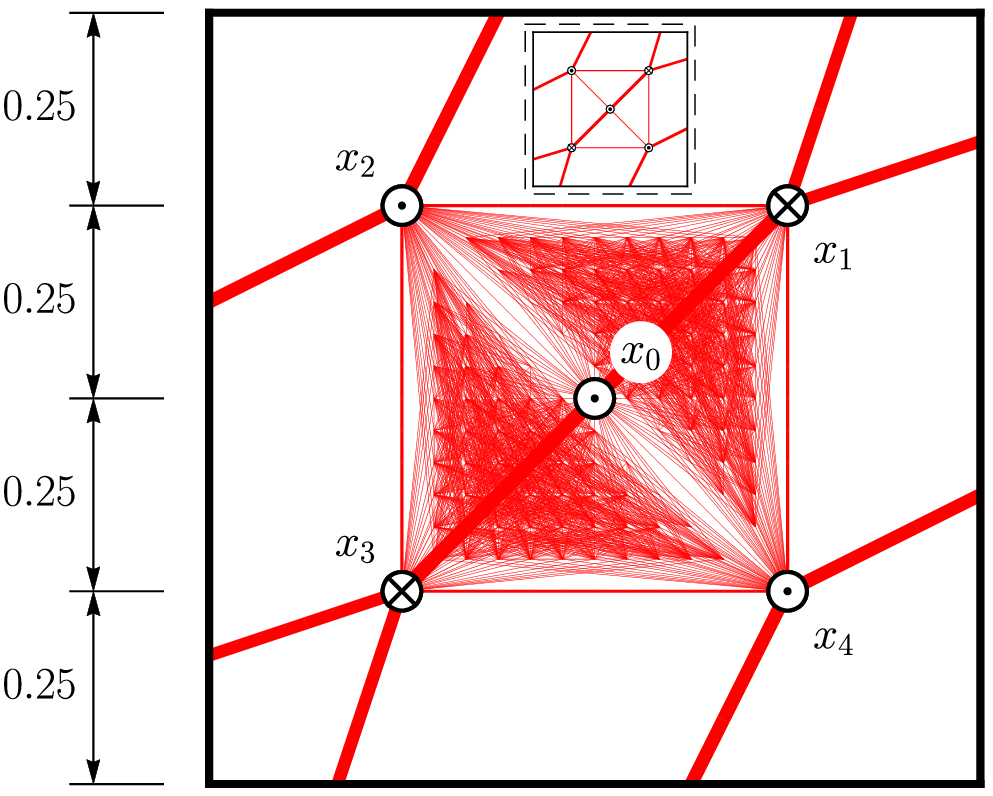}}\hspace{0.4cm}
	\subfloat[]{\includegraphics*[trim={0cm 0cm -0cm -0cm},clip,width=0.28\textwidth]{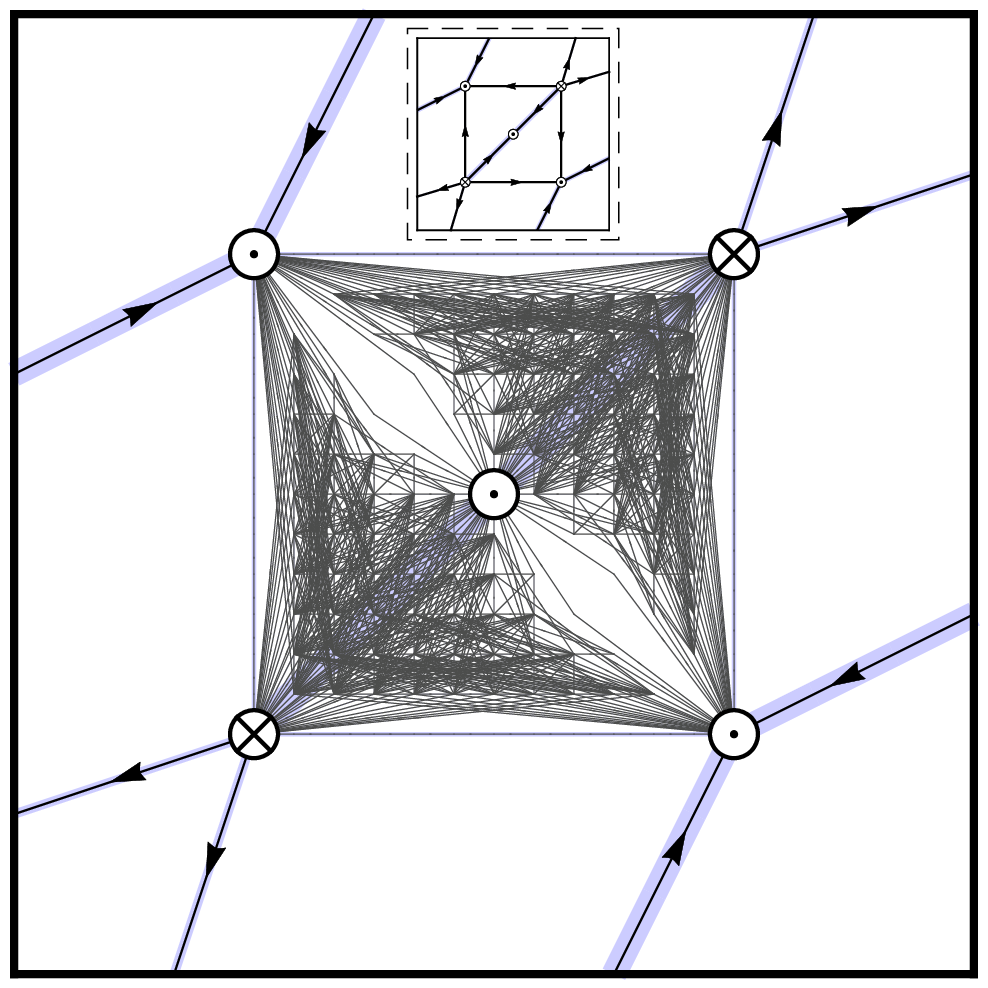}}\hspace{0.4cm}
	\subfloat[]{\includegraphics*[trim={1.cm -1cm 0.8cm -0cm},clip,width=0.28\textwidth]{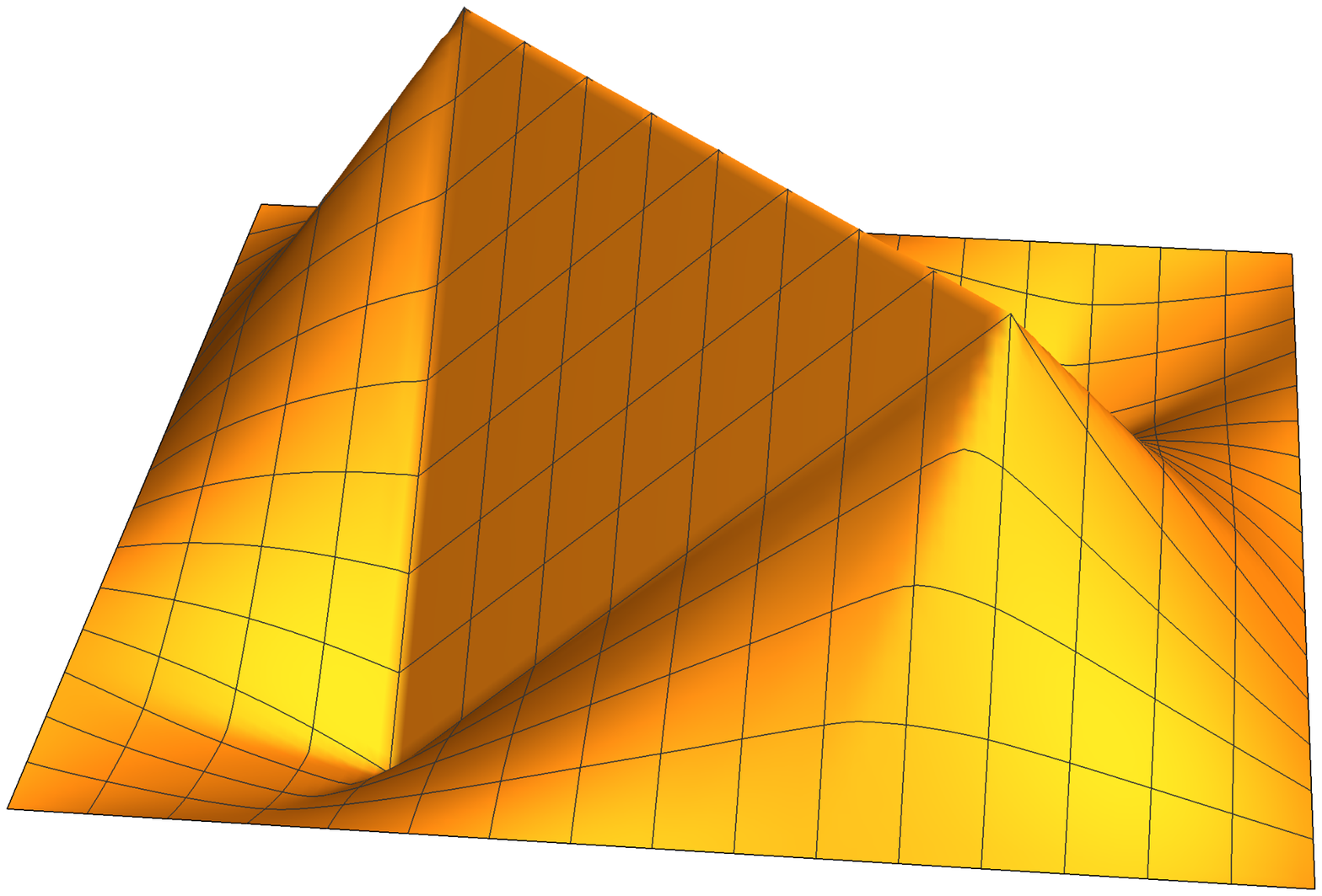}}
	\caption{Numerical prediction of optimal membrane for the signed load: (a) optimal $\sigma_\Pi$ (and an alternative solution); (b) optimal $\lambda_\pi$ (and alternative solution); (c) optimal $u$.}
	\label{fig:signed_load}
\end{figure}	
We can see that  $u$ is affine within two triangles $\mathrm{co}\bigl(\{x_2,x_3,x_4\}\bigr)$ and $\mathrm{co}\bigl(\{x_1,x_2,x_4\}\bigr)$ while $w$ turns out to be affine in the whole central square $\mathrm{co}\bigl(\{x_1,x_2,x_3,x_4\}\bigr)$ where it satisfies $\frac{1}{2}\,\nabla u \otimes \nabla u +e(w) = \Ident$. Accordingly, no pointwise constraint is prescribed for $\sigma_{\Pi}$ in this square and multiple solutions may appear as in the Example \ref{ex:four_forces}. On the top  of Fig. \ref{fig:dirichlet_zone}(a,b), we display miniatures of simplified solutions $\sigma_{\tilde{\Pi}}$ and  $\lambda_{\tilde{\pi}}$ which are obtained by slightly perturbing the algorithm.  Again we observe that $\spt\, (\tilde{\pi},\tilde{\Pi}) \subset (\bO \cup \spt f)^2$.
\end{example}

\begin{example}[\textbf{Eight-point Dirichlet zone}]\label{8points}
Again we consider a uniform pressure load $f = \mathcal{L}^2 \mres \O$ but we set an eight-point Dirichlet zone $\Sigma_0 = \{a_1,\ldots, a_8\}$, see Fig. \ref{fig:dirichlet_zone}(a). Numerical solutions $\sigma_{\Pi}$, $u$ are presented in Fig. \ref{fig:dirichlet_zone}(a,b) respectively while Fig. \ref{fig:dirichlet_zone}(c) shows the component of $w$ along diagonal $[a_4,a_2]$, i.e. $\pairing{w,\tau^{a_4,a_2}}$. The high resolution ($h = \frac1{200}$) allows to predict jump-type discontinuities of exact optimal $w \in BV(\Omega;\Rd)$ along segments $[a_5,a_6]$ and $[a_7,a_8]$, more precisely it is the normal component that is discontinuous (the component  $\pairing{w,\tau^{a_3,a_1}}$ is discontinuous along $[a_5,a_8]$ and $[a_6,a_7]$). Moreover, although in the numerical program $w$ is enforced to be zero on $\bO \cap \mathrm{X}_h$, from Fig. \ref{fig:dirichlet_zone}(c) we see that $w$ admits large values at the nodes close to $\bO$ -- this suggests that the multifunction $\mbf{v} = \mbf{i}^{-1}(\ident-w)$ is multivalued on $\bO$. Notwithstanding this, the graph shown in Fig. \ref{fig:dirichlet_zone}(b) seems to point to Lipschitz continuity for the exact solution $u$. 
	\begin{figure}[h]
		\centering
		\subfloat[]{\includegraphics*[trim={0cm 0cm -0cm -0cm},clip,height=0.28\textwidth]{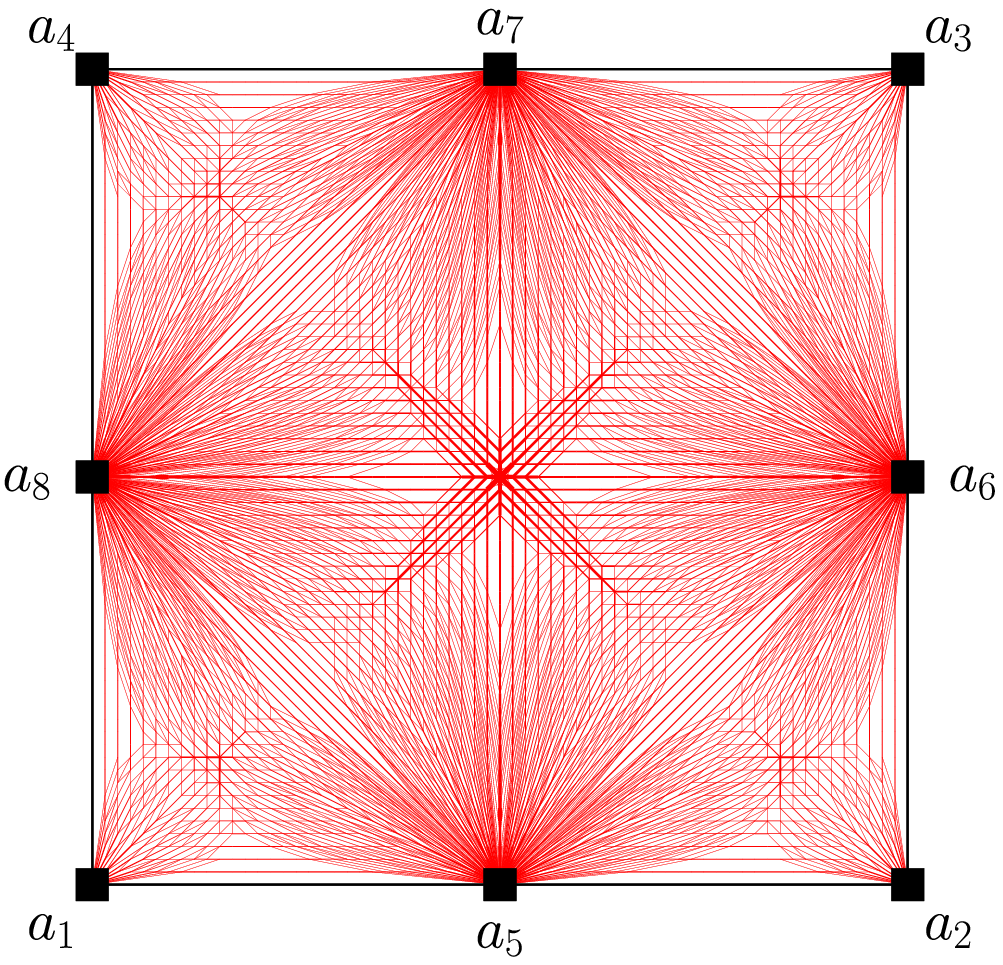}}\hspace{0.4cm}
		\subfloat[]{\includegraphics*[trim={1cm -1cm 1cm -0cm},clip,width=0.28\textwidth]{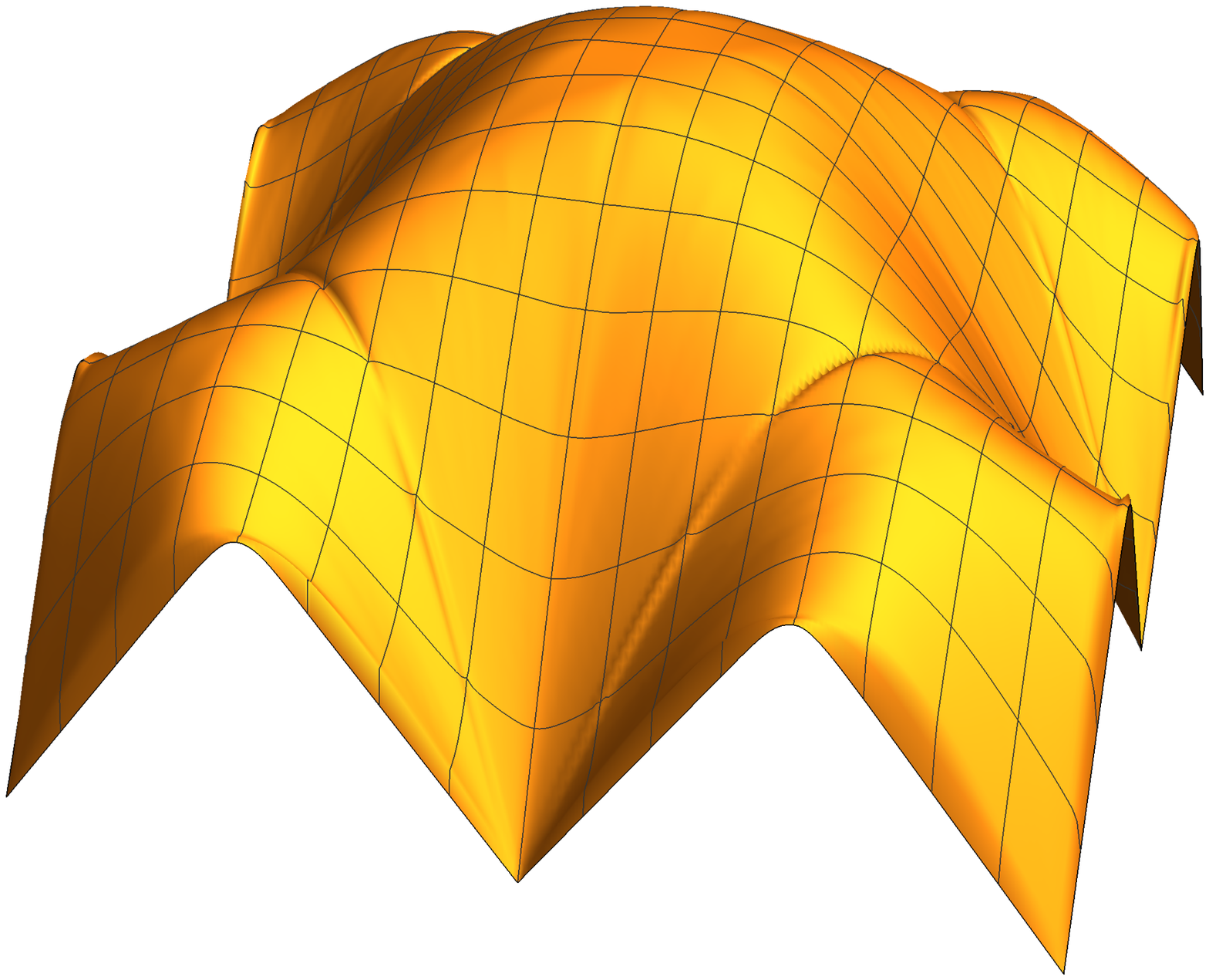}}\hspace{0.4cm}
		\subfloat[]{\includegraphics*[trim={0.7cm 2.2cm 0.5cm -0cm},clip,width=0.28\textwidth]{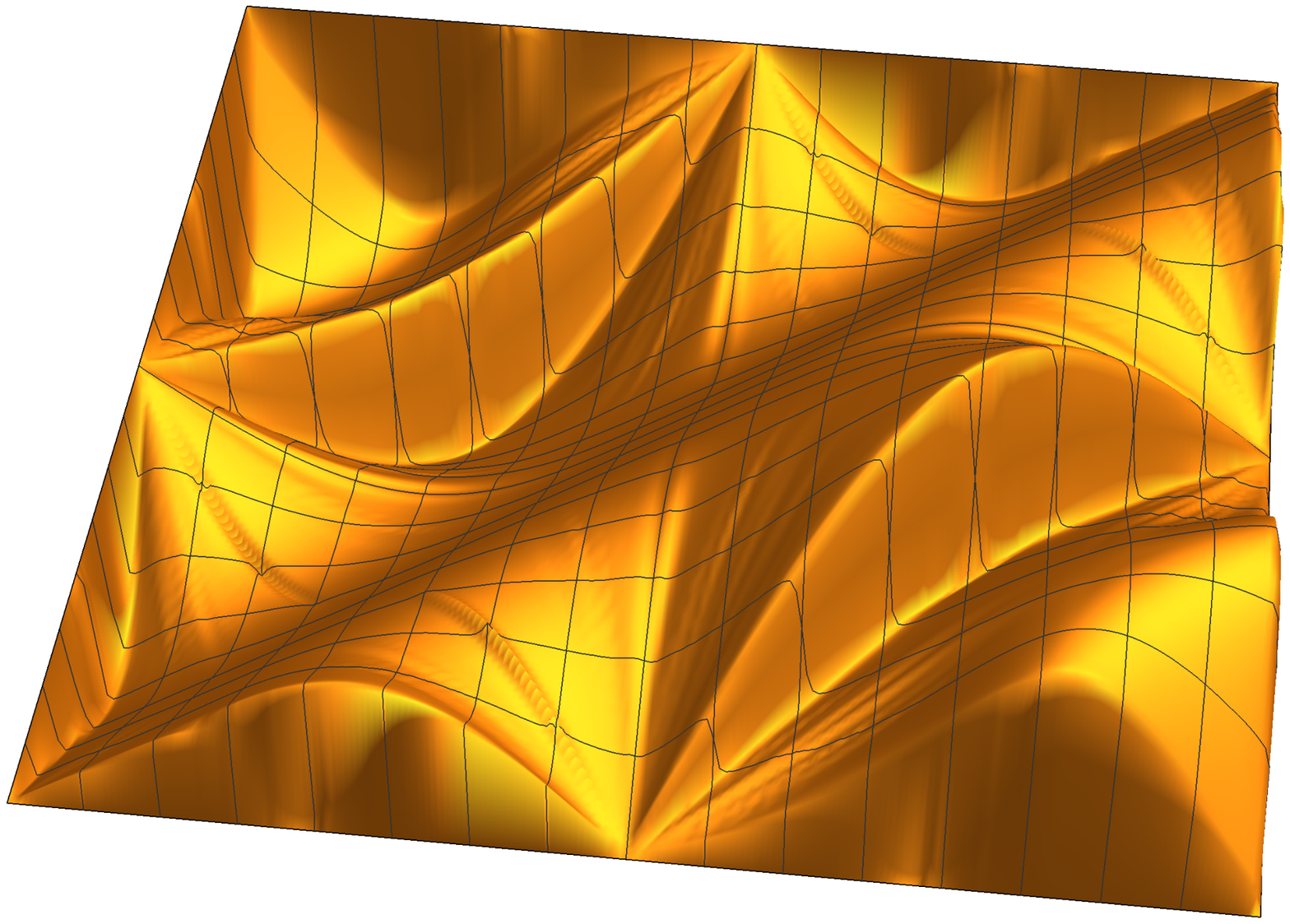}}
		\caption{ Uniform pressure load and  eight-points Dirichlet zone (denoted by solid squares): (a) optimal $\sigma_\Pi$ ; (b) optimal $u$; (c) component of $w$ parallel to diagonal $[a_4,a_2]$.}
		\label{fig:dirichlet_zone}
	\end{figure}
\end{example}

\begin{example}[\textbf{Miscellaneous}]\label{divers}
	We conclude the presentation with another three solutions for the square domain $\Omega$ and $\Sigma_0 = \bO$:  Fig. \ref{fig:miscellaneous}(a,b) show solutions $\sigma_{\Pi}$ for asymmetric positive loads: three point forces $f = \sum_{i=1}^{3} \delta_{x_i}$ in Fig. \ref{fig:miscellaneous}(a) and sum of a point force and load distributed along a line $f = \delta_{x_0} + \Ha^1 \mres [a,b]$ in Fig. \ref{fig:miscellaneous}(b).
	\begin{figure}[h]
		\centering
		\subfloat[]{\includegraphics*[trim={0cm 0cm -0cm -0cm},clip,height=0.28\textwidth]{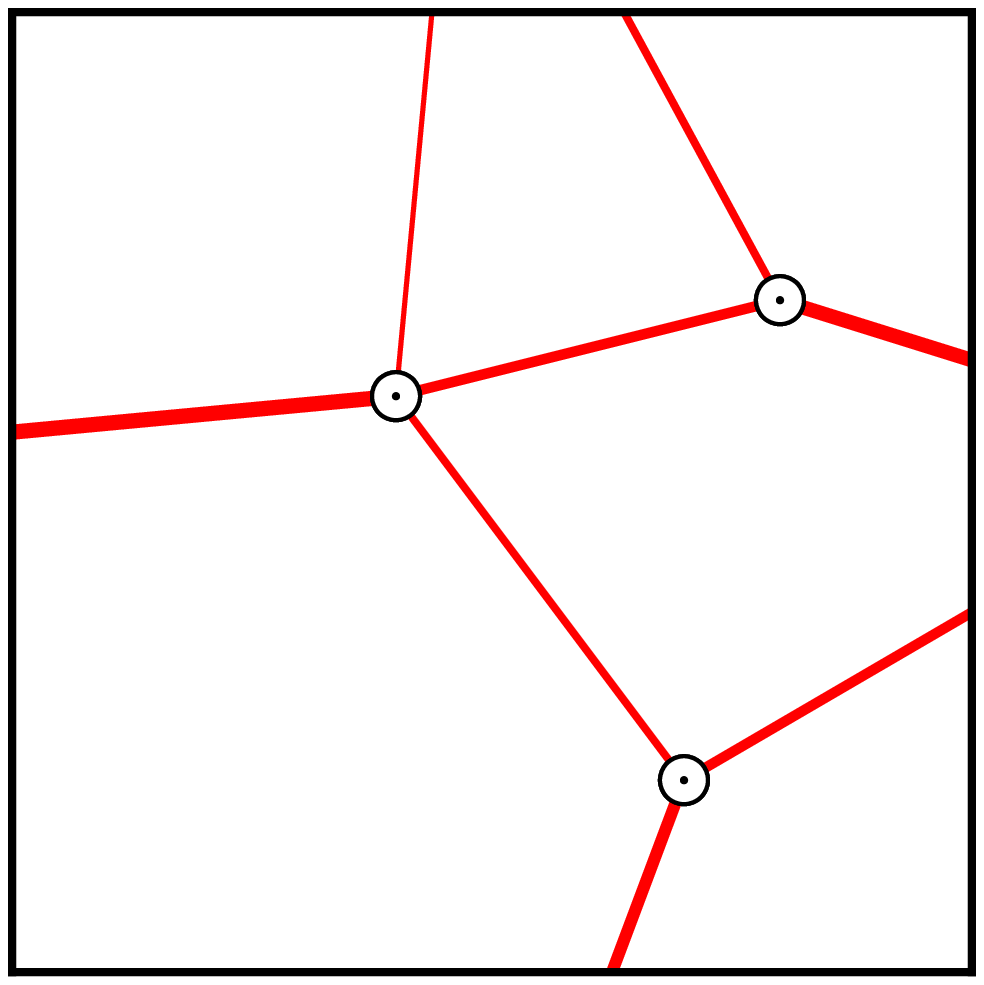}}\hspace{0.4cm}
		\subfloat[]{\includegraphics*[trim={0cm 0cm -0cm -0cm},clip,width=0.28\textwidth]{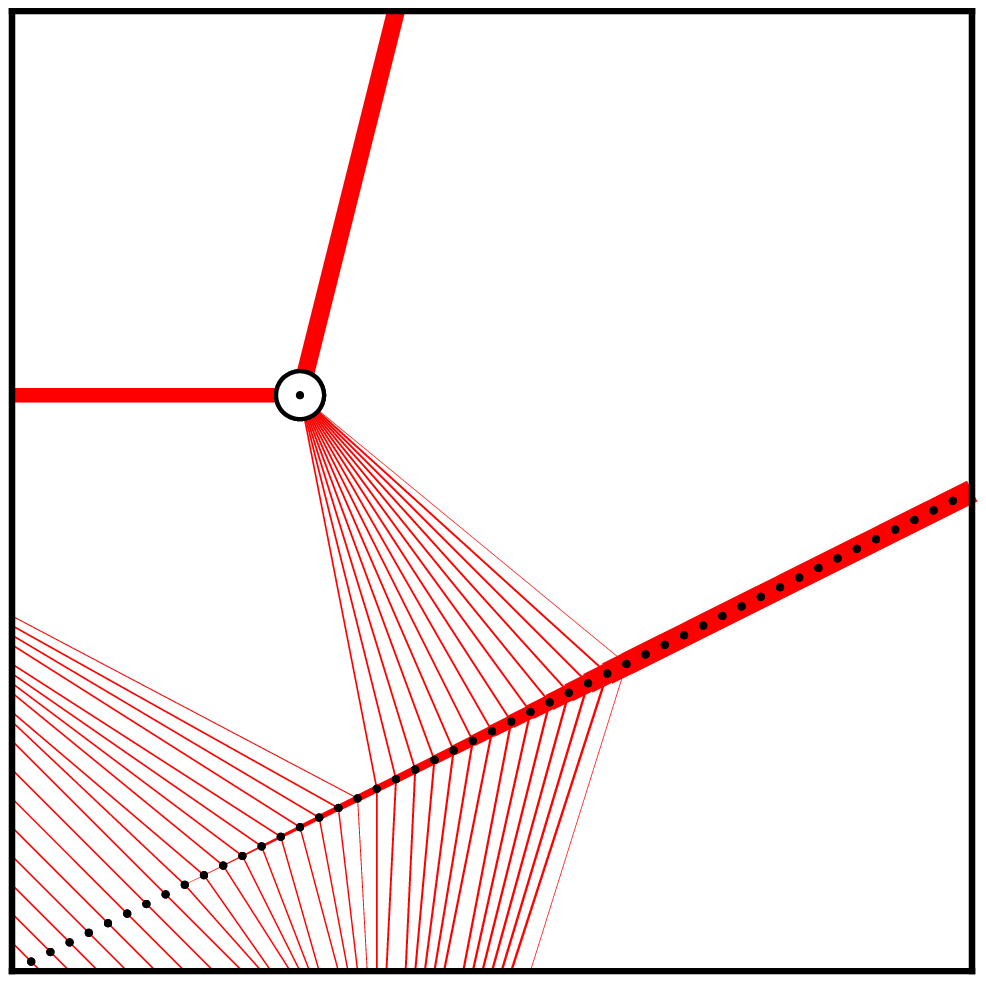}}\hspace{0.4cm}
		\subfloat[]{\includegraphics*[trim={0cm 0cm -0cm -0cm},clip,height=0.28\textwidth]{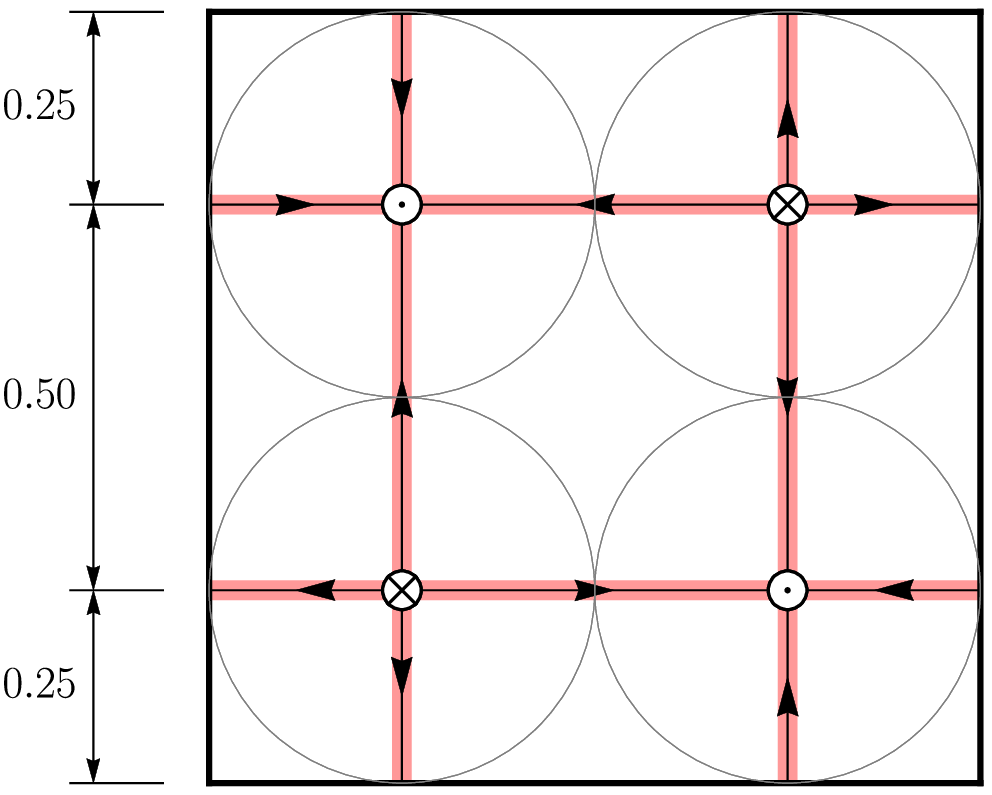}}
		\caption{ (a) optimal $\sigma_\Pi$ for three asymmetric point forces; (b) optimal $\sigma_\Pi$ for point force and force distributed along a line; (c) optimal $\sigma_\Pi$ and $\lambda_{\pi}$ for a signed load.}
		\label{fig:miscellaneous}
	\end{figure}
	Position of the signed load $f = \sum_{i=1}^{2} \delta_{x_i} - \sum_{i=3}^{4} \delta_{x_i}$ in Fig. \ref{fig:miscellaneous}(c) was strategically picked: the solution $w$ turns out to be zero implying that obtained $\sigma_{\Pi}$ solves the (FMD) problem while $\gamma = -\pi$ (Fig. \ref{fig:miscellaneous}(c) represents both optimal $\sigma_{\Pi}$ and $\lambda_{\pi}$, note the arrows) solves the optimal transport problem for the Euclidean distance. This implies that Corollary \ref{fmd<om} cannot be generalized to signed loads $f$. Solution from Fig. \ref{fig:miscellaneous}(c) may be proved to be exact by a simple adaptation of the proof of Proposition \ref{one-force_construction}: the graph of function $u$ can be constructed as an extension by zero of four cones of revolution with disks marked in Fig. \ref{fig:miscellaneous}(c) as bases and vertices $\bigl(x_i, \mathrm{sign}\bigl(f(x_i)\bigr) \sqrt{2}/4  \bigr)$. The key point is that in-flows and out-flows of $\lambda_{\pi}$ are matched at points where the pairs of disks touch and thus they play the role of additional points in the Dirichlet zone.
\end{example}

To conclude this section, we stress that in all the examples, a numerical solution $(\pi,\Pi)$ whose support is contained in $(\bO \cup \spt f)^2$ was found. In fact, the authors did not find a single counter-example to this rule, in particular in the case of a discrete load where an exact solution $(\pi,\Pi)$
should exist  according to Conjecture \ref{finiteconj}.
On the other hand, it appears that if  $\Omega$ is a convex domain and $\Sigma_0$ is the whole  boundary $\bO$, then 
 all  numerical predictions suggest that the exact optimal pair $(u,w)$ is Lipschitz continuous vanishing on the boundary. In contrast,
 if $\Sigma_0$ is a discrete subset of $\bO$ as it is in Example \ref{8points}, we have a strong indication that the exact solution $w$ could  exhibit discontinuities (that is $w\in BV\setminus W^{1,1}$).

\appendix

\section{Convex analysis}\label{secapp}


\subsection*{ Conjugate and biconjugate}

Let $X$ be a Banach space and $h:X\to \R \cup\{+\infty\}$ a convex function with non-empty domain $\mathrm{dom}(h)=\big\{x\in X : h(x)<+\infty\big\}.$
The conjugates $h^*: X^* \to \R \cup\{+\infty\}$ and $h^{**}:X\to \R \cup\{+\infty\}$ are defined respectively by
$$  h^*(x^*) = \sup_{x\in X}  \Big\{ \pairing{x,x^*} - h(x)\Big\},\qquad  h^{**}(x) = \sup_{x^*\in X^*}  \Big\{ \pairing{x,x^*} - h^*(x^*) \Big\}.$$

\begin{lemma}\label{pertu}\ Let $h:X\to \R \cup\{+\infty\}$ be a convex function such that $h(0)<+\infty$ and $h$ is lower semicontinuous
at $0$. Then it holds that $h(0) = h^{**}(0)= - \inf h^*$.
 Assume in addition that $h$ is continuous at $x=0$, then $h^*$ reaches its minimum on $X^*$. 

\end{lemma}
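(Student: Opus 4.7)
\textbf{Proof plan for Lemma \ref{pertu}.} The plan proceeds in three steps. First, the equality $h^{**}(0) = -\inf h^*$ is immediate from the definition of the biconjugate:
\begin{equation*}
 h^{**}(0) \ =\ \sup_{x^* \in X^*}\bigl\{\pairing{0,x^*} - h^*(x^*)\bigr\} \ =\ -\inf_{x^* \in X^*} h^*(x^*).
\end{equation*}

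Second, I would establish $h^{**}(0) = h(0)$. The inequality $h^{**}(0) \le h(0)$ is always valid, since the Fenchel-Young inequality $h^*(x^*) + h(0) \ge \pairing{0, x^*} = 0$ gives $\pairing{0,x^*} - h^*(x^*) \le h(0)$ for every $x^*$. For the converse, I argue by contradiction via Hahn-Banach separation. Suppose $h^{**}(0) < \alpha < h(0)$ for some real $\alpha$. By lower semicontinuity of $h$ at $0$, there exists an open neighborhood $U$ of $0$ in $X$ such that $h(x) > \alpha$ for every $x \in U$. The convex set $C := U \times (-\infty, \alpha)$ is then open, non-empty, and disjoint from the epigraph $\mathrm{epi}(h) := \{(x,t) \in X \times \R : h(x) \le t\}$, which is convex since $h$ is. The geometric Hahn-Banach theorem yields a non-zero continuous linear functional $(\xi, \tau) \in X^* \times \R$ and a real $\gamma$ such that $\pairing{x, \xi} + \tau\, t \le \gamma \le \pairing{x', \xi} + \tau\, t'$ for all $(x,t) \in C$ and $(x', t') \in \mathrm{epi}(h)$. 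Letting $t \to -\infty$ within $C$ forces $\tau \ge 0$, and $\tau = 0$ is excluded: indeed $\tau=0$ combined with $0 \in U \cap \mathrm{dom}(h)$ would give $\gamma = 0$ and $\pairing{x,\xi} \le 0$ on a neighborhood of $0$, hence $\xi = 0$, contradicting $(\xi,\tau) \neq 0$. Dividing through by $\tau > 0$ and setting $x^* := -\xi/\tau$, $c := \gamma/\tau$, I obtain a continuous affine minorant $A(x) = \pairing{x, x^*} + c \le h(x)$ with $A(0) = c \ge \alpha$. Consequently $h^*(x^*) = \sup_{x}\bigl\{\pairing{x,x^*} - h(x)\bigr\} \le -c \le -\alpha$, contradicting $\inf h^* = -h^{**}(0) > -\alpha$.

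Third, for the attainment statement under the assumption that $h$ is continuous at $0$, I would exploit that continuity implies $h$ is bounded above on some neighborhood $V$ of $0$, so $\mathrm{epi}(h)$ has non-empty interior. Applying Hahn-Banach to separate the closed convex set $\mathrm{epi}(h)$ from the half-line $\{(0,t) : t < h(0)\}$ at the boundary point $(0, h(0))$ produces (after normalization identical to the one above) an element $\bar x^* \in \partial h(0)$, i.e., a continuous affine support: $h(x) \ge h(0) + \pairing{x, \bar x^*}$ for all $x \in X$. This subdifferential inequality immediately yields $h^*(\bar x^*) \le -h(0)$; combined with the reverse Fenchel-Young inequality $h^*(\bar x^*) \ge \pairing{0, \bar x^*} - h(0) = -h(0)$, we deduce $h^*(\bar x^*) = -h(0) = -h^{**}(0) = \inf h^*$, so $\bar x^*$ realizes the minimum.

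The main obstacle is the non-degeneracy argument in the second step, where one must rule out a horizontal separating hyperplane (the case $\tau = 0$). It is precisely here that lower semicontinuity at $0$ intervenes: it guarantees that $0$ lies in the interior of $U$ while simultaneously being in $\mathrm{dom}(h)$, so that the separation constant $\gamma$ is pinned down and the only alternative to $\tau > 0$ is the trivial $(\xi,\tau) = 0$. Without l.s.c.\ at $0$, this obstruction cannot be removed and the biconjugate may strictly undershoot $h(0)$.
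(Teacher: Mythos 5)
Your argument is correct, and it provides an actual proof where the paper gives none: the paper's ``proof'' of Lemma \ref{pertu} is a one-line citation to \cite{bouchitte2006} and \cite[Thm I-12]{castaing}. You have reconstructed the standard Hahn--Banach separation argument underlying the pointwise Fenchel--Moreau identity. Two small remarks. First, in the second step the phrasing of where lower semicontinuity ``intervenes'' is a little misleading: l.s.c.\ at $0$ is used \emph{before} the separation, to manufacture the open neighborhood $U$ on which $h>\alpha$, so that $C=U\times(-\infty,\alpha)$ is disjoint from $\mathrm{epi}(h)$; once you have $C$, the exclusion of $\tau=0$ only uses openness of $U$ together with $0\in\mathrm{dom}(h)$. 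Second, in the third step you call $\mathrm{epi}(h)$ ``the closed convex set,'' but nothing in the hypotheses makes it closed; this is harmless because the version of Hahn--Banach you need only requires one of the two disjoint convex sets to have non-empty interior, which $\mathrm{epi}(h)$ does once $h$ is bounded above near $0$. After that correction, the construction of $\bar x^*\in\partial h(0)$ and the computation $h^*(\bar x^*)=-h(0)=\inf h^*$ are exactly right.
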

\begin{proof} See for instance \cite{bouchitte2006} or  \cite[Thm I-12]{castaing}.  \end{proof}

\subsection*{ Minimax Theorem}

\begin{theorem}\label{ky-Fan} \ (\textit{Ky-Fan})\quad For X,Y being two topological vector spaces, let  
 $\A\subset X$, $\B\subset Y$ be two non-empty convex subsets and let $\LL : \A\times\B \to \R$
 be a convex-concave Lagrangian (i.e. $u\in \A \mapsto \LL(u,v)$ is convex $\forall v\in \B$ and 
 $v\in \B \mapsto \LL(u,v)$ is concave  $\forall u\in \B$).
 Assume the followings:
 \begin{itemize}[leftmargin=1.5\parindent]
\item [(i)] \ $\A$ is a compact subset of $X$,
\item  [(ii)]\ for every $v\in\B$, the map $u\in \A \mapsto \LL(u,v)$ is lower semicontinuous.
\end{itemize}
Then the following equality holds (in $\R \cup \{+\infty\}$):
$$ \min_{u\in \A} \ \sup_{v\in\B} \ \LL(u,v) \quad =\quad \sup_{v\in\B} \  \min_{u\in \A} \ \LL(u,v).$$
Assume in addition that $\B$ is a convex compact subset of $Y$. Then $\LL$ admits 
 a saddle point $(\bar u,\bar v)\in \A\times\B$, that is:
 $$   \LL(\bar u, v) \ \le\   \LL(\bar u,\bar v)\ \le\  \LL(u,\bar v)\qquad \forall u\in \A \ ,\
 \forall v\in \B. $$

\end{theorem}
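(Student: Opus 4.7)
The inequality $\sup_{v\in\B}\inf_{u\in\A}\LL(u,v)\le\inf_{u\in\A}\sup_{v\in\B}\LL(u,v)$ is free, and hypothesis (ii) combined with the compactness of $\A$ forces the right-hand infimum to be attained, since $u\mapsto\sup_v\LL(u,v)$ is l.s.c.\ on $\A$ as a supremum of l.s.c.\ functions. My plan is therefore to establish the reverse inequality and then, under the additional compactness of $\B$, to extract a saddle point from the resulting minimax identity.

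For the nontrivial direction I would use the classical finite-cover plus convexification argument. Fix any $\beta<\min_u\sup_v\LL(u,v)$; hypothesis (ii) makes each $U_v:=\{u\in\A:\LL(u,v)>\beta\}$ open in $\A$, and by the definition of $\beta$ these sets cover the compact $\A$, so I may extract a finite subcover $\A=\bigcup_{i=1}^n U_{v_i}$. The next step introduces, on the compact simplex $\Delta_n=\{t\in\R^n_+:\sum_i t_i=1\}$, the auxiliary Lagrangian
\[
F(u,t):=\sum_{i=1}^n t_i\,\LL(u,v_i),
\]
which is convex l.s.c.\ in $u$ and continuous affine in $t$; the map $u\mapsto\max_t F(u,t)=\max_i\LL(u,v_i)$ is l.s.c.\ on $\A$ and strictly greater than $\beta$ at every $u$, hence its compactly attained minimum over $\A$ still exceeds $\beta$. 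The main obstacle is to commute the two extrema in this auxiliary problem: because $F$ is affine (hence concave and continuous) in $t$ on the compact simplex and convex l.s.c.\ in $u$, Sion's minimax theorem (or, equivalently, the classical von Neumann/Hahn--Banach argument) applies and yields some $t^\ast\in\Delta_n$ with $\sum_i t_i^\ast\LL(u,v_i)\ge\beta$ for every $u\in\A$. Setting $\bar v:=\sum_i t_i^\ast v_i\in\B$ by convexity of $\B$ and invoking concavity of $\LL(u,\cdot)$ one gets $\LL(u,\bar v)\ge\beta$ uniformly in $u$, so that $\sup_v\inf_u\LL(u,v)\ge\beta$; letting $\beta\uparrow\min_u\sup_v\LL(u,v)$ closes the equality. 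The asymmetric hypotheses (nothing is assumed about semicontinuity in $v$) are precisely what forces us to go through this finite-dimensional affine/convex reduction rather than appealing directly to a more symmetric minimax theorem on $\A\times\B$.

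For the saddle-point statement, assume in addition that $\B$ is convex and compact. By the minimax identity and the l.s.c.\ argument recalled at the outset, there exists $\bar u\in\A$ realizing $\sup_v\LL(\bar u,v)=c$, where $c:=\min_u\sup_v\LL(u,v)=\sup_v\inf_u\LL(u,v)$. I would then run the analogous finite-cover plus convexification machinery on the $\B$-side---covering $\B$ by suitable superlevel sets of the concave function $v\mapsto\LL(\bar u,v)$ and using a Hahn--Banach separation in the locally convex space $Y$ to produce a maximizer---so as to obtain $\bar v\in\B$ with $\LL(\bar u,\bar v)=c$ and $\inf_u\LL(u,\bar v)=c$. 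The defining inequalities $\LL(\bar u,v)\le c=\LL(\bar u,\bar v)\le\LL(u,\bar v)$ then hold for every $(u,v)\in\A\times\B$, yielding the saddle point. The delicate technical point here is the attainment of the $\B$-side extremum in the absence of semicontinuity in $v$; it is handled by exploiting the concavity of $\LL(\bar u,\cdot)$ together with the convex compactness of $\B$ in the locally convex Hausdorff topological vector space $Y$.
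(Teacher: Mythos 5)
The paper's own ``proof'' is just a citation (Clarke, Ekeland--Temam), so any self-contained argument is a departure. Your proof of the minimax \emph{equality} is correct and is the classical route: cover $\A$ by the open $\beta$-superlevel sets $U_v$, extract a finite subcover, convexify over the simplex $\Delta_n$, apply a finite-dimensional minimax theorem to the auxiliary affine/convex Lagrangian $F$, and push the optimal weight $t^\ast$ into $\B$ by concavity of $\LL(u,\cdot)$. It is correctly calibrated to the asymmetric hypotheses: no continuity in $v$ is ever invoked. One detail worth stating explicitly is attainment of $\max_t\min_u F(u,t)$; it holds because $t\mapsto\min_u F(u,t)$ is an infimum of continuous functions on the compact simplex and hence upper semicontinuous.

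The saddle-point clause is a genuine gap, and it cannot be filled as you outline. You propose to ``run the analogous finite-cover plus convexification machinery on the $\B$-side,'' covering $\B$ by superlevel sets of the concave function $v\mapsto\LL(\bar u,v)$; but the stated hypotheses impose \emph{no} upper semicontinuity of $\LL(u,\cdot)$, so these superlevel sets need not be open and compactness of $\B$ buys nothing. In fact the second assertion is false as written: take $\A=\B=[0,1]$, $\LL(u,v)=u^2+v$ for $v\in[0,1)$ and $\LL(u,1)=u^2$. This is convex continuous in $u$ and concave in $v$; both sides of the minimax identity equal $1$, yet $\inf_u\LL(u,v)$ equals $v$ for $v<1$ and $0$ for $v=1$, so $\sup_v\inf_u$ is not attained and no saddle point exists. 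The references the paper points to (e.g.\ Ekeland--Temam, Ch.\ VI) add the hypothesis that $\LL(u,\cdot)$ be u.s.c., and the paper's only use of the saddle-point clause (Corollary~\ref{saddle1}, where upper semicontinuity of $L(\cdot,\gamma)$ on $\mbf{M}_\O$ is verified via Fatou) does respect it. So the appendix statement is missing an u.s.c.\ hypothesis in its second part; once that is restored, the attainment of $\sup_v\inf_u$ follows directly from compactness of $\B$ together with u.s.c.\ of $v\mapsto\inf_u\LL(u,v)$ (an infimum of u.s.c.\ functions is u.s.c.), with no need for a $\B$-side cover argument at all.
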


 \begin{proof} See for instance \cite[Theorem 4.36, p.73]{clarke-calvar} and \cite[Chap VI]{ekeland1999} for the saddle point statement. \end{proof}

\section{Tangential calculus with respect to a measure}
\label{appendix_mu_calculus}

For more details on this theory we refer to \cite{BBS,JFAmu,survey}
and to \cite{BCJ} for the specific case of Lipschitz functions. In what follows 
$ \mathrm{Lip}(\Ob)$ will be embedded with the weak* topology of $W^{1,\infty}(\O)$, which amounts to saying that $\f_n\, \weakstar\, \f$ in $\mathrm{Lip}(\Ob)$
if and only if $\f_n\to \f$ uniformly in $\Ob$ while  $\{\f_n\} $ is equi-Lipschitz.
Let $\la\in \Mes(\Ob;\R^d)$ and consider a  decomposition  $\lambda=\xi\, \mu$
with $\mu \in \Mes_+(\Ob)$ such that $\mu(\Ob)<+\infty$ and $\xi\in L^1_{\mu}(\Ob;\R^d)$.
We notice that if $\dive\, \la\in\Mes(\Ob)$ (the distributional divergence is intended in whole $\R^d$), then
 for any  sequence    
$(\f_n)$ in $C^{\infty}(\Ob)$ such that 
$ \f_n\,\weakstar\, 0$ in $\mathrm{Lip}(\Ob)$  one has
$\langle -\dive\,\lambda, \varphi_n\rangle\;=\;\langle \lambda, \nabla\varphi_n\rangle=\int_\Ob \sigma\cdot\nabla \varphi_n\ d\mu\;\to\; 0.$
This implies that  $\int_\Ob \pairing{\xi, \zeta}\, d\mu=0$ holds for every $\zeta$ in the following set:
\begin{align*}
\NN:=\bigg\{ \zeta\in L^\infty_{\mu}(\Ob;\R^d):\ \exists (u_n)_n, \ \ u_n\in C^{\infty}(\O),
\quad  u_n \rightarrow 0\ \text{ uniformly}, \ \ \nabla u_n\,\weakstar\, \zeta \  \ \text{in } (L^\infty_\mu)^d \bigg\}.
\end{align*} 
The orthogonal complement of $\NN$ in $L^1_{\mu}(\Ob;\R^d)$ defined by
$$\NN^\perp \ :=\ \left\{\eta \in  L^1_{\mu}(\Ob;\R^d)\ :\ \int_\Ob \pairing{\eta,\zeta} \, d\mu  = 0 \quad \hbox{for all $\zeta \in \NN $} \right\}$$
is a closed vector subspace of $L^1_{\mu}(\Ob;\R^d)$ which is stable by the multiplication by smooth scalar functions. 
Following \cite{JFAmu,BCJ} we define the
 tangent space $T_{\mu}$ to the measure $\mu$ through the following local characterization of $\NN^{\perp}$:
%
\begin{proposition} \label{Tmu} The following statements hold true:
\begin{itemize}[leftmargin=1.5\parindent]
	\item[(i)] There exists a $\mu$-measurable multifunction $T_{\mu}$ from $\Ob$ to the  linear subspaces of $\R^{d}$ such that:
	$$\eta\in \NN^\perp\quad \Longleftrightarrow\quad \eta(x)\in T_{\mu}(x) \qquad \text{for $\mu$-a.e.}\ x\in\R^d;$$ 
	\item[(ii)] The linear operator $ u\in C^1(\Ob) \mapsto  P_\mu(x) \nabla u (x) \in L^\infty_\mu(\Ob;\R^d)$, where $P_\mu(x)$ denotes the orthogonal projector onto $T_\mu(x)$, can be uniquely extended to a linear continuous operator 
	$$\nabla_\mu: u \in \mathrm{Lip}(\Ob) \  \mapsto \ \nabla_\mu u \in L^\infty_\mu(\Ob;\R^d), $$ 
	$ \mathrm{Lip}(\Ob)$ and  $ L^\infty_\mu(\Ob;\R^d)$ being equipped with their  weak* topology;
	\item[(iii)] The linear operator $ w\in C^1(\Ob;\R^d) \mapsto  P_\mu(x)\, e(w)\,  P_\mu(x) \in L^\infty_\mu(\Ob;\Sdd)$   extends in a unique way to a linear  continuous operator 
	$$e_\mu: w \in \mathrm{Lip}(\Ob;\R^d) \ \mapsto \ e_\mu(w) \in L^\infty_\mu(\Ob;\Sdd), $$ 
	$ \mathrm{Lip}(\Ob;\Rd)$ and $ L^\infty_\mu(\Ob;\Sdd)$ being equipped with the weak star topology.
\end{itemize}
\end{proposition}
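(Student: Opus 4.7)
The plan is to first establish assertion (i) by exhibiting a pointwise-in-$x$ description of the closed subspace $\NN^\perp\subset L^1_\mu(\Ob;\R^d)$, and then to use this representation to define and characterize the extensions required in (ii) and (iii). The only non-routine step is the measurable localization in (i); everything else follows by clean duality arguments already developed in \cite{BBS,JFAmu,BCJ}.

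For (i), I would start by verifying that $\NN$ is stable under multiplication by smooth scalar functions: if $\zeta\in\NN$ arises as the weak$^*$ limit of $\nabla u_n$ with $u_n\in C^\infty(\Ob)$ and $u_n\to 0$ uniformly, then for any $\f\in C^\infty(\Ob)$ the sequence $\f u_n$ also tends to $0$ uniformly, and the Leibniz rule $\nabla(\f u_n)=\f\nabla u_n+u_n\nabla\f$ yields $\nabla(\f u_n)\weakstar \f\zeta$ in $L^\infty_\mu$, so $\f\zeta\in\NN$. By duality the closed subspace $\NN^\perp$ is stable under multiplication by $C^\infty$-functions and, by density, by bounded Borel scalar functions. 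Invoking the standard Castaing-type decomposition for closed $L^\infty_\mu$-submodules of $L^1_\mu(\Ob;\R^d)$ (see \cite{BBS,JFAmu}), this module structure produces a $\mu$-measurable multifunction $T_\mu:\Ob\to\{\text{linear subspaces of }\R^d\}$ such that $\eta\in\NN^\perp$ iff $\eta(x)\in T_\mu(x)$ for $\mu$-a.e.\ $x$; concretely one picks a countable dense sequence $(\eta_j)\subset\NN^\perp$ and sets $T_\mu(x):=\mathrm{span}\{\eta_j(x):j\in\N\}$. As an immediate by-product, given $\zeta\in\NN$, testing $\int\pairing{\zeta,\eta}\,d\mu=0$ with $\eta=\one_B\, P_\mu\zeta$ over every Borel $B\subset\Ob$ forces $P_\mu\zeta=0$ $\mu$-a.e., where $P_\mu(x)$ is the orthogonal projector onto $T_\mu(x)$.

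Assertion (ii) then proceeds by approximation. Given $u\in\mathrm{Lip}(\Ob)$, extend $u$ Lipschitzly to $\R^d$ and consider $u_n:=u*\rho_{1/n}\in C^\infty(\Ob)$; one has $u_n\weakstar u$ in $\mathrm{Lip}(\Ob)$ and $\{\nabla u_n\}$ is bounded in $L^\infty_\mu(\Ob;\R^d)$. By Banach--Alaoglu it has a weak$^*$ cluster point $\xi$, and I define $\nabla_\mu u:=P_\mu\xi$. Independence from the approximating sequence follows from the by-product: for another smooth sequence $\tilde u_n\weakstar u$ with $\nabla\tilde u_n\weakstar\tilde\xi$, the difference $u_n-\tilde u_n\to 0$ uniformly and $\nabla(u_n-\tilde u_n)\weakstar\xi-\tilde\xi\in\NN$, whence $P_\mu(\xi-\tilde\xi)=0$. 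Running the same cluster-point argument for any weak$^*$ convergent sequence $u_n\weakstar u$ in $\mathrm{Lip}(\Ob)$ delivers the required weak$^*$ continuity of $\nabla_\mu$, and on $C^1(\Ob)$ the definition returns $P_\mu\nabla u$ as it should.

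Finally (iii) is a polarized replay of (ii). The only new input is the following tangential by-product: if $B$ is a weak$^*$ cluster point of $e(w_n)$ for some $w_n\in C^1(\Ob;\R^d)$ with $w_n\to 0$ uniformly, then $P_\mu B\, P_\mu=0$ $\mu$-a.e. By polarization it suffices to show $\pairing{B\,\eta,\eta}=0$ $\mu$-a.e.\ for every smooth selection $\eta$ of $T_\mu$. Setting $U_n:=w_n\cdot\eta$, one has $U_n\to 0$ uniformly, and a direct expansion gives $\pairing{e(w_n)\,\eta,\eta}=\eta\cdot\nabla U_n$ modulo a term of the form $w_n\cdot G(\eta)$ which converges uniformly to zero; any weak$^*$ cluster point of $\nabla U_n$ lies in $\NN$ hence is pointwise orthogonal to $\eta\in T_\mu$, so $\eta\cdot\nabla U_n\weakstar 0$ in $L^\infty_\mu$ and $\pairing{B\,\eta,\eta}=0$ as required. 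Uniqueness and weak$^*$ continuity of $e_\mu$ then follow exactly as in (ii).
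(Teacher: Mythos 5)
Your parts (i) and (ii) are sound: deferring the Castaing-type measurable-module decomposition of $\NN^\perp$ to the cited references is reasonable, and the by-product $P_\mu\zeta=0$ $\mu$-a.e.\ for $\zeta\in\NN$ is exactly the local statement that drives well-posedness and weak$^*$ sequential continuity of $\nabla_\mu$. (The paper itself supplies no proof of this proposition and simply refers to \cite{BBS,JFAmu,BCJ}, so there is no ``paper's proof'' to compare against.)

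Part (iii), however, contains a genuine gap. You reduce $P_\mu B P_\mu=0$ to showing $\pairing{B\eta,\eta}=0$ $\mu$-a.e.\ for ``every smooth selection $\eta$ of $T_\mu$.'' Your computation with $U_n=w_n\cdot\eta$ needs $\eta\in C^1$ (so that $U_n\in C^1$ and a weak$^*$ cluster point $\zeta$ of $\nabla U_n$ belongs to $\NN$) \emph{and} $\eta(x)\in T_\mu(x)$ $\mu$-a.e.\ (so that $P_\mu\zeta=0$ forces $\eta\cdot\zeta=0$). But $T_\mu$ is only a $\mu$-measurable multifunction: in general it carries no nonzero $C^1$ selection at all, let alone a family of $C^1$ selections spanning $T_\mu(x)$ for $\mu$-a.e.\ $x$, so the polarization step cannot pin down $P_\mu B P_\mu$. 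For an arbitrary $\eta\in C^1(\Ob;\R^d)$ your identity only yields $\pairing{B\eta,\eta}=\pairing{(\Ident-P_\mu)\eta,\zeta}$ $\mu$-a.e., which has no reason to vanish.

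The clean repair is componentwise and avoids polarization. Write $w_n=(w_n^1,\ldots,w_n^d)$; each $w_n^i\to 0$ uniformly, so (after a harmless mollification if $w_n$ is only $C^1$) every weak$^*$ cluster point $\zeta^i$ of $\nabla w_n^i$ lies in $\NN$ and $P_\mu\zeta^i=0$ by your by-product in (i). If $G$ is the corresponding matrix cluster point of $\nabla w_n$, its $i$-th row is $\zeta^i$, so $GP_\mu=0$ $\mu$-a.e.; hence $P_\mu G P_\mu=0$ and $P_\mu G^{T} P_\mu=(P_\mu G P_\mu)^{T}=0$. Since $B=\frac{1}{2}(G+G^{T})$, this gives $P_\mu B P_\mu=0$, and uniqueness and weak$^*$ continuity of $e_\mu$ then follow exactly as in your argument for (ii).
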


\begin{remark}\label{k-manifold}  By virtue of the second assertion in Proposition \ref{Tmu} any Lipschitz function admits,
for every measure $\mu$, a $\mu$-a.e. defined tangential gradient $\nabla_\mu u$. In the case where $\mu$ is the $k$-dimensional
Hausdorff measure restricted to a smooth $k$-dimensional manifold in $\R^d$, this tangential gradient coincides with the one 
which is obtained  by using  Rademacher theorem on local charts representing the manifold.
If $\mu$ is a discrete measure, then $T_\mu(x)=\{0\}$ and the tangential gradient vanishes.
If $\mu$ is a multidimensional measure of the kind $\mu= \sum_i \mu_i$, where the $\mu_i$'s are mutually singular and $\mu_i=\Ha^{k_i} \mres S_i$, 
$S_i$ being a smooth $k_i$-dimensional manifold in $\R^d$ ($1\le k_i\le d$), then $T_\mu(x)=T_{\mu_i} (x)$ and $\nabla_\mu u
= \nabla_{\mu_i} u$ \ $\mu_i$-a.e. for any $u\in \mathrm{Lip}(\Ob)$.
\end{remark}

\bigskip
In view of Proposition \ref{Tmu} we may define in an intrinsic way the set of {\em tangential} vector measures
$$ \Mes_T(\Ob;\R^d) \ :=\ \Big\{ \la = \xi \,\mu \ : \  \mu\in \Mes_+(\Ob), \ \ \xi(x) \in T_\mu(x)  \ \  \mu\text{-a.e}\Big\}.$$
It can be shown that the property $\la\in \Mes_T(\Ob;\R^d)$ is independent of the chosen decomposition $\la=\xi\mu$ (see for instance \cite{BBS}).

\begin{remark}\ If $\xi\in L^1(\Omega;\R^d)$, then the measure
$\xi\, \LL^d \mres \O$ is an element of $\Mes_T(\Ob;\R^d)$ since 
$T_{\LL^d}(x)=\R^d\-$ a.e. in $\O$. On the other hand, if  $\la\in \Mes_T(\Ob;\R^d)$, the condition
$\frac{d\la}{d|\la|} \in T_{|\la|}(x)$ implies  $|\la|$-a.e that ${\rm dim} \big(T_{|\la|}(x)\big) \ge 1 \ \ |\la|$-a.e. 
As a consequence, elements of  $\Mes_T(\Ob;\R^d)$ are atomless.
\end{remark}

We are now in position to give the desired integration by parts formulae for tangential vector (resp. symmetric tensor) measures 
\begin{proposition} \label{byparts} The following statements hold true:
\begin{itemize}[leftmargin=1.5\parindent]
	\item[(i)] Let $\la\in \Mes(\Ob;\R^d)$ be such that  $-\dive\, \la \in
	\Mes(\Ob)$. Then  $\la\in \Mes_T(\Ob;\R^d)$ and for any decomposition $\la=\xi\, \mu$  we have 
	$$ \langle - \dive\,\la, u\rangle \ =\ \int_\Ob \pairing{\xi,\nabla_\mu u} \, d\mu\qquad \forall\, u\in \mathrm{Lip}(\Ob).$$
	\item[(ii)] Let $\sigma\in \Mes(\Ob;\Sdd)$ be such that  $-\DIV\, \sigma =0$ in $\O$ and let $\sigma=S\mu$  be a decomposition of $\sigma$.
	Then it holds that $P_\mu(x)\, S \, P_\mu(x)= S$ \ $\mu$-a.e. and   we have
	$$ \int_\Ob \pairing{S , e_\mu (w)} \, d\mu\ =\ 0   \qquad \forall w\,\in \mathrm{Lip}_0(\O;\R^d). $$
\end{itemize}
\end{proposition}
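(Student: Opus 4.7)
Both parts rest on the characterisation $\eta\in\NN^\perp \iff \eta(x)\in T_\mu(x)$ $\mu$-a.e.\ from Proposition~\ref{Tmu}(i), combined with the weak-$*$ continuity of the tangential differential operators (Proposition~\ref{Tmu}(ii)--(iii)). Since $-\dive\,\la$ (resp.\ $-\DIV\,\sigma$) is a finite Radon measure, it pairs continuously with uniformly convergent sequences of smooth functions, and this is the bridge that delivers the required orthogonality on one hand and the passage to Lipschitz test functions on the other.

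For (i), I would proceed in two steps. \emph{Tangentiality of $\la$:} given any $\zeta\in\NN$ together with witnesses $u_n\in C^\infty(\Ob)$ such that $u_n\to 0$ uniformly and $\nabla u_n\weakstar\zeta$ in $L^\infty_\mu$, I evaluate both sides of the identity $\langle -\dive\,\la,u_n\rangle = \int\pairing{\xi,\nabla u_n}\,d\mu$. The left-hand side tends to $0$ because $-\dive\,\la$ is a finite measure and $u_n\to 0$ uniformly, while the right-hand side tends to $\int\pairing{\xi,\zeta}\,d\mu$ by weak-$*$ convergence. Hence $\xi\in\NN^\perp$, i.e.\ $\xi(x)\in T_\mu(x)$ $\mu$-a.e., which is precisely $\la\in\Mes_T(\Ob;\R^d)$. \emph{Passage to $u\in\mathrm{Lip}(\Ob)$:} approximate $u$ by an equi-Lipschitz sequence $u_n\in C^\infty(\Ob)$ with $u_n\to u$ uniformly (mollify a Lipschitz extension of $u$ to $\Rd$). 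Writing $\langle -\dive\,\la,u_n\rangle=\int\pairing{\xi,\nabla u_n}\,d\mu=\int\pairing{\xi,\nabla_\mu u_n}\,d\mu$, where the last equality uses the already-established $\xi(x)\in T_\mu(x)$ to absorb the projector $P_\mu$, the conclusion follows by passing to the limit: the left side converges by uniform convergence against the finite measure $-\dive\,\la$, and the right side by $\xi\in L^1_\mu$ and the weak-$*$ convergence $\nabla_\mu u_n\weakstar\nabla_\mu u$ guaranteed by Proposition~\ref{Tmu}(ii).

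For (ii), the strategy is parallel but vector-valued. \emph{Sandwich identity $P_\mu S P_\mu = S$ on $\Omega$:} for every $b\in\R^d$ and $\varphi\in\D(\Omega)$, apply the divergence condition to $w=\varphi\,b\in\D(\O;\R^d)$; the symmetry of $S$ yields $0=-\langle\DIV\,\sigma,w\rangle=\int\pairing{S,e(\varphi b)}\,d\mu=\int\pairing{Sb,\nabla\varphi}\,d\mu$. Then, for any $\zeta\in\NN$, localise a witness sequence $\varphi_n\in C^\infty(\Ob)$ of $\zeta$ by multiplying with a smooth cutoff $\chi\in\D(\Omega)$ and mollifying (this preserves uniform convergence to $0$ and yields $\nabla(\chi\varphi_n)\weakstar\chi\zeta$ in $L^\infty_\mu$); sending $\chi$ to $\mathbf{1}_\Omega$ one obtains $\int_\Omega\pairing{Sb,\zeta}\,d\mu=0$ for every $\zeta\in\NN$. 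Hence $Sb(x)\in T_\mu(x)$ $\mu$-a.e.\ in $\Omega$, so $P_\mu S=S$ on $\Omega$; symmetry of $S$ then gives $SP_\mu=S$ and thus $P_\mu S P_\mu=S$ $\mu$-a.e. \emph{Integration by parts:} approximate $w\in\mathrm{Lip}_0(\O;\R^d)$ by $w_n\in\D(\O;\R^d)$ with $w_n\weakstar w$ in $\mathrm{Lip}(\Ob;\R^d)$, by multiplying $w$ with a Lipschitz cutoff vanishing on an $\e$-neighbourhood of $\partial\Omega$ (possible precisely because $w$ vanishes on $\partial\Omega$) and then mollifying. For such $w_n$ one has $0=-\langle\DIV\,\sigma,w_n\rangle=\int\pairing{S,e(w_n)}\,d\mu$; since $\spt(w_n)\subset\Omega$ and $P_\mu SP_\mu=S$ there, the identity $\pairing{S,e(w_n)}=\pairing{S,P_\mu e(w_n)P_\mu}=\pairing{S,e_\mu(w_n)}$ holds $\mu$-a.e.\ on $\Omega$ while both sides vanish on $\partial\Omega$ (because $e(w_n)$ does). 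Passing to the limit through the weak-$*$ continuity of $e_\mu$ (Proposition~\ref{Tmu}(iii)) together with $S\in L^1_\mu$ yields the desired equality $\int\pairing{S,e_\mu(w)}\,d\mu=0$.

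The technical points on which I would focus are (a) the cutoff-and-mollification localisation showing that $T_\mu|_\Omega$ is insensitive to replacing $C^\infty(\Ob)$ by $C_c^\infty(\Omega)$ in the definition of $\NN$, which is needed to deduce $Sb\in T_\mu$ on $\Omega$ from test functions with compact support in $\Omega$, and (b) the density step that approximates Lipschitz maps vanishing on $\partial\Omega$ by smooth maps compactly supported in $\Omega$ in the weak-$*$ topology of $\mathrm{Lip}(\Ob;\R^d)$. Both are standard but delicate ingredients, and they are the real content of the argument beyond the abstract orthogonality/continuity machinery of Proposition~\ref{Tmu}.
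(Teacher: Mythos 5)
Your argument is correct, and it is genuinely different in structure from the paper's. For part (i) the paper does not give a proof at all: it simply cites \cite[Prop.~3.5]{BCJ}. You instead prove it from scratch by testing the identity $\langle-\dive\la,u_n\rangle=\int\pairing{\xi,\nabla u_n}\,d\mu$ directly against a witness sequence for an arbitrary $\zeta\in\NN$, getting $\xi\in\NN^\perp$ and hence $\la\in\Mes_T(\Ob;\R^d)$, and then pass to Lipschitz $u$ by mollification plus the weak-$*$ continuity of $\nabla_\mu$ from Proposition~\ref{Tmu}(ii). For part (ii) the paper's route is to localise $\sigma$ to $\tilde\sigma=\theta_\delta\sigma$ (so that $\DIV\tilde\sigma=S\nabla\theta_\delta\,\mu$ becomes an honest finite measure), apply the \emph{already-established} assertion (i) row by row to $\tilde\sigma$ to get $S(x)b\in T_\mu(x)$ $\mu$-a.e.\ on $\O_\delta$, hence the sandwich identity on $\O$, and then finish the integration by parts by applying (i) once more to $\tilde\sigma$ after reducing to compactly supported $w$. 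Your route avoids applying (i) to $\tilde\sigma$ altogether: you test $-\langle\DIV\sigma,\varphi b\rangle=0$ directly on $\varphi\in\D(\O)$, then use a cutoff $\chi\in\D(\O)$ together with the defining witness sequence of any $\zeta\in\NN$ (noting $\nabla(\chi\varphi_n)\weakstar\chi\zeta$) and a monotone passage $\chi\uparrow\mathbf{1}_\O$ to conclude $\mathbf{1}_\O\,Sb\in\NN^\perp$, hence the sandwich identity; and for the final integration by parts you exploit the pointwise identity $\pairing{S,e(w_n)}=\pairing{P_\mu SP_\mu,e(w_n)}=\pairing{S,e_\mu(w_n)}$ on the support of the compactly supported smooth approximants $w_n$ rather than invoking (i) again. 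Both strategies rest on the same two pillars (orthogonality to $\NN$ via Proposition~\ref{Tmu}(i), weak-$*$ continuity of $\nabla_\mu$ and $e_\mu$ via (ii)--(iii)); yours is self-contained and makes the role of the cutoff/localisation mechanism explicit, while the paper's is shorter because it reduces (ii) to the cited assertion (i). One small stylistic gain of the paper's version is that the cutoff $\theta_\delta\sigma$ packages the localisation once, whereas you perform it twice (once for the sandwich identity, once in the approximation of $w$), but both are correct and of comparable length.
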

\begin{proof} For the assertion (i) we refer to \cite[Prop. 3.5]{BCJ}. To show (ii) we consider a localization function $\theta_\d \in \D(\O; [0,1])$
such that $\theta_\d = 1 $ on $\O_\d:=\{x\in\O : d(x, \bO) > \delta\}$. Then, the measure $\tilde \sigma:=\theta_\d \sigma$ satisfies
 $\DIV\, \tilde \sigma=
S \, \nabla \theta_\d \, \mu \in \Mes(\Ob;\R^d)$, hence all its rows or columns are tangential measures.  It follows that the rows and columns of $S$
belong to $T_\mu(x)$, hence $ P_\mu(x)\, S\, P_\mu(x)= S$ \ $\mu$-a.e. in $\O_\d$. Eventually let $w\in  \mathrm{Lip}_0(\O;\R^d)$ and let us  prove that
 $\int_\Ob \pairing{S , e_\mu (w)} \, d\mu = 0 $.  As $w$ can be approximated by a sequence $(w_n)\in \mathrm{Lip}(\O;\R^d)$ such that 
 $w_n$ is compactly supported in $\O$ while $w_n\,\weakstar\, w$ and  $\int_\Ob \pairing{S , e_\mu (w_n)} \, d\mu \to
\int_\Ob \pairing{S , e_\mu (w)} \, d\mu  $  (see assertion (iii) in Proposition \ref{Tmu}), it is not restrictive to assume that
$w$ is itself compactly supported in $\O$, hence in $\O_\d$ for $\d$ small enough. 
 Then, since $\tilde \sigma =\sigma= S\, \mu$  and $\DIV\, \tilde \sigma=0$ in $\O_\d$, by applying the integration by parts formula from assertion (i), we infer that $ 0 =  \langle - \DIV\, \tilde \sigma  , w\rangle \  =\ \int_\Ob \pairing{S,e_\mu (w)} \, d\mu. $
\end{proof}

\null

\section{Mollifications of convex functions of measures }\ Let $h: \R^m\to [0,+\infty]$ be a convex, l.s.c. and positively one homogeneous integrand
and let $\chi\in \Mes(\R^d;\R^m)$
 to which we associate the scalar non-negative Borel measure $h(\chi)$ on $\R^d$ (see  \cite{Goffman}).
 Let us apply to $\chi$  a smooth convolution kernel $\theta_\e(x)= \e^{-d} \theta\big(\frac{x}{d}\big)$ where  $\e>0$ and $\theta$ is a radial symmetric element of $\D^+(\R^d)$ such that $\int \theta =1$.
 \begin{lemma} \label{mollih}
 We have $h(\chi * \theta_\e) \le h(\chi)$ for every $\e>0$ and $\lim_{\e\to 0} \int_A h(\chi * \theta_\e) = \int_A h(\chi)$ (possibly equal to $+\infty$)
 for every open subset $A\subset \R^d$.
 \end{lemma}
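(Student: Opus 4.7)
The plan is to establish both parts by combining the duality representation of $h$ as the support function of a closed convex set with Jensen's inequality and Reshetnyak-type continuity for convex functionals on measures.

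First I would recall that, since $h:\R^m\to[0,+\infty]$ is convex, lower semicontinuous and positively $1$-homogeneous, it is the support function of the closed convex set $C=\{\xi\in\R^m:\pairing{\xi,z}\le h(z)\ \forall z\}$. By the Goffman-Serrin representation, for every open set $B\subset\R^d$,
\[
\int_B h(\chi)\ =\ \sup\left\{\int_B\pairing{\varphi,d\chi}\ :\ \varphi\in C_c(B;\R^m),\ \varphi(y)\in C\ \forall y\right\}.
\]

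For the inequality $h(\chi*\theta_\e)\le h(\chi)$, I would exploit the radial symmetry of $\theta_\e$ and Fubini's theorem. Given $\varphi\in C_c(\R^d;\R^m)$ with $\varphi(y)\in C$ everywhere, one has
\[
\int\pairing{\varphi,d(\chi*\theta_\e)}\ =\ \int\pairing{\varphi*\theta_\e,d\chi}.
\]
Because $\theta_\e$ is a probability density, $\varphi*\theta_\e$ is a continuous map whose pointwise values are convex combinations of values of $\varphi$, hence remain in $C$; its support is contained in $\spt(\varphi)+B_\e$. The right-hand side is therefore bounded by $\int h(\chi)$, which after taking supremum over admissible $\varphi$ gives $\int h(\chi*\theta_\e)\le\int h(\chi)$. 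The same argument restricted to test functions supported in a Borel set $B$ yields in fact the stronger Jensen-type measure inequality $h(\chi*\theta_\e)\le h(\chi)*\theta_\e$ as non-negative measures on $\R^d$, with the localized bound $h(\chi*\theta_\e)(B)\le h(\chi)(B+B_\e)$.

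For the convergence statement, I would proceed in two steps. The lower bound $\liminf_{\e\to0}\int_A h(\chi*\theta_\e)\ge\int_A h(\chi)$ on any open $A$ follows from the weak$^{*}$ lower semicontinuity of $\mu\mapsto\int_A h(\mu)$, applied to the convergence $\chi*\theta_\e\weakstar\chi$ (standard property of mollification). For the upper bound, combining the Jensen inequality $\int_A h(\chi*\theta_\e)\le\int_A(h(\chi)*\theta_\e)\,dx=\int(\one_A*\theta_\e)(y)\,dh(\chi)(y)$ with dominated convergence (noting that $0\le\one_A*\theta_\e\le 1$ and $\one_A*\theta_\e\to\one_A$ pointwise off $\partial A$) gives the matching limsup. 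The pieces are tied together by Reshetnyak's continuity theorem: the Jensen bound $|\chi*\theta_\e|\le|\chi|*\theta_\e$ combined with weak$^{*}$ lower semicontinuity of the total variation shows $|\chi*\theta_\e|(\R^d)\to|\chi|(\R^d)$, which is precisely the mass-preservation hypothesis needed to promote the weak$^{*}$ convergence of $\chi*\theta_\e$ to full continuity of the functional $\int_A h(\cdot)$.

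The main technical obstacle is the treatment of the boundary $\partial A$: the upper bound from Jensen naturally yields $\limsup\le h(\chi)(\bar A)$ rather than $h(\chi)(A)$, and the difference $h(\chi)(\partial A)$ must be absorbed. This is exactly what the mass-convergence hypothesis in Reshetnyak's continuity theorem is designed to handle, so verifying $|\chi*\theta_\e|(\R^d)\to|\chi|(\R^d)$ is the crucial step that allows us to conclude equality of the limit rather than only a two-sided bound.
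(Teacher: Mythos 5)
Your argument for the first inequality is essentially the paper's: represent $h$ as the support function of a closed convex set $K$, transfer the kernel onto the test function by symmetry and Fubini, and observe that mollification preserves the $K$-valued constraint by convexity. The Jensen refinement $h(\chi*\theta_\e)\le h(\chi)*\theta_\e$ that you make explicit is correct, and it is in fact the form of the estimate that the paper's later application actually relies on (one needs $h(\chi*\theta_\e)\le h(\chi)*\theta_\e\le \mathcal{L}^d*\theta_\e=\mathcal{L}^d$, not a pointwise comparison $h(\chi*\theta_\e)\le h(\chi)$, which fails in general).

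For the convergence statement you diverge from the paper, and there is a genuine gap. The paper obtains the limit by combining the part-one upper bound $\int_A h(\chi*\theta_\e)\le\int_A h(\chi)$ with weak* lower semicontinuity of $\mu\mapsto\int_A h(\mu)$ and stops there. You instead estimate the $\limsup$ from the Jensen bound and dominated convergence, which only gives $\limsup_\e\int_A h(\chi*\theta_\e)\le h(\chi)(\overline{A})$, and then appeal to Reshetnyak's continuity theorem, via the mass convergence $|\chi*\theta_\e|(\R^d)\to|\chi|(\R^d)$, to absorb the boundary term $h(\chi)(\partial A)$. This step cannot work, for two reasons. First, Reshetnyak's continuity theorem requires a finite integrand that is continuous on the unit sphere, whereas here $h$ is permitted to take the value $+\infty$ (and does so in the paper's application, where $K$ is unbounded); the theorem simply does not apply. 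Second, even for finite continuous $h$, the hypothesis $|\chi*\theta_\e|(\R^d)\to|\chi|(\R^d)$ yields convergence of $\int_{\R^d}h(\cdot)$ but says nothing about a strict open subset $A$: localizing would require $|\chi*\theta_\e|(A)\to|\chi|(A)$, which global mass convergence does not give when $|\chi|(\partial A)>0$ --- exactly the situation in which the boundary term you are trying to discard is nonzero. A concrete instance is $d=m=1$, $h=|\cdot|$, $\chi=\delta_0$, $A=(0,1)$: then $\int_A h(\chi*\theta_\e)=\int_0^1\theta_\e\to\tfrac{1}{2}$ while $\int_A h(\chi)=0$, so no mechanism can absorb $h(\chi)(\partial A)$ in general. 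The paper sidesteps the issue by reusing the part-one inequality itself as the $\limsup$ bound rather than passing through $h(\chi)*\theta_\e$ and $\overline{A}$.
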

 \begin{proof} The integrand $h$ is the support function of a closed convex subset $K\subset \R^m$ such that $0\in K$. 
 Let $A$ be an open subset of $\R^d$ and let $\zeta\in C_0(A;\R^m)$ be such that $\spt(\zeta)\subset A$ and $\zeta(x)\in K$ for all $x\in A$.
 Then, for small $\e>0$, the convolution $\zeta*\theta_\e$ satisfies the same properties as listed above for $\zeta$. Hence, since $\theta_\e$ is symmetric and inequality $ \big\langle \zeta*\theta_\e, \frac{d \chi}{d |\chi|} \big\rangle \le  \one_A\,  h(\frac{d \chi}{d |\chi|})$ holds, we get
 $$ \pairing{ \chi*\theta_\e, \zeta}\ =\  \pairing{ \chi, \zeta*\theta_\e} \le \int_A h(\chi).$$
 Passing to the supremum with respect to all $\zeta$, we infer that $\int_A h(\chi*\theta_\e)\le \int_A h(\chi)$. This is true for every open subset $A$
 and the first claim of Lemma \ref{mollih} follows. On the other hand, as it holds that $\chi*\theta_\e \,\weakstar\, \chi$ in $\Mes(A; \R^m)$, we have the classical lower semicontinuity property
 $ \liminf_{\e\to 0} \ \int_A  h(\chi*\theta_\e) \ \ge \   \int_A  h(\chi),$
 from which we obtain the desired convergence.
  
 \end{proof}
%
%
%
%
%

\bibliographystyle{plain}      

\begin{thebibliography}{10}
	
	\bibitem{alberti1999}
	Alberti, G., Ambrosio L.:
	\newblock A geometrical approach to monotone functions in $\R^d$.
	\newblock {\em Math. Z.}, \textbf{230}(2):259--316, 1999.
	
	\bibitem{allaire-book}
	Allaire, G.:
	\newblock {\em Shape optimization by the homogenization method}, vol. 146 of
	{\em Applied Mathematical Sciences}.
	\newblock Springer-Verlag, New York, 2002.
	
	\bibitem{ambrosio-fusco}
	Ambrosio, L., Fusco, N., Pallara, D.:
	\newblock {\em Functions of bounded variation and free discontinuity problems}.
	\newblock Oxford Mathematical Monographs. The Clarendon Press, Oxford
	University Press, New York, 2000.
	
	\bibitem{andersen2003}
	Andersen, E., Roos, C., Terlaky, T.:
	\newblock On implementing a primal-dual interior-point method for conic
	quadratic optimization.
	\newblock {\em Math. Program.}, \textbf{95}(2):249--277, 2003.
	
	\bibitem{bolbotowski2021}
	Bo{\l}botowski, K.:
	\newblock Optimal vault problem -- form finding through 2D convex program.
	Forthcoming.
	
	\bibitem{bolbotowski2020}
	Bo{\l}botowski, K., Lewi{\'{n}}ski, T.:
	\newblock Setting the Free Material Design problem through the methods of
	optimal mass distribution. Preprint,
	\newblock {\em  arXiv:2004.11084}, 2020.
	
	\bibitem{bouchitte2006}
	Bouchitt\'{e}, G.:
	\newblock {\em Convex Analysis and Duality}.
	\newblock Encyclopedia of Mathematical physics. Academic Press, 2006.
	
	\bibitem{bouchitte2001}
	Bouchitt{\'e}, G., Buttazzo, G.:
	\newblock Characterization of optimal shapes and masses through
	{M}onge-{K}antorovich equation.
	\newblock {\em J. Eur. Math. Soc.}, \textbf{3}:139--168, 2001.
	
	\bibitem{BCJ}
	Bouchitt\'{e}, G., Champion, T., Jimenez, C.:
	\newblock Completion of the space of measures in the {K}antorovich norm.
	\newblock {\em Riv. Mat. Univ. Parma}, \textbf{4}:127--139, 2005.
	
	\bibitem{bouvalIHP}
	Bouchitt\'{e}, G., Valadier, M.:
	\newblock Multifonctions s.c.i. et r\'{e}gularis\'{e}e s.c.i. essentielle.
	\newblock {\em Ann. Inst. H. Poincar\'{e} Anal. Non Lin\'{e}aire},
	\textbf{6}:123--149, 1989.
	
	\bibitem{BBS}
	Bouchitt\'{e}, G., Buttazzo, G., Seppecher, P.:
	\newblock Energies with respect to a measure and applications to
	low-dimensional structures.
	\newblock {\em Calc. Var. Partial Differ. Equ.}, \textbf{5}(1):37--54, 1997.
	
	\bibitem{bouchitte1997}
	Bouchitt\'{e}, G., Buttazzo, G., Seppecher, P.:
	\newblock Shape optimization solutions via {M}onge-{K}antorovich equation.
	\newblock {\em C. R. Acad. Sci. Paris S\'{e}r. I Math.}, \textbf{324}(10):1185--1191,
	1997.
	
	\bibitem{survey}
	Bouchitt\'{e}, G., Fragal\`a, I.:
	\newblock Variational theory of weak geometric structures: the measure method
	and its applications.
	\newblock In G. dal Maso, F. Tomarelli (Eds.), {\em Variational methods for discontinuous structures}, vol. 51
	of {\em Progr. Nonlinear Differential Equations Appl.}, pages 19--40.
	Birkh\"{a}user, Basel, 2002.
	
	\bibitem{JFAmu}
	Bouchitt\'{e}, G., Fragal\`a, I.:
	\newblock Second-order energies on thin structures: variational theory and
	non-local effects.
	\newblock {\em J. Funct. Anal.}, \textbf{204}(1):228--267, 2003.
	
	\bibitem{bouchitte2008}
	Bouchitt{\'e}, G., Gangbo, W., Seppecher, P.:
	\newblock Michell trusses and lines of principal action.
	\newblock {\em Math. Models Meth. Appl. Sci.},
	\textbf{18}:1571--1603, 2008.
	
	\bibitem{valadier}
	Bouchitt\'{e}, G., Valadier, M.:
	\newblock Integral representation of convex functionals on a space of measures.
	\newblock {\em J. Funct. Anal.}, \textbf{80}(2):398--420, 1988.
	
	\bibitem{brezis}
	Brezis, H.:
	\newblock {\em Functional analysis, {S}obolev spaces and partial differential
		equations}.
	\newblock Universitext. Springer, New York, 2011.
	
	\bibitem{cardaliaguet}
	Cannarsa, P., Cardaliaguet, P.:
	\newblock Representation of equilibrium solutions to the table problem for
	growing sandpiles.
	\newblock {\em J. Eur. Math. Soc.}, \textbf{6}(4):435--464, 2004.
	
	\bibitem{cannarsa}
	Cannarsa, P., Sinestrari, C.:
	\newblock  {\em Semiconcave functions, {H}amilton-{J}acobi equations, and
		optimal control},
	\newblock vol.~58 of {\em Progress in Nonlinear Differential
		Equations and their Applications}.
	\newblock Birkh\"{a}user Boston, Inc., Boston, MA, 2004.
	
	\bibitem{castaing}
	Castaing, C., Valadier, M.:
	\newblock {\em Convex analysis and measurable multifunctions}.
	\newblock Lecture Notes in Mathematics, vol. 580. Springer-Verlag, Berlin-New
	York, 1977.
	
	\bibitem{ciarlet1980}
	Ciarlet, P.G.:
	\newblock A justification of the von K{\'a}rm{\'a}n equations.
	\newblock {\em Arch. Ration. Mech. Anal.}, \textbf{73}(4):349--389,
	1980.
	
	\bibitem{clarke-calvar}
	Clarke, F.:
	\newblock {\em Functional analysis, calculus of variations and optimal
		control}, vol. 264 of {\em Graduate Texts in Mathematics}.
	\newblock Springer, London, 2013.
	
	\bibitem{courant1989}
	Courant, R., Hilbert, D.:
	\newblock {\em Methods of mathematical physics, vol. I}.
	\newblock Wiley, 1989.
	
	\bibitem{czubacki2020}
	Czubacki, R., Lewi{\'n}ski, T.:
	\newblock Optimal archgrids: a variational setting.
	\newblock {\em Struct. Multidiscip. Optim.}, \textbf{62}:1371--1393,
	2020.
	
	\bibitem{zolesio}
	Delfour, M.C., Zol\'{e}sio, J.P.:
	\newblock {\em Shapes and geometries: metrics, analysis, differential calculus, and optimization}, vol.~22 of {\em Advances in Design and
		Control}.
	\newblock SIAM, Philadelphia,
	PA, second edition, 2011.
	
	\bibitem{Dweik}
	Dweik, S., Santambrogio, F.:
	\newblock {$L^p$} bounds for boundary-to-boundary transport densities, and
	{$W^{1,p}$} bounds for the {BV} least gradient problem in 2{D}.
	\newblock {\em Calc. Var. Partial Differ. Equ.}, \textbf{58}(1), 31, 2019.
	
	\bibitem{ekeland1999}
	Ekeland, I., Temam, R.:
	\newblock {\em Convex analysis and variational problems}.
	\newblock SIAM, Oxford, 1999.
	
	\bibitem{fox1993}
	Fox, D.D., Raoult, A., Simo, J.C.:
	\newblock A justification of nonlinear properly invariant plate theories.
	\newblock {\em Arch. Ration. Mech. Anal.}, \textbf{124}(2):157--199,
	1993.
	
	\bibitem{buttazzo2004}
	Fragala, I., Buttazzo, G., Davini, A., Maci\`a, F.:
	\newblock Optimal Riemannian distances preventing mass transfer.
	\newblock {\em J. Reine Angew. Math.}, \textbf{475}:157--171, 2004.
	
	\bibitem{Goffman}
	Goffman, C., Serrin, J.:
	\newblock Sublinear functions of measures and variational integrals.
	\newblock {\em Duke Math. J.}, \textbf{31}:159--178, 1964.
	
	\bibitem{lewinski2000}
	Lewinski, T., Telega, J.J.:
	\newblock {\em Plates, laminates, and shells: asymptotic analysis and
		homogenization}.
	\newblock World Scientific Singapore, 2000.
	
	\bibitem{Papadopoulos}
	Papadopoulos, A.:
	\newblock {\em Metric spaces, convexity and nonpositive curvature}, vol. 6 of
	{\em IRMA Lectures in Mathematics and Theoretical Physics}.
	\newblock European Mathematical Society (EMS), Z\"{u}rich, 2005.
	
	\bibitem{rozvany1979}
	Rozvany, G.I.N., Prager, W.:
	\newblock A new class of structural optimization problems: optimal archgrids.
	\newblock {\em Comput. Meth. Appl. Mech. Eng.},
	\textbf{19}(1):127--150, 1979.
	
	\bibitem{Saint-Pierre}
	Saint-Pierre, J.:
	\newblock Point de {S}teiner et sections lipschitziennes.
	\newblock {\em S\'{e}m. Anal. Convexe}, \textbf{15}(7):1--42, 1985.
	
	\bibitem{Schneider}
	Schneider, R.:
	\newblock {\em Convex bodies: the {B}runn-{M}inkowski theory}, vol. 151 of
	{\em Encyclopedia of Mathematics and its Applications}.
	\newblock Cambridge University Press, Cambridge, expanded edition, 2014.
	
	\bibitem{Conti2011}
	Schweizer, B., Conti, S.:
	\newblock On scalar metrics that maximize geodesic distances in the plane.
	\newblock {\em Calc. Var. Partial Differ. Equ.}, \textbf{41}:151--177, 2011.
	
	\bibitem{Smirnov}
	Smirnov, S.K.:
	\newblock Decomposition of solenoidal vector charges into elementary solenoids,
	and the structure of normal one-dimensional flows.
	\newblock {\em Algebra i Analiz}, \textbf{5}(4):206--238, 1993.
	
	\bibitem{venturini}
	Venturini, S.:
	\newblock Pseudodistances and pseudometrics on real and complex manifolds.
	\newblock {\em Ann. Mat. Pura Appl. (4)}, \textbf{154}:385--402, 1989.
	
	\bibitem{villani}
	Villani, C.:
	\newblock {\em Topics in optimal transportation}, vol. 58 of {\em Graduate
		Studies in Mathematics}.
	\newblock American Mathematical Society, Providence, RI, 2003.
	
\end{thebibliography}

\bigskip\small\noindent
Karol Bo{\l}botowski:
Department of Structural Mechanics and Computer Aided Engineering \\
 Faculty of Civil Engineering, Warsaw University of Technology\\
   16 Armii Ludowej Street, 00-637 Warsaw - POLAND\\
{\tt k.bolbotowski@il.pw.edu.pl}

\bigskip
{\small\noindent
Guy Bouchitt\'e:
Laboratoire IMATH, Universit\'e de Toulon\\
BP 20132, 83957 La Garde Cedex - FRANCE\\
{\tt bouchitte@univ-tln.fr}

\end{document}

\section{A related non-linear membrane model}\label {NLM}
	The (OM) problem studied in this work employs a linear membrane model where no elastic properties of the membrane enter. The design variable of the (OM) problem is the zero divergence pre-stress field $\sigma$ in the whole design domain $\Ob$ that constitutes the stiffness of the membrane. From mechanical point of view it may be debatable whether, as designers, we have control over the stresses in the interior $\Omega$. The more natural approach would be to exert horizontal forces $F$ on the boundary $\bO$ while the desired stress $\sigma$ should be a response of the optimally designed elastic body. In this section we put forward an alternative design problem in this spirit. The underlying non-linear elastic membrane model is inspired by the von K\'{a}rm\'{a}n plate that is built on the \textit{a priori} proposed non-linear operator $\frac{1}{2} \, \nabla u \otimes \nabla u + e(w)$. The same operator spontaneously emerges in the analysis of the (OM) problem. Our conclusion shall be that both the problems are equivalent.

	The membrane shall be made of material modelled by elastic potential $j:\Sdd \to \R$ of the form
	\begin{equation}
		j(A) = \frac{1}{2}\bigl( \rho(A)\bigr)^2, \qquad \rho(A) = \max_i \abs{\lambda_i(A)},
	\end{equation}
	i.e. $\rho$ is the spectral norm. This particular choice of $j$ relates to the so called Michell-type material (cf. \cite{bouchitte2008}) and is motivated to compare with the (OM) problem where the cost integrand is proposed as the trace function. In fact, for any potential $j$ that is convex, $p$-homogeneous and elliptic the optimal design problem put forward below would be equivalent to the (OM) problem with the cost function chosen as a particular homogeneous cost function $c$. 
	
	Our proposal of the mechanical model of the membrane may be considered a variant of the von K\'{a}rm\'{a}n plate model where the in-plane strain depends non-linearly on the pair $u$ and $w$ being the out-of-plane and in-plane displacement respectively. The following modifications are natural once we assume that our membrane is very thin:
	
	\begin{enumerate}
		\item[(i)] \ the terms depending on the second derivative $\nabla^2 u$ (the bending terms) are omitted, hence the in plane strain is constant across the thickness of the membrane and reads
		\begin{equation}
			\nlOper(u,w) := \frac{1}{2} \, \nabla u \otimes \nabla u + e(w);
		\end{equation}
		\item [(ii)] we enforce zero resistance of the membrane to in-plane compression (the membrane is perfectly immune to local buckling), more precisely the integrand that enters the elastic energy shall be:
		\begin{equation}
			\label{eq:def_jp}
			j_+ := \left(j^* + \ind_{\Sddp} \right)^*,
		\end{equation}
		cf. \cite{giaquinta1985} where a constitutive law is propose for masonry structures which cannot resist tension.
	\end{enumerate}
	From definition of $j_+$ it is clear that $j^*_+(S) = j^*(S)= \frac{1}{2} \bigl(\rho^0(S) \bigr)^2 = \frac{1}{2} \bigl(\tr S\bigr)^2$ if $S \in \Sddp$ and $j^*_+(S) = \infty$ otherwise. Thus we find that $j_+(A) = \frac{1}{2} \bigl(\rho_+(A) \bigr)^2$ where $\rho_+$ was given in \eqref{def:rho+}.
	
	The membrane shall occupy a bounded convex domain $\Omega$ lying within a horizontal plane $\R^2$ and will be subject to a fixed vertical load $f \in \Mes(\Ob;\R)$. The membrane is vertically pinned on a non-empty compact set $\Sigma_0 \subset \bO$ whilst the horizontal displacement $w$ is not fixed on the boundary.
	Our goal shall be to design both the mass distribution $\mu \in \Mes_+(\Ob)$ of the elastic material and the external pre-stress exerted on the boundary and modelled by a vector-valued distribution $F \in \D'(\Rd;\Rd)$ supported on $\bO$. The elastic compliance for the proposed membrane model reads:
	\begin{equation}
		\label{nlcomp}
		\nlComp(\mu,F) := \sup\limits_{(u,w) \in \D(\Rd;\R^{d+1})} \left\{ \int u\, df + \pairing{F,w} - \int j_+\bigl(\nlOper(u,w) \bigr) \,d\mu \ : \ \spt u \subset  \R^d \backslash\Sigma_0 \right\}.
	\end{equation}
	By virtue of Lemma \ref{CA} the functional $(u,w) \mapsto j_+\bigl(\nlOper(u,w) \bigr) \,d\mu$ is convex (which would not be the case if $j_+$ was replaced by $j$) hence, via classical duality argument we may find the dual characterization of the compliance: 
	\begin{equation}
	\label{eq:dual_comp_nonlin_memb}
	\nlComp(\mu,F) = \inf \biggl\{ \frac{1}{2} \int \pairing{S,q\otimes q}\,d\mu +\int j^*(S) \, d\mu \ :\
	\begin{array}{cc}
	S \in L^{2}_\mu(\Ob;\Sddp), \ & -\DIV (S\mu) = F\\
	q \in L^{2}_{\abs{S\mu}}(\Ob;\Rd), \ & -\dive\bigl((Sq)\mu\bigr)=f
	\end{array}
	\biggr\},
	\end{equation}
	where $\DIV(S \mu)$ is computed in whole $\Rd$ and $\dive\bigl((Sq)\mu\bigr)$ in $\Rd \backslash \Sigma_0$. We may readily pose the design problem:
	\begin{equation}
	\tag(nlOM)\label{nlOM} \alpha_{\mathrm{nl},0}(m) = \inf \biggl\{ \nlComp(\mu,F) \ : \ F\in \D'(\Rd;\Rd),\ \spt F \subset \bO,\ \mu \in \Mes_+(\Ob),\ \frac{1}{d}\int d\mu \leq m  \biggr\}.
	\end{equation}
	
	The following result presents the link between the two design problems: the (OM) problem, where a linear model of a pre-stressed membrane is employed, and the (nlOM) problem involving a non-linear elastic model:
	\begin{proposition}
		Let $(\lambda,\sigma) = (S q\, \mu, S \mu)$ be a solution of the problem $(\mathcal{P})$ where $Z_0 = \min (\mathcal{P})$. Then the pair
		\begin{equation}
			\label{nlOM_from_P}
			\hat\mu = \frac{2\, m\, d}{Z_0}\, \mu, \qquad \hat{F} = - \left(\frac{2\, m \, d}{Z_0} \right)^{\frac{1}{3}}  \DIV\, \sigma
		\end{equation}
		solves the problem $(\mathrm{nlOM})$ with $\alpha_{\mathrm{nl},0}(m) =\frac{3}{4} \frac{(Z_0)^{4/3}}{(2\, m\, d)^{1/3}}$, where $\DIV \, \sigma$ is computed in whole $\Rd$. Moreover, the pair
		\begin{equation}
			\hat{S} = \left(\frac{Z_0}{2\, m \, d} \right)^{\frac{2}{3}} S, \qquad \hat{q} = \left(\frac{Z_0}{2\, m \, d} \right)^{\frac{1}{3}} q
		\end{equation}
		solves the dual elasticity problem \eqref{eq:dual_comp_nonlin_memb}.
	\end{proposition}
	\begin{proof}[Proof]
		Upon plugging definition \eqref{nlcomp} of compliance $\nlComp(\mu,F)$ into $\alpha_{\mathrm{nl},0}(m)$ we arrive at an $\inf$-$\sup$ problem. \textit{A priori} it is not clear whether we can swap to $\sup$-$\inf$ while preserving the equality. We can, however, estimate $\alpha_{\mathrm{nl},0}(m)$ from below:
		\begin{align*}
			\alpha_{\mathrm{nl},0}(m) &\geq \sup\limits_{\substack{u \in \D(\Rd\backslash \Sigma_0;\R)\\ w\in \D(\Rd;\Rd)}} \ \inf\limits_{\substack{F\in \D'(\Rd),\ \spt F \subset \bO  \\ \mu \in \Mes_+(\Ob),\ \frac{1}{d}\int d\mu \leq m}} \left\{ \int u\, df + \pairing{F,w} - \frac{1}{2}\int \bigl(\rho_+(\nlOper(u,w) )\bigr)^2 \,d\mu \right\}\\
			& = \sup\limits_{\substack{u \in \D(\Rd\backslash \Sigma_0;\R)\\ w \in \D(\Omega;\Rd)}} \ \left\{ \int u\, df - \frac{m \,d}{2} \norm{\rho_+\bigl(\nlOper(u,w) \bigr)}_{\infty}^2 \right\}\\
			& =  \sup\limits_{\substack{u_1 \in \D(\Rd\backslash \Sigma_0;\R)\\ w_1 \in \D(\Omega;\Rd)}} \ \sup\limits_{t \geq 0} \ \left\{ t \int u_1\, df - t^4\, \frac{m \,d}{2} \ : \rho_+\bigl(\nlOper(u_1,w_1) \bigr) \leq 1 \text{ in } \Omega   \right\} = \frac{3}{4} \frac{(Z_0)^{4/3}}{(2\, m\, d)^{1/3}}
		\end{align*}
		where:
		\begin{enumerate}
			\item [-] to pass from first to the second line we observe that if $\spt w \cap \bO \neq \varnothing$ there would exist an admissible $\bar{F}$ such that $\pairing{\bar{F},w} <0$ therefore by taking sequence $F_n = n\, \bar{F}$ we discover that the infimum in the first line would be equal $-\infty$; upon imposing the constraint $w \in \D(\Omega;\Rd)$ we optimize with respect to $\mu$ by taking the Dirac masses of the form $\mu = m\, d\,\delta_x$;
			\item [-] to pass from second to the third line we propose a change of variables $u = t u_1$, $w = t^2 w_1$ for $t \geq 0$ and $\norm{\rho_+\bigl(\nlOper(u_1,w_1) \bigr)}_{\infty}=1$ which furnishes $\nlOper(u,w) = t^2 \nlOper(u_1,w_1)$;
			\item [-] after choosing optimal $t = \bigl(\frac{\int u_1 df}{2 m d} \bigr)^{1/3}$ in the last line we find the last equality by means of Proposition \ref{Z0=I0} and density result in Lemma \ref{density}.
		\end{enumerate}
		One easily checks that the pair $\hat{\mu}$ and $\hat{F}$ is a competitor for the problem (nlOM)
		therefore $\alpha_{\mathrm{nl,0}}(m) \leq \nlComp(\hat{\mu},\hat{F})$. After verifying that the pair $\hat{S}$, $\hat{q}$ is admissible in the dual formulation \eqref{eq:dual_comp_nonlin_memb} a chain of inequalities may be written down:
		\begin{align*}
			\alpha_{\mathrm{nl,0}}(m) &\leq \nlComp(\hat{\mu},\hat{F}) \leq \frac{1}{2} \int \pairing{\hat{S},\hat{q}\otimes \hat{q}} d\hat{\mu} + \frac{1}{2} \int (\tr \hat{S})^2 d\hat{\mu} \\
			&= \left(\frac{Z_0}{2\, m \, d}\right)^{1/3} \frac{1}{2} \int \pairing{S,q\otimes q} d\mu +\frac{1}{2} \left(\frac{Z_0}{2\, m \, d}\right)^{1/3} \int \tr{S}\, d\mu = \frac{3}{4} \frac{(Z_0)^{4/3}}{(2\, m\, d)^{1/3}} \leq \alpha_{\mathrm{nl,0}}(m)
		\end{align*}
		where in the last line we utilized the equi-repartition rule \eqref{equipart}. The above is in fact a chain of equalities from which both of the assertions follow.  
	\end{proof}
	From the above result we find that the optimal mass distribution for the (nlOM) problem may be computed as $\hat{\mu} = \tr \, \widetilde{\sigma}$ where $\widetilde{\sigma}$ is any solution of the (OM) problem, compare formulas \eqref{OM_from_P} and \eqref{nlOM_from_P}. On the other hand, the pre-stress fields in the two design problems are identical only up to multiplicative constant, more precisely $\hat{\sigma} := \hat{S} \hat \mu = \bigl(\frac{2\, m \, d}{Z_0} \bigr)^{1/3} \,\widetilde{\sigma}$; the minimal compliances differ as well: $\alpha_{\mathrm{nl},0}(m) = \frac{3}{4} \frac{(Z_0)^{4/3}}{(2\, m\, d)^{1/3}}$ and $\alpha_0(m)=\frac{(Z_0)^2}{4 m d}$. It is a fact that the precise (OM) problem could be fully recovered by an asymptotic analysis of a modified problem $(\mathrm{nlOM}\kappa)$ where displacement $w = \kappa\, \ident$ is enforced on $\bO$ -- the reader is referred to \cite{ciarlet1980} or \cite{fox1993} where classical linear membrane problem is obtained via asymptotic analysis of the von K\'{a}rm\'{a}n plate model.

		\end{document}